\documentclass[11pt]{amsart}

\usepackage{amsmath,amssymb,amsthm,mathtools,mathrsfs}
\usepackage{enumitem}
\usepackage{microtype}
\usepackage{xcolor}
\usepackage{geometry}
\usepackage[colorlinks=true,linkcolor=blue!55!black,citecolor=blue!55!black,urlcolor=blue!55!black,hypertexnames=false]{hyperref}
\hypersetup{
 pdftitle={Commutators with Two Matrix Weights: Boundedness, Compactness and  Riesz-Transform Schatten Classes},
 pdfauthor={Ji Li; Chong-Wei Liang; Chun-Yen Shen and Brett D. Wick},
 pdfsubject={General Calderon--Zygmund commutators and Schatten classes of Riesz commutators},
 pdfkeywords={matrix weight, commutator, Calderon--Zygmund operator, compactness, Schatten class, Riesz transform}
}

\newtheorem{theorem}{Theorem}[section]
\newtheorem{proposition}[theorem]{Proposition}
\newtheorem{lemma}[theorem]{Lemma}
\newtheorem{corollary}[theorem]{Corollary}
\newtheorem{remark}[theorem]{Remark}
\theoremstyle{definition}
\newtheorem{definition}[theorem]{Definition}
\numberwithin{equation}{section}

\newcommand{\R}{\mathbb R}
\newcommand{\C}{\mathbb C}
\newcommand{\D}{\mathcal D}
\newcommand{\BMO}{\mathrm{BMO}}
\newcommand{\VMO}{\mathrm{VMO}}
\newcommand{\dist}{\operatorname{dist}}
\newcommand{\ess}{\mathrm e}
\newcommand{\avg}{\mathop{\rlap{\raisebox{0.15ex}{--}}\!\int}}
\newcommand{\one}{\mathbf 1}
\newcommand{\op}{\mathrm{op}}
\newcommand{\HS}{\mathrm{HS}}
\newcommand{\Sch}{\mathcal S}
\newcommand{\Lag}{\mathcal L}
\newcommand{\Sig}{\mathrm{Sig}_d}
\title[Commutators with two matrix weights]
{Commutators with Two Matrix Weights}
\author[J. Li]{Ji Li}
\author[C.W. Liang]{Chong-Wei Liang}
\author[C.Y. Shen]{Chun-Yen Shen}
\author[B. D. Wick]{Brett D. Wick}
\date{}
\subjclass[2020]{Primary 42B20, 47B10; Secondary 42B35, 46E35, 47B35.}
\keywords{Matrix weight; commutator; Calder\'on--Zygmund operator; Riesz transform; compactness; essential norm; Schatten class; weak Schatten class; Besov space; Sobolev space}

\begin{document}

\begin{abstract}
Let $U,V$ be matrix $\mathcal A_p$ weights, $1<p<\infty$, and
let $B$ be a matrix-valued function.  For Calder\'on--Zygmund
operators $T$ with scalar standard kernels, we characterize
boundedness and compactness of
$[M_B,T\otimes I_m]:L^p(U)\to L^p(V)$ by two-matrix BMO and
vanishing oscillation conditions.  The converse statements require
non-degeneracy condition.

For matrix \(\mathcal A_2\) weights \(U,V\) and a fixed Riesz
transform \(R_j\) on \(\mathbb R^d\), \(d\geq2\), we characterize
the Schatten class membership of
\(C_{B,R_j}:L^2(U)\to L^2(V)\).
We characterize $\mathcal S^r$ membership for $r>d$
and $\mathcal S^{d,\infty}$ membership by $\ell^r$ and weak
$\ell^d$ conditions on the two Hilbert--Schmidt cube oscillations.
For $0<r\leq d$, $\mathcal S^r$ membership is equivalent to
$B$ being constant almost everywhere.  The weak endpoint condition
implies a weighted first-order Sobolev estimate for $C^1$ symbols,
with a converse for constant or uniformly elliptic weights.
\end{abstract}

\maketitle

\section*{Introduction and the main results}

We study boundedness, compactness and Schatten class membership of
commutators with two matrix weights and a matrix-valued symbol.  In
the scalar unweighted setting, Coifman, Rochberg and Weiss characterized
boundedness of commutators with Riesz transforms by BMO \cite{CRW}, and Uchiyama proved the
corresponding compactness theorem \cite{Uchiyama}. 
Janson and Wolff \cite{JansonWolff}, Rochberg and Semmes
\cite{RS} further developed the Schatten class theory of these commutators.
  Bloom introduced a
two-weight BMO condition for the Hilbert transform \cite{Bloom}.
Holmes, Lacey and Wick extended the two-weight theory to higher
dimensions \cite{HolmesLaceyWick}.  For two scalar weights, Lacey and
Li characterized compactness of Riesz-transform commutators
\cite{LaceyLi}.  Schatten class results in this setting were obtained
for the Hilbert transform in \cite{LLW} and for Riesz transforms in
\cite{LLWW}.
We also recall some very recent closely related unweighted or one-weight results:
Wei and Zhang studied Schatten class membership for unweighted scalar
and operator-valued commutators \cite{WeiZhangNC,WeiZhang}.
Hyt\"onen obtained results on spaces of homogeneous type for
commutators acting on a single scalar-weighted space $L^2(w)$,
$w\in A_2$ \cite{HytSchatten}.  

Volberg formulated the matrix $\mathcal A_p$ condition using
averaged norms and reducing operators \cite{Volberg1997}.  He proved
that this condition characterizes boundedness of the Hilbert transform
on $L^p(W)$, and obtained Haar and $S$-function estimates.
Matrix-weighted commutators were studied by Isralowitz, Kwon and Pott
\cite{IKP}, Isralowitz \cite{IsralowitzTwoMatrix}, and Isralowitz, Pott
and Treil \cite{IPT}.  For recent work on matrix weights, see, for
example,
\cite{BownikCruzUribe,CruzUribePenrod,JQWX,KakaroumpasNieraeth,NieraethLattice}.
Matrix-weighted estimates for Bergman projections, square functions and
maximal functions appear in
\cite{HuoWick,TreilSquare,TreilVolbergMaximal}.  There are also recent
results on function spaces with matrix weights; see, for example,
\cite{BuHytYangYuanI,BuHytYangYuanII,BuHytYangYuanIII,BuYangYuan}.

Let $1<p<\infty$.  Let $U,V$ be $m\times m$ matrix
$\mathcal A_p$ weights on $\mathbb R^d$ and let
$B:\mathbb R^d\to\mathbb C^{m\times m}$ be locally integrable.  If
$T$ has a scalar kernel and acts componentwise, we write
$$
 C_{B,T}=[M_B,T\otimes I_m].
$$
For a cube $Q$, let $\mathcal U_Q$ be a reducing matrix for $U$
at exponent $p$ and let $\mathcal V'_Q$ be a reducing matrix for
$V^{-p'/p}$ at exponent $p'$.  Put $B_Q=\avg_QB$ and $ \omega_Q(B)=\max\{\alpha_Q(B),\beta_Q(B)\}
$,
\begin{align*}
 \alpha_Q(B)
 &=\bigg(\avg_Q
 \|V^{1/p}(x)(B(x)-B_Q)\mathcal U_Q^{-1}\|_{\op}^{p}\,dx
 \bigg)^{1/p},
\end{align*}
\begin{align*}
 \beta_Q(B)
 &=\bigg(\avg_Q
 \|U^{-1/p}(x)(B^*(x)-B_Q^*)(\mathcal V'_Q)^{-1}\|_{\op}^{p'}\,dx
 \bigg)^{1/p'}.
\end{align*}
The quantity $\alpha_Q(B)$ measures the oscillation from $L^p(U)$ to
$L^p(V)$, and $\beta_Q(B)$ is the corresponding quantity for the
adjoint.  Their suprema over all cubes are equivalent.  This
equivalence alone gives no comparison when the cubes are restricted
by size or location, so we use both quantities.

Under the non-degeneracy assumptions below, $\sup_Q\omega_Q(B)$ is
comparable to the operator norm.  The limits over small cubes, large
cubes and cubes tending to infinity give the essential norm.  For a
Riesz transform at $p=2$, we replace the operator norm in these
oscillations by the Hilbert--Schmidt norm.  Their $\ell^r$ norm gives
the Schatten norm for $r>d$, and their weak $\ell^d$ norm gives the
weak Schatten norm at $r=d$.

For Riesz transforms, Isralowitz characterized boundedness with two
matrix weights \cite{IsralowitzTwoMatrix}.  Isralowitz, Pott and Treil
proved the upper bound for general Calder\'on--Zygmund operators by a
block-weight argument \cite{IPT}.  They also proved a converse using
the maximum over a collection of operators satisfying a lower-bound
condition; the full family of Riesz transforms is one such
collection.  Theorem A proves the converse for a fixed
Calder\'on--Zygmund operator with a scalar standard kernel satisfying
source-side non-degeneracy.  The kernel may depend on both variables.
Isralowitz, Rivera-R\'ios and S\'aez-Rivas have also proved convex-body
domination and strong-type bounds for vector-valued commutators with
several matrix symbols \cite{IRRS}.

\medskip
\noindent\textbf{Theorem A.}
Let $T$ be an $L^2$-bounded Calder\'on--Zygmund operator with a
scalar standard kernel.  Then
$$
 \|C_{B,T}\|_{L^p(U)\to L^p(V)}
 \lesssim \sup_Q\omega_Q(B).
$$
If the kernel is source-side non-degenerate in the sense of
\eqref{eq:source-side-nondegeneracy}, then
$$
 \|C_{B,T}\|_{L^p(U)\to L^p(V)}
 \simeq \sup_Q\omega_Q(B).
$$
The upper estimate is due to \cite{IPT}.  We prove the reverse
estimate under the stated non-degeneracy assumption.  See
Theorem~\ref{thm:two-matrix-boundedness}.

For compactness in the scalar two-weight setting, see
\cite{CLLV,HytOikSinko,LaceyLi}.  Laukkarinen and Sinko proved a
two-weight compactness characterization for rough homogeneous singular
integrals \cite{LaukkarinenSinko}.  They also gave a sufficient
condition for compactness when the symbol is scalar and there is one
matrix weight.  Theorem B treats two matrix $\mathcal A_p$ weights and
a matrix symbol.  It characterizes compactness and gives an
essential-norm estimate when the scalar standard kernel is
non-degenerate in both variables.  We impose the vanishing conditions
separately on the oscillations for the commutator and its adjoint.

Define
\begin{align*}
 \delta_{\rm s}(B)&=\lim_{a\downarrow0}
       \sup_{\ell(Q)\leq a}\omega_Q(B),\quad
 \delta_{\rm l}(B)=\lim_{a\uparrow\infty}
       \sup_{\ell(Q)\geq a}\omega_Q(B),\quad
 \delta_{\rm f}(B)=\lim_{a\uparrow\infty}
       \sup_{\dist(Q,0)\geq a}\omega_Q(B),
\end{align*}
and let
$
 \mathcal T_{V,U,p}(B)
 =\max\{\delta_{\rm s}(B),\delta_{\rm l}(B),\delta_{\rm f}(B)\}.
$

\medskip
\noindent\textbf{Theorem B.}
Suppose that the scalar kernel of $T$ is two-sided non-degenerate.
Then
$
 C_{B,T}:L^p(U;\mathbb C^m)\longrightarrow L^p(V;\mathbb C^m)
 \quad\text{is compact}
$
if and only if $B\in\BMO^p_{V,U}$ and
$
 \delta_{\rm s}(B)=\delta_{\rm l}(B)=\delta_{\rm f}(B)=0.
$
Moreover, for $B\in\BMO^p_{V,U}$,
$$
 \|C_{B,T}\|_{\ess}
 \simeq \mathcal T_{V,U,p}(B).
$$
The upper essential-norm estimate holds without non-degeneracy.
Thus the vanishing conditions imply compactness for every operator in
the stated Calder\'on--Zygmund class.  See Theorem~\ref{thm:main}.

We next consider Schatten classes for a fixed Riesz transform $R_j$
with $p=2$.  Janson and Wolff proved the scalar unweighted theorem in
dimensions $d\geq2$ \cite{JansonWolff}.  Rochberg and Semmes developed
estimates for nearly weakly orthonormal (NWO) sequences and applied
them to singular values \cite{RS}.  For weighted and endpoint
results, see \cite{FrankSukochevZanin,GongLiWick}.

We use the complex median method of Wei and Zhang \cite{WeiZhang}.
Their operator-valued theorem \cite{WeiZhangNC} includes matrix
symbols with $U=V=I_m$.  Hyt\"onen's weak endpoint and rigidity
results require further assumptions on the space, including
completeness and a suitable Poincar\'e inequality \cite{HytSchatten}.

In Theorem C, both weights and the symbol are matrix-valued.  On
conjugating the commutator to unweighted vector-valued $L^2$, we obtain
the kernel
$$
 V^{1/2}(x)(B(x)-B(y))K_j(x,y)U^{-1/2}(y).
$$
The factors $V^{1/2}(x)$ and $U^{-1/2}(y)$ remain on opposite sides
of $B(x)-B(y)$.  They need not commute with the symbol or with each
other.  Thus the conjugated operator is in general not an unweighted
operator-valued commutator, and the scalar two-weight theorem does not
apply to its matrix entries.

At exponent $2$, take
$$
 \mathcal U_Q=\langle U\rangle_Q^{1/2},
 \  \mathcal V'_Q=\langle V^{-1}\rangle_Q^{1/2}.
$$
Let $\alpha_Q^{\HS}(B)$ and $\beta_Q^{\HS}(B)$ be the preceding
oscillations with the operator norm replaced by the Hilbert--Schmidt
norm and put
$$
 \omega_Q^{\HS}(B)
 =\max\{\alpha_Q^{\HS}(B),\beta_Q^{\HS}(B)\}.
$$
Fix adjacent dyadic systems $\D^1,\ldots,\D^{3^d}$.  For
$0<r<\infty$ and $0<q\leq\infty$, define
$$
 \|B\|_{\mathfrak B^{r,q}_{V,U}}
 =\sum_{t=1}^{3^d}
 \big\|\{\omega_Q^{\HS}(B):Q\in\D^t\}\big\|_{\ell^{r,q}},
 \qquad
 \mathfrak W^d_{V,U}=\mathfrak B^{d,\infty}_{V,U}.
$$
We write $\mathfrak B^r_{V,U}=\mathfrak B^{r,r}_{V,U}$.  In the
proof we use the singular-value decomposition of the normalized Haar
coefficients.  The Hilbert--Schmidt norm is the $\ell^2$ norm of their
singular values.  The singular vectors may depend on the cube and the
Haar signature.

\medskip
\noindent\textbf{Theorem C.}
Let $d\geq2$, let $U,V\in\mathcal A_2$ and fix
$1\leq j\leq d$.
\begin{enumerate}[label=\textup{(\roman*)},leftmargin=2.5em]
\item If $d<r<\infty$, then
$
 C_{B,R_j}\in\Sch^r(L^2(U),L^2(V))
 \quad\Longleftrightarrow\quad
 B\in\mathfrak B^r_{V,U},
$
with equivalence of the two seminorms.
\item If $0<r\leq d$, then $C_{B,R_j}\in\Sch^r$ if and only if
there is a fixed matrix $B_0\in\mathbb C^{m\times m}$ such that
$B(x)=B_0$ for almost every $x$.
\item At the critical index,
$
 C_{B,R_j}\in\Sch^{d,\infty}(L^2(U),L^2(V))
 \quad\Longleftrightarrow\quad
 B\in\mathfrak W^d_{V,U},
$
again with equivalence of seminorms.
\end{enumerate}
See Theorem~\ref{sch:thm:target}.  The statement is restricted to
$d\geq2$; for the Hilbert transform, see \cite{LLW}.

When $m=1$, the conditions in Theorem C are equivalent to those in
\cite{LLWW}.  When $U=V=I_m$, one can apply the scalar unweighted
theorem to each entry; this case is also included in
\cite{WeiZhangNC}.  The norm equivalences in Theorem C show that the
spaces $\mathfrak B^r_{V,U}$, $r>d$, and $\mathfrak W^d_{V,U}$ do
not depend on the chosen adjacent dyadic systems, up to equivalence
of seminorms.

We also obtain a Sobolev estimate at the critical index.  For a
matrix function with weak first derivatives, set
$$
 G_{V,U}B(x)
 =\bigg(\sum_{k=1}^d
 \|V^{1/2}(x)(\partial_kB)(x)U^{-1/2}(x)\|_{\HS}^2
 \bigg)^{1/2}.
$$

\medskip
\noindent\textbf{Theorem D.}
If $B\in C^1(\mathbb R^d;\mathbb C^{m\times m})$, then
$
 \|G_{V,U}B\|_{L^d}
 \lesssim \|B\|_{\mathfrak W^d_{V,U}}
 \simeq \|C_{B,R_j}\|_{\Sch^{d,\infty}}.
$

If $U$ and $V$ are constant or uniformly elliptic matrix weights,
the first inequality also holds in the reverse direction.  For these
weights, weak Schatten membership is equivalent to the first-order
Sobolev condition, with derivatives understood in the weak sense.
For general matrix $\mathcal A_2$ weights, we do not prove the reverse
inequality.  The corresponding Sobolev characterization is open even
for two scalar weights.  See
Proposition~\ref{sch:prop:Sobolev-necessity} and
Theorem~\ref{sch:thm:elliptic-Sobolev}.

\subsection{Proof strategy
}
We now describe the proofs.  For the lower bounds in Theorems A and
C, we fix the reducing matrices on each cube and apply the complex
median lemma to scalar functions obtained from $B$.  The choices of
scalar functions are different in the two proofs.

For Theorem~A, fix a cube $Q$ and two standard basis vectors.  Apply
the complex median lemma to
$z_{st,Q}(x)=e_t^*\mathcal V_QB(x)\mathcal U_Q^{-1}e_s$.
Non-degeneracy and kernel regularity give a companion cube such that
the kernel is bounded away from zero on the product of the two cubes
and its values lie in a fixed sector.
Pairings with the commutator then bound the mean oscillation of
$z_{st,Q}$.  Summing over the finitely many pairs $(s,t)$ bounds the
mean oscillation of $\mathcal V_QB\mathcal U_Q^{-1}$ in operator
norm.  The matrix John--Nirenberg equivalences in \cite{IPT} give
bounds for the suprema of both $\alpha_Q(B)$ and $\beta_Q(B)$.

For Theorem~B we need estimates on each cube before taking the limits
in its size or location.  The global John--Nirenberg equivalence does
not give these estimates.  We test the commutator using the reciprocal
of the kernel on separated cubes.  This bounds the normalized columns
of the oscillation on each cube, with an error that becomes small
after subdivision.  Testing the adjoint gives the estimate for
$\beta_Q(B)$.  The test functions tend weakly to zero as the cubes
shrink, grow or tend to infinity.  This proves the lower
essential-norm bound.  For the upper bound, we truncate the operator
to obtain a compact term and estimate the remaining terms by
localized convex-body sparse bounds.

For the lower estimate in Theorem~C, take the singular-value
decomposition of each normalized Haar coefficient
$A_{Q,\varepsilon}=\mathcal V_Q\widehat B(Q,\varepsilon)
|Q|^{-1/2}\mathcal U_Q^{-1}$.  Let
$u_{Q,\varepsilon,\rho}$ and $v_{Q,\varepsilon,\rho}$ be its left
and right singular vectors.  Apply the complex median lemma to
$u_{Q,\varepsilon,\rho}^*\mathcal V_QB
\mathcal U_Q^{-1}v_{Q,\varepsilon,\rho}$.
Haar cancellation bounds the corresponding singular value by the
oscillation of this scalar function.  The NWO estimate for these
vector-valued families has a constant independent of the chosen
singular vectors.  We then sum over $\varepsilon$ and $\rho$ to
estimate the Hilbert--Schmidt oscillation.

For the upper estimate in Theorem~C, we conjugate the commutator to
unweighted vector-valued $L^2$ and expand the Riesz kernel by a
Whitney decomposition and Fourier series.  The resulting rank-one
terms contain the two matrix weights.  We decompose each matrix
coefficient by its singular values and group the Haar descendants by
generation.  Scalar $A_\infty$ estimates in each vector direction
give the decay needed to sum over the generations.  We use the scalar
NWO theorem of Rochberg and Semmes \cite{RS} and prove the required
extensions to finite-dimensional vector-valued families and matrix
weights.

Part~I proves Theorem A.  Part~II proves Theorem B.
Part~III proves Theorems C and D and discusses the
Sobolev question for general matrix weights.

\subsection{Further comments}
We make an additional remark on the  proof of Theorem A and Theorem B in this paper.
For Theorem A and Theorem B, our argument can be extend to the setting of space of homogeneous type $(X,d,\mu)$, see for example \cite{MR4203673,CLLV}. The boundedness for the commutator, Theorem A, can also be extended to the iterated commutators. As for the characterization of the Schatten class, it is very likely that our results can has the generalization on either stratified Lie groups \cite{MR4756021} or metric space with the corresponding Poincar\'e inequality condition 
\cite{HytSchatten}.

\section{Definitions and the boundedness theorem}

By a matrix weight we mean a measurable function $W$ with values in the
positive definite $m\times m$ matrices and with locally integrable
entries.  For $1<p<\infty$, put $p'=p/(p-1)$ and write
$$
 \|f\|_{L^p(W)}=
 \bigg(\int_{\R^d}|W^{1/p}(x)f(x)|^p\,dx\bigg)^{1/p}.
$$
The matrix $\mathcal A_p$ characteristic is
\begin{equation}\label{eq:matrix-Ap}
 [W]_{\mathcal A_p}
 :=\sup_Q\avg_Q
 \bigg(\avg_Q
 \|W^{1/p}(x)W^{-1/p}(y)\|_{\op}^{p'}\,dy
 \bigg)^{p/p'}dx.
\end{equation}
We write $W\in\mathcal A_p$ when this quantity is finite.  Then the
dual weight $W^\#=W^{-p'/p}$ belongs to $\mathcal A_{p'}$ and
\begin{equation}\label{eq:dual-Ap-characteristic}
 [W^\#]_{\mathcal A_{p'}}^{1/p'}
 \simeq_{m,p} [W]_{\mathcal A_p}^{1/p}.
\end{equation}
See \cite[Lemma~2.4]{CUIM}.

For every cube $Q$, choose positive definite reducing matrices
$\mathcal W_Q$ and $\mathcal W'_Q$ such that
\begin{align*}
 |\mathcal W_Qe|
 &\simeq \bigg(\avg_Q|W^{1/p}(x)e|^p\,dx\bigg)^{1/p}\qquad {\rm and }\qquad
 |\mathcal W'_Qe|
 &\simeq \bigg(\avg_Q|W^{-1/p}(x)e|^{p'}\,dx\bigg)^{1/p'}.
\end{align*}
Thus $\mathcal W'_Q$ is a reducing matrix for $W^\#$ at exponent
$p'$.  We use the notation $\mathcal U_Q,\mathcal U'_Q$ and
$\mathcal V_Q,\mathcal V'_Q$ for $U$ and $V$.  Different choices of
reducing matrices give equivalent quantities below.

Put $B_Q=\avg_QB$ and define
\begin{align}
 \alpha_Q(B)
 &=\bigg(\avg_Q
 \big\|V^{1/p}(x)(B(x)-B_Q)\mathcal U_Q^{-1}\big\|_{\op}^{p}\,dx
 \bigg)^{1/p},                                      \label{eq:alpha}\\
 \beta_Q(B)
 &=\bigg(\avg_Q
 \big\|U^{-1/p}(x)(B^*(x)-B_Q^*)(\mathcal V'_Q)^{-1}\big\|_{\op}^{p'}\,dx
 \bigg)^{1/p'},                                     \label{eq:beta}\\
\omega_Q(B)&=\max\{\alpha_Q(B),\beta_Q(B)\}.       \label{eq:omega}
\end{align}
The two global seminorms are equivalent, but this equivalence does not
compare their small-, large-, or far-cube tails separately.  We
therefore retain both quantities.

\begin{definition}\label{def:bmo}
We write $B\in\BMO^p_{V,U}$ if
$
 \|B\|_{\BMO^p_{V,U}}:=\sup_Q\omega_Q(B)<\infty.
$
\end{definition}

We use the following operator class.  A Calder\'on--Zygmund operator
$T$ is an operator bounded on $L^2(\R^d)$ and associated, away from the
diagonal, with a scalar kernel $K$ such that
\begin{align}
 &|K(x,y)|\leq C_K|x-y|^{-d},                                      \label{eq:kernel-size}\\
 &|K(x,y)-K(x',y)|+|K(y,x)-K(y,x')|
 \leq C_K\frac{|x-x'|^\delta}{|x-y|^{d+\delta}}                  \label{eq:kernel-holder}
\end{align}
whenever $|x-x'|\leq |x-y|/2$, where $0<\delta\leq1$.  The operator
acts componentwise on vector-valued functions.  We also require the
usual off-diagonal identity
$$
 \langle Tf,g\rangle
 =\iint K(x,y)f(y)\overline{g(x)}\,dy\,dx
$$
for bounded compactly supported scalar functions with disjoint
supports.  We put
$
 C_{B,T}=[M_B,T\otimes I_m].
$

\begin{definition}
\label{def:two-sided-nondegeneracy}
The kernel $K$ is two-sided non-degenerate if there are $c_0,C_0>0$
such that, for every $x\in\R^d$ and $r>0$, there are points
$y_+,y_-\in B(x,C_0r)\setminus B(x,r)$ for which
\begin{equation}\label{eq:two-sided-nondegeneracy}
 |K(x,y_+)|\geq \frac1{c_0r^d},
 \qquad
 |K(y_-,x)|\geq \frac1{c_0r^d}.
\end{equation}
Thus both $T$ and $T^*$ satisfy the non-degeneracy condition used in
\cite{HytNondeg,LaceyLi}.
\end{definition}

The upper estimate below is the block-weight theorem of
Isralowitz--Pott--Treil \cite{IPT} for the present matrix $\mathcal A_p$
setting.  Isralowitz \cite{IsralowitzTwoMatrix} proved the reverse
estimate for Riesz transforms.  Part~I proves the reverse estimate for
one variable-kernel operator under the source-side portion
of Definition~\ref{def:two-sided-nondegeneracy}, in the same
primal--dual notation that is used for the vanishing tails.

\begin{theorem}
\label{thm:two-matrix-boundedness}
Let $1<p<\infty$, let $U,V$ be $m\times m$ matrix $\mathcal A_p$
weights on $\R^d$ and  let $B:\R^d\to\C^{m\times m}$ be locally
integrable.  Let $T$ be a Calder\'on--Zygmund operator whose kernel is
source-side non-degenerate: there are $c_0,C_0>0$ such that, for every
$x\in\mathbb R^d$ and $r>0$, there is
$y_-\in B(x,C_0r)\setminus B(x,r)$ for which
\begin{equation}\label{eq:source-side-nondegeneracy}
 |K(y_-,x)|\geq \frac1{c_0r^d}.
\end{equation}
Then
\begin{equation}\label{eq:two-matrix-bdd-equivalence}
 C_{B,T}:L^p(U;\C^m)\longrightarrow L^p(V;\C^m)
 \quad\hbox{is bounded}
\end{equation}
if and only if
\begin{equation}\label{eq:two-matrix-bdd-BMO}
 \sup_Q\omega_Q(B)
 =\max\left\{\sup_Q\alpha_Q(B),\sup_Q\beta_Q(B)\right\}<\infty.
\end{equation}
Moreover,
\begin{equation}\label{eq:two-matrix-bdd-norm}
 \|C_{B,T}\|_{L^p(U)\to L^p(V)}
 \simeq \|B\|_{\BMO^p_{V,U}}.
\end{equation}
The constants depend only on $d,m,p$, the $\mathcal A_p$
characteristics of $U$ and $V$, the Calder\'on--Zygmund constants of
$T$, the constants
in \eqref{eq:source-side-nondegeneracy} and  the fixed normalizations of
the reducing matrices.  The upper estimate does not require
non-degeneracy.
\end{theorem}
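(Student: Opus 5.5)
The upper estimate is the block-weight theorem of Isralowitz--Pott--Treil \cite{IPT}, which we take as given. It gives $\|C_{B,T}\|_{L^p(U)\to L^p(V)}\lesssim\sup_Q\alpha_Q(B)$, or an equivalent matrix BMO quantity. The matrix John--Nirenberg equivalences of \cite{IPT} show that $\sup_Q\alpha_Q(B)\simeq\sup_Q\beta_Q(B)$, so this is bounded by $\sup_Q\omega_Q(B)$. The same equivalences, applied in the other direction, mean that for the lower bound it suffices to control one normalized oscillation, namely
$$
\sup_Q\avg_Q\|\mathcal V_Q(B(x)-B_Q)\mathcal U_Q^{-1}\|_{\op}\,dx\lesssim\|C_{B,T}\|,
$$
where $\mathcal V_Q$ is the reducing matrix of $V$ at exponent $p$. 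We then recover $\sup\alpha_Q$ and $\sup\beta_Q$ from this averaged oscillation.

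For the lower bound, fix a cube $Q$ with centre $x_Q$ and side $r$. Use \eqref{eq:source-side-nondegeneracy} at $x_Q$ with radius $Ar$, for a large constant $A$, to obtain a point $y_-$. Let $P$ be the cube of side $r$ centred at $y_-$. Then $\dist(P,Q)\simeq r$ and $|K(y_-,x_Q)|\gtrsim r^{-d}$. The Hölder condition \eqref{eq:kernel-holder} gives $|K(y,x)-K(y_-,x_Q)|\lesssim A^{-\delta}(Ar)^{-d}$ on $P\times Q$. For $A$ large this error is small compared with $|K(y_-,x_Q)|$, so $K$ takes values in a fixed narrow sector of size $\simeq r^{-d}$ on $P\times Q$. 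Fix standard basis vectors $e_s,e_t$ and set $z(x)=e_t^*\mathcal V_QB(x)\mathcal U_Q^{-1}e_s$. Apply the complex median lemma to $z$ on $Q$. This gives a constant $c$ and subsets $E\subset Q$ and $F\subset P$ with $|E|,|F|\gtrsim|Q|$, where $F$ comes from medians of $z$ on $P$ (after possibly swapping roles). On the product $E\times F$, $\operatorname{Re}$ or $\operatorname{Im}$ of $e^{i\theta}(z(y)-z(x))$ has one sign. Moreover, $\avg_Q|z-c|\lesssim$ the average of $|z(y)-z(x)|$ over $E\times F$, and we aim for $\lesssim$ the real part of a single signed integral.

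We then test the commutator with $f=\one_E\,\mathcal U_Q^{-1}e_s$ and $g=\one_F\,\mathcal V_Q^*e_t$, paired against the unweighted duality. The off-diagonal formula gives
$$
\langle C_{B,T}f,g\rangle=\iint_{F\times E}K(y,x)\,(z(y)-z(x))\,dx\,dy.
$$
Since $K$ lies in a fixed sector and $z(y)-z(x)$ lies in a fixed half-plane, this quantity is $\gtrsim r^{-d}|Q|\avg_Q|z-c|$, up to the usual splitting of $E$ and $F$ into the four median quadrants. The norms satisfy
$$
\|f\|_{L^p(U)}\le\Big(\int_Q|U^{1/p}\mathcal U_Q^{-1}e_s|^p\Big)^{1/p}\lesssim|Q|^{1/p},
$$
by the definition of the reducing matrix. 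Similarly $\|g\|_{L^{p'}(V^{-p'/p})}\lesssim|Q|^{1/p'}$, because $\mathcal V_Q\simeq(\mathcal V'_Q)^{-1}$ in the $\mathcal A_p$ setting. Combining these yields $\avg_Q|z-z_Q|\lesssim\|C_{B,T}\|$. Summing over the $m^2$ pairs $(s,t)$ gives the operator-norm oscillation bound displayed above.

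The main obstacle is the median step. We must ensure simultaneously that the kernel has a fixed sign on all of $F\times E$ and that the symbol difference has a coherent sign there, with both sets of proportional measure. This forces the companion cube $P$ to be chosen before applying the median lemma to the pair $(Q,P)$. We need the version of the lemma that controls $\avg_Q|z-c|$ via sets in $Q$ and $P$ relative to a common median; this version is what the standard Lerner--Ombrosi--Rivera-Ríos-type argument provides. The remaining step is to pass from the averaged $L^1$ oscillation of $\mathcal V_Q B\mathcal U_Q^{-1}$ to $\alpha_Q$ and $\beta_Q$. This is exactly the matrix John--Nirenberg equivalence of \cite{IPT}, which we invoke as stated, with constants depending only on $d,m,p$ and the $\mathcal A_p$ characteristics.
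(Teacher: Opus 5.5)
Your proposal is essentially the paper's proof. The upper bound is taken from \cite{IPT}; the paper re-derives it through the block weight of Lemma~\ref{lem:block-weight-bdd} and the per-cube comparison of Lemma~\ref{lem:localcomparison}. The lower bound uses the same companion cube with the kernel in a narrow sector, the same scalarization $z=e_t^*\mathcal V_QB\mathcal U_Q^{-1}e_s$, the Wei--Zhang complex median, the same separated tests and \cite[Corollary~4.7]{IPT}.

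When you write it out, three points need tightening:
\begin{enumerate}
\item Take the median on the companion cube $P$, not on $Q$, and partition all of $Q$ into the sets where $z$ lies in the opposite quadrants. A median on $Q$ would only control the oscillation on $P$.
\item With that partition, $z(y)-z(x)$ lies in a closed quarter-plane and $|z(x)-c|\le|z(y)-z(x)|$. A half-plane, as you state it, controls only one real coordinate of $z(x)-c$, and that control is destroyed by multiplication by a complex kernel with phase in $[\phi_Q-\varepsilon,\phi_Q+\varepsilon]$.
\item The test $g$ is supported in $P$, not in $Q$. So $\|g\|_{L^{p'}(V^{-p'/p})}$ must be estimated by comparing reducing matrices on nearby cubes, as in Lemma~\ref{lem:local-matrix-weight-bdd}(ii); $\mathcal V_Q\simeq(\mathcal V'_Q)^{-1}$ alone does not suffice.
\end{enumerate}
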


The next section introduces the local two-matrix form.
Section~\ref{sec:boundedness-proof} proves the two estimates in
Theorem~\ref{thm:two-matrix-boundedness}.

\section{The local two-matrix form}

The pairwise quantity in the block-weight argument of
Isralowitz--Pott--Treil is
\begin{equation}
 \widehat\omega_Q(B)=
 \bigg[\avg_Q\bigg(
 \avg_Q\big\|V^{1/p}(x)(B(x)-B(y))U^{-1/p}(y)\big\|_{\op}^{p'}\,dy
 \bigg)^{p/p'}dx\bigg]^{1/p}.                       \label{eq:pairwise}
\end{equation}

\begin{lemma}\label{lem:localcomparison}
Let $U,V\in\mathcal A_p$ and  let $B$ be locally integrable.  For
every cube $Q$,
\begin{equation}
 \alpha_Q(B)+\beta_Q(B)\lesssim \widehat\omega_Q(B)
 \lesssim \alpha_Q(B)+\beta_Q(B).                    \label{eq:localcomparison}
\end{equation}
No supremum over other cubes occurs in this estimate.
\end{lemma}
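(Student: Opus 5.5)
The proof uses only the reducing-matrix identities on the fixed cube $Q$ together with the matrix $\mathcal A_p$ condition on $Q$, so no other cubes enter. Two facts are used throughout. First, by the definition of $\mathcal U_Q$, for any matrix $M$,
\[
 \Big(\avg_Q\|M\,U^{-1/p}(y)\|_{\op}^{p'}\,dy\Big)^{1/p'}
 \simeq \|M\,\mathcal U'_Q\|_{\op}.
\]
Second, by the $\mathcal A_p$ condition, $\|\mathcal U_Q\mathcal U'_Q\|_{\op}\lesssim[U]_{\mathcal A_p}^{1/p}$, and $\|\mathcal U_Q\mathcal U'_Q e\|\gtrsim|e|$ by Hölder. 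Hence $\mathcal U'_Q\simeq\mathcal U_Q^{-1}$ in the sense that $\|\mathcal U_Q\mathcal U'_Q\|_{\op}$ and $\|(\mathcal U_Q\mathcal U'_Q)^{-1}\|_{\op}$ are both bounded. The same holds for $V$: $\mathcal V_Q\simeq(\mathcal V'_Q)^{-1}$.

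\emph{Upper bound for $\widehat\omega_Q$.} Write $B(x)-B(y)=(B(x)-B_Q)-(B(y)-B_Q)$ and split $\widehat\omega_Q(B)\le I+II$.

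For $I$, the inner $y$-average is
\[
 \Big(\avg_Q\|V^{1/p}(x)(B(x)-B_Q)U^{-1/p}(y)\|_{\op}^{p'}dy\Big)^{1/p'}
 \simeq \|V^{1/p}(x)(B(x)-B_Q)\mathcal U'_Q\|_{\op}
 \lesssim \|V^{1/p}(x)(B(x)-B_Q)\mathcal U_Q^{-1}\|_{\op}.
\]
Taking the $L^p$ average in $x$ gives $I\lesssim\alpha_Q(B)$.

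For $II$, take adjoints. The outer $x$-average with exponent $p$ of $\|V^{1/p}(x)M\|_{\op}$ is comparable to $\|\mathcal V_QM\|_{\op}$, which is $\lesssim\|(\mathcal V'_Q)^{-1}M\|_{\op}$. So $II$ is bounded by the $L^{p'}(dy)$ average of $\|(\mathcal V'_Q)^{-1}(B(y)-B_Q)U^{-1/p}(y)\|_{\op}$. Taking the adjoint of this matrix gives exactly the integrand of $\beta_Q(B)$, so $II\lesssim\beta_Q(B)$.

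One step needs care: the exponents in $\widehat\omega_Q$ are nested ($p'$ inside, $p$ outside), but the $x$-dependence in $II$ factors out. Indeed $\|V^{1/p}(x)MU^{-1/p}(y)\|_{\op}$ must be averaged first in $y$ and then in $x$, and exchanging the order is not automatic. I would handle this by fixing $y$, using $\avg_Q\|V^{1/p}(x)N\|^p\,dx\simeq\|\mathcal V_QN\|^p$ for $N=(B(y)-B_Q)U^{-1/p}(y)$, and then applying Minkowski's inequality when $p\ge p'$. When $p<p'$, I would instead use the reducing-operator "norm-of-sums" trick: apply the reducing matrix for the $y$-variable first to pass to $\|V^{1/p}(x)M\mathcal U'_Q\|$-type quantities column by column. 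Since the dimension $m$ is finite, one may replace operator norms by sums over basis vectors, $\|A\|_{\op}\simeq\sum_s|Ae_s|$, which linearizes the problem. This exponent bookkeeping is the main technical obstacle.

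\emph{Lower bound.} Average out $y$: $B(x)-B_Q=\avg_Q(B(x)-B(y))\,dy$. Hence
\[
 \|V^{1/p}(x)(B(x)-B_Q)\mathcal U_Q^{-1}\|_{\op}
 \le \avg_Q\|V^{1/p}(x)(B(x)-B(y))U^{-1/p}(y)\|_{\op}\,\|U^{1/p}(y)\mathcal U_Q^{-1}\|_{\op}\,dy.
\]
By Hölder in $y$ with exponents $p',p$, and since $\avg_Q\|U^{1/p}(y)\mathcal U_Q^{-1}\|^p\,dy\simeq1$ by the definition of $\mathcal U_Q$, this gives $\alpha_Q(B)\lesssim\widehat\omega_Q(B)$ after the $L^p(dx)$ average.

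Symmetrically, write $B^*(y)-B_Q^*=\avg_Q(B^*(y)-B^*(x))\,dx$ and insert $V^{1/p}(x)V^{-1/p}(x)$. Using $\avg_Q\|V^{-1/p}(x)(\mathcal V'_Q)^{-1}\|^{p'}\,dx\simeq1$ and Hölder in $x$ with exponents $p',p$, we get
\[
 \beta_Q(B)\lesssim\Big[\avg_Q\Big(\avg_Q\|V^{1/p}(x)(B(x)-B(y))U^{-1/p}(y)\|_{\op}^{p}\,dx\Big)^{p'/p}dy\Big]^{1/p'}.
\]
This is $\widehat\omega_Q$ with the order of integration reversed. Comparing it with $\widehat\omega_Q$ is the same nested-exponent issue as above. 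I would resolve it by the same device: apply the reducing matrices and the finite basis expansion to show that both iterated forms are $\simeq\alpha_Q(B)+\beta_Q(B)$, using the upper-bound argument for the reversed order as well.
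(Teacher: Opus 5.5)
Your bound for $I$ and your proof of $\alpha_Q(B)\lesssim\widehat\omega_Q(B)$ are correct and agree with the paper. The genuine gap is in the lower bound $\beta_Q(B)\lesssim\widehat\omega_Q(B)$.

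Put $H(x,y)=V^{1/p}(x)(B(x)-B(y))U^{-1/p}(y)$. Then $\widehat\omega_Q(B)$ is the normalized mixed norm of $\|H\|_{\op}$ with $L^{p'}(dy)$ inside and $L^{p}(dx)$ outside. You apply H\"older in $x$ pointwise in $y$, and only afterwards take the $L^{p'}(dy)$ norm. So you land on the mixed norm in the opposite order. Minkowski's integral inequality bounds that reversed quantity by $\widehat\omega_Q(B)$ only when $p'\geq p$, that is, $p\leq2$. For $p>2$ it goes the wrong way.

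Your proposed remedy does not close the argument. Proving that both iterated forms are $\lesssim\alpha_Q(B)+\beta_Q(B)$ yields only the triviality $\beta_Q\lesssim\alpha_Q+\beta_Q$. Proving that they are $\simeq\alpha_Q+\beta_Q$ requires exactly the lower bound you are trying to establish.

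The missing step is to apply Minkowski \emph{before} H\"older. Start from
\[
 U^{-1/p}(y)(B^*(y)-B_Q^*)(\mathcal V'_Q)^{-1}
 =-\avg_Q H(x,y)^*\,\big[V^{-1/p}(x)(\mathcal V'_Q)^{-1}\big]\,dx .
\]
Take the $L^{p'}(dy)$ norm and move it inside the $x$-average. This gives the bound $\avg_Q\big(\avg_Q\|H(x,y)\|_{\op}^{p'}dy\big)^{1/p'}\|V^{-1/p}(x)(\mathcal V'_Q)^{-1}\|_{\op}\,dx$. Now apply H\"older in $x$ with exponent $p$ on the first factor and $p'$ on the second. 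The first factor becomes exactly $\widehat\omega_Q(B)$, and the second is $\lesssim1$ by the reducing property of $\mathcal V'_Q$. This works for every $1<p<\infty$, and it is the paper's argument with the variable names interchanged.

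A smaller point concerns $II$ in the upper bound. The nested-exponent obstacle there is not real, and your case split is unnecessary: Minkowski for $p\geq2$ and an unspecified ``norm-of-sums'' device for $p<2$. As written, the case $p<2$ is not actually proved. Use submultiplicativity instead of comparability for a fixed matrix:
$\|V^{1/p}(x)(B(y)-B_Q)U^{-1/p}(y)\|_{\op}\leq\|V^{1/p}(x)\mathcal V_Q^{-1}\|_{\op}\,\|\mathcal V_Q(B(y)-B_Q)U^{-1/p}(y)\|_{\op}$.

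The integrand is now dominated by a product $a(x)b(y)$, so in either order its mixed norm is $\|a\|_{L^p}\|b\|_{L^{p'}}$. The factor $\|a\|_{L^p}$ is $\lesssim1$ by the reducing property of $\mathcal V_Q$. The factor $\|b\|_{L^{p'}}$ is $\lesssim\beta_Q(B)$, because $\|\mathcal V_Q\mathcal V'_Q\|_{\op}\lesssim1$ and the operator norm is invariant under adjoints. The paper does the same, with $\mathcal V'_Q$ in place of $\mathcal V_Q^{-1}$.
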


\begin{proof}
Split $B(x)-B(y)$ at $B_Q$.  For the first part, factor
$$
 V^{1/p}(x)(B(x)-B_Q)U^{-1/p}(y)
 =\big[V^{1/p}(x)(B(x)-B_Q)\mathcal U_Q^{-1}\big]
  \big[\mathcal U_QU^{-1/p}(y)\big].
$$
The reducing-matrix characterization of matrix $\mathcal A_p$ gives
$$
 \bigg(\avg_Q\|\mathcal U_QU^{-1/p}(y)\|_{\op}^{p'}\,dy\bigg)^{1/p'}
 \lesssim_{m,p} [U]_{\mathcal A_p}^{1/p}.
$$
Indeed, apply the reducing property of $\mathcal U'_Q$ to the columns
of $\mathcal U_Q$ and use
$\|\mathcal U'_Q\mathcal U_Q\|_{\op}
 \lesssim [U]_{\mathcal A_p}^{1/p}$; see
\cite[Proposition~2.2 and formula~(2.6)]{CUIM}.
Thus this part is bounded by a constant times $\alpha_Q(B)$.  For the
other part, take adjoints and use
$$
 U^{-1/p}(y)(B^*(y)-B_Q^*)V^{1/p}(x)
 =\big[U^{-1/p}(y)(B^*(y)-B_Q^*)(\mathcal V'_Q)^{-1}\big]
  \big[\mathcal V'_QV^{1/p}(x)\big].
$$
The reducing property and the matrix $\mathcal A_p$ condition give
$$
 \bigg(\avg_Q\|\mathcal V'_QV^{1/p}(x)\|_{\op}^{p}\,dx\bigg)^{1/p}
 \lesssim_{m,p} [V]_{\mathcal A_p}^{1/p}.
$$
This part is bounded by a constant times $\beta_Q(B)$.

For the reverse estimate, use
\begin{align*}
 &V^{1/p}(x)(B(x)-B_Q)\mathcal U_Q^{-1}=\avg_Q V^{1/p}(x)(B(x)-B(y))U^{-1/p}(y)
       \big[U^{1/p}(y)\mathcal U_Q^{-1}\big] \,dy .
\end{align*}
The reducing property, applied to a fixed orthonormal basis, gives
\begin{equation}
 \bigg(\avg_Q
 \|U^{1/p}(y)\mathcal U_Q^{-1}\|_{\op}^{p}\,dy
 \bigg)^{1/p}\lesssim_m1.
 \label{eq:primal-reducing-basis-bound}
\end{equation}
H\"older's inequality in $y$ and
\eqref{eq:primal-reducing-basis-bound} give
$\alpha_Q(B)\lesssim \widehat\omega_Q(B)$.
Also,
\begin{align*}
 &U^{-1/p}(x)(B^*(x)-B_Q^*)(\mathcal V'_Q)^{-1}=\avg_Q U^{-1/p}(x)(B^*(x)-B^*(y))V^{1/p}(y)
       \big[V^{-1/p}(y)(\mathcal V'_Q)^{-1}\big] \,dy .
\end{align*}
The corresponding reducing estimate is
\begin{equation}
 \bigg(\avg_Q
 \|V^{-1/p}(y)(\mathcal V'_Q)^{-1}\|_{\op}^{p'}\,dy
 \bigg)^{1/p'}\lesssim_m1.
 \label{eq:dual-reducing-basis-bound}
\end{equation}
Take the $L^{p'}(Q,dx/|Q|)$ norm, apply Minkowski's inequality and 
then apply H\"older's inequality in $y$ with exponents $p$ and $p'$.
The first factor is $\widehat\omega_Q(B)$ after interchanging the names
of $x$ and $y$ and  the second is bounded by the reducing property of
$\mathcal V'_Q$.  Hence
$\beta_Q(B)\lesssim \widehat\omega_Q(B)$.
\end{proof}

For a dyadic grid $\D$, let $\BMO^p_{V,U,\D}$ denote the same space
with the supremum restricted to $Q\in\D$.

\section{Proof of the boundedness theorem}
\label{sec:boundedness-proof}

Changing the reducing matrices in the definition of
$\BMO^p_{V,U}$ changes its norm only by a constant depending on $m$ and
$p$.

For a locally integrable matrix function $A$, put
$$
 \mathscr T_A=\{f\in L_c^\infty(\mathbb R^d;\mathbb C^m):
 Af\in L^2(\mathbb R^d;\mathbb C^m)\}.
$$
The class $\mathscr T_B$ is dense in $L^p(U)$ and 
$\mathscr T_{B^*}$ is dense in
$L^{p'}(V^{-p'/p})$.  We give the argument for the first assertion.
Under the isometry $f\mapsto U^{1/p}f$, first approximate an arbitrary
unweighted $L^p$ function by bounded compactly supported functions.
Then restrict these approximants to the increasing measurable sets on
which both $\|U^{-1/p}\|_{\op}$ and $\|B\|_{\op}$ are bounded.  After
multiplication by $U^{-1/p}$, the resulting functions are bounded,
compactly supported and  their products with $B$ belong to $L^2$.
This proves density.  The second assertion follows in the same way,
using the isometry $g\mapsto V^{-1/p}g$ from
$L^{p'}(V^{-p'/p})$ to unweighted $L^{p'}$ and restricting to sets on
which $\|V^{1/p}\|_{\op}$ and $\|B^*\|_{\op}$ are bounded.

On $\mathscr T_B\times\mathscr T_{B^*}$ define the commutator form by
\begin{equation}\label{eq:commutator-form-definition}
 \Lambda_{B,T}(f,g)
 :=\langle (T\otimes I_m)f,B^*g\rangle_{L^2}
   -\langle (T\otimes I_m)(Bf),g\rangle_{L^2}.
\end{equation}
All $L^2$ and duality pairings in this paper are linear in the first
variable.
Both terms are well defined.  We say that $C_{B,T}$ is bounded from
$L^p(U)$ to $L^p(V)$ if this form has a bounded extension to
$L^p(U)\times L^{p'}(V^{-p'/p})$.  The representing operator exists
by reflexivity and duality and is unique by density.

If $f,g$ are bounded, compactly supported vector functions and the
distance between their supports is positive, then the extended form,
when it exists, agrees with
\begin{equation}\label{eq:commutator-separated-form-bdd}
 \Lambda_{B,T}(f,g)
 =\iint
 \big\langle(B(x)-B(y))K(x,y)f(y),g(x)\big\rangle
 \,dy\,dx.
\end{equation}
The integral is absolutely convergent.  Formula
\eqref{eq:commutator-separated-form-bdd} follows first for
$f\in\mathscr T_B$ and $g\in\mathscr T_{B^*}$ from the off-support
kernel representation and Fubini's theorem.  For general bounded
separated tests, truncate on the sets where $\|B\|_{\op}$ is bounded
and use dominated convergence in the double integral and in the two
weighted norms.  This also shows that the bounded extension agrees with
the usual distributional commutator wherever the latter is defined.

We divide the proof into the upper and lower estimates.

\subsection{The upper estimate}

Recall the pairwise quantity $\widehat\omega_Q(B)$ from
\eqref{eq:pairwise} and  put
\begin{equation}\label{eq:pairwise-global-bdd}
 \widehat{\mathcal O}_{V,U,p}(B)=\sup_Q\widehat\omega_Q(B).
\end{equation}
Lemma~\ref{lem:localcomparison} gives
\begin{equation}\label{eq:pairwise-from-alpha-beta}
 \widehat{\mathcal O}_{V,U,p}(B)
 \leq C_0\sup_Q\omega_Q(B),
\end{equation}
where $C_0$ depends only on $m,p$ and the $\mathcal A_p$
characteristics of $U$ and $V$.

\begin{lemma}\label{lem:block-weight-bdd}
Let $U,V\in\mathcal A_p$ and  let $B$ be locally integrable.  Suppose
that $\sup_Q\omega_Q(B)<\infty$ and
$0<\widehat{\mathcal O}_{V,U,p}(B)<\infty$ and  put $A=B/\widehat{\mathcal O}_{V,U,p}(B)$.  Define
\begin{equation}\label{eq:block-Phi-bdd}
 \Phi_A(x)=
 \begin{pmatrix}
  V^{1/p}(x)&V^{1/p}(x)A(x)\\
  0&U^{1/p}(x)
 \end{pmatrix},
 \qquad
 \mathbb W_A(x)=\big(\Phi_A(x)^*\Phi_A(x)\big)^{p/2}.
\end{equation}
Then $\mathbb W_A$ is a $2m\times2m$ matrix $\mathcal A_p$ weight and
\begin{equation}\label{eq:block-Ap-boundedness}
 [\mathbb W_A]_{\mathcal A_p}
 \leq c_p\big([U]_{\mathcal A_p}+[V]_{\mathcal A_p}+1\big).
\end{equation}
One may take $c_p=3^{\max\{1,p-1\}}$.
\end{lemma}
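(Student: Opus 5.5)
The plan is to compute $\mathbb W_A^{1/p}$ through the polar decomposition of $\Phi_A$. This reduces the matrix $\mathcal A_p$ characteristic of $\mathbb W_A$ to an explicit $2\times2$ block product, whose three nonzero blocks are controlled by $[V]_{\mathcal A_p}$, $[U]_{\mathcal A_p}$ and the pairwise quantity \eqref{eq:pairwise}, respectively.

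\emph{Step 1 (reduction to $\Phi_A$).} By definition $\mathbb W_A^{1/p}(x)=(\Phi_A(x)^*\Phi_A(x))^{1/2}=|\Phi_A(x)|$. The block matrix $\Phi_A(x)$ is upper triangular with invertible diagonal blocks, so it is invertible. Its polar decomposition $\Phi_A(x)=\mathcal Q_x|\Phi_A(x)|$ has $\mathcal Q_x$ unitary. Hence $\mathbb W_A(x)$ is positive definite and $|\mathbb W_A^{1/p}(x)e|=|\Phi_A(x)e|$ for all $e\in\C^{2m}$. Moreover
$$
 \|\mathbb W_A^{1/p}(x)\mathbb W_A^{-1/p}(y)\|_{\op}
 =\|\mathcal Q_x^*\Phi_A(x)\Phi_A(y)^{-1}\mathcal Q_y\|_{\op}
 =\|\Phi_A(x)\Phi_A(y)^{-1}\|_{\op}.
$$
For local integrability of the entries of $\mathbb W_A$, it suffices that $|\Phi_A e|^p$ is locally integrable for each $e$. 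The only nontrivial column entry is $V^{1/p}Ae'$ with $e'\in\C^m$. On a cube $Q$ we write
$$
 V^{1/p}A=V^{1/p}(A-A_Q)\mathcal U_Q^{-1}\,\mathcal U_Q+V^{1/p}A_Q.
$$
The first term lies in $L^p(Q)$ because $\alpha_Q(B)\leq\sup_Q\omega_Q(B)<\infty$. The second lies in $L^p(Q)$ because $V$ is locally integrable. This is where the hypothesis $\sup_Q\omega_Q(B)<\infty$ enters.

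\emph{Step 2 (block computation).} We have
$$
 \Phi_A(y)^{-1}=\begin{pmatrix}V^{-1/p}(y)&-A(y)U^{-1/p}(y)\\0&U^{-1/p}(y)\end{pmatrix},
$$
and therefore
$$
 \Phi_A(x)\Phi_A(y)^{-1}=
 \begin{pmatrix}
 V^{1/p}(x)V^{-1/p}(y)&V^{1/p}(x)(A(x)-A(y))U^{-1/p}(y)\\
 0&U^{1/p}(x)U^{-1/p}(y)
 \end{pmatrix}.
$$
The operator norm of this block matrix is at most the sum of the operator norms of its three nonzero blocks.

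\emph{Step 3 (averaging).} Fix $Q$. Minkowski's inequality in $L^{p'}(Q,dy/|Q|)$ bounds the inner average by the sum of three $L^{p'}$ norms. The elementary inequality $(a+b+c)^p\leq 3^{\max\{1,p-1\}}(a^p+b^p+c^p)$ then separates the outer $x$-average. The first term is bounded by $[V]_{\mathcal A_p}$ and the third by $[U]_{\mathcal A_p}$, both directly from \eqref{eq:matrix-Ap}. The middle term equals $\widehat\omega_Q(A)^p=\widehat\omega_Q(B)^p/\widehat{\mathcal O}_{V,U,p}(B)^p\leq 1$, by the normalization of $A$ and \eqref{eq:pairwise-global-bdd}. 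Taking the supremum over $Q$ gives \eqref{eq:block-Ap-boundedness} with $c_p=3^{\max\{1,p-1\}}$.

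The only real obstacle is Step 1: identifying $\|\mathbb W_A^{1/p}(x)\mathbb W_A^{-1/p}(y)\|_{\op}$ with $\|\Phi_A(x)\Phi_A(y)^{-1}\|_{\op}$ via the unitary factors, and verifying that $\mathbb W_A$ is a genuine matrix weight. Once this identification is in place, the rest is bookkeeping.
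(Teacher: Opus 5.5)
Your proposal is correct and follows essentially the same route as the paper: the $\alpha_Q$-based local integrability check, the polar decomposition to replace $\mathbb W_A^{1/p}(x)\mathbb W_A^{-1/p}(y)$ by $\Phi_A(x)\Phi_A(y)^{-1}$, the explicit block product, and the three-block triangle bound. The differences are cosmetic: you use Minkowski in $L^{p'}$ where the paper applies two elementary power inequalities, which even gives the smaller constant $3^{p-1}\le 3^{\max\{1,p-1\}}$. You also omit the paper's $\beta_Q$-based check that $\mathbb W_A^{-p'/p}$ is locally integrable, which is harmless because it follows from the finiteness of $[\mathbb W_A]_{\mathcal A_p}$.
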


\begin{proof}
We first check local integrability.  On a fixed cube $Q$, the definition
of $\alpha_Q(A)$ gives
$$
 V^{1/p}(A-A_Q)\mathcal U_Q^{-1}\in L^p(Q).
$$
Multiplication on the right by the constant matrix $\mathcal U_Q$
shows that $V^{1/p}(A-A_Q)\in L^p(Q)$.  Also
$V^{1/p}A_Q\in L^p(Q)$.  Hence $V^{1/p}A\in L^p(Q)$.  In the same way,
the definition of $\beta_Q(A)$ and passage to adjoints give
$$
 (\mathcal V'_Q)^{-1}(A-A_Q)U^{-1/p}\in L^{p'}(Q).
$$
It follows that $(A-A_Q)U^{-1/p}\in L^{p'}(Q)$, while
$A_QU^{-1/p}\in L^{p'}(Q)$.  Thus $AU^{-1/p}\in L^{p'}(Q)$.  These
two observations show that $\Phi_A\in L^p_{\rm loc}$ and
$\Phi_A^{-1}\in L^{p'}_{\rm loc}$ in operator norm.  Put
$|\Phi_A|=(\Phi_A^*\Phi_A)^{1/2}$.  Then
$\mathbb W_A=|\Phi_A|^p$ and $\mathbb W_A^{1/p}=|\Phi_A|$, so
$$
 \|\mathbb W_A\|_{\op}=\|\Phi_A\|_{\op}^p,
 \qquad
 \|\mathbb W_A^{-p'/p}\|_{\op}
 =\|\Phi_A^{-1}\|_{\op}^{p'}.
$$
Hence $\mathbb W_A$ and $\mathbb W_A^{-p'/p}$ are locally integrable.

The inverse of $\Phi_A$ is
$$
 \Phi_A(y)^{-1}
 =\begin{pmatrix}
 V^{-1/p}(y)&-A(y)U^{-1/p}(y)\\
 0&U^{-1/p}(y)
 \end{pmatrix}.
$$
Consequently,
\begin{equation}\label{eq:block-product-bdd}
 \Phi_A(x)\Phi_A(y)^{-1}
 =\begin{pmatrix}
 V^{1/p}(x)V^{-1/p}(y)&
 V^{1/p}(x)(A(x)-A(y))U^{-1/p}(y)\\
 0&U^{1/p}(x)U^{-1/p}(y)
 \end{pmatrix}.
\end{equation}
Write the polar decomposition as
$$
 \Phi_A(x)=E_A(x)\mathbb W_A^{1/p}(x),
$$
where $E_A(x)$ is unitary.  Hence
$$
 \mathbb W_A^{1/p}(x)\mathbb W_A^{-1/p}(y)
 =E_A(x)^*\Phi_A(x)\Phi_A(y)^{-1}E_A(y),
$$
and the two sides have the same operator norm.

For a block upper triangular matrix,
$$
 \left\|\begin{pmatrix}P&D\\0&S\end{pmatrix}\right\|_{\op}
 \leq \|P\|_{\op}+\|D\|_{\op}+\|S\|_{\op}.
$$
Use this estimate in \eqref{eq:block-product-bdd}.  Raise first to
$p'$, take the $y$-average, raise to $p/p'$ and  then take the
$x$-average.  The two diagonal terms are bounded by
$[V]_{\mathcal A_p}$ and $[U]_{\mathcal A_p}$.  The off-diagonal term
is $\widehat\omega_Q(A)^p$.  Since
$\sup_Q\widehat\omega_Q(A)=1$, we obtain
\eqref{eq:block-Ap-boundedness}.
The elementary inequalities
$$
 (a_1+a_2+a_3)^{p'}\leq3^{p'-1}\sum_{k=1}^3a_k^{p'},
 \qquad
 (b_1+b_2+b_3)^{p/p'}
 \leq3^{\max\{p/p'-1,0\}}\sum_{k=1}^3b_k^{p/p'}
$$
give $c_p=3^{\max\{1,p-1\}}$.
\end{proof}

\begin{proposition}\label{prop:two-matrix-upper}
Let $1<p<\infty$, let $U,V$ be matrix $\mathcal A_p$ weights, let
$B$ be locally integrable and  let $T$ be an $L^2$-bounded
Calder\'on--Zygmund operator with a scalar standard kernel.  No
non-degeneracy is assumed.  Then
\begin{equation}\label{eq:two-matrix-upper}
 \|[M_B,T\otimes I_m]\|_{L^p(U)\to L^p(V)}
 \leq C\sup_Q\omega_Q(B).
\end{equation}
\end{proposition}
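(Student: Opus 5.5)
The plan is the block-weight argument of Isralowitz--Pott--Treil, built on Lemma~\ref{lem:block-weight-bdd}. If $\widehat{\mathcal O}_{V,U,p}(B)=0$, then $B(x)-B(y)=0$ for a.e.\ $(x,y)$ in every cube. Hence $B$ is a.e.\ constant, the commutator form \eqref{eq:commutator-form-definition} vanishes identically, and there is nothing to prove. Otherwise, \eqref{eq:pairwise-from-alpha-beta} gives $0<\widehat{\mathcal O}_{V,U,p}(B)\le C_0\sup_Q\omega_Q(B)<\infty$. We then put $A=B/\widehat{\mathcal O}_{V,U,p}(B)$. By Lemma~\ref{lem:block-weight-bdd}, the block weight $\mathbb W_A$ is a $2m\times 2m$ matrix $\mathcal A_p$ weight whose characteristic is bounded by $c_p([U]_{\mathcal A_p}+[V]_{\mathcal A_p}+1)$, uniformly in $B$.

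Next we invoke the matrix-weighted boundedness of Calder\'on--Zygmund operators with scalar kernels acting componentwise. This is available, for instance, via convex-body sparse domination. It gives
$$\|(T\otimes I_{2m})F\|_{L^p(\mathbb W_A)}\le C([\mathbb W_A]_{\mathcal A_p})\|F\|_{L^p(\mathbb W_A)}.$$
Since $\|F\|_{L^p(\mathbb W_A)}^p=\int|\mathbb W_A^{1/p}F|^p=\int|\Phi_AF|^p$ by the polar decomposition $\Phi_A=E_A\mathbb W_A^{1/p}$, this says that $\Phi_A(T\otimes I_{2m})\Phi_A^{-1}$ is bounded on unweighted $L^p(\mathbb R^d;\mathbb C^{2m})$. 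We then compute the conjugated operator in block form. For $F=(0,f)^T$ we get $\Phi_A^{-1}$-type formulas, and the upper-right block of $\Phi_A(T\otimes I)\Phi_A^{-1}$ is
$$V^{1/p}\big(A\,T(U^{-1/p}\cdot)-T(AU^{-1/p}\cdot)\big)=V^{1/p}[M_A,T]U^{-1/p}.$$
Equivalently, testing with $F=(0,h)$, where $h\in\mathscr T_B$ and $\Phi_AF=(V^{1/p}Ah,U^{1/p}h)$, and pairing with $G=(g,0)$, extracts exactly $\Lambda_{A,T}(h,g)$ in \eqref{eq:commutator-form-definition} up to the diagonal terms $\langle (T\otimes I)(Ah),g\rangle$ reorganized. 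The cleanest version is to write $(T\otimes I)F$ for $F=(Ah,h)$ minus $F'=(Ah,0)$, etc.

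We then read off the bound on $\mathscr T_B\times\mathscr T_{B^*}$:
$$|\Lambda_{A,T}(h,g)|\le C\|h\|_{L^p(U)}\|g\|_{L^{p'}(V^{-p'/p})}.$$
Multiplying by $\widehat{\mathcal O}_{V,U,p}(B)$ and using \eqref{eq:pairwise-from-alpha-beta} and density yields \eqref{eq:two-matrix-upper}.

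The main obstacle is the bookkeeping in this middle step: making the block identity rigorous at the level of the form, with only $h\in\mathscr T_B$ and $g\in\mathscr T_{B^*}$. One must ensure that the test vector $(Ah,h)$ lies in $L^p(\mathbb W_A)$ and that the individual pairings make sense. Concretely, I would take $F=(0,h)$, noting that $|\Phi_AF|=|(V^{1/p}Ah,U^{1/p}h)|$. This is finite in $L^p$ on compact sets because $V^{1/p}A\in L^p_{\rm loc}$ (shown in the lemma's proof) and $h$ is bounded. I would take $G=(g,0)$ in the dual space $L^{p'}(\mathbb W_A^{-p'/p})$, whose norm is $\|(\Phi_A^{-1})^*G\|_{L^{p'}}=\|(V^{-1/p}g,-U^{-1/p}A^*g)\|_{L^{p'}}$. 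That norm is controlled by $\|g\|_{L^{p'}(V^{-p'/p})}$ only after subtracting the term coming from the $(2,1)$ entry. So a second pairing with $(g,0)$ against $((T\otimes I)(Ah),0)$-type pieces has to be arranged so that all unbounded pieces cancel, leaving exactly $\Lambda_{A,T}$. I would first try the operator identity
$$\Phi_A(T\otimes I)\Phi_A^{-1}=\begin{pmatrix}V^{1/p}TV^{-1/p}&V^{1/p}[M_A,T]U^{-1/p}\\0&U^{1/p}TU^{-1/p}\end{pmatrix},$$
verified on nice functions, and then pass to the form via truncation sets where $\|B\|_{\op}$, $\|U^{\pm1/p}\|_{\op}$ and $\|V^{\pm1/p}\|_{\op}$ are bounded, as in the density argument above.
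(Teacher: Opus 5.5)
Your overall route is the paper's. You normalize by $\widehat{\mathcal O}_{V,U,p}(B)$, apply Lemma~\ref{lem:block-weight-bdd} and the matrix-weighted Calder\'on--Zygmund bound to $\mathbb W_A$, and read off the upper-right block $V^{1/p}[M_A,T]U^{-1/p}$ of $M_{\Phi_A}(T\otimes I_{2m})M_{\Phi_A^{-1}}$. The block identity you display at the end is correct, and it is all that is needed.

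The step that fails as written is your weighted test pair $F=(0,h)$, $G=(g,0)$. Since $(T\otimes I_{2m})(0,h)=(0,(T\otimes I_m)h)$, the pairing $\langle (T\otimes I_{2m})F,G\rangle$ is identically zero. So it does not extract $\Lambda_{A,T}(h,g)$ ``up to reorganized diagonal terms.'' As you observed, neither weighted norm is controlled either.

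No cancellation scheme is needed. Take instead
\[
 F=\Phi_A^{-1}(0,U^{1/p}h)=(-Ah,h),\qquad
 G=\Phi_A^{*}(V^{-1/p}g,0)=(g,A^*g).
\]
Then $\|F\|_{L^p(\mathbb W_A)}=\|U^{1/p}h\|_{L^p}=\|h\|_{L^p(U)}$ and $\|G\|_{L^{p'}(\mathbb W_A^{-p'/p})}=\|(\Phi_A^{-1})^*G\|_{L^{p'}}=\|g\|_{L^{p'}(V^{-p'/p})}$. Moreover, $\langle (T\otimes I_{2m})F,G\rangle=\langle (T\otimes I_m)h,A^*g\rangle-\langle (T\otimes I_m)(Ah),g\rangle=\Lambda_{A,T}(h,g)$ exactly. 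This is the paper's identity $\langle P_1\mathcal T_AI_2(U^{1/p}h),V^{-1/p}g\rangle=\Lambda_{A,T}(h,g)$.

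For $h\in\mathscr T_B$ and $g\in\mathscr T_{B^*}$, the four functions $h,Ah,g,A^*g$ lie in $L^2$, so every pairing is an $L^2$ pairing. The bound then follows from the weighted estimate for $T\otimes I_{2m}$ and density. With the right test vectors, the ``main obstacle'' you describe essentially disappears.
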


\begin{proof}
Assume first that $0<\widehat{\mathcal O}_{V,U,p}(B)<\infty$ and use the notation in
Lemma~\ref{lem:block-weight-bdd}.  Multiplication by $\Phi_A$ is an
isometry from
$L^p(\mathbb W_A;\C^{2m})$ onto unweighted
$L^p(\R^d;\C^{2m})$.  Indeed,
$$
 |\Phi_A(x)F(x)|=|\mathbb W_A^{1/p}(x)F(x)|
$$
 because the polar factor is unitary.  Define the bounded operator
\begin{equation}\label{eq:block-conjugation-bdd}
 \mathcal T_A
 :=M_{\Phi_A}(T\otimes I_{2m})M_{\Phi_A^{-1}}
 \quad\hbox{on }L^p(\R^d;\C^{2m})
\end{equation}
by this isometric conjugation.  This definition does not multiply a
distribution by the measurable matrix $\Phi_A$.

Let $I_2h=(0,h)$ and let $P_1(h_1,h_2)=h_1$.  Take
$f\in\mathscr T_A$ and $g\in\mathscr T_{A^*}$.  Then
$f,Af,g,A^*g\in L^2$, so direct block multiplication, interpreted in
$L^2$ pairings, gives
\begin{equation}\label{eq:block-upper-entry-bdd}
 \big\langle P_1\mathcal T_AI_2(U^{1/p}f),V^{-1/p}g\big\rangle
 =\Lambda_{A,T}(f,g).
\end{equation}
Indeed,
$\Phi_A^{-1}I_2(U^{1/p}f)=(-Af,f)$ and  the first component after
applying $T\otimes I_{2m}$ and multiplying by $\Phi_A$ pairs with
$V^{-1/p}g$ as
$$
 \langle (T\otimes I_m)f,A^*g\rangle_{L^2}
 -\langle (T\otimes I_m)(Af),g\rangle_{L^2}.
$$
The density of $\mathscr T_A$ and $\mathscr T_{A^*}$ shows that the
upper-right block represents the unique bounded extension of the
commutator form.  This is also the block argument of
\cite[Lemma~1.3]{IPT}.  Thus
\begin{align}
 \|[M_A,T\otimes I_m]\|_{L^p(U)\to L^p(V)}
 &\leq
 \|T\otimes I_{2m}\|_{L^p(\mathbb W_A)\to L^p(\mathbb W_A)}.
                                                        \label{eq:block-upper-bdd}
\end{align}
The matrix-weighted Calder\'on--Zygmund theorem gives
\begin{equation}\label{eq:matrix-CZO-bound-bdd}
 \|T\otimes I_n\|_{L^p(W)\to L^p(W)}
 \leq C_{T,d,n,p}[W]_{\mathcal A_p}^{
  1+\frac1{p-1}-\frac1p};
\end{equation}
see \cite[Corollary~1.16]{CUIM}.  Lemma~\ref{lem:block-weight-bdd} and
\eqref{eq:block-upper-bdd} therefore give
\begin{equation}\label{eq:pairwise-upper-bdd}
 \|[M_B,T\otimes I_m]\|_{L^p(U)\to L^p(V)}
 \leq C_{T,d,m,p,[U]_{\mathcal A_p},[V]_{\mathcal A_p}}
 \widehat{\mathcal O}_{V,U,p}(B).
\end{equation}
Now apply \eqref{eq:pairwise-from-alpha-beta}.

If $\widehat{\mathcal O}_{V,U,p}(B)=0$, then $B(x)=B(y)$ for almost every
$(x,y)\in Q\times Q$ for each cube $Q$ with rational center and rational
side length.  This countable family is connected by overlapping cubes
and covers $\R^d$.  Thus $B$ is a constant matrix modulo one null set,
and the commutator is zero.  If
$\sup_Q\omega_Q(B)<\infty$, then \eqref{eq:pairwise-from-alpha-beta}
rules out $\widehat{\mathcal O}_{V,U,p}(B)=\infty$.  This proves
\eqref{eq:two-matrix-upper}.
\end{proof}

\subsection{The lower estimate by complex medians}
\label{subsec:complex-median-bdd-lower}

We record two consequences of the matrix $\mathcal A_q$ condition.
They will be used only for cubes whose side lengths and mutual
distances are comparable.

\begin{lemma}
\label{lem:local-matrix-weight-bdd}
Let $1<q<\infty$.  Let $W$ be an $m\times m$ matrix
$\mathcal A_q$ weight and  let $\mathcal W_Q$ be an exponent-$q$
reducing matrix for $W$ on $Q$.

\noindent (i) For every measurable matrix function $F$ on $Q$ for which the
right side below is finite,
\begin{equation}\label{eq:weighted-average-center-bdd}
 \bigg(\avg_Q
 \|W^{1/q}(x)\avg_QF(y)\,dy\|_{\op}^{q}\,dx
 \bigg)^{1/q}
 \leq C_W
 \bigg(\avg_Q\|W^{1/q}(x)F(x)\|_{\op}^{q}\,dx\bigg)^{1/q}.
\end{equation}

\noindent  (ii) Suppose that $Q_1,Q_2$ have the same side length and that a cube
$S$ contains $Q_1\cup Q_2$ with $|S|\leq A|Q_1|$.  Then
\begin{equation}\label{eq:companion-reducing-bdd}
 \bigg(\avg_{Q_2}
 |W^{1/q}(x)\mathcal W_{Q_1}^{-1}e|^q\,dx\bigg)^{1/q}
 \leq C_{W,A}|e|,
 \qquad e\in\C^m.
\end{equation}
The constants depend only on $m,q,[W]_{\mathcal A_q}$ and , in
\eqref{eq:companion-reducing-bdd}, on $A$.
\end{lemma}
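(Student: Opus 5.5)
For (i) I will reduce to the reducing matrix and use the matrix $\mathcal A_q$ condition. Put $M=\avg_QF(y)\,dy$. The left side of \eqref{eq:weighted-average-center-bdd} is comparable, uniformly in $m$, to $\|\mathcal W_QM\|_{\op}$: applying the reducing property to the columns $Me_s$ gives
$$\Big(\avg_Q\|W^{1/q}(x)M\|_{\op}^q\,dx\Big)^{1/q}\simeq_m \max_s|\mathcal W_QMe_s|\simeq_m\|\mathcal W_QM\|_{\op}.$$
Next I will write
$$\mathcal W_QM=\avg_Q\mathcal W_QW^{-1/q}(y)\,W^{1/q}(y)F(y)\,dy.$$
Bringing the norm inside and applying H\"older with exponents $q',q$ gives
$$\|\mathcal W_QM\|_{\op}\le\Big(\avg_Q\|\mathcal W_QW^{-1/q}(y)\|_{\op}^{q'}dy\Big)^{1/q'}\Big(\avg_Q\|W^{1/q}F\|_{\op}^q\Big)^{1/q}.$$
The first factor is $\lesssim_{m,q}[W]_{\mathcal A_q}^{1/q}$. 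This is exactly the estimate already quoted in the proof of Lemma~\ref{lem:localcomparison} from \cite[Proposition~2.2, (2.6)]{CUIM}: one uses $\|\mathcal W'_Q\mathcal W_Q\|_{\op}\lesssim[W]_{\mathcal A_q}^{1/q}$ together with the reducing property of $\mathcal W'_Q$ applied to the columns of $\mathcal W_Q$. This proves (i), with $C_W\lesssim [W]_{\mathcal A_q}^{1/q}$.

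For (ii) I will pass through the enclosing cube $S$. Since $Q_2\subset S$ and $|S|\le A|Q_2|$,
$$\Big(\avg_{Q_2}|W^{1/q}\mathcal W_{Q_1}^{-1}e|^q\Big)^{1/q}\le A^{1/q}\Big(\avg_S|W^{1/q}\mathcal W_{Q_1}^{-1}e|^q\Big)^{1/q}\simeq A^{1/q}|\mathcal W_S\mathcal W_{Q_1}^{-1}e|.$$
It therefore suffices to prove $\|\mathcal W_S\mathcal W_{Q_1}^{-1}\|_{\op}\le C_{W,A}$, which is a comparison of reducing matrices on nested cubes of comparable size. I will factor
$$\mathcal W_S\mathcal W_{Q_1}^{-1}=(\mathcal W_S\mathcal W'_S)\,\big((\mathcal W'_S)^{-1}\mathcal W'_{Q_1}\big)\,\big((\mathcal W'_{Q_1})^{-1}\mathcal W_{Q_1}^{-1}\big).$$
The three factors are handled as follows.
\begin{itemize}
\item The first factor has norm $\lesssim[W]^{1/q}_{\mathcal A_q}$, by the same $\mathcal A_q$ product bound as in (i).
\item For the third factor, the general inequality $|\mathcal W'_Q\mathcal W_Q e|\gtrsim|e|$ holds by H\"older applied to $|e|^2=\avg_Q\langle W^{1/q}e,W^{-1/q}e\rangle$ together with the reducing properties. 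Hence $\|(\mathcal W_{Q_1}\mathcal W'_{Q_1})^{-1}\|_{\op}\lesssim_m1$.
\item For the middle factor, note that $\|(\mathcal W'_S)^{-1}\mathcal W'_{Q_1}\|_{\op}=\|\mathcal W'_{Q_1}(\mathcal W'_S)^{-1}\|_{\op}$ because both matrices are self-adjoint. By the reducing property on $Q_1\subset S$,
$$|\mathcal W'_{Q_1}v|\simeq\Big(\avg_{Q_1}|W^{-1/q}v|^{q'}\Big)^{1/q'}\le A^{1/q'}\Big(\avg_S|W^{-1/q}v|^{q'}\Big)^{1/q'}\simeq A^{1/q'}|\mathcal W'_Sv|.$$
Taking $v=(\mathcal W'_S)^{-1}e$ bounds the middle factor by $\lesssim A^{1/q'}$.
\end{itemize}
Multiplying the three bounds gives (ii).

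The only point needing care is the lower bound $|\mathcal W'_Q\mathcal W_Qe|\gtrsim|e|$ used for the third factor. I will write $|e|^2$ as the average above and apply H\"older to get
$$|e|^2\le\Big(\avg|W^{1/q}e|^q\Big)^{1/q}\Big(\avg|W^{-1/q}e|^{q'}\Big)^{1/q'}\simeq|\mathcal W_Qe|\,|\mathcal W'_Qe|.$$
By itself this controls a product rather than the composition. To reach the composition, I will apply the same identity with $e$ replaced by $\mathcal W_Q\,\cdot$ in duality: the reducing property gives $|\langle e,f\rangle|\lesssim|\mathcal W_Qe|\,|\mathcal W'_Qf|$. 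Taking $e=\mathcal W_Q^{-1}u$ yields $|\langle\mathcal W_Q^{-1}u,f\rangle|\lesssim|u|\,|\mathcal W'_Qf|$, so $\|(\mathcal W'_Q)^{-1}\mathcal W_Q^{-1}\|_{\op}\lesssim1$ by self-adjointness. All constants depend only on $m$, $q$, $[W]_{\mathcal A_q}$ and, in (ii), on $A$, as required.
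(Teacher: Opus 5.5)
Your proof is correct, but it takes a different route from the paper. The paper never goes through products of reducing matrices. It proves one averaging-operator bound, $\|\one_S\avg_S f\|_{L^q(W)}\le[W]_{\mathcal A_q}^{1/q}\|\one_S f\|_{L^q(W)}$, directly from the integral form \eqref{eq:matrix-Ap} of the characteristic: H\"older in $y$ for each fixed $x\in S$, then integrate in $x$. It gets (i) by applying this bound to the columns of $F$. It gets (ii) by testing the bound on $f=\one_{Q_1}\mathcal W_{Q_1}^{-1}e$, since $\mathsf A_Sf=\frac{|Q_1|}{|S|}\one_S\mathcal W_{Q_1}^{-1}e$. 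The reducing property is used only once, on $Q_1$, at the very end.

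Your argument instead needs three ingredients:
\begin{enumerate}
\item the cited product bound $\|\mathcal W'_Q\mathcal W_Q\|_{\op}\lesssim[W]_{\mathcal A_q}^{1/q}$, used in both parts;
\item the reverse bound $\|(\mathcal W'_Q)^{-1}\mathcal W_Q^{-1}\|_{\op}\lesssim1$;
\item monotonicity of the dual reducing matrices for comparable nested cubes.
\end{enumerate}
All three steps are valid. Your constant in (ii), $A^{1/q}\cdot[W]_{\mathcal A_q}^{1/q}\cdot A^{1/q'}=A[W]_{\mathcal A_q}^{1/q}$, agrees with the paper's up to factors depending on $m$ and $q$.

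Your route gives, as a byproduct, the explicit comparison $\|\mathcal W_S\mathcal W_{Q_1}^{-1}\|_{\op}\lesssim_A1$. The paper derives this separately, again from the averaging bound, in \eqref{eq:comparable-reducing-weak-null}. The paper's route is shorter and does not depend on the external reducing-matrix characterization of $\mathcal A_q$.

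One presentational point concerns your third bullet. H\"older applied to $|e|^2=\avg_Q\langle W^{1/q}e,W^{-1/q}e\rangle$ only gives $|\mathcal W_Qe|\,|\mathcal W'_Qe|\gtrsim|e|^2$, which does not control the composition, as you note yourself. The actual proof is the bilinear estimate $|\langle e,f\rangle|\lesssim|\mathcal W_Qe|\,|\mathcal W'_Qf|$, followed by the substitutions $e=\mathcal W_Q^{-1}u$ and $f=(\mathcal W'_Q)^{-1}g$. Cite that in the bullet instead of the first justification.
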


\begin{proof}
For a cube $S$, define
$$
 \mathsf A_Sf=\one_S\avg_S f.
$$
For $x\in S$, H\"older's inequality gives
\begin{align*}
 \left|W^{1/q}(x)\avg_Sf(y)\,dy\right|
 &\leq
 \bigg(\avg_S
  \|W^{1/q}(x)W^{-1/q}(y)\|_{\op}^{q'}\,dy\bigg)^{1/q'}
 \bigg(\avg_S|W^{1/q}(y)f(y)|^q\,dy\bigg)^{1/q}.
\end{align*}
Raise this inequality to the power $q$ and integrate over $S$.  By
the definition of $[W]_{\mathcal A_q}$,
\begin{equation}\label{eq:weighted-averaging-operator-bdd}
 \|\mathsf A_Sf\|_{L^q(W)}
 \leq [W]_{\mathcal A_q}^{1/q}
 \|\one_Sf\|_{L^q(W)}.
\end{equation}
Apply this estimate to the columns of $F$.  The inequalities
$\|T\|_{\op}\leq\sum_{s=1}^m|Te_s|$ and
$|Te_s|\leq\|T\|_{\op}$ give
\eqref{eq:weighted-average-center-bdd}, with a factor at most $m$.

For (ii), put $a=\mathcal W_{Q_1}^{-1}e$.  Since
$$
 \mathsf A_S(\one_{Q_1}a)
 =\frac{|Q_1|}{|S|}\one_Sa,
$$
formula \eqref{eq:weighted-averaging-operator-bdd} gives
$$
 \frac{|Q_1|}{|S|}
 \|\one_{Q_2}a\|_{L^q(W)}
 \leq [W]_{\mathcal A_q}^{1/q}
 \|\one_{Q_1}a\|_{L^q(W)}.
$$
Now use $|Q_1|=|Q_2|$, $|S|\leq A|Q_1|$ and  the reducing property
on $Q_1$.  This proves \eqref{eq:companion-reducing-bdd}.
\end{proof}

The next estimate controls an average over a companion cube.

\begin{lemma}
\label{lem:change-center-bdd}
Let $U,V\in\mathcal A_p$.  For every cube $Q$ and every constant
matrix $A$,
\begin{equation}\label{eq:change-center-bdd}
 \alpha_Q(B)
 \leq C
 \bigg(\avg_Q
 \|V^{1/p}(x)(B(x)-A)\mathcal U_Q^{-1}\|_{\op}^{p}\,dx
 \bigg)^{1/p}.
\end{equation}
The constant depends only on $m,p,[V]_{\mathcal A_p}$.
\end{lemma}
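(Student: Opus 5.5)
The plan is to write $B(x)-B_Q=(B(x)-A)-(B_Q-A)$ and note that $B_Q-A=\avg_Q(B(y)-A)\,dy$. By the triangle inequality in $L^p(Q,dx/|Q|)$ with the operator norm,
$$
 \alpha_Q(B)\leq
 \bigg(\avg_Q\|V^{1/p}(x)(B(x)-A)\mathcal U_Q^{-1}\|_{\op}^p\,dx\bigg)^{1/p}
 +\bigg(\avg_Q\Big\|V^{1/p}(x)\avg_Q(B(y)-A)\mathcal U_Q^{-1}\,dy\Big\|_{\op}^p\,dx\bigg)^{1/p}.
$$
The first term is already the right side of \eqref{eq:change-center-bdd}. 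Only the second term needs work.

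For the second term, I will put $F(y)=(B(y)-A)\mathcal U_Q^{-1}$. Since $\mathcal U_Q^{-1}$ is constant, we have $\avg_Q(B(y)-A)\,dy\,\mathcal U_Q^{-1}=\avg_QF(y)\,dy$. Lemma~\ref{lem:local-matrix-weight-bdd}(i) applies with $W=V$ and $q=p$. It bounds the second term by $C_V$ times $\big(\avg_Q\|V^{1/p}(x)F(x)\|_{\op}^p\,dx\big)^{1/p}$, which is again the right side. This gives \eqref{eq:change-center-bdd} with $C=1+C_V$, where $C_V$ depends only on $m,p,[V]_{\mathcal A_p}$ (in fact $C_V\le m[V]_{\mathcal A_p}^{1/p}$).

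The only point needing care is the finiteness hypothesis in Lemma~\ref{lem:local-matrix-weight-bdd}(i). If the right side of \eqref{eq:change-center-bdd} is infinite, there is nothing to prove. Otherwise the lemma applies directly. I also need that $\avg_QF$ makes sense. This follows because $B$ is locally integrable and $\mathcal U_Q^{-1}$ is a fixed matrix. I expect no real obstacle: the averaging-operator bound \eqref{eq:weighted-averaging-operator-bdd}, applied column by column, does all the work.
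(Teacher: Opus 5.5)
Your proposal is correct and follows the paper's own argument: set $F=(B-A)\mathcal U_Q^{-1}$, write $(B-B_Q)\mathcal U_Q^{-1}=F-\avg_QF$, and apply the triangle inequality together with Lemma~\ref{lem:local-matrix-weight-bdd}(i) for $W=V$, $q=p$. Your constant $1+m[V]_{\mathcal A_p}^{1/p}$ is consistent with the factor $m$ recorded in that lemma.
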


\begin{proof}
Put $F=(B-A)\mathcal U_Q^{-1}$.  Then
$$
 (B_Q-A)\mathcal U_Q^{-1}=\avg_QF.
$$
Apply Lemma~\ref{lem:local-matrix-weight-bdd}(i), with $q=p$ and
$W=V$, to
this identity.  The triangle inequality applied to
$$
 (B-B_Q)\mathcal U_Q^{-1}=F-\avg_QF
$$
gives \eqref{eq:change-center-bdd}.
\end{proof}

We prove the lower estimate without approximate weak factorization.
Only the source-side condition
\eqref{eq:source-side-nondegeneracy} is used.  The kernel and the
matrix symbol may be complex-valued.

We first record the form of the complex median lemma that we need.

\begin{lemma}
\label{lem:complex-median-WZ}
Let $(X,\mu)$ be a measure space, let $E\subset X$ satisfy
$0<\mu(E)<\infty$ and  let $z:E\to\mathbb C$ be measurable.  There
are a point $c\in\mathbb C$ and two perpendicular lines through $c$.
The four closed quadrants $\Gamma_1,\ldots,\Gamma_4$ determined by
these lines satisfy
\begin{equation}
 \mu\{x\in E:z(x)\in\Gamma_j\}
 \geq \frac1{16}\mu(E),
 \qquad 1\leq j\leq4.
 \label{eq:complex-median-major-sets}
\end{equation}
Opposite quadrants are indexed by $j$ and $j+2$, with indices taken
modulo $4$.
\end{lemma}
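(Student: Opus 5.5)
The plan is to build the two lines from real medians in a rotated frame and then choose the angle by a sign-change argument, so that mass is forced into both diagonal pairs of quadrants. Normalize $\mu(E)=1$. For $\theta\in\mathbb R$ put $e_\theta=e^{i\theta}$ and consider the real functions $x\mapsto\operatorname{Re}(\overline{e_\theta}z(x))$ and $x\mapsto\operatorname{Im}(\overline{e_\theta}z(x))$. Each real measurable function on a finite measure space has a median $m$: both sets $\{\cdot\geq m\}$ and $\{\cdot\leq m\}$ have measure $\geq\frac12$. Let $a(\theta)$ and $b(\theta)$ be such medians, chosen canonically, for instance the smallest median. The two lines $\{\operatorname{Re}(\overline{e_\theta}w)=a(\theta)\}$ and $\{\operatorname{Im}(\overline{e_\theta}w)=b(\theta)\}$ are perpendicular and meet at $c(\theta)=e_\theta(a(\theta)+ib(\theta))$. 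Label the closed quadrants $\Gamma_1(\theta),\dots,\Gamma_4(\theta)$ counterclockwise, starting from the one where both coordinates are $\geq$ the medians, and write $q_j(\theta)=\mu\{z\in\Gamma_j(\theta)\}$. Each closed half-plane is a union of two adjacent closed quadrants, so the median property gives
\[
 q_j(\theta)+q_{j+1}(\theta)\geq\tfrac12,\qquad j=1,\dots,4\ (\text{mod }4).
\]

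The second step is a reduction to diagonal balance. Suppose that, for some configuration, also
\[
 q_1+q_3\geq\tfrac14\qquad\text{and}\qquad q_2+q_4\geq\tfrac14 .
\]
If $q_1<\frac1{16}$, the adjacency inequalities force $q_2,q_4>\frac7{16}$. The diagonal inequality then gives $q_3>\frac3{16}$, which contradicts nothing; so $q_1<\frac1{16}$ is compatible with these inequalities, and this is the weak point. What I actually need to rule out is $q_1<\frac1{16}$ together with $q_3\geq\frac3{16}$. For that I will demand a stronger balance, namely that neither diagonal pair dominates, in the form $q_1+q_3\geq q_2+q_4-\eta$ and the reverse, for a suitable small $\eta$. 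I will then combine this with the four adjacency inequalities and $\sum_j q_j\geq1$ in a small linear-programming check. If the constant $\frac1{16}$ is not reached this way, I will instead re-apply the median construction inside the dominant pair to adjust $c$. I expect the bookkeeping here, and not the topology, to fix the constant.

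The third step produces the balanced angle. Rotating by $\pi/2$ permutes the frame: $\operatorname{Re}(\overline{e_{\theta+\pi/2}}w)=\operatorname{Im}(\overline{e_\theta}w)$ and $\operatorname{Im}(\overline{e_{\theta+\pi/2}}w)=-\operatorname{Re}(\overline{e_\theta}w)$. After choosing medians consistently (using $-a(\theta)$ up to the nonuniqueness of medians), the quadrants are relabeled cyclically, $\Gamma_j(\theta+\pi/2)=\Gamma_{j+1}(\theta)$. Hence $g(\theta)=(q_1+q_3)-(q_2+q_4)$ satisfies $g(\theta+\pi/2)=-g(\theta)$ up to boundary effects. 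Let $\theta_*=\sup\{\theta\in[0,\pi/2]:g(\theta)\geq0\}$, or use the first sign change. The main obstacle is that $g$ is not continuous, because $\mu$ may charge lines and the medians may jump. I will handle this by a compactness passage. Take $\theta_n\uparrow\theta_*$ and $\theta'_n\downarrow\theta_*$. Since $z$ is finite a.e., the centers $c(\theta)$ stay in a bounded set: by the median property, medians of $|z|$-controlled functions are bounded. So along subsequences $c(\theta_n)\to c$ and $c(\theta'_n)\to c'$. For closed quadrants, $\limsup_n\mu\{z\in\Gamma(\theta_n)\}\leq\mu\{z\in\Gamma\}$ holds for the limiting closed quadrant, by upper semicontinuity of measure on decreasing closed neighbourhoods. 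The same holds for the half-plane inequalities, which therefore pass to the limit configuration. Using the limit from the side where $g\geq0$, I get $q_1+q_3\geq q_2+q_4-o(1)$ with closed quadrants. Using the other side, and possibly the other limit center, gives the reverse inequality. If the two limit centers differ, I would take whichever satisfies the needed inequality and rerun the median normalization at $\theta_*$ alone. Closedness of the quadrants is exactly what makes these limits harmless, and is why the statement uses closed quadrants.
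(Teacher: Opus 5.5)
The paper gives no proof of this lemma; it quotes it from Wei--Zhang. Your argument therefore has to stand on its own, and it has a genuine gap exactly where you flag the ``weak point''. The proposed repairs do not close it.

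Test everything on $\mu$ consisting of two atoms of mass $\tfrac12$ at $z=-1$ and $z=-i$. At $\theta=0$ with smallest medians you get $a=b=-1$, $c=-1-i$, and $q_1=1$, $q_2=q_4=\tfrac12$, $q_3=0$. All four half-plane inequalities hold and $q_1+q_3=q_2+q_4$ \emph{exactly}, yet one closed quadrant is empty. So no linear-programming check built from the adjacency inequalities, $\sum_jq_j\geq1$ and (near) diagonal balance can yield any positive lower bound. ``Re-applying the median construction inside the dominant pair'' is not yet an argument.

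The sign-change step also fails. In this example the smallest-median rule puts both atoms in quadrant $1$ for every $\theta$, and one checks that $g(\theta)\in\{0,\tfrac12\}$ for all $\theta$. Hence $g(\theta+\pi/2)=-g(\theta)$ is false once medians are non-unique, and there is no sign change to locate.

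The compactness passage cannot transport a balance condition either. Upper semicontinuity of closed-quadrant masses preserves only inequalities of the form $\sum_{j\in J}q_j\geq\text{const}$, not the mixed-sign inequality $q_1+q_3-q_2-q_4\geq-o(1)$. Inequalities obtained at two different limiting centers cannot be merged into one configuration. Separately, rotated quadrants do not lie in a small neighbourhood of the limit quadrant far from the origin, so tightness is needed there.

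The missing idea is to regularize first, so that the only thing passed to the limit is a lower bound on a single closed quadrant. Let $\nu=\mu(E)^{-1}z_*(\mathbf 1_E\mu)$ and $\nu_\varepsilon=\nu*\gamma_\varepsilon$ with $\gamma_\varepsilon$ a Gaussian. Then $\nu_\varepsilon$ has a positive continuous density, so:
\begin{itemize}
\item the rotated medians are unique and continuous in $\theta$, lines are null, and every half-plane bounded by either axis has mass exactly $\tfrac12$;
\item this forces $q_1=q_3$, $q_2=q_4$ and $q_1+q_2=\tfrac12$;
\item now $q_1(\theta+\pi/2)=q_2(\theta)$ holds exactly, so the intermediate value theorem gives $\theta_\varepsilon$ with $q_j(\theta_\varepsilon)=\tfrac14$ for all $j$.
\end{itemize}
Tightness bounds the centers, so you may pass to a subsequence with $(c_\varepsilon,\theta_\varepsilon)\to(c,\theta)$. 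Let $\Gamma_j$ be a limiting closed quadrant, $N_\delta$ its closed $\delta$-neighbourhood, and $R>0$. Then $\Gamma_j(\varepsilon)\cap\overline{B(0,R)}\subset N_\delta$ for small $\varepsilon$, hence $\tfrac14\leq\nu_\varepsilon(N_\delta)+\nu_\varepsilon\{|w|\geq R\}$. The Portmanteau inequality for closed sets, followed by $\delta\downarrow0$ and $R\uparrow\infty$, gives $\nu(\Gamma_j)\geq\tfrac14$. This is stronger than the constant $\tfrac1{16}$ in the statement.
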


This is the complex median theorem of Wei and Zhang
\cite[Theorem~1.5]{WeiZhang}.  The quadrants in the statement are
closed.  Thus the statement also applies when the push-forward
measure $z_*(\mathbf1_E\mu)$ has atoms.  A point on a boundary may
belong to two or four quadrants.  We allow this overlap on the companion
cube and choose a measurable disjoint partition on the original cube.

We next turn a non-zero kernel value into one fixed kernel sector on a
pair of cubes.

\begin{lemma}
\label{lem:median-companion-sector}
Assume \eqref{eq:source-side-nondegeneracy}.  There are fixed constants
$A\geq4\sqrt d$, $c_A>0$ and  $0<\varepsilon<\pi/8$ with the
following property.  For every cube $Q$, there is a cube $Q^\sharp$
with $\ell(Q^\sharp)=\ell(Q)$.  There is also a number
$\phi_Q\in\mathbb R$ such that
\begin{align}
 \dist(Q,Q^\sharp)&\simeq A\ell(Q),
 \label{eq:median-companion-distance}\\
 |K(y,x)|&\geq \frac{c_A}{|Q|},
 \qquad x\in Q,\quad y\in Q^\sharp,
 \label{eq:median-kernel-lower}\\
 |\arg(e^{-i\phi_Q}K(y,x))|&\leq\varepsilon,
 \qquad x\in Q,\quad y\in Q^\sharp.
 \label{eq:median-kernel-sector}
\end{align}
There is also a cube $S_Q\supset Q\cup Q^\sharp$ such that
\begin{equation}
 |S_Q|\leq C_A|Q|.
 \label{eq:median-common-ancestor}
\end{equation}
The constants are independent of $Q$.
\end{lemma}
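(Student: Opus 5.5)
Let $Q$ have center $x_Q$ and side $\ell=\ell(Q)$. The plan is to apply the source-side non-degeneracy \eqref{eq:source-side-nondegeneracy} at $x=x_Q$ with a radius $r=A\ell$, where $A$ is large and fixed. This gives a point $y_-$ with $A\ell\le|y_--x_Q|<C_0A\ell$ and $|K(y_-,x_Q)|\ge 1/(c_0(A\ell)^d)$. Take $Q^\sharp$ to be the cube of side $\ell$ centered at $y_-$. Since $A\ge4\sqrt d$, the distance from $Q$ to $Q^\sharp$ lies between $A\ell-\sqrt d\,\ell\ge\tfrac34A\ell$ and $C_0A\ell$, which proves \eqref{eq:median-companion-distance}. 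For $S_Q$ we take the cube centered at $x_Q$ of side $2(C_0A+1)\ell$. It contains $Q\cup Q^\sharp$, and $|S_Q|\le C_A|Q|$ with $C_A=(2C_0A+2)^d$. This proves \eqref{eq:median-common-ancestor}.

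The main step is to show that the kernel stays close to the single value $K(y_-,x_Q)$ on all of $Q^\sharp\times Q$. Take $x\in Q$ and $y\in Q^\sharp$. Then $|x-x_Q|\le\sqrt d\,\ell/2$ and $|y-y_-|\le\sqrt d\,\ell/2$, while $|y_--x_Q|\ge A\ell\ge 4\sqrt d\,\ell$. So each of these displacements is at most half the relevant distance. We change one variable at a time, using the two halves of \eqref{eq:kernel-holder}. Moving $x_Q$ to $x$ in the second variable costs at most $C_K(\sqrt d\ell)^\delta/(A\ell)^{d+\delta}$. After that, $|y_--x|\ge \tfrac78A\ell$, and moving $y_-$ to $y$ in the first variable costs a bound of the same form. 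Altogether
$$
|K(y,x)-K(y_-,x_Q)|\le C\,C_K\,d^{\delta/2}A^{-\delta}(A\ell)^{-d}.
$$
The reference value has size at least $c_0^{-1}(A\ell)^{-d}$. Hence the relative error is at most $C' c_0 C_K A^{-\delta}$.

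We now fix $A$, depending only on $d,\delta,C_K,c_0$, so large that this relative error is at most $\eta:=\min\{1/2,\sin(\pi/16)\}$. Then $|K(y,x)|\ge(1-\eta)|K(y_-,x_Q)|\ge \tfrac12 c_0^{-1}A^{-d}|Q|^{-1}$, which gives \eqref{eq:median-kernel-lower} with $c_A=(2c_0A^d)^{-1}$. Put $\phi_Q=\arg K(y_-,x_Q)$. The quotient $K(y,x)/K(y_-,x_Q)$ lies in the disc of radius $\eta<1$ about $1$, so its argument is at most $\arcsin\eta\le\pi/16$. This gives \eqref{eq:median-kernel-sector} with $\varepsilon=\pi/16<\pi/8$. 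All constants are independent of $Q$.

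The one point that needs care is the order of the choices. The lower bound from non-degeneracy decays like $A^{-d}$, but the Hölder error decays like $A^{-d-\delta}$. So the ratio improves as $A$ grows, and choosing $A$ large is consistent with the lower bound. We also have to check that the hypothesis $|x-x'|\le|x-y|/2$ of \eqref{eq:kernel-holder} holds at both steps of the two-step perturbation; this follows from $A\ge4\sqrt d$.
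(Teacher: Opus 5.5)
Your proposal is correct and matches the paper's proof: apply the source-side non-degeneracy at the center of $Q$ with radius $A\ell(Q)$, and center $Q^\sharp$ at the resulting point. Then bound the kernel on $Q^\sharp\times Q$ by a two-step H\"older perturbation with relative error $O(c_0C_KA^{-\delta})$, and fix $A$ large. The only differences are cosmetic: you move the two variables in the opposite order, and you use the sharper bound $\arcsin\eta$ where the paper uses $\arctan(\eta/(1-\eta))$.
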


\begin{proof}
Let $x_Q$ be the center of $Q$ and put $r=A\ell(Q)$.  By
\eqref{eq:source-side-nondegeneracy}, there is a point $y_Q$ such that
\begin{equation}
 A\ell(Q)<|y_Q-x_Q|\leq C_0A\ell(Q),
 \qquad
 |K(y_Q,x_Q)|\geq c_0^{-1}A^{-d}|Q|^{-1}.
 \label{eq:median-center-value}
\end{equation}
Let $Q^\sharp$ be the cube centered at $y_Q$ with the same side
length and orientation as $Q$.  If $A$ is large enough, the two
regularity estimates for $K$ give
\begin{align*}
 |K(y,x)-K(y_Q,x_Q)|
 &\leq |K(y,x)-K(y,x_Q)|
      +|K(y,x_Q)-K(y_Q,x_Q)|\\
 &\leq C_K\frac{|x-x_Q|^\delta}{|y-x|^{d+\delta}}
      +C_K\frac{|y-y_Q|^\delta}{|y-x_Q|^{d+\delta}}\\
 &\leq C_{d,\delta}C_K A^{-d-\delta}|Q|^{-1}
\end{align*}
for $x\in Q$ and $y\in Q^\sharp$.  In the first difference the
second kernel variable changes.  In the second difference the first
kernel variable changes.  The restriction in the H\"older estimate
holds because $A\geq4\sqrt d$.

Write $K(y_Q,x_Q)=e^{i\phi_Q}\kappa_Q$, where $\kappa_Q>0$.  It
follows from \eqref{eq:median-center-value} that
\begin{equation}
 \left|e^{-i\phi_Q}K(y,x)-\kappa_Q\right|
 \leq \eta_A\kappa_Q,
 \qquad
 \eta_A\leq C_{d,\delta}C_Kc_0A^{-\delta}.
 \label{eq:median-relative-kernel-error}
\end{equation}
Fix $A$ so large that $\eta_A<1/8$ and
$$
 \arctan\frac{\eta_A}{1-\eta_A}<\frac\pi8.
$$
Then \eqref{eq:median-kernel-lower} holds with
$c_A=(1-\eta_A)c_0^{-1}A^{-d}$ and 
\eqref{eq:median-kernel-sector} holds with
$\varepsilon=\arctan(\eta_A/(1-\eta_A))$.  The distance estimate
follows from the choice of $y_Q$.  A cube with side length at most
$C(C_0A+1)\ell(Q)$ contains both cubes.  This gives
\eqref{eq:median-common-ancestor}.
\end{proof}

We now prove the lower estimate.

\begin{proposition}
\label{lem:general-czo-global-lower}
Let $U,V\in\mathcal A_p$, let $B$ be locally integrable and  suppose
that
$
 C_{B,T}:L^p(U;\mathbb C^m)\longrightarrow L^p(V;\mathbb C^m)
$
is bounded.  If $K$ satisfies
\eqref{eq:source-side-nondegeneracy}, then
\begin{equation}
 \sup_Q\avg_Q
 \|\mathcal V_Q(B(x)-B_Q)\mathcal U_Q^{-1}\|_{\op}\,dx
 \lesssim
 \|C_{B,T}\|_{L^p(U)\to L^p(V)}.
 \label{eq:weak-reducing-lower}
\end{equation}
Consequently,
\begin{equation}
 \sup_Q\alpha_Q(B)+\sup_Q\beta_Q(B)
 \lesssim
 \|C_{B,T}\|_{L^p(U)\to L^p(V)}.
 \label{eq:alpha-beta-general-lower}
\end{equation}
The constants have the dependence stated in
Theorem~\ref{thm:two-matrix-boundedness}.
\end{proposition}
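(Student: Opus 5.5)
We follow the strategy announced in the introduction: fix a cube $Q$ and standard basis vectors $e_s,e_t$, and put
$$
 z(x)=z_{st,Q}(x)=e_t^*\mathcal V_QB(x)\mathcal U_Q^{-1}e_s .
$$
Here $\mathcal V_Q$ is the exponent-$p$ reducing matrix of $V$. Let $Q^\sharp$, $\phi_Q$ and $S_Q$ be given by Lemma~\ref{lem:median-companion-sector}, and apply Lemma~\ref{lem:complex-median-WZ} to $z$ on the companion cube $Q^\sharp$. This gives a center $c$ and closed quadrants $\Gamma_1,\dots,\Gamma_4$, each of which captures at least $|Q^\sharp|/16$ of $Q^\sharp$. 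On $Q$ we choose a measurable disjoint partition $Q=\bigcup_j E_j$ with $z(E_j)\subset\Gamma_j$.

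For $x\in E_j$ and $y\in F_{j+2}:=\{y\in Q^\sharp: z(y)\in\Gamma_{j+2}\}$, the numbers $z(x)-z(y)$ lie in a fixed closed quadrant (a cone of aperture $\pi/2$) determined by $j$. The phases of $e^{-i\phi_Q}K(y,x)$ lie within $\varepsilon<\pi/8$ of $0$. Hence, after multiplying by a fixed unimodular $\theta_j$,
$$
 \mathrm{Re}\big(\theta_j e^{-i\phi_Q}(z(x)-z(y))K(y,x)\big)\gtrsim |z(x)-z(y)|\,|K(y,x)|,
$$
because the product still lies in a cone of aperture $<\pi$ after rotation. We also have $|z(x)-z(y)|\ge \mathrm{dist}(z(x),\Gamma_{j+2})$, and this dominates $|z(x)-c|$ up to an absolute constant, since $z(x)$ lies in the opposite closed quadrant. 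Therefore
$$
 \frac{c_A}{16}\int_{E_j}|z(x)-c|\,dx\lesssim \mathrm{Re}\,\theta_je^{-i\phi_Q}\int_{E_j}\int_{F_{j+2}}(z(x)-z(y))K(y,x)\,dy\,dx .
$$

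The right-hand side is a pairing of the commutator between separated supports, via \eqref{eq:commutator-separated-form-bdd}. The variable roles are exchanged here, with $y$ as output and $x$ as input, which is why the source-side condition \eqref{eq:source-side-nondegeneracy} is the one used. Concretely,
$$
 \iint (z(x)-z(y))K(y,x)
 =-\big\langle (B(y)-B(x))K(y,x)f(x),g(y)\big\rangle
$$
integrated over $E_j\times F_{j+2}$, with
$$
 f=\one_{E_j}\mathcal U_Q^{-1}e_s,\qquad g=\one_{F_{j+2}}\mathcal V_Qe_t .
$$
Thus the pairing is at most $\|C_{B,T}\|\,\|f\|_{L^p(U)}\|g\|_{L^{p'}(V^{-p'/p})}$.

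The reducing property gives $\|f\|_{L^p(U)}\lesssim|Q|^{1/p}$. For $g$ we need
$$
 \Big(\avg_{Q^\sharp}|V^{-1/p}\mathcal V_Qe_t|^{p'}\Big)^{1/p'}\lesssim 1 .
$$
This is the main technical point. We obtain it by first using $\|\mathcal V'_{Q}\mathcal V_{Q}\|_{\op}\lesssim[V]_{\mathcal A_p}^{1/p}$, as in the proof of Lemma~\ref{lem:localcomparison}, and then applying Lemma~\ref{lem:local-matrix-weight-bdd}(ii) to $W=V^{-p'/p}\in\mathcal A_{p'}$ with $Q_1=Q$, $Q_2=Q^\sharp$ and the cube $S_Q$. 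This transfers the reducing matrix of $V^\#$ on $Q$ to $Q^\sharp$. Summing over $j$ gives
$$
 \avg_Q|z-c|\lesssim\|C_{B,T}\|.
$$
Since $\avg_Q|z-z_Q|\le2\avg_Q|z-c|$, summing over the $m^2$ pairs $(s,t)$ yields \eqref{eq:weak-reducing-lower}.

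For \eqref{eq:alpha-beta-general-lower}, note that \eqref{eq:weak-reducing-lower} is the $L^1$ reducing-matrix BMO condition. We invoke the matrix John--Nirenberg equivalences of \cite{IPT}, which state that the supremum of this quantity is comparable to $\sup_Q\alpha_Q(B)$ and to $\sup_Q\beta_Q(B)$ for $U,V\in\mathcal A_p$. These are global statements, so no per-cube comparison is needed.

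The step most likely to need care is the geometric sector argument. We must ensure that opposite closed quadrants, combined with the kernel sector of width $2\varepsilon<\pi/4$, still give a uniform positive real part, and that $|z(x)-z(y)|\gtrsim|z(x)-c|$ holds. If this bound on $|z(x)-z(y)|$ is awkward, the fallback is to keep $|z(x)-c|+|z(y)-c|$ as a lower bound, which suffices anyway.
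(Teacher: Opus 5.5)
Your proposal is correct and follows the paper's proof essentially step for step. The shared steps are:
\begin{itemize}
\item the scalarization $z_{st,Q}=e_t^*\mathcal V_QB\mathcal U_Q^{-1}e_s$;
\item the complex median taken on $Q^\sharp$, with the quadrant partition of $Q$ paired against the opposite quadrant (the paper puts $E_j\subset\{z\in\Gamma_{j+2}\}$ and pairs it with $F_j$, which is your labeling shifted by two);
\item the kernel-sector positivity;
\item the same separated test functions;
\item the bound for $g$ via $\|\mathcal V'_Q\mathcal V_Q\|_{\op}\lesssim1$ and Lemma~\ref{lem:local-matrix-weight-bdd}(ii) applied to $V^{-p'/p}$;
\item the global John--Nirenberg equivalences of \cite{IPT}.
\end{itemize}
The geometric step you flagged needs no fallback. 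For $z(x)$ in the closed quadrant opposite to $\Gamma_{j+2}$ one has exactly $\dist(z(x),\Gamma_{j+2})=|z(x)-c|$. This is the paper's inequality $|z(x)-c|\le|z(y)-z(x)|$.
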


\begin{proof}
Write
$$
 \mathcal N=\|C_{B,T}\|_{L^p(U)\to L^p(V)}.
$$
Fix a cube $Q$.  Let $Q^\sharp$, $\phi_Q$ and  $\varepsilon$ be as
in Lemma~\ref{lem:median-companion-sector}.  For
$1\leq s,t\leq m$, put
\begin{equation}
 a_s=\mathcal U_Q^{-1}e_s,
 \qquad
 c_t=\mathcal V_Qe_t,
 \qquad
 z_{st,Q}(x)=c_t^*B(x)a_s.
 \label{eq:median-scalarization}
\end{equation}
Fix $s,t$ and write $z=z_{st,Q}$.  All quadrants, sets and  test
functions introduced below depend on this pair.  We suppress this
dependence in the notation.  Apply
Lemma~\ref{lem:complex-median-WZ} to $z$ on $Q^\sharp$.  Denote the
median center by $c=c_{st,Q}$.  After one rotation, write the four quadrants as
$$
 \Gamma_j
 =c+\{re^{i\theta}:r\geq0,
       \ \theta_j\leq\theta\leq\theta_j+\pi/2\},
 \qquad
 \theta_{j+1}=\theta_j+\pi/2.
$$
Set
\begin{equation}
 F_j=\{y\in Q^\sharp:z(y)\in\Gamma_j\}.
 \label{eq:median-Fj}
\end{equation}
Then
\begin{equation}
 |F_j|\geq |Q|/16.
 \label{eq:median-Fj-size}
\end{equation}
The sets $F_j$ need not be disjoint.  This permits atoms on the
quadrant boundaries.

Choose a measurable disjoint partition
\begin{equation}
 Q=E_1\mathbin{\dot\cup}E_2\mathbin{\dot\cup}E_3
     \mathbin{\dot\cup}E_4
 \label{eq:median-Ej-partition}
\end{equation}
such that
\begin{equation}
 E_j\subset\{x\in Q:z(x)\in\Gamma_{j+2}\}.
 \label{eq:median-Ej-opposite}
\end{equation}
Such a partition is obtained by assigning each boundary point to the
first admissible index.  If $x\in E_j$ and $y\in F_j$, then
$z(y)-z(x)$ lies in the closed sector
\begin{equation}
 \{re^{i\theta}:r\geq0,
       \ \theta_j\leq\theta\leq\theta_j+\pi/2\}.
 \label{eq:median-difference-sector}
\end{equation}
Indeed, after translating by $c$ and rotating by $e^{-i\theta_j}$,
the two coordinates of $z(y)$ are nonnegative and the two coordinates
of $z(x)$ are nonpositive.  The same observation gives
\begin{equation}
 |z(x)-c|\leq |z(y)-z(x)|.
 \label{eq:median-distance-dominated}
\end{equation}

Define
\begin{equation}
 \xi_{Q,j}=e^{-i(\phi_Q+\theta_j+\pi/4)}.
 \label{eq:median-phase}
\end{equation}
Equations \eqref{eq:median-kernel-lower},
\eqref{eq:median-kernel-sector} and 
\eqref{eq:median-difference-sector} imply
\begin{align}
 \operatorname{Re}\!\left\{
   \xi_{Q,j}K(y,x)[z(y)-z(x)]\right\}&\geq
 \cos(\pi/4+\varepsilon)
 |K(y,x)|\,|z(y)-z(x)|\notag\\
 &\geq
 \frac{c_A\cos(\pi/4+\varepsilon)}{|Q|}|z(x)-c|
 \label{eq:median-positive-product}
\end{align}
for $x\in E_j$ and $y\in F_j$.  Integrating first in $y$ and using
\eqref{eq:median-Fj-size} gives
\begin{equation}
 \int_{E_j}|z(x)-c|\,dx
 \lesssim_A
 \left|\int_{F_j}\int_{E_j}
 K(y,x)[z(y)-z(x)]\,dx\,dy\right|.
 \label{eq:median-sector-integral}
\end{equation}
The phase in \eqref{eq:median-phase} has modulus one and disappears
after the absolute value is taken.

Since
$$
 \avg_Q|z-z_Q|
 \leq \avg_Q|z-c|+|z_Q-c|
 \leq2\avg_Q|z-c|,
$$
equations \eqref{eq:median-Ej-partition} and
\eqref{eq:median-sector-integral} yield
\begin{equation}
 \avg_Q|z-z_Q|
 \lesssim_A
 \sum_{j=1}^4\frac1{|Q|}
 \left|\int_{F_j}\int_{E_j}
 K(y,x)[z(y)-z(x)]\,dx\,dy\right|.
 \label{eq:median-oscillation-by-pairings}
\end{equation}

For each $j$, define the normalized vector tests
\begin{equation}
 f_{j,s}=|Q|^{-1/p}\mathbf1_{E_j}\mathcal U_Q^{-1}e_s,
 \qquad
 g_{j,t}=|Q|^{-1/p'}\mathbf1_{F_j}\mathcal V_Qe_t.
 \label{eq:median-normalized-tests}
\end{equation}
Their supports are separated.  The off-support commutator formula and
the convention that the pairing is linear in its first variable give
the identity
\begin{align}
 \langle C_{B,T}f_{j,s},g_{j,t}\rangle
 &=\frac1{|Q|}\int_{F_j}\int_{E_j}
   \big\langle(B(y)-B(x))K(y,x)a_s,c_t\big\rangle
   \,dx\,dy\notag\\
 &=\frac1{|Q|}\int_{F_j}\int_{E_j}
   K(y,x)[z_{st,Q}(y)-z_{st,Q}(x)]\,dx\,dy.
 \label{eq:median-exact-commutator-pairing}
\end{align}
This also fixes the order of the two kernel variables.  The integration
variable lies in $Q$ and  the value of the commutator is taken on
$Q^\sharp$.

The reducing property on $Q$ gives
\begin{align}
 \|f_{j,s}\|_{L^p(U)}
 &\leq |Q|^{-1/p}
 \bigg(\int_Q
 |U^{1/p}(x)\mathcal U_Q^{-1}e_s|^p\,dx\bigg)^{1/p}
 \lesssim 1.
 \label{eq:median-source-norm}
\end{align}
For the second test, put $V^\#=V^{-p'/p}$.  Then
$(V^\#)^{1/p'}=V^{-1/p}$ and  $\mathcal V'_Q$ is an
exponent-$p'$ reducing matrix for $V^\#$ on $Q$.  Apply
Lemma~\ref{lem:local-matrix-weight-bdd}(ii), with $q=p'$ and
$W=V^\#$, to the pair $Q,Q^\sharp$ and the vector
$$
 e=\mathcal V'_Q\mathcal V_Qe_t.
$$
The reducing-matrix form of the matrix $\mathcal A_p$ condition gives
\begin{equation}
 \|\mathcal V'_Q\mathcal V_Q\|_{\op}
 \lesssim_{m,p} [V]_{\mathcal A_p}^{1/p}.
 \label{eq:median-primal-dual-reducing-product}
\end{equation}
See \cite[Proposition~2.2 and formula~(2.6)]{CUIM}.  Therefore
\begin{align}
 \|g_{j,t}\|_{L^{p'}(V^\#)}
 &\leq |Q|^{-1/p'}
 \bigg(\int_{Q^\sharp}
 |V^{-1/p}(x)\mathcal V_Qe_t|^{p'}\,dx\bigg)^{1/p'}
 \lesssim_A 1.
 \label{eq:median-target-norm}
\end{align}
Here \eqref{eq:median-common-ancestor} supplies the cube $S$ required
in Lemma~\ref{lem:local-matrix-weight-bdd}(ii).

Equations \eqref{eq:median-oscillation-by-pairings}--
\eqref{eq:median-target-norm} now give
\begin{equation}
 \avg_Q|z_{st,Q}-(z_{st,Q})_Q|
 \lesssim_A \mathcal N,
 \qquad 1\leq s,t\leq m.
 \label{eq:median-scalar-entry-lower}
\end{equation}
Finally,
\begin{align*}
 \avg_Q
 \|\mathcal V_Q(B-B_Q)\mathcal U_Q^{-1}\|_{\op}\leq
 \sum_{s,t=1}^m\avg_Q
 |e_t^*\mathcal V_Q(B-B_Q)\mathcal U_Q^{-1}e_s|=
 \sum_{s,t=1}^m
 \avg_Q|z_{st,Q}-(z_{st,Q})_Q|
 \lesssim_A \mathcal N.
\end{align*}
This proves \eqref{eq:weak-reducing-lower}.

The global matrix John--Nirenberg equivalences of
Isralowitz--Pott--Treil \cite[Corollary~4.7]{IPT} give
\begin{equation}
 \sup_Q\alpha_Q(B)
 \simeq
 \sup_Q\avg_Q
 \|\mathcal V_Q(B-B_Q)\mathcal U_Q^{-1}\|_{\op}
 \simeq
 \sup_Q\beta_Q(B).
 \label{eq:median-IPT-global-equivalence}
\end{equation}
Combining \eqref{eq:weak-reducing-lower} with
\eqref{eq:median-IPT-global-equivalence} proves
\eqref{eq:alpha-beta-general-lower}.
\end{proof}
The preceding proposition is enough for boundedness.  Compactness requires
tests that retain the size and location of each cube.  We now build
such tests.  The first inequality in
\eqref{eq:two-sided-nondegeneracy} is used in the next lemma; the
second will be used after taking the adjoint.

\begin{lemma}
\label{lem:reciprocal-tests}
Let $U,V\in\mathcal A_p$ and  let $B$ be locally integrable.  Assume
that the first inequality in
\eqref{eq:two-sided-nondegeneracy} holds.
There are fixed constants $A\geq4\sqrt d$ and $C_A$ with the following
property.  For every cube $Q$, there is a cube $Q^\sharp$ of the same
side length such that
\begin{equation}\label{eq:reciprocal-companion-geometry}
 A\ell(Q)\leq\dist(Q,Q^\sharp)\leq C_A\ell(Q).
\end{equation}
Let $M\geq2$ be an integer and partition $Q$ into $M^d$ congruent cubes
$R\in\mathscr R_M(Q)$.  If $x_R$ is the center of $R$, then
\begin{equation}\label{eq:reciprocal-multiplier}
 \psi_{Q,R}(y)=
 \frac{\one_{Q^\sharp}(y)}{|Q|K(x_R,y)}
\end{equation}
is well defined and $\|\psi_{Q,R}\|_\infty\leq C_A$.  For
$1\leq s\leq m$, set
\begin{equation}\label{eq:reciprocal-test-function}
 g_{Q,R,s}(y)=|Q|^{-1/p}\psi_{Q,R}(y)
 \mathcal U_Q^{-1}e_s.
\end{equation}
Then
\begin{equation}\label{eq:reciprocal-test-bound}
 \sup_{Q,R,s}\|g_{Q,R,s}\|_{L^p(U)}\leq C_{A,U}.
\end{equation}
If $B\in\BMO^p_{V,U}$ and $S_Q$ is a cube containing
$Q\cup Q^\sharp$ with $|S_Q|\leq C_A|Q|$, then
\begin{align}
 \alpha_Q(B)
 &\leq C\sum_{s=1}^m
 \bigg(\sum_{R\in\mathscr R_M(Q)}
 \|\one_R C_{B,T}g_{Q,R,s}\|_{L^p(V)}^p\bigg)^{1/p}
 +C_A M^{-\delta}\omega_{S_Q}(B).             \label{eq:localized-reciprocal-alpha}
\end{align}
\end{lemma}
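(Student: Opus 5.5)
The plan is to build the companion cube from the target-side non-degeneracy, exactly as in Lemma~\ref{lem:median-companion-sector}, and then read off $\alpha_Q(B)$ from the pointwise value of $C_{B,T}g_{Q,R,s}$ on each subcube $R$. The reciprocal multiplier cancels the kernel at the centre $x_R$. What remains is an error of relative size $M^{-\delta}$, controlled by the oscillation on the enveloping cube $S_Q$.

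\emph{Geometry and the test bound.} Let $x_Q$ be the centre of $Q$ and $r=A\ell(Q)$. By the first inequality in \eqref{eq:two-sided-nondegeneracy} there is $y_+$ with $r<|y_+-x_Q|\le C_0r$ and $|K(x_Q,y_+)|\ge c_0^{-1}A^{-d}|Q|^{-1}$. Take $Q^\sharp$ to be the cube centred at $y_+$ with side length $\ell(Q)$; this gives \eqref{eq:reciprocal-companion-geometry}. The Hölder estimates \eqref{eq:kernel-holder}, used in both variables as in Lemma~\ref{lem:median-companion-sector}, give
$$
|K(x,y)-K(x_Q,y_+)|\le C A^{-d-\delta}|Q|^{-1},\qquad x\in Q,\ y\in Q^\sharp.
$$
Hence for $A$ large, $|K(x,y)|\ge c_A|Q|^{-1}$ on $Q\times Q^\sharp$. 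In particular $K(x_R,\cdot)$ does not vanish on $Q^\sharp$ and $\|\psi_{Q,R}\|_\infty\le C_A$.

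For \eqref{eq:reciprocal-test-bound}, observe that $\|g_{Q,R,s}\|_{L^p(U)}\le C_A\big(\avg_{Q^\sharp}|U^{1/p}\mathcal U_Q^{-1}e_s|^p\big)^{1/p}$. This is bounded by Lemma~\ref{lem:local-matrix-weight-bdd}(ii), applied with $W=U$, $q=p$, $Q_1=Q$, $Q_2=Q^\sharp$, and with $S_Q$ as the enveloping cube.

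\emph{Main term.} For $x\in R$ the supports are separated, so \eqref{eq:commutator-separated-form-bdd} gives
$$
C_{B,T}g_{Q,R,s}(x)=|Q|^{-1/p}\int_{Q^\sharp}(B(x)-B(y))\frac{K(x,y)}{|Q|K(x_R,y)}\,dy\;\mathcal U_Q^{-1}e_s.
$$
Split $K(x,y)=K(x_R,y)+[K(x,y)-K(x_R,y)]$. The first piece equals exactly $|Q|^{-1/p}(B(x)-B_{Q^\sharp})\mathcal U_Q^{-1}e_s$, because $|Q^\sharp|=|Q|$. Raise to the $p$-th power against $V$ and sum over $R\in\mathscr R_M(Q)$; this reconstitutes
$$
\Big(\avg_Q|V^{1/p}(x)(B(x)-B_{Q^\sharp})\mathcal U_Q^{-1}e_s|^p\,dx\Big)^{1/p}.
$$
Summing over $s$ bounds the operator-norm version, since $\|T\|_{\op}\le\sum_s|Te_s|$. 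Lemma~\ref{lem:change-center-bdd} with $A=B_{Q^\sharp}$ then controls $\alpha_Q(B)$ by this quantity. The main term therefore yields the first sum in \eqref{eq:localized-reciprocal-alpha}, and we use the triangle inequality in $\ell^p$ over $R$ to move the error to the other side.

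\emph{Error term (main obstacle).} For $x\in R$, $y\in Q^\sharp$, the ratio satisfies $h(x,y):=|K(x,y)-K(x_R,y)|/(|Q||K(x_R,y)|)\le C_AM^{-\delta}|Q|^{-1}$, since $|x-x_R|\lesssim\ell(Q)/M$ and $|x-y|\simeq A\ell(Q)$. Write $B(x)-B(y)=(B(x)-B_{S_Q})-(B(y)-B_{S_Q})$.

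The first part contributes at most $C_AM^{-\delta}\big(\avg_Q\|V^{1/p}(B-B_{S_Q})\mathcal U_Q^{-1}\|_{\op}^p\big)^{1/p}$. This is $\lesssim M^{-\delta}\alpha_{S_Q}(B)\,\|\mathcal U_{S_Q}\mathcal U_Q^{-1}\|_{\op}$. The last factor is bounded by covering $S_Q$ with $\lesssim C_A$ cubes of side $\ell(Q)$ and applying Lemma~\ref{lem:local-matrix-weight-bdd}(ii) to each.

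The second part is the delicate one, because the $y$-integrand carries $B(y)$ while the weight sits at $x$. Bound it by
$$
C_AM^{-\delta}\avg_{S_Q}\|V^{1/p}(x)V^{-1/p}(y)\|_{\op}\,\|V^{1/p}(y)(B(y)-B_{S_Q})\mathcal U_Q^{-1}\|_{\op}\,dy.
$$
Then apply Hölder in $y$ with exponents $p',p$, and average the $p$-th power in $x$ over $Q\subset S_Q$. This is the argument of Lemma~\ref{lem:local-matrix-weight-bdd}(i), and it produces $[V]_{\mathcal A_p}^{1/p}$ times the same quantity as in the first part. Both pieces are therefore $\le C_AM^{-\delta}\alpha_{S_Q}(B)\le C_AM^{-\delta}\omega_{S_Q}(B)$, which is finite because $B\in\BMO^p_{V,U}$. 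This proves \eqref{eq:localized-reciprocal-alpha}.
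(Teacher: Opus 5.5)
Your argument is correct, and its skeleton matches the paper's proof. You use the same companion cube from the target-side non-degeneracy and the same reciprocal multiplier. You obtain the same exact main term $|Q|^{-1/p}(B(x)-B_{Q^\sharp})\mathcal U_Q^{-1}e_s$, reassemble it over $R\in\mathscr R_M(Q)$, and apply Lemma~\ref{lem:change-center-bdd} with centre $B_{Q^\sharp}$.

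There is one cosmetic slip. Non-degeneracy at radius $A\ell(Q)$ only gives $\dist(Q,Q^\sharp)>(A-\sqrt d)\ell(Q)$. The paper applies it at radius $2A\ell(Q)$, which, since $A\geq4\sqrt d$, yields the stated lower bound $A\ell(Q)$.

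The genuine difference is in the error term. The paper inserts $U^{-1/p}(y)U^{1/p}(y)$ and applies H\"older in $y$. Using the companion-cube reducing estimate, this bounds the error by $C_AM^{-\delta}\widehat\omega_{S_Q}(B)$. It then invokes Lemma~\ref{lem:localcomparison} to pass to $\alpha_{S_Q}(B)+\beta_{S_Q}(B)$.

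You instead recentre at $B_{S_Q}$ and treat the $x$-half directly. In the other half, you transfer the weight from $x$ to $y$ with the $\mathcal A_p$ condition of $V$, which is the mechanism of Lemma~\ref{lem:local-matrix-weight-bdd}(i). You finish with $\|\mathcal U_{S_Q}\mathcal U_Q^{-1}\|_{\op}\lesssim1$, which the paper also records as \eqref{eq:comparable-reducing-weak-null}.

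The paper's route is shorter, because the pairwise quantity carries no reducing matrix and the comparison lemma is already in hand. Yours yields the slightly sharper error $C_AM^{-\delta}\alpha_{S_Q}(B)$, so the primal estimate never uses the dual oscillation $\beta$. Either version suffices for Proposition~\ref{prop:lower}, which only needs $\limsup_n\omega_{S_n}(B)\le\tau$.
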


\begin{proof}
Apply the first non-degeneracy condition at the center $x_Q$ and at
radius $2A\ell(Q)$.  The size estimate bounds the resulting distance
from above by a fixed multiple of $A\ell(Q)$.  Take $Q^\sharp$ to be
the cube centered at the resulting point.  Since the radius is
$2A\ell(Q)$ and $A\geq4\sqrt d$, the two cubes are separated by at
least $A\ell(Q)$.  First fix $A$ so large that
the H\"older variation of the kernel on $Q\times Q^\sharp$ is at most
one half of its value at the two centers.  Thus
\begin{equation}\label{eq:kernel-lower-product-cubes}
 |K(x,y)|\geq c_A|Q|^{-1},
 \qquad x\in Q,\quad y\in Q^\sharp.
\end{equation}
This proves the geometry and the uniform bound for
\eqref{eq:reciprocal-multiplier}.  The same regularity estimate gives
\begin{equation}\label{eq:kernel-ratio-error}
 \sup_{\substack{x\in R\\y\in Q^\sharp}}
 \left|\frac{K(x,y)}{K(x_R,y)}-1\right|
 \leq C_A M^{-\delta}=:\eta_M.
\end{equation}
Lemma~\ref{lem:local-matrix-weight-bdd}(ii), applied to $U$, gives
\begin{align*}
 \|g_{Q,R,s}\|_{L^p(U)}
 &\leq C_A|Q|^{-1/p}
 \bigg(\int_{Q^\sharp}
 |U^{1/p}(y)\mathcal U_Q^{-1}e_s|^p\,dy\bigg)^{1/p}
 \leq C_{A,U}.
\end{align*}

For $x\in R$, the separated-kernel identity gives
\begin{equation}\label{eq:reciprocal-identity}
 C_{B,T}g_{Q,R,s}(x)
 =|Q|^{-1/p}\avg_{Q^\sharp}
 \frac{K(x,y)}{K(x_R,y)}
 (B(x)-B(y))\mathcal U_Q^{-1}e_s\,dy.
\end{equation}
The term obtained by replacing the quotient by one is
$$
 |Q|^{-1/p}(B(x)-B_{Q^\sharp})\mathcal U_Q^{-1}e_s.
$$
Denote the remaining average by $|Q|^{-1/p}E_{Q,R,s}(x)$.  Insert
$U^{-1/p}(y)U^{1/p}(y)$, apply H\"older's inequality in $y$ and  use
\eqref{eq:kernel-ratio-error}.  For $x\in R$ this gives
\begin{align*}
 |V^{1/p}(x)E_{Q,R,s}(x)|
 &\leq\eta_M
 \bigg(\avg_{Q^\sharp}
 \|V^{1/p}(x)(B(x)-B(y))U^{-1/p}(y)\|_{\op}^{p'}dy
 \bigg)^{1/p'}
 \bigg(\avg_{Q^\sharp}
 |U^{1/p}(y)\mathcal U_Q^{-1}e_s|^pdy\bigg)^{1/p}.
\end{align*}
The last factor is bounded by the companion-cube reducing estimate.
Take the $L^p$ average in $x$ over all of $Q$, not separately over
the individual cubes $R$.  Enlarging both normalized averages to
$S_Q$ costs only a constant.  Therefore
\begin{equation}\label{eq:reciprocal-error-bound}
 \bigg(\avg_Q\sum_{R\in\mathscr R_M(Q)}
 \one_R(x)|V^{1/p}(x)E_{Q,R,s}(x)|^pdx\bigg)^{1/p}
 \leq C_A M^{-\delta}\widehat\omega_{S_Q}(B)
 \lesssim C_A M^{-\delta}\omega_{S_Q}(B).
\end{equation}
Apply Lemma~\ref{lem:change-center-bdd} with
$A=B_{Q^\sharp}$, estimate the operator norm by its values on the
standard basis and  use \eqref{eq:reciprocal-identity} and
\eqref{eq:reciprocal-error-bound}.  This is
\eqref{eq:localized-reciprocal-alpha}.
\end{proof}

\begin{proposition}\label{prop:two-matrix-lower}
Under the assumptions of Theorem~\ref{thm:two-matrix-boundedness}, if
$C_{B,T}:L^p(U)\to L^p(V)$ is bounded, then
\begin{equation}\label{eq:two-matrix-lower}
 \sup_Q\omega_Q(B)
 \leq C\|C_{B,T}\|_{L^p(U)\to L^p(V)}.
\end{equation}
\end{proposition}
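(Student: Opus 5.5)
The inequality \eqref{eq:two-matrix-lower} follows almost directly from Proposition~\ref{lem:general-czo-global-lower}. Its hypotheses are exactly those of Theorem~\ref{thm:two-matrix-boundedness}: $U,V\in\mathcal A_p$, $B$ locally integrable, $C_{B,T}$ bounded, and $K$ satisfying the source-side condition \eqref{eq:source-side-nondegeneracy}. Its conclusion \eqref{eq:alpha-beta-general-lower} bounds $\sup_Q\alpha_Q(B)+\sup_Q\beta_Q(B)$ by a constant times $\|C_{B,T}\|_{L^p(U)\to L^p(V)}$. Since $\sup_Q\omega_Q(B)=\max\{\sup_Q\alpha_Q(B),\sup_Q\beta_Q(B)\}$ is at most this sum, \eqref{eq:two-matrix-lower} follows.

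The steps, in order, are as follows.

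\emph{Step 1 (complex medians).} For each cube $Q$, use the complex median argument with the companion cube $Q^\sharp$ of Lemma~\ref{lem:median-companion-sector}. Pair the commutator with the separated tests \eqref{eq:median-normalized-tests}. This bounds the unweighted averaged oscillation
\[
 \avg_Q\|\mathcal V_Q(B-B_Q)\mathcal U_Q^{-1}\|_{\op}
\]
by $\|C_{B,T}\|_{L^p(U)\to L^p(V)}$, uniformly in $Q$.

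\emph{Step 2 (John--Nirenberg).} Invoke the matrix John--Nirenberg equivalence \eqref{eq:median-IPT-global-equivalence} of Isralowitz--Pott--Treil. It upgrades the bound from Step~1 to bounds for $\sup_Q\alpha_Q(B)$ and $\sup_Q\beta_Q(B)$.

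\emph{Step 3 (constants).} Verify that the constants depend only on the quantities listed in Theorem~\ref{thm:two-matrix-boundedness}. These are $d,m,p$, the characteristics $[U]_{\mathcal A_p}$ and $[V]_{\mathcal A_p}$ (through the reducing estimates and Lemma~\ref{lem:local-matrix-weight-bdd}(ii)), $C_K$ and $\delta$ (through the choice of $A$), and $c_0,C_0$.

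The one point to check is that the pairing identity \eqref{eq:median-exact-commutator-pairing} is legitimate for the bounded extension of the form $\Lambda_{B,T}$. This holds by the separated-support formula \eqref{eq:commutator-separated-form-bdd}, since the tests are bounded, compactly supported, and supported in $Q$ and $Q^\sharp$, which are at positive distance.

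If one prefers not to rely on the global equivalence \eqref{eq:median-IPT-global-equivalence}, the alternative is to use Lemma~\ref{lem:reciprocal-tests} directly. Each term in \eqref{eq:localized-reciprocal-alpha} is at most $m^{1/p}$ times the operator norm, up to the constant of \eqref{eq:reciprocal-test-bound}. This gives
\[
 \alpha_Q\le C\|C_{B,T}\|+C_AM^{-\delta}\sup\omega .
\]
Taking $M$ large then absorbs the error term, provided $\sup\omega<\infty$ is known a priori; that proviso is exactly the obstacle, which is why I would take the John--Nirenberg route. The analogous treatment of $\beta_Q$ would need the adjoint and the target-side condition, which Theorem~\ref{thm:two-matrix-boundedness} does not assume. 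So the main (mild) obstacle is justifying the passage from the $L^1$-type reducing oscillation to $\beta_Q$ using only source-side non-degeneracy, and the IPT equivalence is what resolves it.
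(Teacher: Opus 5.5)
Your proof is correct and matches the paper's argument: Proposition~\ref{prop:two-matrix-lower} is exactly \eqref{eq:alpha-beta-general-lower} from Proposition~\ref{lem:general-czo-global-lower}, which is proved by the complex-median step followed by the Isralowitz--Pott--Treil equivalence \eqref{eq:median-IPT-global-equivalence}, as you describe. One correction to your aside on the reciprocal-test route: Lemma~\ref{lem:reciprocal-tests} for $\alpha_Q$ needs the \emph{first} inequality in \eqref{eq:two-sided-nondegeneracy}, which is not assumed here, while the adjoint treatment of $\beta_Q$ needs $|K^*(x,y_-)|=|K(y_-,x)|$, which is precisely the assumed source-side condition \eqref{eq:source-side-nondegeneracy}; you have the two orientations reversed, though this does not affect your main argument.
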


\begin{proof}
This is \eqref{eq:alpha-beta-general-lower}.  This global lower estimate
uses only \eqref{eq:source-side-nondegeneracy}.  Both
orientations are retained in the compactness theorem because
the localized compactness argument must recover the primal and dual
tails separately.
\end{proof}

Propositions~\ref{prop:two-matrix-upper} and
\ref{prop:two-matrix-lower} prove
Theorem~\ref{thm:two-matrix-boundedness}.  The upper estimate was
already available for all Calder\'on--Zygmund operators in
\cite{IPT}.  The lower estimate shows that one source-side
non-degenerate variable kernel suffices.

\section{The compactness theorem}

\begin{definition}\label{def:vmo}
We write $B\in\VMO^p_{V,U}$ if $B\in\BMO^p_{V,U}$ and
\begin{align}
 \lim_{r\downarrow0}\ \sup_{\ell(Q)\le r}\omega_Q(B)&=0,
                                                        \label{eq:vmosmall}\\
 \lim_{R\uparrow\infty}\ \sup_{\ell(Q)\ge R}\omega_Q(B)&=0,
                                                        \label{eq:vmolarge}\\
 \lim_{R\uparrow\infty}\ \sup_{\dist(Q,0)\ge R}\omega_Q(B)&=0.
                                                        \label{eq:vmofar}
\end{align}
\end{definition}

Set
\begin{align*}
 \delta_{\rm s}(B)&=\lim_{r\downarrow0}
   \sup_{\ell(Q)\le r}\omega_Q(B),\quad
 \delta_{\rm l}(B)=\lim_{R\uparrow\infty}
   \sup_{\ell(Q)\ge R}\omega_Q(B),\quad
 \delta_{\rm f}(B)=\lim_{R\uparrow\infty}
   \sup_{\dist(Q,0)\ge R}\omega_Q(B),\\
 \mathcal T_{V,U,p}(B)&=\max\{\delta_{\rm s}(B),\delta_{\rm l}(B),
 \delta_{\rm f}(B)\}.
\end{align*}

\begin{theorem}
\label{thm:main}
Let $1<p<\infty$, let $U,V$ be matrix $\mathcal A_p$ weights, let
$B:\R^d\to\C^{m\times m}$ be locally integrable and  let $T$ be an
$L^2$-bounded Calder\'on--Zygmund operator with a $\delta$-standard
scalar kernel.  Assume that the kernel is two-sided non-degenerate in
the sense of Definition~\ref{def:two-sided-nondegeneracy}.  Then
$$
 C_{B,T}:L^p(U;\C^m)\longrightarrow L^p(V;\C^m)
 \quad\text{is compact}
$$
if and only if $B\in\VMO^p_{V,U}$.  Moreover,
\begin{equation}
 \|C_{B,T}\|_{\ess} \simeq \mathcal T_{V,U,p}(B),          \label{eq:essential}
\end{equation}
whenever $B\in\BMO^p_{V,U}$.  Here $\|\cdot\|_{\ess}$ denotes distance
to the compact operators from $L^p(U;\C^m)$ to $L^p(V;\C^m)$.
The upper essential-norm estimate and  hence the implication
$B\in\VMO^p_{V,U}\Rightarrow C_{B,T}$ compact, does not use
non-degeneracy.
\end{theorem}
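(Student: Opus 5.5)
The proof has two halves: an upper essential-norm estimate valid for every operator in the class, and a lower estimate that uses two-sided non-degeneracy. The compactness characterization then follows. If $B\in\VMO^p_{V,U}$, then $\mathcal T_{V,U,p}(B)=0$, so the essential norm vanishes and the commutator is compact. Conversely, compactness forces boundedness, hence $B\in\BMO^p_{V,U}$ by Proposition~\ref{prop:two-matrix-lower}, and the lower essential-norm bound then forces $\mathcal T_{V,U,p}(B)=0$.

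\textbf{Lower bound.} Take a sequence of cubes $Q_k$ realizing one of the tails: $\ell(Q_k)\to0$, $\ell(Q_k)\to\infty$, or $\dist(Q_k,0)\to\infty$, with $\alpha_{Q_k}(B)$ or $\beta_{Q_k}(B)$ tending to the corresponding $\delta$. For the $\alpha$ part, fix $M$ and use the reciprocal tests $g_{Q_k,R,s}$ of Lemma~\ref{lem:reciprocal-tests}. Since $S_{Q_k}$ lies in the same regime, $\omega_{S_{Q_k}}(B)$ is bounded by $\|B\|_{\BMO}$, so the error term $C_AM^{-\delta}\omega_{S_{Q_k}}(B)$ is harmless once $M$ is large. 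For the main term, pass to a subsequence so that the supports $Q_k^\sharp$ are essentially disjoint (in the tail regimes this is automatic) and the normalized tests tend weakly to zero in $L^p(U)$. This uses $|Q_k|^{-1/p}$ normalization together with shrinking, spreading, or escaping supports, and reflexivity.

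For a compact $K$ we have $\|Kg_k\|\to0$, and therefore
\[
\|\one_{R}(C_{B,T}-K)g_{Q_k,R,s}\|_{L^p(V)}\ge\|\one_RC_{B,T}g_{Q_k,R,s}\|_{L^p(V)}-o(1).
\]
Summing over the finitely many ($M^d$) subcubes $R$ costs a factor $M^{d/p}$. This is the delicate point. To avoid it, I would combine the tests over $R$ into a single test with unimodular signs or disjoint spreading, or use the $\ell^p$ sum directly. The idea is that $\sum_R\|\one_RC_{B,T}g_R\|^p$ is the $p$-th power of the norm of $C_{B,T}$ applied to a vector of tests. Each $g_R$ has norm $O(1)$ and there are only $M^d$ of them, but $M$ is fixed before $k\to\infty$. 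So I would accept a constant depending on $M$, bounding the total by $M^{d/p}\|C_{B,T}-K\|$ plus the weak limit. Then, for $\varepsilon>0$, choose $M$ such that $C_AM^{-\delta}\|B\|_{\BMO}<\varepsilon$, and get $\delta\lesssim_M\|C_{B,T}\|_{\ess}+\varepsilon$. I expect this to be the main obstacle: making the constant independent of $M$. I would first try to replace $\psi_{Q,R}$ by a single test multiplied by a measurable selection $x\mapsto1/K(x,y)$. The estimate then follows from one application of $C_{B,T}-K$, because the error in \eqref{eq:kernel-ratio-error} is only used pointwise in $x$. The $\beta$ part is identical after passing to the adjoint $C_{B,T}^*=-C_{B^*,T^*}$ from $L^{p'}(V^\#)$ to $L^{p'}(U^\#)$, using the second inequality in \eqref{eq:two-sided-nondegeneracy}; compactness is preserved under adjoints.

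\textbf{Upper bound.} Given $\varepsilon>0$, choose $a$ small and $b$ large so that $\sup\omega_Q(B)<\mathcal T+\varepsilon$ over all cubes that are small ($\ell(Q)\le a$), large ($\ell(Q)\ge b$), or far ($\dist(Q,0)\ge b$). Split the kernel of $C_{B,T}$ with a smooth truncation:
\[
K=K\one_{\{a\le|x-y|\le b,\ |x|,|y|\le b'\}}+\text{rest}.
\]
The truncated piece, with $B$ additionally truncated to a bounded approximant in the weighted sense, has a bounded compactly supported kernel conjugated by $V^{1/p}$ and $U^{-1/p}$ factors that are locally $L^p$ and $L^{p'}$. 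It is therefore compact, since it is a limit of Hilbert--Schmidt-type or finite-rank approximations. The remaining pieces are controlled by a localized convex-body sparse domination of the commutator. Every sparse cube meeting these pieces is small, large, or far, so the sparse estimate yields $\lesssim\sup_{\text{such }Q}\omega_Q(B)\le\mathcal T+\varepsilon$. Equivalently, one can run the block-weight argument of Lemma~\ref{lem:block-weight-bdd} with $A$ normalized by this local supremum, applied to the localized operators. This gives $\|C_{B,T}\|_{\ess}\lesssim\mathcal T_{V,U,p}(B)$ and completes \eqref{eq:essential}.
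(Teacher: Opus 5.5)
\textbf{The upper bound has a genuine gap at the large cubes.} It rests on one sentence: every sparse cube is small, large or far, so the sparse estimate yields $\lesssim\sup_{\text{such }Q}\omega_Q(B)$. That does not follow. If you apply H\"older on each sparse cube with the exponents of $\widehat\omega_Q$, you are left with the endpoint form $\sum_Q|Q|\langle|\tilde f|^p\rangle_Q^{1/p}\langle|\tilde g|^{p'}\rangle_Q^{1/p'}$, which is not bounded. So the matrix-weighted sparse bound (Lemma~\ref{lem:generalized-sparse-weighted}, via the block weight $\mathbb W_A$) needs reverse H\"older improvements on each sparse cube. Those improvements depend on the $\mathcal A_p$ characteristic of $\mathbb W_A$ on \emph{all} dyadic subcubes.

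This is harmless for small and far cubes, whose subcubes are again small or far. It fails for large cubes, whose subcubes are arbitrary. If you normalize $A=B/(\mathcal T+\varepsilon)$, the block-weight characteristic on those subcubes is of size $(\|B\|_{\BMO^p_{V,U}}/(\mathcal T+\varepsilon))^p$, so your ``equivalently'' route produces no smallness.

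The missing idea is the paper's splitting $B=P_M^{\D}B+R_M^{\D}B$ by the finite lagom Haar projection:
\begin{itemize}
\item Lemma~\ref{lem:dyadictail} shows that $R_M^{\D}B$ has \emph{full} dyadic oscillation norm $\lesssim\epsilon_{M-c}(B)$. On lagom cubes the oscillation reduces to the maximal non-lagom roots. The block-weight argument therefore applies to $R_M^{\D}B$ with a genuinely small normalization.
\item The finite sum $P_M^{\D}B$ is constant on small sparse cubes and vanishes on far ones.
\item On large cubes of side $\ge2^N$, Lemma~\ref{lem:restricted-large-sparse} gives the decay $2^{-\theta(N-M)}\|B\|_{\BMO^p_{V,U}}$, from the $A_\infty$ absolute continuity of the block weight.
\end{itemize}
Then take $M\approx N/2$ and let $N\to\infty$.

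\textbf{The lower bound, as written, does not give $\mathcal T_{V,U,p}(B)\lesssim\|C_{B,T}\|_{\ess}$.} You bound the error $C_AM^{-\delta}\omega_{S_{Q_k}}(B)$ by $C_AM^{-\delta}\|B\|_{\BMO}$ and then choose $M=M(\varepsilon,\|B\|_{\BMO})$. This yields only $\tau\le C_M\|C_{B,T}\|_{\ess}+\varepsilon$ with $C_M\to\infty$. That suffices for compact $\Rightarrow\VMO$, but not for the linear estimate.

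The fix is absorption, and you already noted the key fact.
\begin{itemize}
\item Since $S_{Q_k}$ lies in a tail regime, $\limsup_k\omega_{S_{Q_k}}(B)\le\tau$ (this is \eqref{eq:enlarged-tail-limsup}). In the far regime, pass to a subsequence and use the large tail if the side lengths blow up.
\item Fix $M$ once and for all with $C_AM^{-\delta}\le1/4$. Take a subsequence on which $\alpha_{Q_k}(B)\ge\frac12\omega_{Q_k}(B)$.
\item Absorb: $\tau/2\le C_M\|C_{B,T}-K_0\|+\tau/4$.
\end{itemize}
Now $M$ depends only on structural constants, so the factor $M^{d/p}$ is harmless and making the constant independent of $M$ is unnecessary.

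Your alternative, a single test carrying an $x$-dependent reciprocal $1/K(x,y)$, cannot work. A test function depends on $y$ alone, and freezing $x$ at the centers $x_R$ of the subcubes $R$ is exactly how the paper handles this.

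Finally, in the large-cube regime the companion supports need not be disjoint (nested growing cubes). Weak nullity of the tests therefore needs the reverse-H\"older local-mass estimate of Lemma~\ref{lem:weaknull}, not disjointness.
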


Sections~\ref{sec:upper} and~\ref{sec:lower} prove the two essential
norm bounds and the compactness equivalence.

\subsection{The upper essential norm}
\label{sec:upper}

Throughout this section, $1<p<\infty$, $U,V\in\mathcal A_p$ are
$m\times m$ matrix weights and  $T$ is a scalar
Calder\'on--Zygmund operator acting componentwise.  Put
$$
 X=L^p(U;\mathbb C^m),\qquad
 Y=L^p(V;\mathbb C^m),\qquad
 V^\#=V^{-p'/p}.
$$
We use only the size and regularity estimates for the kernel of $T$.
No non-degeneracy assumption is used.
The H\"older modulus in \eqref{eq:kernel-holder} satisfies the Dini
condition used in the convex-body sparse theorem below.

A family $\mathcal S$ in a dyadic grid $\mathcal D$ is
$\eta$-sparse if there are pairwise disjoint measurable sets
$E_Q\subset Q$, $Q\in\mathcal S$, such that $|E_Q|\geq\eta|Q|$.
For a matrix weight $W$, put
\begin{equation}
 [W]_{\mathcal A_p(\mathcal D)}
 :=\sup_{Q\in\mathcal D}\avg_Q
 \bigg(\avg_Q
 \|W^{1/p}(x)W^{-1/p}(y)\|_{\op}^{p'}\,dy
 \bigg)^{p/p'}dx.                                  \label{eq:dyadic-matrix-Ap-sparse}
\end{equation}
Thus $[W]_{\mathcal A_p(\mathcal D)}\leq[W]_{\mathcal A_p}$ for
every grid.

\subsubsection{An central decomposition}

For $N\geq1$, let
$
 Q_N=[-2^N,2^N]^d.
$
Let $\mathcal P_N$ be the partition of $Q_N$, up to its boundary, by
half-open cubes $P$ of side length $2^{-N}$.  All cubes $3P$ and
$3Q_N$ below are concentric dilates.  Define
\begin{equation}
 K_Nf=\sum_{P\in\mathcal P_N}
 \one_P C_{B,T}\big(\one_{3Q_N\setminus3P}f\big).
 \label{eq:CLLV-compact-part}
\end{equation}
This is a finite sum.  Define the three remainder operators
\begin{align}
 R_N^{\rm out}f
 &=\one_{Q_N^c}C_{B,T}f,                              \label{eq:CLLV-target-remainder}\\
 R_N^{\rm loc}f
 &=\sum_{P\in\mathcal P_N}
      \one_PC_{B,T}(\one_{3P}f),                     \label{eq:CLLV-local-remainder}\\
 R_N^{\rm rem}f
 &=\one_{Q_N}C_{B,T}(\one_{(3Q_N)^c}f).              \label{eq:CLLV-remote-remainder}
\end{align}
For $x\in P$, the sets
$
 3P,\ 3Q_N\setminus3P,\ (3Q_N)^c
$
partition $\mathbb R^d$, up to boundaries; here $3P\subset3Q_N$.
Therefore
\begin{equation}
 C_{B,T}-K_N
 =R_N^{\rm out}+R_N^{\rm loc}+R_N^{\rm rem}.         \label{eq:CLLV-exact-decomposition}
\end{equation}
This is an operator identity on the form domain.  The estimates
below extend it to $X$.

\begin{lemma}
\label{lem:compact-separated-commutator}
Let $E,F\subset\mathbb R^d$ be bounded measurable sets with
$\dist(E,F)>0$.  If $B\in\BMO^p_{V,U}$, then
$$
 \one_E C_{B,T}\one_F:X\longrightarrow Y
$$
is compact.
\end{lemma}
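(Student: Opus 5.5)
The approach is to conjugate to unweighted spaces and exhibit the operator as a norm limit of finite-rank operators, or as a Hilbert–Schmidt-type integral operator with an integrable kernel. By the isometries $f\mapsto U^{1/p}f$ and $g\mapsto V^{1/p}g$, the claim is equivalent to compactness on unweighted $L^p(\R^d;\C^m)$ of the integral operator
$$
 \mathcal K h(x)=\one_E(x)\int_F V^{1/p}(x)\big(B(x)-B(y)\big)K(x,y)U^{-1/p}(y)h(y)\,dy .
$$
By the separated-support formula \eqref{eq:commutator-separated-form-bdd}, this representation is valid for the bounded extension. Since $\dist(E,F)>0$ and both sets are bounded, \eqref{eq:kernel-size} gives $|K(x,y)|\le C_K\dist(E,F)^{-d}$ on $E\times F$. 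Fix a cube $S$ containing $E\cup F$. Then the kernel is dominated in operator norm by a constant times $k(x,y)=\|V^{1/p}(x)(B(x)-B(y))U^{-1/p}(y)\|_{\op}\one_S(x)\one_S(y)$.

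The key quantitative input is Lemma~\ref{lem:localcomparison}. Since $B\in\BMO^p_{V,U}$, we have $\widehat\omega_S(B)<\infty$, i.e.
$$
 \int_S\Big(\int_S k(x,y)^{p'}\,dy\Big)^{p/p'}dx<\infty .
$$
Thus the kernel lies in the mixed-norm space $L^p_x(L^{p'}_y)$. Any integral operator on $L^p$ whose kernel lies in $L^p_x L^{p'}_y$ is bounded, with norm at most the mixed norm, by Hölder in $y$. It is also compact. To see this, approximate the kernel in $L^p_xL^{p'}_y$ by finite sums of products $a_i(x)b_i(y)$ with $a_i\in L^p$ and $b_i\in L^{p'}$ (matrix-valued simple tensors, dense in the mixed-norm Bochner space $L^p(L^{p'})$ since $p,p'<\infty$). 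Each such sum gives a finite-rank operator. The norm bound then shows that the operators converge in operator norm, and a norm limit of finite-rank operators is compact.

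The one point requiring care is that the kernel must be the full matrix kernel, with the factor $K(x,y)$ included, and not merely its dominating function $k$. Since $K$ is bounded on $E\times F$, multiplying the $L^p_xL^{p'}_y$ matrix function $V^{1/p}(x)(B(x)-B(y))U^{-1/p}(y)\one_S\otimes\one_S$ by $\one_E(x)\one_F(y)K(x,y)$ keeps it in that space. The density argument therefore applies directly, entrywise on the $m\times m$ matrix.

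Finally, I would check that this integral operator coincides with $\one_EC_{B,T}\one_F$ on the dense class $\mathscr T_B$. This follows from the off-support formula, after truncation as in the discussion following \eqref{eq:commutator-separated-form-bdd}. Both operators are bounded, so they agree on all of $X$. The main obstacle is justifying this identification for general elements of $X$; in particular, $Bf$ need not be locally integrable. It is handled by the form-level definition and density.
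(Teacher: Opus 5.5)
Your proposal is correct, and it shares with the paper its two quantitative ingredients. The first is Lemma~\ref{lem:localcomparison}, which places $H(x,y)=V^{1/p}(x)(B(x)-B(y))U^{-1/p}(y)$ in $L^p_x(L^{p'}_y)$ on $S\times S$. The second is the H\"older bound of an integral operator by the mixed norm of its kernel.

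Where you differ is the finite-rank approximation. The paper keeps the commutator structure. It approximates only the bounded scalar factor $a=\one_E K\one_F$ by its conditional expectations $a_j$ on dyadic product rectangles. It gets operator-norm convergence from dominated convergence, with dominating function $2\|a\|_\infty\|H\|_{\op}$. It then checks by hand that each rectangle operator $f\mapsto\one_{E'}\{B\int_{F'}f-\int_{F'}Bf\}$ has finite rank. That check uses $\alpha_Q$ to put $\one_QBe_s$ in $Y$, and $\beta_Q$ to make $f\mapsto\int_{F'}Bf$ a bounded functional on $X$.

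You instead approximate the whole conjugated matrix kernel by simple tensors in the Bochner space $L^p(S;L^{p'}(S))$. Finite rank and boundedness of the approximants on unweighted $L^p$ are then automatic. The price is invoking the identification of the mixed-norm Lebesgue space with the Bochner space, and the density of simple tensors there; both are standard since $p,p'<\infty$.

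Your route is shorter: it is just the general fact that a kernel in $L^p_x(L^{p'}_y)$ defines a compact operator on $L^p$. The paper's route is more hands-on, and its approximants are again commutator-type operators with the same symbol. Your identification of the integral operator with $\one_EC_{B,T}\one_F$, via the separated-support form and density, is at the same level of detail as the paper's.
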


\begin{proof}
The operator has kernel
$$
 a(x,y)(B(x)-B(y)),\qquad
 a(x,y)=\one_E(x)K(x,y)\one_F(y).
$$
The function $a$ is bounded and supported on a bounded product set.
Choose a cube $Q$ containing $E\cup F$ and  set
$$
 H(x,y)=V^{1/p}(x)(B(x)-B(y))U^{-1/p}(y).
$$
Lemma~\ref{lem:localcomparison} gives
\begin{equation}
 \int_Q\bigg(\int_Q\|H(x,y)\|_{\op}^{p'}\,dy
       \bigg)^{p/p'}dx
 =|Q|^p\widehat\omega_Q(B)^p<\infty.                 \label{eq:separated-mixed-integrability}
\end{equation}

If $c$ is a bounded scalar kernel supported on $Q\times Q$, then
H\"older's inequality in $y$ gives
\begin{align}
 &\left\|\int_Qc(\,\cdot\,,y)(B(\,\cdot\,)-B(y))f(y)\,dy
       \right\|_Y \nonumber\\
 &\quad\leq
 \left\{\int_Q\left[
       \int_Q|c(x,y)|^{p'}\|H(x,y)\|_{\op}^{p'}\,dy
                    \right]^{p/p'}dx\right\}^{1/p}\|f\|_X.
                                                               \label{eq:separated-mixed-operator-bound}
\end{align}
Indeed, write $f=U^{-1/p}h$, apply H\"older's inequality and  take
the unweighted $L^p$ norm in $x$.

Partition $Q$ into its dyadic subcubes of side $2^{-j}\ell(Q)$.  Let $a_j$ be
the conditional expectation of $a$ with respect to the product
partition.  Then
$$
 a_j(x,y)=\sum_{\nu=1}^{M_j}c_{j,\nu}
      \one_{E_{j,\nu}}(x)\one_{F_{j,\nu}}(y),
 \qquad M_j<\infty,
$$
$|a_j|\leq\|a\|_\infty$ and  $a_j\to a$ almost everywhere.  Apply
dominated convergence first in $y$ and then in $x$ in
\eqref{eq:separated-mixed-operator-bound}.  Equation
\eqref{eq:separated-mixed-integrability} supplies the dominating
function.  The corresponding operators converge in norm.

It remains to check that each rectangle kernel has finite rank.  Its
action is a scalar multiple of
$$
 f\longmapsto
 \one_{E_{j,\nu}}(x)
 \left\{B(x)\int_{F_{j,\nu}}f(y)\,dy
              -\int_{F_{j,\nu}}B(y)f(y)\,dy\right\}.
$$
Its range is contained in the span of
$$
 \{\one_{E_{j,\nu}}e_s,\ \one_{E_{j,\nu}}Be_s:
                         1\leq s\leq m\}.
$$
These vectors belong to $Y$.  This follows from $\alpha_Q(B)$ and
the local integrability of $V$: apply the defining estimate to
$\mathcal U_Qe_s$ and then add the constant vector $B_Qe_s$.
The two displayed integrals define
bounded functionals on $X$.  The first follows from
$\one_Qe_s\in L^{p'}(U^\#)$.  For the second, use $\beta_Q(B)$ to
apply the defining estimate to $\mathcal V'_Qe_s$ to obtain
$$
 \one_Q(B^*-B_Q^*)e_s\in L^{p'}(U^\#),
$$
and treat $B_Q^*e_s$ as a constant vector.  Thus the rectangle
operators have finite rank.  Their norm limit is compact.
\end{proof}

\begin{corollary}
\label{cor:CLLV-compact-approximant}
For every $N$, the operator $K_N$ in
\eqref{eq:CLLV-compact-part} is compact from $X$ to $Y$.
\end{corollary}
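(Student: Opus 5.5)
The corollary follows from Lemma~\ref{lem:compact-separated-commutator} applied term by term. The plan is to show that each summand of $K_N$ has the form $\one_E C_{B,T}\one_F$ with $E,F$ bounded and $\dist(E,F)>0$. Since $\mathcal P_N$ is finite (it has $2^{2N+1}\cdot 2^{N}$ cubes per coordinate direction, hence finitely many cubes), $K_N$ is then a finite sum of compact operators. A finite sum of compact operators from $X$ to $Y$ is compact.

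For a fixed $P\in\mathcal P_N$, take $E=P$ and $F=3Q_N\setminus 3P$. Both sets are bounded, being contained in $3Q_N$. Since $P$ has side $2^{-N}$ and $3P$ is its concentric triple, every point of $P$ lies at distance at least $2^{-N}$ from the complement of $3P$ (in the $\ell^\infty$ metric, hence also in the Euclidean metric up to a constant). This gives $\dist(P,3Q_N\setminus3P)\geq 2^{-N}>0$. The half-open convention for $P$ does not affect this, since $\overline P$ lies in the interior of $3P$.

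We must also check that the summand $\one_P C_{B,T}(\one_{3Q_N\setminus3P}f)$ agrees with the operator treated in Lemma~\ref{lem:compact-separated-commutator}. On the form domain, for separated supports, the commutator form is given by the absolutely convergent kernel formula \eqref{eq:commutator-separated-form-bdd}. This is exactly the kernel $a(x,y)(B(x)-B(y))$ with $a=\one_EK\one_F$ used in that lemma. The lemma shows this kernel operator is bounded (indeed compact) from $X$ to $Y$ whenever $B\in\BMO^p_{V,U}$, so the summand extends from the dense class $\mathscr T_B$ to all of $X$ and coincides with that compact operator.

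There is no real obstacle here. The only point needing care is the standing hypothesis $B\in\BMO^p_{V,U}$ required by Lemma~\ref{lem:compact-separated-commutator}, which is assumed throughout this section since the essential-norm estimate is stated for such $B$. The other routine check is the positive separation of $P$ from $3Q_N\setminus3P$ noted above.
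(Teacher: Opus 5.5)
Your proposal is correct and follows the paper's proof. Each summand is $\one_P C_{B,T}\one_{3Q_N\setminus3P}$ with both sets bounded and $\dist(P,3Q_N\setminus3P)\geq\ell(P)=2^{-N}$, so Lemma~\ref{lem:compact-separated-commutator} (under the standing hypothesis $B\in\BMO^p_{V,U}$) makes each term compact, and the sum over $\mathcal P_N$ is finite; your cube count is slightly off (there are $2^{2N+1}$ cubes per coordinate direction, hence $2^{(2N+1)d}$ in total), but only finiteness is used.
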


\begin{proof}
For $P\in\mathcal P_N$, the sets $P$ and
$3Q_N\setminus3P$ are bounded and satisfy
$$
 \dist(P,3Q_N\setminus3P)\geq\ell(P).
$$
Lemma~\ref{lem:compact-separated-commutator} applies to every summand
in \eqref{eq:CLLV-compact-part}.  The sum is finite.
\end{proof}

\subsubsection{Global and prescribed-top convex-body domination}

For an integrable $\mathbb C^n$-valued function $F$, define
$$
 \langle\!\langle F\rangle\!\rangle_Q
 =\bigg\{\avg_Q\varphi(y)F(y)\,dy:
       \varphi\text{ is scalar and }\|\varphi\|_\infty\leq1\bigg\}.
$$
This is a compact, symmetric, convex subset of $\mathbb C^n$.
Compactness follows from weak-star compactness of the closed unit
ball of $L^\infty(Q)$.

For a scalar function $h$, let
$$
 Mh(x)=\sup_{Q\ni x}\avg_Q|h(y)|\,dy.
$$
For $r>0$, put
$$
 T_rh(x)=\int_{|x-y|>r}K(x,y)h(y)\,dy,
 \qquad T_*h(x)=\sup_{r>0}|T_rh(x)|.
$$
Also put
$$
 \mathfrak M_T h(x)
 =\sup_{Q\ni x}\ \mathop{\rm ess\,sup}_{\xi\in Q}
   |T(\one_{\mathbb R^d\setminus3Q}h)(\xi)|.
$$
The operators $T$, $T_*$ and  $M$ are of weak type $(1,1)$.  We now
verify the same assertion for $\mathfrak M_T$.  If
$x,\xi\in Q$, choose a fixed $A_d>10\sqrt d$ and put $S=A_dQ$.
The size estimate gives
$$
 \left|\int_{S\setminus3Q}K(\xi,y)h(y)\,dy\right|
 \leq C_{d,T}Mh(x).
$$
On $S^c$, decompose into the annuli
$2^{k+1}S\setminus2^kS$.  If $\omega$ is the kernel modulus of
continuity, the regularity estimate gives
\begin{align*}
 \int_{S^c}|K(\xi,y)-K(x,y)|\,|h(y)|\,dy
 &\leq C_{d,T}\sum_{k=0}^\infty\omega(c_d2^{-k})
 \avg_{2^{k+1}S}|h(y)|\,dy
 \leq C_{d,T}Mh(x).
\end{align*}
The last series is finite by the Dini condition.  For the kernel in
this paper one may take $\omega(t)=t^\delta$.
Finally, choose $r \simeq_d A_d\ell(Q)$ so that the symmetric
difference of $S^c$ and $\{y:|x-y|>r\}$ lies in an annulus centered
at $x$ whose two radii are comparable.  The size estimate on this
annulus gives
$$
 |T(\one_{S^c}h)(x)-T_rh(x)|\leq C_{d,T}Mh(x).
$$
Consequently,
$$
 |T(\one_{\mathbb R^d\setminus3Q}h)(\xi)|
 \leq C_T\{T_*h(x)+Mh(x)\}.
$$
Take the two suprema.  The weak $(1,1)$ estimates for $T_*$ and $M$
prove the assertion.  This is also the grand maximal hypothesis in
\cite[Lemma~13]{IRRS}.

\begin{lemma}
\label{lem:global-local-convex-body}
Let $T$ have a Dini-continuous Calder\'on--Zygmund kernel.  The
following assertions hold in every finite dimension $n$.

\begin{enumerate}
\item For every compactly supported $F\in L^1(\mathbb R^d;\mathbb C^n)$,
there are at most $L_d$ dyadic grids $\mathcal D^t$
and $\eta_d$-sparse families $\mathcal S^t\subset\mathcal D^t$ such
that
\begin{equation}
 (T\otimes I_n)F(x)
 \in C_T\sum_{t=1}^{L_d}\sum_{Q\in\mathcal S^t}
      \one_Q(x)\langle\!\langle F\rangle\!\rangle_Q
 \quad\text{for almost every }x.                    \label{eq:global-convex-body}
\end{equation}

\item Let $P$ be a cube and let
$F\in L^1(\mathbb R^d;\mathbb C^n)$ be supported in $3P$.  There are
at most $L_d$ dyadic grids and sparse families
$\mathcal S_P^t$ such that, for almost every $x\in P$,
\begin{equation}
 (T\otimes I_n)F(x)
 \in C_T\sum_{t=1}^{L_d}\sum_{Q\in\mathcal S_P^t}
       \one_Q(x)\langle\!\langle F\rangle\!\rangle_Q,           \label{eq:local-convex-body}
\end{equation}
and every cube in these families satisfies
\begin{equation}
 \ell(Q)\leq C_d\ell(P).                              \label{eq:prescribed-top-length}
\end{equation}
 The constants are independent of $P$ and $F$.  They may depend on
 $d$ and $n$.
 If the cubes $P$ belong to one fixed grid, the finite collection of
 adjacent grids may be chosen once for all of them.
\end{enumerate}
\end{lemma}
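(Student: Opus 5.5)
Part (1) is the convex-body sparse domination theorem of Nazarov--Petermichl--Treil--Volberg in the form proved by Lerner's local grand maximal argument. I would reprove it in that form, because the same argument yields part (2). The scalar input is the weak type $(1,1)$ of $\mathfrak M_T$ and of $T$, verified just before the statement. The vector-valued version follows because $T\otimes I_n$ acts componentwise: the weak $(1,1)$ bounds for $|(T\otimes I_n)F|$ and $\mathfrak M_{T\otimes I_n}F$ follow from the scalar bounds applied to each coordinate. The only losses are constants depending on $n$.

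The core step is the local recursion. Fix a cube $Q_0$ and $F$ supported in $3Q_0$. Let $E\subset Q_0$ be the set where $M(|F|\one_{3Q_0})$ or $\mathfrak M_T(F\one_{3Q_0})$ (componentwise, with its grand-maximal modification) exceeds $C\avg_{3Q_0}|F|$. For $C$ large the weak-type bounds give $|E|\le 2^{-d-2}|Q_0|$. Take a Calder\'on--Zygmund cover of $E$ by maximal dyadic subcubes $P_k$ of $Q_0$ with $\sum|P_k|\le\frac12|Q_0|$ and $P_k\cap E^c\neq\emptyset$. This gives the decomposition
\[
 \one_{Q_0}(T\otimes I_n)(F\one_{3Q_0})
 =\one_{Q_0\setminus\cup P_k}(T\otimes I_n)(F\one_{3Q_0})
 +\sum_k\one_{P_k}(T\otimes I_n)(F\one_{3Q_0\setminus3P_k})
 +\sum_k\one_{P_k}(T\otimes I_n)(F\one_{3P_k}).
\]
The first two pieces must be placed in a dilate of the convex body $\langle\!\langle F\rangle\!\rangle_{3Q_0}$, not merely bounded in norm; this is the main obstacle. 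I would follow Nazarov--Petermichl--Treil--Volberg here. For each unit vector $e$, apply the scalar estimate to $\langle F,e\rangle$. This gives $|\langle(T\otimes I_n)(F\one_{3Q_0\setminus3P_k})(x),e\rangle|\lesssim \sup_{v\in\langle\!\langle F\rangle\!\rangle_{3Q_0}}|\langle v,e\rangle|$ for a.e.\ $x\in P_k$. A Hahn--Banach argument for symmetric convex bodies then gives membership in $C_n\langle\!\langle F\rangle\!\rangle_{3Q_0}$. The bound must hold uniformly over a countable dense set of $e$ outside one null set, so I take the maximal functions over a countable dense family of directions, for which the weak-type bound still holds with a constant depending on $n$. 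Equivalently, one can use the John ellipsoid of $\langle\!\langle F\rangle\!\rangle_{3Q_0}$ to reduce to $n$ scalar functions, which is the cleaner route.

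Iterating the recursion on the last sum produces, inside $Q_0$, a $\frac12$-sparse family of dyadic subcubes of $Q_0$, all contained in $Q_0$. For part (2), take $Q_0$ to be a dyadic cube from one of the $3^d$ adjacent grids that contains $3P$ with $\ell(Q_0)\le C_d\ell(P)$; this gives \eqref{eq:local-convex-body} on $P\subset Q_0$ together with \eqref{eq:prescribed-top-length}. The bodies $\langle\!\langle F\rangle\!\rangle_{3Q}$ are replaced by $\langle\!\langle F\rangle\!\rangle_{R}$ with $R\supset3Q$ a dyadic cube in an adjacent grid of comparable size, using the monotonicity $\langle\!\langle F\rangle\!\rangle_{3Q}\subset (|R|/|3Q|)\langle\!\langle F\rangle\!\rangle_R$. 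Sparseness transfers with a constant depending on $d$. The adjacent grids depend only on $d$, so they can be fixed once for all $P$ in a given grid. For part (1), cover the compact support of $F$ by at most $3^d$ cubes from the adjacent grids, or by an increasing chain of dyadic cubes exhausting $\mathbb R^d$ in the standard way, and apply the local result. Outside the top cube, $(T\otimes I_n)F$ is handled by the dyadic chain of ancestors, whose union is again sparse.
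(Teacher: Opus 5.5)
Your proposal follows essentially the paper's proof: part (1) is the NPTV convex-body theorem, which the paper simply cites. Part (2) is the same Lerner-type local recursion. The exceptional set is built from the finitely many John-ellipsoid directions $u_j$ of $\langle\!\langle F\rangle\!\rangle_{3Q}$, with thresholds $A\avg_{3Q}|\langle F,u_j\rangle|$; the recursion is iterated to a $\tfrac12$-sparse family, and the cubes $3Q$ are then moved into a fixed finite set of adjacent grids. The paper starts the recursion at $P$ itself, since $F$ is already supported in $3P$, rather than at an enclosing $Q_0\supset 3P$; this also keeps the witness sets inside $P$.

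The one point to tighten is that the John route is necessary, not just ``cleaner''. The ``countable dense family of directions'' is not equivalent: weak-type bounds in each single direction do not control a union of exceptional sets over infinitely many directions without exactly this finite-dimensional reduction. Correspondingly, your first stopping set with threshold $C\avg_{3Q_0}|F|$ gives only norm bounds and must be replaced by the directional one. When the body has empty interior, you also need the paper's check that the pieces of $(T\otimes I_n)F$ lie in its span.
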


\begin{proof}
The global assertion is the convex-body sparse theorem
\cite[Theorem~3.4]{NPTV}; see also
\cite[Theorem~13]{IRRS}.  We explain why the same stopping argument
gives the second assertion with the stated length bound.  We work in
$\mathbb C^n$, viewed as a real vector space.  Complex scalar
multipliers give the support-function identity below.

We give the local stopping argument.  Choose a dyadic grid containing
$P$ and fix a dyadic descendant
$Q\subset P$ and put
$$
 \mathcal K_Q=\langle\!\langle F\rangle\!\rangle_{3Q}.
$$
John's ellipsoid theorem in the real linear span of $\mathcal K_Q$
gives vectors $u_1,\ldots,u_\rho\in\mathbb C^n$, with
$\rho\leq2n$, such that the following implication holds.  If
$$
 h_j(y)=\langle F(y),u_j\rangle,\qquad
 \lambda_j=\avg_{3Q}|h_j(y)|\,dy,
$$
and $z$ belongs to that real linear span, then
\begin{equation}
 |\operatorname {Re}\langle z,u_j\rangle|
 \leq A\lambda_j\quad(1\leq j\leq\rho)
 \quad\Longrightarrow\quad
 z\in C_nA\mathcal K_Q.                              \label{eq:John-coordinate-body}
\end{equation}
If $\mathcal K_Q$ has empty interior, John's theorem is applied in
its real linear span.  The support function identity
$$
 \sup_{z\in\mathcal K_Q}
 |\operatorname {Re}\langle z,u\rangle|
 =\avg_{3Q}|\langle F(y),u\rangle|\,dy
$$
proves \eqref{eq:John-coordinate-body}.

Define
\begin{align*}
 \Omega_Q
 =\bigcup_{\substack{1\leq j\leq\rho\\\lambda_j>0}}\Big(
 \{x\in Q:|T(\one_{3Q}h_j)(x)|>A\lambda_j\}
 &{}\cup{}
 \{x\in Q:\mathfrak M_T(\one_{3Q}h_j)(x)>A\lambda_j\}\\
 &{}\cup{}
 \{x\in Q:M(\one_{3Q}h_j)(x)>A\lambda_j\}\Big).
\end{align*}
The three weak $(1,1)$ estimates give
$$
 |\Omega_Q|\leq C_{d,n,T}A^{-1}|3Q|.
$$
Fix $A=A(d,n,T)$ so large that
\begin{equation}
 |\Omega_Q|\leq2^{-d-2}|Q|.                          \label{eq:local-bad-small}
\end{equation}
Let $\{Q_s\}_s$ be the maximal dyadic subcubes of $Q$ satisfying
$$
 |Q_s\cap\Omega_Q|>2^{-d-1}|Q_s|.
$$
They cover $\Omega_Q$ up to a null set, are pairwise disjoint and 
\begin{equation}
 \sum_s|Q_s|\leq2^{d+1}|\Omega_Q|
 \leq\tfrac12|Q|.                                   \label{eq:local-children-half}
\end{equation}
The cube $Q$ itself is not selected by
\eqref{eq:local-bad-small}.  Hence the dyadic parent
$\widehat Q_s$ lies in $Q$.  Maximality gives a point
$z_s\in\widehat Q_s\setminus\Omega_Q$ at which all relevant
estimates hold.

If $A_0\subset B_0$ are cubes, then, for every coordinate $i$,
$$
 |(c_{A_0})_i-(c_{B_0})_i|+\tfrac32\ell(A_0)
 \leq\tfrac12\{\ell(B_0)-\ell(A_0)\}
      +\tfrac32\ell(A_0)
 \leq\tfrac32\ell(B_0).
$$
Thus $3A_0\subset3B_0$.  In particular,
$3Q_s\subset3\widehat Q_s\subset3Q$.

For almost every $x\in Q_s$,
\begin{align*}
 T(\one_{3Q\setminus3Q_s}h_j)(x)
 &=T(\one_{3Q\setminus3\widehat Q_s}h_j)(x)
   +T(\one_{3\widehat Q_s\setminus3Q_s}h_j)(x).
\end{align*}
The first term is bounded by
$\mathfrak M_T(\one_{3Q}h_j)(z_s)$.  Since
$3\widehat Q_s\subset3Q$ and
$$
 \dist(Q_s,3\widehat Q_s\setminus3Q_s)
 \geq c_d\ell(Q_s),
$$
the kernel size estimate bounds the second term by
$C_{d,T}M(\one_{3Q}h_j)(z_s)$.  Thus
\begin{equation}
 |T(\one_{3Q\setminus3Q_s}h_j)(x)|
 \leq C_{d,T}A\lambda_j.                             \label{eq:local-child-coordinate}
\end{equation}

Define
\begin{align*}
 G_Q(x)
 &=\one_Q(x)(T\otimes I_n)(\one_{3Q}F)(x)-\sum_s\one_{Q_s}(x)
        (T\otimes I_n)(\one_{3Q_s}F)(x).
\end{align*}
The vector $G_Q(x)$ lies in
$V_Q=\operatorname{span}_{\mathbb R}\mathcal K_Q$.  Indeed,
$\mathcal K_Q$ is balanced under complex scalars, so $V_Q$ is a
complex subspace.  If $u$ annihilates $V_Q$, then
$\langle F,u\rangle=0$ almost everywhere on $3Q$ and  complex
linearity gives
$$
 \langle(T\otimes I_n)(\one_EF),u\rangle
 =T(\one_E\langle F,u\rangle)=0
$$
for $E=3Q$ and $E=3Q_s$.  This proves the claim.  Also,
$\lambda_j=0$ implies $h_j=0$ almost everywhere, so the corresponding
coordinate of $G_Q$ is zero.

Outside $\bigcup_sQ_s$, the point $x$ is outside $\Omega_Q$ for
almost every $x$, so the first condition in the definition of
$\Omega_Q$ applies.  On $Q_s$, use
\eqref{eq:local-child-coordinate}.  Applying
\eqref{eq:John-coordinate-body} in both cases gives the one-step
inclusion
\begin{equation}
 G_Q(x)\in C_{d,n,T}\one_Q(x)\mathcal K_Q
 \quad\text{for almost every }x.                    \label{eq:local-one-step-body}
\end{equation}
This proves directly the prescribed-top version of
\cite[Lemma~3.2, with $r=3$ and 
Sections~3.2.1--3.2.2]{NPTV}; see also
\cite[Lemma~13]{IRRS}.  It requires only
$F\in L^1(3Q;\mathbb C^n)$.

Start with $Q=P$ and repeat \eqref{eq:local-one-step-body} on every
selected child.  Let $\mathcal G_k$ be the $k$th generation, with
$\mathcal G_0=\{P\}$.  After $k$ steps, the one-step inclusions give
\begin{align}
 \one_P(T\otimes I_n)(\one_{3P}F)(x)\in
 C_{d,n,T}\sum_{j=0}^{k-1}\sum_{Q\in\mathcal G_j}
   \one_Q(x)\mathcal K_Q
 +\sum_{Q\in\mathcal G_k}\one_Q(x)
   (T\otimes I_n)(\one_{3Q}F)(x).                  \label{eq:local-finite-generation}
\end{align}
By \eqref{eq:local-children-half},
$$
 \bigg|\bigcup_{Q\in\mathcal G_k}Q\bigg|
 \leq2^{-k}|P|.
$$
These unions decrease.  Their intersection has measure zero.
Discard also the countable union of the exceptional null sets in the
one-step inclusions.  For every remaining $x$, the last sum in
\eqref{eq:local-finite-generation} is zero for all sufficiently
large $k$.  Thus no limiting estimate for the remainder is needed.
We obtain a
$\tfrac12$-sparse family $\mathcal G(P)$ of dyadic descendants of
$P$ and the inclusion
\begin{equation}
 (T\otimes I_n)F(x)
 \in C_T\sum_{Q\in\mathcal G(P)}
       \one_Q(x)\langle\!\langle F\rangle\!\rangle_{3Q},
 \qquad x\in P.                                      \label{eq:local-3Q-inclusion}
\end{equation}
Indeed, the sets
$$
 E_Q=Q\setminus
 \bigcup_{\substack{Q_s\text{ selected}\\\text{from }Q}}Q_s
$$
are pairwise disjoint and satisfy $|E_Q|\geq|Q|/2$.  This also proves
the asserted sparseness without any boundedness assumption on $F$.

Use the three-lattice lemma.  There are $3^d$ dyadic grids
$\mathcal D^{(a)}$, depending only on the original grid and  a map
$a:\mathcal D\to\{1,\ldots,3^d\}$ such that
$$
 3Q\in\mathcal D^{(a(Q))}\qquad(Q\in\mathcal D).
$$
For $R=3Q$, enlarge $\one_Q$ to $\one_R$.  The convex body is already
$\langle\!\langle F\rangle\!\rangle_R$ and  the map $Q\mapsto R=3Q$
is injective on the whole original grid.  For each fixed $a$, attach
the set $E_Q$ above to $R=3Q$.  Then
$$
 E_Q\subset Q\subset R,\qquad
 |E_Q|\geq\tfrac12|Q|=\frac1{2\cdot3^d}|R|.
$$
Thus each image family is $(2\cdot3^d)^{-1}$-sparse.  Since
$Q\subset P$, every image cube satisfies
$\ell(R)\leq3\ell(P)$.  This proves
\eqref{eq:local-convex-body} and
\eqref{eq:prescribed-top-length}.  The grids and the map are fixed
before $P$ is chosen.
\end{proof}

\begin{lemma}
\label{lem:2m-commutator-lift}
Let $\mathcal T$ be a scalar linear operator.  Suppose that a
convex-body inclusion of the form
$$
 (\mathcal T\otimes I_{2m})F(x)
 \in C\sum_Q\one_Q(x)\langle\!\langle F\rangle\!\rangle_Q
$$
holds for
$$
 F(y)=\binom{-B(y)f(y)}{f(y)}.
$$
For fixed $x$ and $Q$, put
$$
 \mathcal C_Q(x)=
 \bigg\{\avg_Q\varphi(y)(B(x)-B(y))f(y)\,dy:
          \varphi\text{ scalar},\ |\varphi|\leq1\bigg\}.
$$
Then
\begin{equation}
 [M_B,\mathcal T]f(x)
 \in C\sum_Q\one_Q(x)\mathcal C_Q(x).
                                                               \label{eq:commutator-convex-body}
\end{equation}
\end{lemma}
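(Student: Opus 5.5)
The plan is to apply the lifted operator $\mathcal T\otimes I_{2m}$ to the block vector $F$ and read off the commutator from one block. Applying the map $\mathcal T\otimes I_{2m}$ to $F=\binom{-Bf}{f}$ gives $\binom{-\mathcal T(Bf)}{\mathcal Tf}$, with $\mathcal T$ acting componentwise. For fixed $x$ we introduce the linear map $L_x:\mathbb C^{2m}\to\mathbb C^m$, $L_x(u,v)=u+B(x)v$. Then
\[
 L_x\big((\mathcal T\otimes I_{2m})F(x)\big)=B(x)\mathcal Tf(x)-\mathcal T(Bf)(x)=[M_B,\mathcal T]f(x).
\]
The key point is that $L_x$ depends only on $x$, not on the integration variable $y$. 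Hence it may be applied pointwise to the inclusion after $x$ has been fixed.

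Next, fix $x$ outside the exceptional null set where the hypothesis fails. The hypothesis says that $(\mathcal T\otimes I_{2m})F(x)=C\sum_Q\one_Q(x)w_Q$, where each $w_Q$ lies in $\langle\!\langle F\rangle\!\rangle_Q$. The sum is understood as a Minkowski sum of sets; if infinitely many cubes contain $x$, it is the set of convergent sums of selections, as in the sparse theorem. Because $L_x$ is linear and continuous on the finite-dimensional space $\mathbb C^{2m}$, it maps such a Minkowski sum into the Minkowski sum of the images. The same holds for convergent series of selections. So it suffices to show that
\[
 L_x\langle\!\langle F\rangle\!\rangle_Q=\mathcal C_Q(x).
\]

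To check this identity, take an element $w=\avg_Q\varphi F$ with $|\varphi|\leq1$. Then
\[
 L_xw=\avg_Q\varphi(y)\big(-B(y)f(y)+B(x)f(y)\big)\,dy=\avg_Q\varphi(y)(B(x)-B(y))f(y)\,dy .
\]
This uses that the constant matrix $B(x)$ can be pulled inside the average. The right-hand side is exactly the generic element of $\mathcal C_Q(x)$ with the same $\varphi$, so the correspondence $\varphi\mapsto\varphi$ gives equality of the two sets. Applying $L_x$ to the decomposition from the hypothesis and multiplying by $C$ then yields \eqref{eq:commutator-convex-body} for almost every $x$.

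The only potentially delicate points are integrability and infinite sums. We need $Bf$ and $f$ to be integrable on each $Q$ for the averages to make sense; this is already implicit in the hypothesis that the inclusion holds for $F$. For infinite sums, linearity of $L_x$ passes through the limits of partial sums. No estimate on $B$ is needed, because $B(x)$ enters only as a fixed matrix.
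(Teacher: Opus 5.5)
Your proposal is correct and follows the same route as the paper, which also applies the pointwise linear map $L_x(a,b)=a+B(x)b$ to both sides of the lifted inclusion. The paper likewise checks that $L_x$ sends $\avg_Q\varphi F$ to $\avg_Q\varphi(y)(B(x)-B(y))f(y)\,dy$; your remarks on Minkowski sums, infinite selections and integrability only make explicit what the paper leaves implicit.
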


\begin{proof}
For each $x$, define
$
 L_x(a,b)=a+B(x)b,\ a,b\in\mathbb C^m.
$
The image under $L_x$ of the left side is
$$
 -\mathcal T(Bf)(x)+B(x)\mathcal T f(x)
 =[M_B,\mathcal T]f(x).
$$
The image of
$$
 \avg_Q\varphi(y)\binom{-B(y)f(y)}{f(y)}\,dy
$$
is
$$
 \avg_Q\varphi(y)(B(x)-B(y))f(y)\,dy.
$$
Apply $L_x$ to the inclusion.
\end{proof}

For a dyadic grid $\mathcal D$, put
\begin{equation}
 \mathfrak T_N(\mathcal D)
 =\{Q\in\mathcal D:\ell(Q)<2^{-N}\ \text{or}\
   \ell(Q)>2^N\ \text{or}\ \dist(Q,0)>2^N\}.         \label{eq:tail-cube-family}
\end{equation}

\begin{lemma}
\label{lem:three-remainder-sparse-forms}
There are fixed integers $L,c_0$ and $\eta>0$ with the following
property.  For every $N$, every bounded compactly supported $f,g$ in
the form domain and  each
$$
 \tau\in\{{\rm out},{\rm loc},{\rm rem}\},
$$
there are dyadic grids $\mathcal D^{\tau,t}$ and $\eta$-sparse
families
$$
 \mathcal S_N^{\tau,t}\subset
 \mathfrak T_{N-c_0}(\mathcal D^{\tau,t}),
 \qquad 1\leq t\leq L,
$$
such that
\begin{align}
 |\langle R_N^\tau f,g\rangle|
 &\leq C_T\sum_{t=1}^L\sum_{Q\in\mathcal S_N^{\tau,t}}
 \frac1{|Q|}\int_Q\int_Q
 |\langle(B(x)-B(y))f(y),g(x)\rangle|\,dy\,dx.       \label{eq:three-remainder-positive-form}
\end{align}
\end{lemma}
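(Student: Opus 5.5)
The plan is to treat each remainder by applying a convex-body domination, Lemma~\ref{lem:global-local-convex-body}, to the $2m$-vector function $F=(-Bf,f)$. Lemma~\ref{lem:2m-commutator-lift} then converts the inclusion into one for the commutator. Pairing with $g$ and using that each element of $\mathcal C_Q(x)$ is $\avg_Q\varphi(y)(B(x)-B(y))f(y)\,dy$ with $|\varphi|\le1$ gives pointwise
$$
 |\langle C_{B,T}h(x),g(x)\rangle|
 \le C\sum_Q\one_Q(x)\frac1{|Q|}\int_Q|\langle(B(x)-B(y))h(y),g(x)\rangle|\,dy .
$$
Integrating in $x$ produces the positive form in \eqref{eq:three-remainder-positive-form}. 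Since $f$ is bounded and compactly supported in the form domain, $F\in L^1$ with compact support, so the lemma applies. The work is to ensure that only cubes in $\mathfrak T_{N-c_0}$ appear, or that the remaining cubes contribute nothing.

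For $R_N^{\rm loc}$, I apply part (2) of Lemma~\ref{lem:global-local-convex-body} to $\one_{3P}F$ for each $P\in\mathcal P_N$. All cubes in $\mathcal P_N$ lie in one grid, so the adjacent grids are fixed once. The cubes in $\mathcal S_P^t$ have $\ell(Q)\le C_d2^{-N}<2^{-(N-c_0)}$, so they belong to $\mathfrak T_{N-c_0}$. After multiplication by $\one_P$, only cubes meeting $P$ matter. Their union over $P$ is no longer sparse, because cubes from neighbouring $P$ can coincide. I would handle this by restricting the sparse sets $E_Q$ to $Q\cap P$. Alternatively, one can note that each cube meets boundedly many $P$, and split into $O_d(1)$ subfamilies of pairwise far-apart $P$, absorbing the multiplicity into $L$. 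In that case the families from different $P$ have disjoint sparse sets because the sets $E_Q\subset Q\subset C P$ are disjoint.

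For $R_N^{\rm out}$, I use the global domination \eqref{eq:global-convex-body} and restrict $x$ to $Q_N^c$. A cube $Q\ni x$ with $\ell(Q)\le 2^N$ and $x\notin Q_N$ need not lie far from $0$, since $x$ may sit just outside $Q_N$. This is the main obstacle for this term. My first attempt is to dominate $\one_{Q_N^c}C_{B,T}f$ by running the stopping construction of Lemma~\ref{lem:global-local-convex-body} on the cubes of a Whitney-type partition of $Q_N^c$, namely dyadic annuli cubes $P'$ with $\ell(P')\simeq\dist(P',0)\gtrsim2^N$. Inside each such $P'$, the local part $\one_{P'}C_{B,T}(\one_{3P'}f)$ uses cubes of side at most $C\ell(P')$ contained near $P'$, hence at distance $\gtrsim2^{N-c_0}$ from $0$. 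The far part $\one_{P'}C_{B,T}(\one_{(3P')^c}f)$ is controlled by the stopping argument continued upward along ancestors of $P'$. Those ancestors have side at least $2^N$ or lie far away, and I expect enough slack in $c_0$ to place them in $\mathfrak T_{N-c_0}$.

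For $R_N^{\rm rem}$, the input $\one_{(3Q_N)^c}f$ is supported away from the target $Q_N$. In the sparse domination any cube $Q$ with $x\in Q_N$ and $\langle\!\langle \one_{(3Q_N)^c}F\rangle\!\rangle_Q\ne\{0\}$ must meet both $Q_N$ and $(3Q_N)^c$. Such a cube has $\ell(Q)\gtrsim2^N$, so it lies in $\mathfrak T_{N-c_0}$ for suitable $c_0$. The remaining cubes give zero contribution and can be discarded while preserving sparseness. Replacing $\varphi$ on $Q$ by $\varphi\one_{(3Q_N)^c}$ only decreases the integrand, so the bound takes the stated form.

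I expect the main obstacle to be the out term: choosing the decomposition so that the top cubes are large or far while sparseness holds uniformly in $N$.
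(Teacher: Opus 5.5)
\textbf{The gap is in the out term.} Your treatment of $R_N^{\rm rem}$ is the paper's argument. For $R_N^{\rm loc}$, your second option works as a black-box alternative. You discard the cubes of $\mathcal S_P^t$ that do not meet $P$, so the remaining ones lie in a fixed dilate $C'P$, and you colour $\mathcal P_N$ into $O_d(1)$ classes with pairwise disjoint dilates. The paper instead reopens the proof of Lemma~\ref{lem:global-local-convex-body}(2). It uses that the witness sets satisfy $E_{P,Q}\subset Q\subset P$ and that $Q\mapsto 3Q$ is injective on the whole grid, so the union over all $P$ is already sparse without colouring. Your first option, shrinking $E_Q$ to $Q\cap P$, does not follow from the black-box statement, since nothing there prevents $|E_Q\cap P|$ from being small.

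\textbf{The out term is not an obstacle; you have misdiagnosed it.} Your worry that a cube meeting $Q_N^c$ with $\ell(Q)\le 2^N$ ``need not lie far from $0$'' uses the unshifted threshold. Membership in $\mathfrak T_{N-c_0}$ only needs $\ell(Q)>2^{N-c_0}$ or $\dist(Q,0)>2^{N-c_0}$.

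Suppose $|Q\cap Q_N^c|>0$ and $\ell(Q)<2^{N-2}$, and pick $x\in Q\setminus Q_N$. Some coordinate satisfies $|x_i|>2^N$. Hence every $y\in Q$ has $|y_i|\geq |x_i|-\ell(Q)>2^{N-1}$, and so $\dist(Q,0)\geq 2^{N-1}$.

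So every cube that survives after multiplying the global inclusion \eqref{eq:global-convex-body} for $F=(-Bf,f)$ by $\one_{Q_N^c}$ lies in $\mathfrak T_{N-3}$. The other cubes contribute nothing and can be dropped without losing sparseness. You then enlarge the $x$-integral from $Q\cap Q_N^c$ to $Q$. This is exactly the paper's proof; no Whitney decomposition of $Q_N^c$ is needed.

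\textbf{Your Whitney detour is also incomplete as written.} The far pieces $\one_{P'}C_{B,T}(\one_{(3P')^c}f)$, summed over infinitely many Whitney cubes $P'$, need a single uniformly sparse family consisting of large or far cubes. Saying the stopping argument is ``continued upward along ancestors'' with ``enough slack in $c_0$'' does not construct such a family or prove its sparseness. As it stands, the proposal does not establish \eqref{eq:three-remainder-positive-form} for $\tau={\rm out}$. The two-line geometric observation above is the missing step.
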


\begin{proof}
We treat the three terms separately.
Since $f$ is bounded and compactly supported and $B$ is locally
integrable, the vector functions $F$, $F_N$ and  $F_P$ below belong
to $L^1$ and have compact support.  Thus both parts of
Lemma~\ref{lem:global-local-convex-body} apply on the stated form
domain.

For $R_N^{\rm out}$, apply
Lemma~\ref{lem:global-local-convex-body} in dimension $2m$ to
$F=(-Bf,f)$ and then apply
Lemma~\ref{lem:2m-commutator-lift}.  Multiply the resulting inclusion
by $\one_{Q_N^c}(x)$ and test it against $g(x)$.  Cubes which do not
intersect $Q_N^c$ in positive measure make no contribution.  If
$|Q\cap Q_N^c|>0$, then
\begin{equation}
 \ell(Q)\geq c_d2^N
 \quad\text{or}\quad
 \dist(Q,0)\geq c_d2^N.                              \label{eq:target-exterior-geometry}
\end{equation}
Indeed, if $\ell(Q)<2^{N-2}$ and $x\in Q\setminus Q_N$, then one
coordinate of every point of $Q$ has absolute value at least
$2^N-\ell(Q)>2^{N-1}$.  This proves
\eqref{eq:target-exterior-geometry} after decreasing $c_d$ by a
dimensional factor.  Enlarge the integral in $x$ from
$Q\cap Q_N^c$ to $Q$.  The support function of the convex body gives
\eqref{eq:three-remainder-positive-form}.

For $R_N^{\rm rem}$, apply the global inclusion to
$$
 F_N(y)=\one_{(3Q_N)^c}(y)\binom{-B(y)f(y)}{f(y)}
$$
and multiply the result by $\one_{Q_N}$.  Drop every sparse cube on which
the convex average of $F_N$ is zero.  Each remaining cube meets both
$Q_N$ and $(3Q_N)^c$.  Therefore
\begin{equation}
 \ell(Q)\geq c_d2^N.                                 \label{eq:remote-source-geometry}
\end{equation}
Apply the $2m$ lift, test against $g$ and  enlarge the $y$ integral
from $Q\cap(3Q_N)^c$ to $Q$ and the $x$ integral from
$Q\cap Q_N$ to $Q$.

For $R_N^{\rm loc}$, fix $P\in\mathcal P_N$ and apply the
prescribed-top inclusion to
$$
 F_P(y)=\one_{3P}(y)\binom{-B(y)f(y)}{f(y)}.
$$
Multiply the result by $\one_P$ and use the $2m$ lift.  Every resulting
cube satisfies
$$
 \ell(Q)\leq C_d\ell(P)=C_d2^{-N}.                  \label{eq:local-target-geometry}
$$
The cubes $P$ belong to one fixed grid.  Use the same $3^d$ adjacent
grids and
the same map $Q\mapsto3Q$ for every $P$.  Let
$E_{P,Q}\subset Q\subset P$ be the witness set from the local
recursion.  The cubes $P$ have disjoint interiors, so these witness
sets are pairwise disjoint even when they come from different top
cubes.  The map $Q\mapsto3Q$ is injective on the whole original
grid, not only inside one top cube.  Hence, in each of the $3^d$
three-lattice classes, the union over all $P$ remains
$(2\cdot3^d)^{-1}$-sparse.  Its cubes have side length at most
$3\cdot2^{-N}$.  Since $3Q\subset3P$, the vector $F_P$ equals the
full lifted vector on the averaging cube $3Q$.  Sum over $P$ and
enlarge only the $x$ integrals from $P\cap3Q$ to $3Q$.

Equations \eqref{eq:target-exterior-geometry},
\eqref{eq:remote-source-geometry} and 
\eqref{eq:local-target-geometry} place every cube in
$\mathfrak T_{N-c_0}$ for a fixed $c_0$.  This completes the proof.
\end{proof}

\begin{remark}
Lemma~\ref{lem:three-remainder-sparse-forms} estimates the three
operators in the identity \eqref{eq:CLLV-exact-decomposition}.
The sparse operators are auxiliary bounded operators.  They are not
claimed to be compact.
\end{remark}

\begin{lemma}
\label{lem:scalar-phase-linearization}
For fixed $f,g,B$ and a finite sparse family $\mathcal S$, there are
measurable scalar kernels $k_Q$, $|k_Q|\leq1$, such that
\begin{align}
 &\sum_{Q\in\mathcal S}\frac1{|Q|}\int_Q\int_Q
 |\langle(B(x)-B(y))f(y),g(x)\rangle|\,dy\,dx =\langle[M_B,\mathsf A_{\mathcal S,k}]f,g\rangle,
                                                               \label{eq:scalar-phase-linearization}
\end{align}
where
\begin{equation}
 \mathsf A_{\mathcal S,k}f(x)
 =\sum_{Q\in\mathcal S}\one_Q(x)\avg_Qk_Q(x,y)f(y)\,dy.         \label{eq:generalized-sparse-average}
\end{equation}
\end{lemma}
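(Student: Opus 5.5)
The plan is to choose the phase of each integrand pointwise and absorb it into a scalar kernel. Fix $f,g,B$ and the finite sparse family $\mathcal S$. For $Q\in\mathcal S$ and $(x,y)\in Q\times Q$, put
$$
 \Theta_Q(x,y)=\langle(B(x)-B(y))f(y),g(x)\rangle .
$$
Since the pairing is linear in the first variable, this is a scalar measurable function of $(x,y)$. Define
$$
 k_Q(x,y)=\begin{cases}\overline{\Theta_Q(x,y)}/|\Theta_Q(x,y)|, & \Theta_Q(x,y)\neq0,\\ 0,& \Theta_Q(x,y)=0.\end{cases}
$$
Then $k_Q$ is measurable, $|k_Q|\leq1$, and $k_Q\Theta_Q=|\Theta_Q|$ pointwise. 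In fact $k_Q$ does not depend on $Q$ except through its domain $Q\times Q$, but indexing it by $Q$ costs nothing.

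Next compute the commutator with $\mathsf A_{\mathcal S,k}$. Because $k_Q$ is a scalar kernel,
$$
 [M_B,\mathsf A_{\mathcal S,k}]f(x)=\sum_{Q\in\mathcal S}\one_Q(x)\avg_Qk_Q(x,y)(B(x)-B(y))f(y)\,dy .
$$
This uses the fact that $B(x)$ commutes with multiplication by the scalar $k_Q(x,y)$. Pairing with $g$, pulling the scalar $k_Q(x,y)$ out of the first slot, and exchanging the finite sum with the integrals gives
$$
 \langle[M_B,\mathsf A_{\mathcal S,k}]f,g\rangle=\sum_{Q}\frac1{|Q|}\int_Q\int_Q k_Q\Theta_Q\,dy\,dx=\sum_Q\frac1{|Q|}\int_Q\int_Q|\Theta_Q|\,dy\,dx,
$$
which is the identity in the lemma.

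The only step needing care is justifying Fubini and the exchange of sum and integrals, which I expect to be the main (mild) obstacle. Since $f,g$ are bounded and compactly supported, $B$ is locally integrable, and each $Q$ is bounded, I would bound
$$
 |\Theta_Q|\le (\|B(x)\|_{\op}+\|B(y)\|_{\op})\|f\|_\infty\|g\|_\infty \in L^1(Q\times Q).
$$
This makes every double integral absolutely convergent. Finiteness of $\mathcal S$ then handles the sum, and it also ensures that $\mathsf A_{\mathcal S,k}f$ and $\mathsf A_{\mathcal S,k}(Bf)$ are integrable functions, so that the commutator pairing is well defined on this form domain.
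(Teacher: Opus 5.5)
Your proposal is correct and is essentially the paper's proof: you choose the phase $k_Q=\overline{\Theta_Q}/|\Theta_Q|$ (set to zero where $\Theta_Q=0$), expand the commutator using that $k_Q$ is scalar, and pair with $g$ using linearity in the first slot. Your check that $\Theta_Q$ is absolutely integrable on $Q\times Q$ for bounded compactly supported $f,g$ and locally integrable $B$ is a detail the paper leaves implicit, and it correctly justifies the use of Fubini and the finite exchange of sum and integrals.
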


\begin{proof}
For each $Q$, put
$$
 \zeta_Q(x,y)=
 \langle(B(x)-B(y))f(y),g(x)\rangle
$$
and define
$$
 k_Q(x,y)=
 \begin{cases}
  \overline{\zeta_Q(x,y)}/|\zeta_Q(x,y)|,
       &\zeta_Q(x,y)\ne0,\\
  0,&\zeta_Q(x,y)=0.
 \end{cases}
$$
This is measurable, $|k_Q|\leq1$ and 
$$
 k_Q(x,y)\langle(B(x)-B(y))f(y),g(x)\rangle
 =|\langle(B(x)-B(y))f(y),g(x)\rangle|.
$$
Expand the commutator with \eqref{eq:generalized-sparse-average} and
pair it with $g$.  If the inner product is linear in its second
variable, use the conjugate phase.
\end{proof}

\subsubsection{Matrix-weighted sparse estimates}

\begin{lemma}
\label{lem:generalized-sparse-weighted}
Let $W\in\mathcal A_p(\mathcal D)$ be an $n\times n$ matrix weight.
For every finite $\eta$-sparse family
$\mathcal S\subset\mathcal D$ and all scalar kernels
$k_Q$ that are measurable and satisfy $|k_Q(x,y)|\leq1$,
\begin{equation}
 \|\mathsf A_{\mathcal S,k}\|_{L^p(W)\to L^p(W)}
 \leq C_{d,n,p,\eta,[W]_{\mathcal A_p(\mathcal D)}}.           \label{eq:generalized-sparse-weighted}
\end{equation}
The same constant applies to every finite subfamily of an infinite
sparse family.
\end{lemma}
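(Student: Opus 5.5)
Since $|k_Q|\leq1$, the generalized average $\mathsf A_{\mathcal S,k}$ is dominated pointwise by a convex-body sparse operator. The plan is therefore to reduce \eqref{eq:generalized-sparse-weighted} to the known matrix-weighted bound for convex-body sparse operators.

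Conjugate by the weight. Write $f=W^{-1/p}h$ with $h\in L^p(\R^d;\C^n)$, and pair $\mathsf A_{\mathcal S,k}f$ against $W^{-1/p'}$-type duals. Concretely, it suffices to bound
$$
 \sum_{Q\in\mathcal S}\int_Q\avg_Q
 |k_Q(x,y)|\,|\langle W^{1/p}(x)W^{-1/p}(y)h(y),g(x)\rangle|\,dy\,dx
$$
by $C\|h\|_{L^p}\|g\|_{L^{p'}}$ for unweighted $h\in L^p$ and $g\in L^{p'}$. After using $|k_Q|\leq1$, the kernel disappears. We are left with the positive form
$$
 \sum_{Q\in\mathcal S}\frac1{|Q|}\int_Q\int_Q
 \|W^{1/p}(x)W^{-1/p}(y)\|\text{-type pairing}\,|h(y)|\,|g(x)|.
$$
This expression must not be taken as the naive operator-norm kernel, since that loses the $\mathcal A_p$ structure.

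The correct route inserts reducing matrices. Let $\mathcal W_Q$ be the exponent-$p$ reducing matrix of $W$ on $Q$. Factor
$$
 W^{1/p}(x)W^{-1/p}(y)=\big[W^{1/p}(x)\mathcal W_Q^{-1}\big]\big[\mathcal W_QW^{-1/p}(y)\big].
$$
Then bound the $Q$-term by
$$
 \avg_Q\!\big|\mathcal W_QW^{-1/p}h\big|\;\cdot\;\int_Q\|W^{1/p}(x)\mathcal W_Q^{-1}\|_{\op}|g(x)|\,dx.
$$
Now use the two facts already recorded earlier in the paper. The first is that $\avg_Q\|W^{1/p}\mathcal W_Q^{-1}\|_{\op}^p\lesssim_n1$, as in \eqref{eq:primal-reducing-basis-bound}. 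The second is that $\big(\avg_Q\|\mathcal W_QW^{-1/p}\|_{\op}^{p'}\big)^{1/p'}\lesssim[W]_{\mathcal A_p(\mathcal D)}^{1/p}$, as in the proof of Lemma~\ref{lem:localcomparison}. A naive H\"older inequality would give only a bounded-average form, which does not sum over sparse cubes. So I would use the standard matrix reverse-H\"older (or $A_\infty$-type) improvement: $\|W^{1/p}(x)\mathcal W_Q^{-1}\|_{\op}\in L^{p+\epsilon}$ and $\|\mathcal W_QW^{-1/p}\|_{\op}\in L^{p'+\epsilon}$ on average, uniformly in $Q$, with exponents depending on the dyadic characteristic. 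This gives
$$
 \text{term}_Q\lesssim |Q|\,\Big(\avg_Q|h|^{s}\Big)^{1/s}\Big(\avg_Q|g|^{t}\Big)^{1/t}
$$
with $s<p$ and $t<p'$.

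Finally, sum over the sparse family. Replace $|Q|$ by $\eta^{-1}|E_Q|$ and dominate the sum by
$$
 \int M_s h\,M_t g\leq\|M_sh\|_{L^p}\|M_tg\|_{L^{p'}}\lesssim\|h\|_p\|g\|_{p'},
$$
using $s<p$ and $t<p'$. The subfamily remark is immediate, because the constants use only $\eta$ and the characteristic and never the cardinality of $\mathcal S$.

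The main obstacle is the reverse H\"older step for matrix weights in a single dyadic grid, i.e.\ the uniform higher integrability of $\|W^{1/p}\mathcal W_Q^{-1}\|$ and of its dual. I would try to obtain it by applying the scalar reverse H\"older inequality to the scalar weights $|W^{1/p}(x)e|^p$ for $e=\mathcal W_Q^{-1}e_j$. These are scalar $A_\infty$ weights with uniform constants, by Volberg's/Goldberg's results cited in \cite{CUIM}. Alternatively, one may simply cite the convex-body sparse bound of \cite{NPTV} together with the pointwise inclusion $\mathsf A_{\mathcal S,k}f(x)\in\sum_Q\one_Q(x)\langle\!\langle f\rangle\!\rangle_Q$, which holds because $y\mapsto k_Q(x,y)$ is an admissible multiplier for each fixed $x$.
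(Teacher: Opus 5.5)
Your argument is correct and is essentially the paper's proof: conjugate by $W^{1/p}$, drop $k_Q$ using $|k_Q|\le1$, factor through the reducing matrix $\mathcal W_Q$, use directional dyadic reverse H\"older to reach exponents $s<p$ and $t<p'$, and then sum with the sparse sets $E_Q$ and the dyadic maximal theorem. Your warning against the operator-norm kernel is unnecessary, since the paper bounds the form by $\|W^{1/p}(x)W^{-1/p}(y)\|_{\op}|h(y)||g(x)|$ and then factors that norm through $\mathcal W_Q$, which is what your estimate does; for the dual factor the paper normalizes by $\mathcal W'_Q$ and uses $\|\mathcal W_Q\mathcal W'_Q\|_{\op}\le C$, which is equivalent to your direct use of the directional weights $|W^{-1/p}\mathcal W_Q e_j|^{p'}$.
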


\begin{proof}
Conjugate by $W^{1/p}$ and test against an unweighted function $g$.
Since the kernels are scalar and bounded by one, the absolute value of
the resulting form is at most
\begin{align}
 \sum_{Q\in\mathcal S}\frac1{|Q|}\int_Q\int_Q
 &\|W^{1/p}(x)W^{-1/p}(y)\|_{\op}
 |f(y)|\,|g(x)|\,dy\,dx.                            \label{eq:positive-matrix-sparse-form-new}
\end{align}

We prove the required estimate for this positive form.  Let
$\mathcal W_Q$ and $\mathcal W'_Q$ be exponent-$p$ and
exponent-$p'$ reducing matrices for $W$ and $W^{-p'/p}$ on $Q$.
For $e\ne0$, the scalar directional weight
$$
 w_e(x)=|W^{1/p}(x)e|^p
$$
has dyadic $A_p$ characteristic at most
$[W]_{\mathcal A_p(\mathcal D)}$.  The same assertion holds for the
dual matrix weight.  The scalar dyadic reverse H\"older theorem,
applied to the standard basis after normalization by
$\mathcal W_Q$ and $\mathcal W'_Q$, therefore gives one number
$\rho>1$ such that
\begin{align}
 \bigg(\avg_Q
 \|W^{1/p}(x)\mathcal W_Q^{-1}\|_{\op}^{p\rho}\,dx
 \bigg)^{1/(p\rho)}
 &\leq C,                                             \label{eq:sparse-normalized-primal-RH}\\
 \bigg(\avg_Q
 \|W^{-1/p}(x)(\mathcal W'_Q)^{-1}\|_{\op}^{p'\rho}\,dx
 \bigg)^{1/(p'\rho)}
 &\leq C.                                             \label{eq:sparse-normalized-dual-RH}
\end{align}
This is the dyadic version of the directional reverse H\"older
calculation in \cite[proof of Corollary~1.16]{CUIM}.  Matrix
$A_p$ duality also gives
$$
 \|\mathcal W_Q\mathcal W'_Q\|_{\op}\leq C.
$$
Since the matrices are self-adjoint,
$$
 \|\mathcal W_QW^{-1/p}(x)\|_{\op}
 \leq C\|W^{-1/p}(x)(\mathcal W'_Q)^{-1}\|_{\op}.
$$
Thus the left side has a uniform normalized $L^{p'\rho}(Q)$ norm.

For $x,y\in Q$, factor the operator norm in
\eqref{eq:positive-matrix-sparse-form-new} directly:
$$
 \|W^{1/p}(x)W^{-1/p}(y)\|_{\op}
 \leq
 \|W^{1/p}(x)\mathcal W_Q^{-1}\|_{\op}
 \|\mathcal W_QW^{-1/p}(y)\|_{\op}.
$$
Put
$$
 r=(p'\rho)',\qquad s=(p\rho)'.
$$
Then $1<r<p$ and $1<s<p'$.  H\"older's inequality and
\eqref{eq:sparse-normalized-primal-RH}--%
\eqref{eq:sparse-normalized-dual-RH} bound
\eqref{eq:positive-matrix-sparse-form-new} by
\begin{equation}
 C\sum_{Q\in\mathcal S}|Q|
 \bigg(\avg_Q|f|^r\bigg)^{1/r}
 \bigg(\avg_Q|g|^s\bigg)^{1/s}.                    \label{eq:positive-form-rs}
\end{equation}

Choose pairwise disjoint sets $E_Q\subset Q$ with
$|E_Q|\geq\eta|Q|$.  With
$$
 M_r^{\mathcal D}f=
 \big(M^{\mathcal D}(|f|^r)\big)^{1/r},
 \qquad
 M_s^{\mathcal D}g=
 \big(M^{\mathcal D}(|g|^s)\big)^{1/s},
$$
the sum in \eqref{eq:positive-form-rs} is at most
\begin{align*}
 \eta^{-1}\sum_{Q\in\mathcal S}
  \int_{E_Q}M_r^{\mathcal D}f\,M_s^{\mathcal D}g
 &\leq\eta^{-1}\int_{\mathbb R^d}
  M_r^{\mathcal D}f\,M_s^{\mathcal D}g
 \leq C\|f\|_{L^p}\|g\|_{L^{p'}}.
\end{align*}
The last estimate follows from $r<p$, $s<p'$ and  the dyadic maximal
theorem.  This is the positive-form argument in the proof of
\cite[Theorem~5.1]{CUIM}, now written for an arbitrary scalar kernel
with $|k_Q(x,y)|\leq1$.

Duality proves \eqref{eq:generalized-sparse-weighted}.  The estimate is
unchanged when cubes are deleted, so it is uniform over finite
subfamilies.
\end{proof}

For a matrix function $A$, put
\begin{equation}
 b_{\mathcal D}(A)=
 \sup_{Q\in\mathcal D}\bigg\{\avg_Q\bigg(
 \avg_Q\|V^{1/p}(x)(A(x)-A(y))U^{-1/p}(y)\|_{\op}^{p'}\,dy
                    \bigg)^{p/p'}dx\bigg\}^{1/p}.             \label{eq:dyadic-pairwise-sparse}
\end{equation}

\begin{lemma}
\label{lem:two-matrix-sparse-commutator}
Let $\mathcal S\subset\mathcal D$ be a finite $\eta$-sparse family.
If $b_{\mathcal D}(A)<\infty$, then
\begin{equation}
 \|[M_A,\mathsf A_{\mathcal S,k}]\|_{L^p(U)\to L^p(V)}
 \leq C\,b_{\mathcal D}(A),                          \label{eq:two-matrix-sparse-bound}
\end{equation}
uniformly in $\mathcal D,\mathcal S$ and  $k$, once $\eta$ is fixed.
\end{lemma}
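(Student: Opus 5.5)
The plan is to reuse the block-weight argument of Lemma~\ref{lem:block-weight-bdd} and Proposition~\ref{prop:two-matrix-upper}, replacing $T$ by the generalized sparse operator $\mathsf A_{\mathcal S,k}$. Its $L^p$ bound for matrix weights is uniform in $\mathcal S$ and $k$ by Lemma~\ref{lem:generalized-sparse-weighted}.

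\emph{Normalization.} If $b_{\mathcal D}(A)=0$, then $A(x)=A(y)$ for almost every $(x,y)\in Q\times Q$, for every $Q\in\mathcal D$. Since the sparse family consists of cubes of $\mathcal D$ and the kernels are scalar, the commutator vanishes. Otherwise we may replace $A$ by $\tilde A=A/b_{\mathcal D}(A)$, since both sides of \eqref{eq:two-matrix-sparse-bound} are homogeneous of degree one in $A$.

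\emph{Block weight.} Define $\Phi_{\tilde A}$ and $\mathbb W_{\tilde A}$ as in \eqref{eq:block-Phi-bdd}. The computation \eqref{eq:block-product-bdd}, together with the triangle inequality for block upper triangular matrices, gives the dyadic estimate
$$
[\mathbb W_{\tilde A}]_{\mathcal A_p(\mathcal D)}\leq c_p\big([U]_{\mathcal A_p}+[V]_{\mathcal A_p}+1\big).
$$
The off-diagonal contribution is at most $b_{\mathcal D}(\tilde A)^p=1$. Only cubes in $\mathcal D$ enter, so only the dyadic quantity \eqref{eq:dyadic-pairwise-sparse} is needed. Local integrability of $\Phi_{\tilde A}$ and $\Phi_{\tilde A}^{-1}$ on cubes of $\mathcal D$ follows from the finiteness of $b_{\mathcal D}(A)$, by the argument of Lemma~\ref{lem:block-weight-bdd}.

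\emph{Applying the sparse bound.} Lemma~\ref{lem:generalized-sparse-weighted} with $n=2m$ and $W=\mathbb W_{\tilde A}$ then bounds $\mathsf A_{\mathcal S,k}\otimes I_{2m}$ on $L^p(\mathbb W_{\tilde A})$ by a constant depending only on $d,m,p,\eta,[U]_{\mathcal A_p},[V]_{\mathcal A_p}$. The identity \eqref{eq:block-upper-entry-bdd} is pure block algebra: the operator has a scalar kernel, so it acts componentwise. Hence the upper-right block of $M_{\Phi}(\mathsf A_{\mathcal S,k}\otimes I_{2m})M_{\Phi^{-1}}$ represents $[M_{\tilde A},\mathsf A_{\mathcal S,k}]$ from $L^p(U)$ to $L^p(V)$. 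Multiplying back by $b_{\mathcal D}(A)$ gives \eqref{eq:two-matrix-sparse-bound}.

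\emph{Main obstacle.} The delicate step is justifying that the block identity represents the commutator for a general measurable $A$, and not only for nice symbols. Because $\mathcal S$ is finite and each average is over a bounded cube, $\mathsf A_{\mathcal S,k}$ is an integral operator with a bounded, compactly supported kernel. I would therefore verify the identity directly on the dense classes $\mathscr T_A\times\mathscr T_{A^*}$. Fubini's theorem applies there, using $\tilde AU^{-1/p}\in L^{p'}$ and $V^{1/p}\tilde A\in L^p$ on each $Q\in\mathcal S$, which is again the local integrability established in the block-weight step. Density then extends the bound.
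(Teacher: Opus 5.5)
Your proposal is correct and follows the paper's proof: normalize by $b_{\mathcal D}(A)$, build the block weight from $\Phi$, restrict the one-cube-at-a-time argument of Lemma~\ref{lem:block-weight-bdd} to cubes of $\mathcal D$ to get a dyadic $\mathcal A_p(\mathcal D)$ bound, identify the commutator as the upper-right block of $M_{\Phi}(\mathsf A_{\mathcal S,k}\otimes I_{2m})M_{\Phi^{-1}}$, and apply Lemma~\ref{lem:generalized-sparse-weighted}. The differences are cosmetic. You treat $b_{\mathcal D}(A)=0$ by noting that $A$ is then a.e.\ constant on each cube of $\mathcal D$, where the paper normalizes by $b+\varepsilon$. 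The density verification you flag as the main obstacle is harmless but unnecessary: for a finite family with bounded kernels, the local integrability of $V^{1/p}A$ and $AU^{-1/p}$ makes every integral absolutely convergent, so the block identity holds pointwise for every $f\in L^p(U)$.
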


\begin{proof}
Let $b=b_{\mathcal D}(A)$.  First suppose that $b>0$ and put $C=A/b$.
Define
\begin{equation}
 \Phi_C(x)=
 \begin{pmatrix}
 V^{1/p}(x)&V^{1/p}(x)C(x)\\
 0&U^{1/p}(x)
 \end{pmatrix},
 \qquad
 \mathbb W_C(x)=\big(\Phi_C(x)^*\Phi_C(x)\big)^{p/2}.            \label{eq:sparse-block-weight}
\end{equation}
The proof of Lemma~\ref{lem:block-weight-bdd} uses one cube at a time.
Restricting that proof to $Q\in\mathcal D$ gives
\begin{equation}
 [\mathbb W_C]_{\mathcal A_p(\mathcal D)}
 \leq C_{m,p}\big(1+[U]_{\mathcal A_p(\mathcal D)}
                     +[V]_{\mathcal A_p(\mathcal D)}\big).
                                                               \label{eq:sparse-block-Ap}
\end{equation}
Write $\mathsf A=\mathsf A_{\mathcal S,k}$ and
$\widetilde{\mathsf A}=\mathsf A\otimes I_{2m}$.  Direct
multiplication gives
\begin{equation}
 M_{\Phi_C}\widetilde{\mathsf A}M_{\Phi_C^{-1}}
 =\begin{pmatrix}
 M_{V^{1/p}}\mathsf A M_{V^{-1/p}}&
 M_{V^{1/p}}[M_C,\mathsf A]M_{U^{-1/p}}\\
 0&M_{U^{1/p}}\mathsf A M_{U^{-1/p}}
 \end{pmatrix}.                                      \label{eq:sparse-block-conjugation}
\end{equation}
Use the polar decomposition
$\Phi_C=\mathcal U_C\mathbb W_C^{1/p}$.  Multiplication by
$\mathcal U_C$ is an unweighted $L^p$ isometry.  Hence the norm of
the upper-right block is at most
$$
 \|\widetilde{\mathsf A}\|_{L^p(\mathbb W_C)
                           \to L^p(\mathbb W_C)}.
$$
Apply Lemma~\ref{lem:generalized-sparse-weighted} and
\eqref{eq:sparse-block-Ap}.  This proves the bound for $C$.
Multiplication by $b$ proves \eqref{eq:two-matrix-sparse-bound}.  If
$b=0$, replace $b$ by $b+\varepsilon$ and let
$\varepsilon\downarrow0$.
\end{proof}

\begin{lemma}
\label{lem:restricted-large-sparse}
Let $W\in\mathcal A_p(\mathcal D)$ be an $n\times n$ matrix weight.
Let $\mathcal S_{\geq N}\subset\mathcal D$ be a finite family consisting
of cubes with $\ell(Q)\geq2^N$.  Suppose that
$$
 \operatorname{diam}E\leq C_02^M,\qquad N\geq M+c_0.
$$
There are constants $C<\infty$ and $\theta>0$ such that
\begin{align}
 &\|\one_E\mathsf A_{\mathcal S_{\geq N},k}\|_{L^p(W)\to L^p(W)}
 +\|\mathsf A_{\mathcal S_{\geq N},k}\one_E\|_{L^p(W)\to L^p(W)}
 \leq C2^{-\theta(N-M)}.                             \label{eq:restricted-large-sparse}
\end{align}
The constants depend only on $d,n,p,C_0$ and the matrix $\mathcal A_p$
characteristic of $W$.
\end{lemma}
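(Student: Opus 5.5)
The plan is to reduce the weighted estimate to a positive sparse form, exactly as in the proof of Lemma~\ref{lem:generalized-sparse-weighted}, and to extract the geometric decay from the mismatch between the small set $E$ and the large cubes. By duality and symmetry it suffices to treat $\one_E\mathsf A_{\mathcal S_{\geq N},k}$. The adjoint of $\mathsf A_{\mathcal S,k}$ is an operator of the same form with kernels $\overline{k_Q(y,x)}$, and it acts on $L^{p'}(W^{-p'/p})$ with $W^{-p'/p}\in\mathcal A_{p'}(\mathcal D)$. Thus the second term in \eqref{eq:restricted-large-sparse} follows from the first, applied to the dual weight. After conjugating by $W^{1/p}$, we must bound
$$
 \sum_{Q\in\mathcal S_{\geq N}}\frac1{|Q|}\int_{Q\cap E}\int_Q
 \|W^{1/p}(x)W^{-1/p}(y)\|_{\op}|f(y)|\,|g(x)|\,dy\,dx .
$$

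Following the earlier proof, we factor through the reducing matrices $\mathcal W_Q$ and $\mathcal W'_Q$. We then apply H\"older with the reverse H\"older exponents \eqref{eq:sparse-normalized-primal-RH}--\eqref{eq:sparse-normalized-dual-RH}, now keeping $\one_E$ in the $x$-factor. The $x$-integral is
$$
 \avg_Q\one_E(x)\|W^{1/p}(x)\mathcal W_Q^{-1}\|_{\op}|g(x)|\,dx .
$$
Using H\"older with three exponents and the $L^{p\rho}$ bound, this is at most
$$
 C\,(|E\cap Q|/|Q|)^{\gamma}\Big(\avg_Q|g|^{s}\Big)^{1/s}
$$
for some $\gamma>0$ and some $s<p'$. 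We get this by choosing $s$ slightly larger than $(p\rho)'$ but still below $p'$, which leaves a spare power of $\one_E$. Since $\ell(Q)\geq2^N$ and $\operatorname{diam}E\leq C_02^M$, we have $|E\cap Q|/|Q|\lesssim 2^{-d(N-M)}$. This is the source of $\theta$.

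It remains to sum over the cubes while keeping this gain. The cubes meeting $E$ at each scale $2^k\geq2^N$ number at most $2^d$, since $N\geq M+c_0$ makes $E$ small compared with these cubes. So the relevant cubes form at most $2^d$ nested chains. On a chain, $g$ is supported on $E$ after the restriction, so
$$
 \Big(\avg_Q|g\one_E|^s\Big)^{1/s}\leq |Q|^{-1/s}\|g\|_{L^{p'}}|E|^{1/s-1/p'} .
$$
Similarly, $\big(\avg_Q|f|^r\big)^{1/r}\leq|Q|^{-1/p}\|f\|_{L^p}$. Each term is then $\lesssim 2^{-\gamma' d(k-M)}\|f\|_p\|g\|_{p'}$ for some $\gamma'>0$, and summing the geometric series over $k\geq N$ gives $C2^{-\theta(N-M)}$. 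Sparseness is not even needed here: the chain structure alone suffices.

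The main obstacle is the exponent bookkeeping. The powers of $|Q|$ must cancel exactly, and a positive power of $|E|/|Q|$ must survive. I will check this by choosing $r<p$ and $s<p'$ with $1/r+1/s>1$ only slightly, using the reverse H\"older margin $\rho$. The gain is then $(|E|/|Q|)^{1/s-1/p'}$ times bounded factors. If the chain count fails for some cubes, because the dyadic grid places $E$ near the boundaries of many cubes, I will instead use the sparse bound from Lemma~\ref{lem:generalized-sparse-weighted} on the localized functions. The constant then depends only on $C_0$ through $c_0$.
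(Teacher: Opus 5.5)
Your argument is correct and is in substance the paper's proof. Both extract a per-cube gain $(|E|/|Q|)^{c}$ from the directional reverse H\"older (equivalently $A_\infty$) bounds for $\|W^{1/p}\mathcal W_Q^{-1}\|_{\op}$, use $|E|/|Q|\lesssim 2^{-d(k-M)}$, sum geometrically over the scales $k\geq N$, and obtain the second term by duality with $W^{-p'/p}$. The only difference is bookkeeping: the paper proves a one-cube operator bound and sums each generation using disjointness of its cubes, whereas you bound each cube's form by $\|f\|_{L^p}\|g\|_{L^{p'}}$ and use that only $O_{d,C_0}(1)$ cubes per scale meet $E$. Two small points on your write-up: that count holds for any $c_0$, so your fallback is never needed, and the three-exponent H\"older step is redundant, since $s=(p\rho)'$ already gives the gain $(|E|/|Q|)^{1/p-1/(p\rho)}$.
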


\begin{proof}
Let $\mathcal W_Q$ be an exponent-$p$ reducing matrix for $W$ on
$Q$.  For one cube, put
$$
 \mathsf A_{Q,k}f(x)=\one_Q(x)\avg_Qk_Q(x,y)f(y)\,dy.
$$
If $h=W^{1/p}f$, then
\begin{align*}
 |W^{1/p}(x)\mathsf A_{Q,k}f(x)|
 &\leq\one_Q(x)
 \|W^{1/p}(x)\mathcal W_Q^{-1}\|_{\op}
 \avg_Q\|\mathcal W_QW^{-1/p}(y)\|_{\op}|h(y)|\,dy.
\end{align*}
The matrix $\mathcal A_p$ condition gives
\begin{equation}
 \bigg(\avg_Q
 \|\mathcal W_QW^{-1/p}(y)\|_{\op}^{p'}\,dy
 \bigg)^{1/p'}\leq C.                              \label{eq:restricted-dual-factor}
\end{equation}
Therefore, for $F\subset Q$,
\begin{align}
 \|\one_F\mathsf A_{Q,k}f\|_{L^p(W)}
 &\leq C\bigg(\frac1{|Q|}\int_F
   \|W^{1/p}(x)\mathcal W_Q^{-1}\|_{\op}^p\,dx
             \bigg)^{1/p}\|\one_Qf\|_{L^p(W)}.      \label{eq:one-cube-before-Ainfty}
\end{align}

Uniformly in nonzero vectors $e$, the scalar weights
$|W^{1/p}e|^p$ have bounded directional $A_\infty$ characteristics.
Thus there are $C_1,\delta>0$ such that
$$
 \int_F|W^{1/p}(x)e|^p\,dx
 \leq C_1\bigg(\frac{|F|}{|Q|}\bigg)^\delta
       \int_Q|W^{1/p}(x)e|^p\,dx.
$$
Apply this to $e=\mathcal W_Q^{-1}e_s$, $1\leq s\leq n$.  Finite
dimensionality and the reducing property give
\begin{equation}
 \frac1{|Q|}\int_F
 \|W^{1/p}(x)\mathcal W_Q^{-1}\|_{\op}^p\,dx
 \leq C\bigg(\frac{|F|}{|Q|}\bigg)^\delta.          \label{eq:matrix-absolute-continuity}
\end{equation}
It follows that
\begin{equation}
 \|\one_E\mathsf A_{Q,k}\|_{L^p(W)\to L^p(W)}
 \leq C\bigg(\frac{|E\cap Q|}{|Q|}\bigg)^{\delta/p}.
                                                               \label{eq:one-cube-restricted}
\end{equation}

For $j\geq N$, let
$$
 \mathcal S_j=\{Q\in\mathcal S_{\geq N}:
                   2^j\leq\ell(Q)<2^{j+1}\}.
$$
The cubes in $\mathcal S_j$ belong to one dyadic generation and are
pairwise disjoint.  At most $C_{d,C_0}$ of them meet $E$ and , for each
such cube,
$$
 \frac{|E\cap Q|}{|Q|}
 \leq C_{d,C_0}2^{-d(j-M)}.
$$
It follows from \eqref{eq:one-cube-restricted} and disjointness that
$$
 \|\one_E\mathsf A_{\mathcal S_j,k}f\|_{L^p(W)}^p
 \leq C2^{-d\delta(j-M)}
       \sum_{Q\in\mathcal S_j}\|\one_Qf\|_{L^p(W)}^p
 \leq C2^{-d\delta(j-M)}\|f\|_{L^p(W)}^p.
$$
Sum this estimate over $j\geq N$:
$$
 \|\one_E\mathsf A_{\mathcal S_{\geq N},k}\|_{L^p(W)\to L^p(W)}
 \leq C\sum_{j=N}^\infty2^{-d\delta(j-M)/p}
 \leq C2^{-d\delta(N-M)/p}.
$$

For the restriction in the second variable, use duality.  The adjoint
is an operator
of the same form, with the variables in $k_Q$ interchanged.  Hence
\begin{align*}
 \|\mathsf A_{\mathcal S_{\geq N},k}\one_E\|_{L^p(W)\to L^p(W)}
 =\|\one_E\mathsf A_{\mathcal S_{\geq N},k}^*\|
       _{L^{p'}(W^\#)\to L^{p'}(W^\#)}.
\end{align*}
Apply the estimate in the first variable to $W^\#=W^{-p'/p}$.  Matrix $A_p$
duality controls its $A_{p'}$ characteristic.  Take the smaller of
the two exponents.
\end{proof}

\subsubsection{The dyadic tail of the symbol}

Let
\begin{equation}
 \epsilon_N(B)=\sup\{\omega_Q(B):
 \ell(Q)<2^{-N}\ \text{or}\ \ell(Q)>2^N\ \text{or}\
 \dist(Q,0)>2^N\}.                                  \label{eq:epsilon-tail}
\end{equation}
Equivalently, $\epsilon_N(B)$ is the maximum of the three suprema in
\eqref{eq:epsilon-tail}, taken separately over the small, large and 
far cubes.  These three indexing families decrease with $N$.  Thus
$\epsilon_N(B)$ is nonincreasing.
Changing the numerical constants in this definition only shifts $N$
by a fixed integer.  By the definition of the three $\omega$-tails,
or equivalently the six primal and dual tails,
\begin{equation}
 \lim_{N\to\infty}\epsilon_N(B)=\mathcal T_{V,U,p}(B).    \label{eq:epsdelta}
\end{equation}

Fix a dyadic grid $\mathcal D$.  Let
$$
 \Sig=\{0,1\}^d\setminus\{(1,\ldots,1)\}.
$$
For $I\in\mathcal D$ and $\varepsilon\in\Sig$, let
$h_I^\varepsilon$ be the $L^2$-normalized cancellative tensor Haar
function on $I$ and  put
$$
 B_I^\varepsilon=\int_I B(x)h_I^\varepsilon(x)\,dx.
$$
Set $C_M=[-2^M,2^M]^d$ and
\begin{align*}
 \Lag_M(\mathcal D)
 &=\{I\in\mathcal D:2^{-M}\leq\ell(I)\leq2^M,\
                         I\cap C_M\ne\varnothing\},\\
 P_M^{\mathcal D}B
 &=\sum_{I\in\Lag_M(\mathcal D)}
       \sum_{\varepsilon\in\Sig}B_I^\varepsilon h_I^\varepsilon,\\
 R_M^{\mathcal D}B&=B-P_M^{\mathcal D}B.             \label{eq:lagom-Haar-projection}
\end{align*}
The sum defining $P_M^{\mathcal D}B$ is finite.  All identities are
modulo constant matrices.

\begin{lemma}
\label{lem:dyadictail}
There is a fixed integer $c$ such that, uniformly over all dyadic
grids,
\begin{equation}
 \|R_M^{\mathcal D}B\|_{\BMO^p_{V,U,\mathcal D}}
 \leq C\epsilon_{M-c}(B).                            \label{eq:dyadictail}
\end{equation}
Consequently,
\begin{equation}
 b_{\mathcal D}(R_M^{\mathcal D}B)
 \leq C\epsilon_{M-c}(B).                            \label{eq:pairwise-dyadic-tail}
\end{equation}
\end{lemma}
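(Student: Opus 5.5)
We plan to prove \eqref{eq:dyadictail} cube by cube. Fix $Q\in\mathcal D$ and estimate $\omega_Q(R_M^{\mathcal D}B)$, the dyadic seminorm, by splitting into cases according to how $Q$ sits relative to the Haar window $\Lag_M(\mathcal D)$. The basic structural fact is this: modulo constants on $Q$, the restriction to $Q$ of $P_M^{\mathcal D}B$ involves only Haar terms $h_I^\varepsilon$ with $I\subsetneq Q$ or $I\supseteq Q$, and terms with $I\supsetneq Q$ are constant on $Q$. Hence
\[
 (R_M^{\mathcal D}B-(R_M^{\mathcal D}B)_Q)\one_Q
 =\sum_{I\subseteq Q,\ I\notin\Lag_M}\sum_\varepsilon B_I^\varepsilon h_I^\varepsilon ,
\]
and $\omega_Q$ only sees the descendants of $Q$ that lie outside the window.

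\emph{Case 1:} $\ell(Q)<2^{-M}$, or $Q\cap C_M=\varnothing$ (the latter forces $\dist(Q,0)\gtrsim 2^M$ or $\ell(Q)\gtrsim 2^M$). Then no descendant of $Q$ lies in $\Lag_M$. So $R_M^{\mathcal D}B$ and $B$ have the same oscillation on $Q$, and $\omega_Q(R_M^{\mathcal D}B)=\omega_Q(B)\le\epsilon_{M-c}(B)$.

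\emph{Case 2:} $\ell(Q)>2^M$. Then $Q$ is not in the window, and its in-window descendants $I$ satisfy $2^{-M}\le\ell(I)\le2^M$ and $I\cap C_M\neq\varnothing$. Write $P_M^{\mathcal D}B$ restricted to $Q$ as a finite combination of martingale differences $\Delta_k B$ localized near $C_M$ at scales $2^{-M}\le 2^k\le 2^M$. The difference $R_M^{\mathcal D}B$ on $Q$ is then $B-\mathbb E_{2^M}B$ restricted to descendants near $C_M$, plus the remaining terms.

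The key identity is that the window part on the cube $J\in\mathcal D$ of side $2^M$ containing $I$ equals $(E_{2^{-M}}B-B_J)\one_{J}$, summed over the boundedly many cubes $J$ of side $\sim 2^M$ meeting $C_M$. Consequently
\[
 R_M^{\mathcal D}B=(B-B_Q)\ \text{off these }J,\qquad
 R_M^{\mathcal D}B=(B-E_{2^{-M}}B)+B_J\ \text{on }J,
\]
modulo a constant. The oscillation on $Q$ then splits into three pieces:
\begin{enumerate}
\item the oscillation of $B$ on $Q$, controlled by $\omega_Q(B)$ with $\ell(Q)>2^M$, which is a large cube;
\item the pieces $B-E_{2^{-M}}B$ on small cubes, controlled by small-cube oscillations together with Lemma~\ref{lem:local-matrix-weight-bdd}(i);
\item the jumps $B_J-B_Q$, controlled via Lemma~\ref{lem:change-center-bdd} and Lemma~\ref{lem:local-matrix-weight-bdd}(ii) by $\omega_Q(B)$, with bounded overlap since $\ell(J)\simeq\ell(Q)$ only when $\ell(Q)\le 2^{M+c}$.
\end{enumerate}

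\emph{Case 3:} $2^{-M}\le\ell(Q)\le2^M$ and $Q\cap C_M\ne\varnothing$. Here $R_M^{\mathcal D}B$ on $Q$ equals $B-E_{2^{-M}}B$ (every descendant of side $\ge 2^{-M}$ still meets $C_M$, up to bounded boundary effects handled as in Case 2). So
\[
 \alpha_Q(R_M^{\mathcal D}B)^p\lesssim \avg_Q\sum_{J\subset Q,\ \ell(J)=2^{-M}}\one_J\,\|V^{1/p}(B-B_J)\mathcal U_Q^{-1}\|_{\op}^p .
\]
We then replace $\mathcal U_Q^{-1}$ by $\mathcal U_J^{-1}$, and this is the main obstacle. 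The norm $\|\mathcal U_J\mathcal U_Q^{-1}\|_{\op}$ is not uniformly bounded; for large $\ell(Q)/\ell(J)$ it only grows polynomially by the matrix $\mathcal A_p$ doubling. We therefore plan to work instead with the pairwise form $\widehat\omega$ via Lemma~\ref{lem:localcomparison}: bound
\[
 \avg_Q\Big(\avg_Q\|V^{1/p}(x)(R(x)-R(y))U^{-1/p}(y)\|^{p'}dy\Big)^{p/p'}dx
\]
with $R=R_M^{\mathcal D}B$, and write $R(x)-R(y)=(B(x)-B_{J_x})-(B(y)-B_{J_y})$. Each term factors through the weight averages over $Q$. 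For $x\in J$ we use $\|\,\cdot\,\mathcal U_Q\|$ against $\mathcal U_J$ only via the reverse direction $\|\mathcal U_Q^{-1}\cdots\|$, which the $\mathcal A_p$ condition controls by a Hölder/reverse Hölder step as in Lemma~\ref{lem:generalized-sparse-weighted}. If this factorization fails to be uniform, the fallback is to accept a loss and absorb it by shifting $c$, using that small-cube oscillations are controlled by $\epsilon_{M-c}$.

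Finally, \eqref{eq:pairwise-dyadic-tail} follows from \eqref{eq:dyadictail} by the dyadic version of Lemma~\ref{lem:localcomparison}, which is proved one cube at a time.
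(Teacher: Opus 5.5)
Your Case~1 and the final deduction of \eqref{eq:pairwise-dyadic-tail} are fine. Case~3 (lagom $Q$), which is the heart of the lemma, has a genuine gap.

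In Case~3, $\omega_Q(B)$ is not small. You must therefore use the oscillations $\omega_R(B)\le\epsilon_{M-c}(B)$ of the small or far descendants $R$, which are normalized by $\mathcal U_R$, and then renormalize by $\mathcal U_Q$. You correctly note that $\|\mathcal U_R\mathcal U_Q^{-1}\|_{\op}$ is not uniformly bounded, but neither of your remedies works.
\begin{itemize}
\item Passing to $\widehat\omega_Q$ only relocates the difficulty. After H\"older in $y$, the factor $(\avg_Q\|\mathcal U_QU^{-1/p}(y)\|_{\op}^{p'}\,dy)^{1/p'}$ is harmless, but you are back to $\|V^{1/p}(x)(B(x)-B_{R_x})\mathcal U_Q^{-1}\|_{\op}$. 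The reverse H\"older step of Lemma~\ref{lem:generalized-sparse-weighted} only compares $W^{\pm1/p}$ with the reducing matrix of the same cube.
\item Absorbing a loss by shifting $c$ is impossible. The loss grows like a power of $\ell(Q)/\ell(R)$, which is unbounded in $M$, whereas $C$ must be independent of $M$. Changing the index of $\epsilon$ gives no multiplicative gain.
\end{itemize}
The missing observation is that no pointwise bound on $\|\mathcal U_R\mathcal U_Q^{-1}\|_{\op}$ is needed. For pairwise disjoint $R\subset Q$, the reducing property on each $R$ and then on $Q$ gives
\[
 \sum_R|R|\,\|\mathcal U_R\mathcal U_Q^{-1}\|_{\op}^p
 \lesssim\sum_{s=1}^m\sum_R\int_R|U^{1/p}(x)\mathcal U_Q^{-1}e_s|^p\,dx
 \leq\sum_{s=1}^m\int_Q|U^{1/p}(x)\mathcal U_Q^{-1}e_s|^p\,dx
 \lesssim|Q|.
\]
Since $\int_R\|V^{1/p}(B-B_R)\mathcal U_Q^{-1}\|_{\op}^p\leq\|\mathcal U_R\mathcal U_Q^{-1}\|_{\op}^p\,|R|\,\alpha_R(B)^p$, summing over $R$ gives $\alpha_Q(R_M^{\D}B)\lesssim\epsilon_{M-c}(B)$. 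The same argument with $\mathcal V'_R(\mathcal V'_Q)^{-1}$ at exponent $p'$ handles $\beta_Q$. This is exactly the step in the paper's proof.

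Your description of $R_M^{\D}B$ on a lagom cube also needs correcting. When $Q\not\subset C_M$, descendants of $Q$ of side at least $2^{-M}$ need not meet $C_M$. Those that do not are outside the window, and their number is unbounded in $M$, not a bounded boundary effect. The correct identity is
\[
 R_M^{\D}B-(R_M^{\D}B)_Q=\sum_{R\in\mathcal R(Q)}\one_R(B-B_R),
\]
where $\mathcal R(Q)$ is the family of maximal non-lagom descendants of $Q$. Each such $R$ either has side $2^{-M-1}$ (not $2^{-M}$ as in your formula) or is disjoint from $C_M$, so that $\dist(R,0)\geq2^M$. This is why $\omega_R(B)\leq\epsilon_{M-c}(B)$.

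The same partial-overlap issue invalidates the ``key identity'' in your Case~2. That case is nevertheless easy to repair without small-cube oscillations, which would face the same obstacle. On $Q$, the function $R_M^{\D}B-(R_M^{\D}B)_Q$ is a sum of terms $\one_J(B_J-B_Q)$ over the cubes $J\subseteq Q$ of side $2^M$, plus terms $\one_R(B-B_R)$ over disjoint roots $R$. Apply Lemma~\ref{lem:local-matrix-weight-bdd}(i) on each $J$ and each $R$ to $F=(B-B_Q)\mathcal U_Q^{-1}$; for $\beta_Q$ use its dual version with $W=U^{-p'/p}$. This gives $\alpha_Q(R_M^{\D}B)\lesssim\alpha_Q(B)\leq\epsilon_M(B)$ directly. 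It is a more elementary substitute for the uniform $L^p(V)$ bound on $\{0,1\}$ Haar multipliers from \cite{IKP} that the paper uses for non-lagom $Q$.
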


\begin{proof}
We suppress the fixed shift $c$.  Fix $Q\in\mathcal D$.  On $Q$,
\begin{equation}
 R_M^{\mathcal D}B-(R_M^{\mathcal D}B)_Q
 =\sum_{\substack{I\subseteq Q\\I\notin\Lag_M(\mathcal D)}}
       \sum_{\varepsilon\in\Sig}
       B_I^\varepsilon h_I^\varepsilon.             \label{eq:Haar-tail-on-Q}
\end{equation}
All Haar identities in this proof are understood as limits of finite
martingale truncations.  For the primal columns, apply the truncations
to
$$
 \one_Q(B-B_Q)\mathcal U_Q^{-1}.
$$
They converge in $L^p(V)$: the matrix-weighted martingale transforms
are uniformly bounded and  finite Haar sums are dense.  For the dual
columns, apply the same argument in $L^{p'}(U^\#)$ to
$$
 \one_Q(B^*-B_Q^*)(\mathcal V'_Q)^{-1}.
$$
We first use the identities for finite generations and then pass to
the limit in these weighted spaces.
Suppose first that $Q\notin\Lag_M(\mathcal D)$.  The right side is a
scalar Haar projection of $B-B_Q$.  Scalar Haar multipliers with
coefficients in $\{0,1\}$ are uniformly bounded on $L^p(V)$; see
\cite[Section~2.1]{IKP}.  Apply this to the columns of
$(B-B_Q)\mathcal U_Q^{-1}$.  It gives
$$
 \alpha_Q(R_M^{\mathcal D}B)\leq C\alpha_Q(B).
$$
Apply the dual multiplier bound on $L^{p'}(U^\#)$ to the adjoint
expression.  It gives
$$
 \beta_Q(R_M^{\mathcal D}B)\leq C\beta_Q(B).
$$
Since $Q$ is non-lagom, both quantities are bounded by
$C\epsilon_M(B)$.

Now suppose that $Q\in\Lag_M(\mathcal D)$.  The non-lagom descendants
of $Q$ split into disjoint full dyadic subtrees.  Let
$\mathcal R(Q)$ be their maximal roots.  Every
$R\in\mathcal R(Q)$ is small or far and 
\begin{equation}
 R_M^{\mathcal D}B-(R_M^{\mathcal D}B)_Q
 =\sum_{R\in\mathcal R(Q)}\one_R(B-B_R).             \label{eq:Haar-tail-roots}
\end{equation}
Use disjointness and insert the reducing matrix $\mathcal U_R$:
\begin{align*}
 &\avg_Q\|V^{1/p}(R_M^{\mathcal D}B
       -(R_M^{\mathcal D}B)_Q)\mathcal U_Q^{-1}\|_{\op}^p\leq
 \frac{C\epsilon_M(B)^p}{|Q|}
 \sum_{R\in\mathcal R(Q)}|R|
       \|\mathcal U_R\mathcal U_Q^{-1}\|_{\op}^p.
\end{align*}
The reducing property and finite dimensionality give
\begin{align*}
 \sum_{R\in\mathcal R(Q)}|R|
       \|\mathcal U_R\mathcal U_Q^{-1}\|_{\op}^p
 &\leq C\sum_R\int_R
       \|U^{1/p}(x)\mathcal U_Q^{-1}\|_{\op}^p\,dx\leq C|Q|.
\end{align*}
Thus $\alpha_Q(R_M^{\mathcal D}B)\leq C\epsilon_M(B)$.
Take adjoints in \eqref{eq:Haar-tail-roots} and use the reducing
matrices $\mathcal V'_R$ for the dual oscillation.  The estimate
$$
 \sum_R|R|\|\mathcal V'_R(\mathcal V'_Q)^{-1}\|_{\op}^{p'}
 \leq C|Q|
$$
gives the same bound for $\beta_Q$.  This proves
\eqref{eq:dyadictail}.  The per-cube comparison between
$\widehat\omega_Q$ and $\alpha_Q+\beta_Q$ proves
\eqref{eq:pairwise-dyadic-tail}.
\end{proof}

\subsubsection{The finite-lagom sparse estimate}

\begin{proposition}
\label{prop:finite-lagom-sparse}
Let
$
 \mathcal S_N\subset\mathfrak T_N(\mathcal D)
$
be a finite $\eta$-sparse family.  If $1\leq M\leq N-c_1$, then
\begin{align}
 \|[M_B,\mathsf A_{\mathcal S_N,k}]\|_{X\to Y}
 \leq C\left\{\epsilon_{M-c_2}(B)
       +2^{-\theta(N-M)}\|B\|_{\BMO^p_{V,U}}\right\}.           \label{eq:finite-lagom-sparse-bound}
\end{align}
The constants are uniform in $\mathcal D,\mathcal S_N$ and  $k$, once
$\eta$ is fixed.
\end{proposition}

\begin{proof}
Put
$
 B_M=P_M^{\mathcal D}B,\
 R_M=R_M^{\mathcal D}B.
$
Lemma~\ref{lem:dyadictail} and
Lemma~\ref{lem:two-matrix-sparse-commutator} give
\begin{equation}
 \|[M_{R_M},\mathsf A_{\mathcal S_N,k}]\|_{X\to Y}
 \leq C\epsilon_{M-c_2}(B).                          \label{eq:finite-lagom-remainder}
\end{equation}

Let $E_M=\operatorname{supp}B_M$.  Since $B_M$ is a finite sum of
Haar functions from $\Lag_M(\mathcal D)$,
\begin{equation}
 E_M\subset[-C2^M,C2^M]^d,\qquad
 \operatorname{diam}E_M\leq C2^M.                   \label{eq:finite-Haar-support}
\end{equation}
Also, $B_M$ is constant on every dyadic cube of side length less than
$2^{-M}$.

Split $\mathcal S_N$ into
\begin{align*}
 \mathcal S_N^{\rm lg}
 &=\{Q\in\mathcal S_N:\ell(Q)>2^N\},\\
 \mathcal S_N^{\rm sm}
 &=\{Q\in\mathcal S_N\setminus\mathcal S_N^{\rm lg}:
                    \ell(Q)<2^{-N}\},\\
 \mathcal S_N^{\rm far}
 &=\mathcal S_N\setminus
       (\mathcal S_N^{\rm lg}\cup\mathcal S_N^{\rm sm}).
\end{align*}
For $N\geq M+c_1$, the function $B_M$ is constant on every cube in
$\mathcal S_N^{\rm sm}$.  Hence
\begin{equation}
 [M_{B_M},\mathsf A_{\mathcal S_N^{\rm sm},k}]=0.    \label{eq:small-sparse-zero}
\end{equation}
Every cube in $\mathcal S_N^{\rm far}$ is disjoint from $E_M$ after
the fixed shift in $c_1$.  Since $B_M$ vanishes on such a cube,
\begin{equation}
 [M_{B_M},\mathsf A_{\mathcal S_N^{\rm far},k}]=0.   \label{eq:far-sparse-zero}
\end{equation}

It remains to treat the large cubes.  Put
$
 a=b_{\mathcal D}(B_M).
$
The triangle inequality, the per-cube comparison and 
Lemma~\ref{lem:dyadictail} give
\begin{equation}
 a\leq C\|B\|_{\BMO^p_{V,U}}.                       \label{eq:finite-symbol-pairwise}
\end{equation}
For $a>0$, put $\widetilde B_M=B_M/a$ and form the block weight
$\mathbb W_{\widetilde B_M}$ in \eqref{eq:sparse-block-weight}.  Its dyadic
$A_p$ characteristic is bounded independently of $M$ and
$\mathcal D$.

Write
$$
 \mathsf A=\mathsf A_{\mathcal S_N^{\rm lg},k},
 \qquad D=[M_{\widetilde B_M},\mathsf A].
$$
The kernel of $D$ contains $\widetilde B_M(x)-\widetilde B_M(y)$.  It
vanishes when both
$x$ and $y$ lie outside $E_M$.  Therefore
\begin{equation}
 D=\one_{E_M}D+\one_{E_M^c}D\one_{E_M}.              \label{eq:large-symbol-support-split}
\end{equation}
Let
$$
 \mathcal L=M_{\Phi_{\widetilde B_M}}(\mathsf A\otimes I_{2m})
                       M_{\Phi_{\widetilde B_M}^{-1}}.
$$
The conjugated operator $M_{V^{1/p}}DM_{U^{-1/p}}$ is the
upper-right block of $\mathcal L$.  Hence
\begin{align}
 \|D\|_{L^p(U)\to L^p(V)}
 &\leq\|\one_{E_M}\mathcal L\|_{L^p\to L^p}
       +\|\mathcal L\one_{E_M}\|_{L^p\to L^p}.       \label{eq:large-symbol-block-restriction}
\end{align}
The polar decomposition of $\Phi_{\widetilde B_M}$ and the fact that scalar
indicators commute with matrix multiplication show that the right
side equals
\begin{align*}
 &\|\one_{E_M}(\mathsf A\otimes I_{2m})\|
       _{L^p(\mathbb W_{\widetilde B_M})\to L^p(\mathbb W_{\widetilde B_M})}
       +\|(\mathsf A\otimes I_{2m})\one_{E_M}\|
       _{L^p(\mathbb W_{\widetilde B_M})\to L^p(\mathbb W_{\widetilde B_M})}.
\end{align*}
Apply Lemma~\ref{lem:restricted-large-sparse} and
\eqref{eq:finite-Haar-support}.  After multiplying by $a$ and using
\eqref{eq:finite-symbol-pairwise}, we obtain
\begin{equation}
 \|[M_{B_M},\mathsf A_{\mathcal S_N^{\rm lg},k}]\|_{X\to Y}
 \leq C2^{-\theta(N-M)}\|B\|_{\BMO^p_{V,U}}.         \label{eq:large-sparse-decay}
\end{equation}
If $a=0$, use $a+\varepsilon$ and let $\varepsilon\downarrow0$.
Combine \eqref{eq:finite-lagom-remainder},
\eqref{eq:small-sparse-zero}, \eqref{eq:far-sparse-zero} and 
\eqref{eq:large-sparse-decay}.
\end{proof}

\subsubsection{The upper essential norm}

\begin{proposition}
\label{prop:upper}
If $B\in\BMO^p_{V,U}$, then
\begin{equation}
 \|C_{B,T}\|_{\ess,X\to Y}
 \leq C_T\mathcal T_{V,U,p}(B).                          \label{eq:general-czo-upper-essential-norm}
\end{equation}
In particular, if $B\in\VMO^p_{V,U}$, then
$C_{B,T}:X\to Y$ is compact.  No non-degeneracy assumption on $T$ is
used.
\end{proposition}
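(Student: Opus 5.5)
The plan is to use the decomposition \eqref{eq:CLLV-exact-decomposition}, $C_{B,T}=K_N+R_N^{\rm out}+R_N^{\rm loc}+R_N^{\rm rem}$. By Corollary~\ref{cor:CLLV-compact-approximant}, $K_N$ is compact for every $N$, so
\[
\|C_{B,T}\|_{\ess}\leq\limsup_{N\to\infty}\big(\|R_N^{\rm out}\|+\|R_N^{\rm loc}\|+\|R_N^{\rm rem}\|\big).
\]
It therefore suffices to show that each remainder has norm at most $C\{\epsilon_{M-c}(B)+2^{-\theta(N-M)}\|B\|_{\BMO^p_{V,U}}\}$ for every $M\leq N-c_1$. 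We then let $N\to\infty$ first and $M\to\infty$ second, and \eqref{eq:epsdelta} gives $\mathcal T_{V,U,p}(B)$.

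To bound a remainder $R_N^\tau$, we test against bounded, compactly supported $f\in\mathscr T_B$ and $g\in\mathscr T_{B^*}$, which are dense by Section~\ref{sec:boundedness-proof}. Lemma~\ref{lem:three-remainder-sparse-forms} bounds $|\langle R_N^\tau f,g\rangle|$ by at most $L$ positive sparse forms over $\eta$-sparse families $\mathcal S_N^{\tau,t}\subset\mathfrak T_{N-c_0}(\mathcal D^{\tau,t})$. These families may be taken finite by truncation, with monotone convergence in the sum. By Lemma~\ref{lem:scalar-phase-linearization}, each positive form equals $\langle[M_B,\mathsf A_{\mathcal S,k}]f,g\rangle$ for suitable kernels with $|k_Q|\leq1$. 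Proposition~\ref{prop:finite-lagom-sparse}, applied with $N-c_0$ in place of $N$, then gives
\[
|\langle R_N^\tau f,g\rangle|\leq C\big\{\epsilon_{M-c_2}(B)+2^{-\theta(N-c_0-M)}\|B\|_{\BMO^p_{V,U}}\big\}\|f\|_X\|g\|_{L^{p'}(V^\#)},
\]
with constants independent of the grids, the families and the kernels. This constant uniformity is what makes the argument work, because the grids and families depend on $f$ and $g$. Density and duality then give the operator bound on each $R_N^\tau$.

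The main obstacle is justifying that the identity \eqref{eq:CLLV-exact-decomposition}, which holds on the form domain, passes to operators on $X$. This is needed so that the essential norm of $C_{B,T}$ is bounded by the sum of the remainder norms. I would handle it as follows. Each remainder form, together with $K_N$, is bounded on $\mathscr T_B\times\mathscr T_{B^*}$ by the estimates above, and $C_{B,T}$ is bounded by Proposition~\ref{prop:two-matrix-upper}. Hence all of these forms extend uniquely by density, and the identity persists in the extension.

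A secondary check concerns Lemma~\ref{lem:global-local-convex-body}. That lemma needs $F=(-Bf,f)\in L^1$ with compact support, and this holds because $f$ is bounded with compact support and $B$ is locally integrable. Finally, the argument uses only the size and regularity of the kernel, so it requires no non-degeneracy; the VMO conclusion is then immediate.
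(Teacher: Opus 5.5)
Your proposal is correct and follows the paper's proof essentially step for step. It uses the same decomposition with compact $K_N$, the same chain from Lemma~\ref{lem:three-remainder-sparse-forms} through the phase linearization to Proposition~\ref{prop:finite-lagom-sparse} on finite subfamilies (uniform in grids, families and phases), and the same extension of the identity by density; the only cosmetic difference is that you keep $M$ free and take iterated limits, whereas the paper sets $M=\lfloor (N-c_0)/2\rfloor$ and takes a single limit in $N$. One bookkeeping correction: because the sparse families lie in $\mathfrak T_{N-c_0}$, the admissible range is $M\le N-c_0-c_1$ rather than $M\le N-c_1$, which does no harm since you send $N\to\infty$ with $M$ fixed.
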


\begin{proof}
Fix a sufficiently large $N$ and put
$$
 M=\left\lfloor\frac{N-c_0}{2}\right\rfloor.
$$
We take $N$ large enough that $M\geq1$ and
$M\leq (N-c_0)-c_1$, as required in
Proposition~\ref{prop:finite-lagom-sparse}.
Apply Lemma~\ref{lem:three-remainder-sparse-forms} to one of the
three remainder operators and a pair $f,g$ in the form domain.
Fix a finite subfamily of one of the resulting sparse families.
Lemma~\ref{lem:scalar-phase-linearization} and
Proposition~\ref{prop:finite-lagom-sparse}, with the fixed shift
$N-c_0$, give
\begin{align*}
 &\sum_{Q\ \mathrm{in\ the\ finite\ subfamily}}
 \frac1{|Q|}\int_Q\int_Q
 |\langle(B(x)-B(y))f(y),g(x)\rangle|\,dy\,dx\\
 &\quad\leq C\left\{\epsilon_{M-c}(B)
       +2^{-\theta(N-c_0-M)}\|B\|_{\BMO^p_{V,U}}\right\}
       \|f\|_X\|g\|_{L^{p'}(V^\#)}.
\end{align*}
The constant is independent of the finite subfamily, the grid and 
the scalar phases.  Take the supremum over finite subfamilies.  Since
the positive form is an increasing sum, we obtain the same estimate
for the full sparse family.  Sum over the fixed number of grids and
over
$$
 \tau\in\{{\rm out},{\rm loc},{\rm rem}\}.
$$
Lemma~\ref{lem:three-remainder-sparse-forms} now gives
\begin{align}
 \|R_N^{\rm out}+R_N^{\rm loc}+R_N^{\rm rem}\|_{X\to Y}\leq C_T\left\{\epsilon_{M-c}(B)
       +2^{-\theta(N-c_0-M)}\|B\|_{\BMO^p_{V,U}}\right\}.       \label{eq:CLLV-remainder-norm}
\end{align}
Use the identity \eqref{eq:CLLV-exact-decomposition}:
\begin{equation}
 \|C_{B,T}-K_N\|_{X\to Y}
 \leq C_T\left\{\epsilon_{\lfloor(N-c_0)/2\rfloor-c}(B)
       +2^{-\theta(N-c_0-\lfloor(N-c_0)/2\rfloor)}
                    \|B\|_{\BMO^p_{V,U}}\right\}.     \label{eq:CLLV-compact-approximation}
\end{equation}
The operator $K_N$ is compact by
Corollary~\ref{cor:CLLV-compact-approximant}.  Hence
\begin{align*}
 \|C_{B,T}\|_{\ess,X\to Y}
 &\leq C_T\liminf_{N\to\infty}
 \left\{\epsilon_{\lfloor(N-c_0)/2\rfloor-c}(B)
       +2^{-\theta(N-c_0-\lfloor(N-c_0)/2\rfloor)}
                    \|B\|_{\BMO^p_{V,U}}\right\} =C_T\mathcal T_{V,U,p}(B),
\end{align*}
where the last equality follows from \eqref{eq:epsdelta}.  If
$B\in\VMO^p_{V,U}$, the right side of
\eqref{eq:CLLV-compact-approximation} tends to zero.  Thus
$C_{B,T}$ is an operator-norm limit of compact operators.
\end{proof}

\begin{remark}
The dyadic grids are used only to estimate the three remainders in
\eqref{eq:CLLV-exact-decomposition}.
\end{remark}

\subsection{Necessity and the lower essential norm}\label{sec:lower}

Choose the integer $M$ in Lemma~\ref{lem:reciprocal-tests}
sufficiently large below.  Index the
members of $\mathscr R_M(Q)$ by their relative positions
$\rho\in\{1,\ldots,M\}^d$ and write $R_\rho(Q)$ for the corresponding
subcube.

\begin{lemma}\label{lem:weaknull}
There exist $r>1$ and $C<\infty$ such that
\begin{equation}
 \bigg(\avg_S
 \|U^{1/p}(x)\mathcal U_S^{-1}\|_{\op}^{pr}\,dx
 \bigg)^{1/(pr)}\le C                               \label{eq:reverseholder}
\end{equation}
for every cube $S$.  For each fixed relative position $\rho$ and
column index $s$, the tests
$g_{Q_n,R_\rho(Q_n),s}$ in Lemma~\ref{lem:reciprocal-tests} converge
weakly to zero in $L^p(U)$ whenever
\begin{enumerate}[label=(\roman*)]
\item $\ell(Q_n)\to0$;
\item $\ell(Q_n)\to\infty$;
\item $0<c\le\ell(Q_n)\le C_0<\infty$ and
$\dist(Q_n,0)\to\infty$.
\end{enumerate}
\end{lemma}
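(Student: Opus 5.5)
The reverse H\"older estimate \eqref{eq:reverseholder} is the directional reverse H\"older bound already used in the proof of Lemma~\ref{lem:generalized-sparse-weighted}. For each unit vector $e$, the scalar weight $|U^{1/p}e|^p$ is a scalar $A_p$ weight with characteristic controlled by $[U]_{\mathcal A_p}$. Hence it satisfies a reverse H\"older inequality with an exponent $r>1$ depending only on that bound. Apply this to $e=\mathcal U_S^{-1}e_s$ for $1\le s\le m$. The reducing property gives $\avg_S|U^{1/p}\mathcal U_S^{-1}e_s|^p\lesssim1$, and the inequality $\|T\|_{\op}\le\sum_s|Te_s|$ then yields \eqref{eq:reverseholder}.

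For weak convergence, I will pass to the unweighted space. Put $h_n=U^{1/p}g_{Q_n,R_\rho(Q_n),s}$ and $S_n=S_{Q_n}$. Weak convergence of $g_n$ to zero in $L^p(U)$ is equivalent to weak convergence of $h_n$ to zero in $L^p$. By \eqref{eq:reciprocal-test-bound} the sequence $h_n$ is bounded, so it suffices to test against a dense class, namely $\varphi\in L^\infty_c(\R^d;\C^m)$. Each $h_n$ is supported in $Q_n^\sharp\subset S_n$, and $|\psi|\le C_A$. Hence
$$
 |\langle h_n,\varphi\rangle|\le C\,|Q_n|^{-1/p}\int_{S_n\cap\operatorname{supp}\varphi}\|U^{1/p}(y)\mathcal U_{Q_n}^{-1}\|_{\op}|\varphi(y)|\,dy.
$$
Lemma~\ref{lem:local-matrix-weight-bdd}(ii) lets me replace $\mathcal U_{Q_n}$ by $\mathcal U_{S_n}$ at bounded cost. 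Then H\"older's inequality with exponents $pr$ and $(pr)'$ together with \eqref{eq:reverseholder} gives the bound
$$
 C\|\varphi\|_\infty|Q_n|^{-1/p}|S_n|^{1/(pr)}|S_n\cap\operatorname{supp}\varphi|^{1-1/(pr)}\lesssim \|\varphi\|_\infty|Q_n|^{-1/p+1/(pr)}|S_n\cap\operatorname{supp}\varphi|^{1-1/(pr)}.
$$

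The three cases now go as follows.
\begin{enumerate}[label=(\roman*)]
\item If $\ell(Q_n)\to0$, bound $|S_n\cap\operatorname{supp}\varphi|\le|S_n|\lesssim|Q_n|$. The right side is then $\lesssim|Q_n|^{1-1/p}=|Q_n|^{1/p'}\to0$.
\item If $\ell(Q_n)\to\infty$, bound $|S_n\cap\operatorname{supp}\varphi|\le|\operatorname{supp}\varphi|$. The right side is $\lesssim|Q_n|^{-(1/p)(1-1/r)}\to0$, because $r>1$. This is exactly where the reverse H\"older gain is needed.
\item If the side lengths stay comparable to $1$ and $\dist(Q_n,0)\to\infty$, then $S_n$ has bounded diameter and moves off to infinity. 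So $S_n\cap\operatorname{supp}\varphi=\varnothing$ for large $n$, and the pairing vanishes.
\end{enumerate}

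The main obstacle is case (ii). The naive bound, which uses only the reducing property without the exponent gain, gives just boundedness and no decay. The extra integrability from \eqref{eq:reverseholder} is what beats the normalization $|Q_n|^{-1/p}$ when the support of $\varphi$ is fixed. A smaller point to check is that the change from $\mathcal U_{Q_n}$ to $\mathcal U_{S_n}$ is uniform. This follows from the geometry $|S_Q|\le C_A|Q|$, applying Lemma~\ref{lem:local-matrix-weight-bdd}(ii) to the pair $Q,S$ (or to subcubes of $S$ of side $\ell(Q)$ covering $S$) together with finite dimensionality.
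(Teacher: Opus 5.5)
Your proposal is correct and follows the paper's proof essentially step for step. You use the same directional reverse H\"older argument for \eqref{eq:reverseholder} and the same uniform comparison $\|\mathcal U_{S_n}\mathcal U_{Q_n}^{-1}\|_{\op}\lesssim1$. The paper obtains that comparison directly from the averaging bound \eqref{eq:weighted-averaging-operator-bdd} on $S$; your covering by subcubes of side $\ell(Q)$ amounts to the same thing, and it is the right way to use Lemma~\ref{lem:local-matrix-weight-bdd}(ii), whose statement requires equal side lengths. You also use the reverse H\"older gain in exactly the same place, the large-cube case. The only cosmetic difference is that you test directly against $L^\infty_c$ functions, whereas the paper first records the local mass bound $\int_E|F_n|^p\lesssim(|E|/|Q_n|)^{1/r'}$ and then uses absolute continuity of the $L^{p'}$ norm.
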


\begin{proof}
For every nonzero $e\in\mathbb C^m$, the directional weight
$$
 w_e(x)=|U^{1/p}(x)e|^p
$$
belongs to scalar $A_\infty$, with its $A_\infty$ characteristic
bounded uniformly in $e$; see the directional reverse H\"older
calculation in \cite[proof of Corollary~1.16]{CUIM}.  Consequently,
there are common constants $r>1$ and $C$ such that
\begin{equation}
 \bigg(\avg_S|U^{1/p}(x)e|^{pr}\,dx\bigg)^{1/(pr)}
 \leq C\bigg(\avg_S|U^{1/p}(x)e|^p\,dx\bigg)^{1/p}
                                                        \label{eq:directional-reverseholder}
\end{equation}
for every cube $S$ and every $e\in\mathbb C^m$.  If
$e_1,\ldots,e_m$ is the standard basis, then
$$
 \|U^{1/p}(x)\mathcal U_S^{-1}\|_{\op}^{pr}
 \lesssim_m \sum_{k=1}^m
 |U^{1/p}(x)\mathcal U_S^{-1}e_k|^{pr}.
$$
Apply \eqref{eq:directional-reverseholder} to
$\mathcal U_S^{-1}e_k$ and use the reducing normalization
$$
 \bigg(\avg_S|U^{1/p}(x)\mathcal U_S^{-1}e_k|^p\,dx\bigg)^{1/p}
 \simeq |\mathcal U_S\mathcal U_S^{-1}e_k|=1.
$$
After summing over $k$, this proves \eqref{eq:reverseholder}.

For later use, suppose that $Q\subset S$ and $|S|\leq C_1|Q|$.
Apply \eqref{eq:weighted-averaging-operator-bdd} on $S$ to
$f=\one_Q\mathcal U_Q^{-1}e$.  Since
$$
 \mathsf A_Sf=\frac{|Q|}{|S|}\one_S\mathcal U_Q^{-1}e,
$$
we obtain
\begin{align}
 \bigg(\avg_S|U^{1/p}(x)\mathcal U_Q^{-1}e|^p\,dx\bigg)^{1/p}
 &\leq [U]_{\mathcal A_p}^{1/p}
 \bigg(\frac{|S|}{|Q|}\bigg)^{1/p'}
 \bigg(\avg_Q|U^{1/p}(x)\mathcal U_Q^{-1}e|^p\,dx\bigg)^{1/p}
 \lesssim_{U,C_1}|e|.                              \label{eq:comparable-reducing-weak-null}
\end{align}
By the reducing property on $S$, this gives
$\|\mathcal U_S\mathcal U_Q^{-1}\|_{\op}\lesssim_{U,C_1}1$.

Put $F_n=U^{1/p}g_{Q_n,R_\rho(Q_n),s}$.  Let $S_n$ be the cube in
Lemma~\ref{lem:reciprocal-tests} that contains
$Q_n\cup Q_n^\sharp$ and satisfies $|S_n|\lesssim |Q_n|$.  The bound
for the reciprocal multiplier and
\eqref{eq:comparable-reducing-weak-null} give the pointwise estimate
\begin{equation}
 |F_n(x)|\lesssim |Q_n|^{-1/p}\mathbf1_{Q_n^\sharp}(x)
 \|U^{1/p}(x)\mathcal U_{S_n}^{-1}\|_{\op}.          \label{eq:weak-null-pointwise}
\end{equation}
It follows from H\"older's inequality and \eqref{eq:reverseholder}
that, for every measurable set $E$ of finite measure,
\begin{align}
 \int_E|F_n(x)|^p\,dx
 &\lesssim |Q_n|^{-1}|E\cap Q_n^\sharp|^{1/r'}
 \bigg(\int_{S_n}
 \|U^{1/p}(x)\mathcal U_{S_n}^{-1}\|_{\op}^{pr}\,dx
 \bigg)^{1/r}\lesssim
 \bigg(\frac{|E|}{|Q_n|}\bigg)^{1/r'}.             \label{eq:weak-null-local-mass}
\end{align}

Weak convergence in $L^p(U)$ is equivalent to weak convergence of
$F_n$ in unweighted $L^p$.  The functions $F_n$ are uniformly bounded
in $L^p$ by Lemma~\ref{lem:reciprocal-tests}.  In case (i), their
supports have measure tending to zero.  In case (iii), the reciprocal
geometry gives
$$
 \dist(Q_n^\sharp,0)
 \geq\dist(Q_n,0)-C\ell(Q_n)\longrightarrow\infty.
$$
Thus their supports escape every compact set.  H\"older's inequality and absolute
continuity of the $L^{p'}$ norm give weak convergence in both cases.
In case (ii), \eqref{eq:weak-null-local-mass} tends to zero for every
bounded $E$.  Test first against compactly supported $L^{p'}$
functions and then use the uniform global $L^p$ bound to approximate
an arbitrary $L^{p'}$ function by compactly supported ones.
\end{proof}

\begin{proposition}\label{prop:lower}
Let $1<p<\infty$, let $U,V$ be $m\times m$ matrix $\mathcal A_p$
weights and  let $T$ be an $L^2$-bounded Calder\'on--Zygmund operator
with a $\delta$-standard scalar kernel.  Assume that the kernel is
two-sided non-degenerate in the sense of
Definition~\ref{def:two-sided-nondegeneracy}.  If
$B\in\BMO^p_{V,U}$, then
$$
 \mathcal T_{V,U,p}(B)\lesssim \|C_{B,T}\|_{\ess}.
$$
Here the essential norm is taken from $L^p(U;\C^m)$ to
$L^p(V;\C^m)$.  In particular, compactness forces the three
$\omega$-tails, or equivalently the six primal and dual tails, to
vanish as in Definition~\ref{def:vmo}.
\end{proposition}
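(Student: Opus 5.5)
Fix a compact operator $K:X\to Y$ and put $\mathcal N_K=\|C_{B,T}-K\|_{X\to Y}$. The plan is to show that each of the six primal and dual tails (the limits of $\alpha_Q$ and $\beta_Q$ along small, large and far cubes) is bounded by $C\mathcal N_K$. Taking the infimum over $K$ then gives $\mathcal T_{V,U,p}(B)\lesssim\|C_{B,T}\|_{\ess}$.

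For the primal tails, take a sequence $Q_n$ in one of the regimes (i)--(iii) of Lemma~\ref{lem:weaknull} with $\alpha_{Q_n}(B)\to\limsup$ of the corresponding tail. For the far regime, a far cube of arbitrary size either has size in a compact range or falls into the small or large regime after passing to a subsequence, so we may assume the hypotheses of case (iii). Apply Lemma~\ref{lem:reciprocal-tests} with a fixed large $M$:
\[
\alpha_{Q_n}(B)\le C\sum_{s}\Big(\sum_{\rho}\|\one_{R_\rho(Q_n)}C_{B,T}g_{Q_n,R_\rho(Q_n),s}\|_{Y}^p\Big)^{1/p}+C_AM^{-\delta}\omega_{S_{Q_n}}(B).
\]
Each term is bounded by $\|(C_{B,T}-K)g\|_Y+\|Kg\|_Y\le C\mathcal N_K+\|Kg_{Q_n,R_\rho(Q_n),s}\|_Y$, using \eqref{eq:reciprocal-test-bound}. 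For fixed $\rho,s$ the tests converge weakly to zero by Lemma~\ref{lem:weaknull}, so the compact $K$ maps them to norm-null sequences. The number of pairs $(\rho,s)$ is $M^d m$, which is finite for fixed $M$, so the $K$-contributions vanish in the limit. Hence
\[
\limsup_n\alpha_{Q_n}(B)\le C M^{d/p}\mathcal N_K+C_AM^{-\delta}\limsup_n\omega_{S_{Q_n}}(B).
\]

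The main obstacle is the factor $M^{d/p}$, which destroys the argument if we just sum. To avoid it, we should instead use the localization: the sum over $\rho$ of $\|\one_{R_\rho}(C_{B,T}-K)g_\rho\|^p$ is controlled, not by $M^d\mathcal N_K^p$, but by combining the tests. I would try a randomized (Rademacher) or disjointness argument. The plan is to form $G=\sum_\rho\epsilon_\rho g_\rho$ restricted to disjoint pieces. Since all $g_\rho$ live on the same $Q^\sharp$, this fails. So instead I would accept the $M^{d/p}$ loss but exploit the structure of the error term. The cubes $S_{Q_n}$ lie in the same regime as $Q_n$ (comparable size, and comparable distance to the origin), so $\limsup\omega_{S_{Q_n}}(B)\le \mathcal T_{V,U,p}(B)$ up to the regime bookkeeping. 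We obtain
\[
\mathcal T_{\alpha}\le CM^{d/p}\mathcal N_K+C_AM^{-\delta}\mathcal T_{V,U,p}(B).
\]
The same bound for the dual tails $\mathcal T_\beta$ comes from the adjoint: $C_{B,T}^*=-C_{B^*,T^*}$ acts from $L^{p'}(V^\#)$ to $L^{p'}(U^\#)$. The second inequality of \eqref{eq:two-sided-nondegeneracy} gives the first-variable non-degeneracy for $T^*$, and the roles of $U,V$ swap so that $\alpha$ for the adjoint is $\beta_Q(B)$. Also $K^*$ is compact with the same distance. Taking the maximum, and using $\mathcal T_{V,U,p}(B)\le\|B\|_{\BMO}<\infty$, we choose $M$ so large that $C_AM^{-\delta}\le1/2$ and absorb. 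This gives $\mathcal T_{V,U,p}(B)\le 2CM^{d/p}\mathcal N_K$ with $M$ now a fixed constant, so the loss is harmless.

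The remaining point to check carefully is the regime bookkeeping for $S_{Q_n}$. We need $\ell(S_{Q_n})\simeq\ell(Q_n)$, which holds since $|S_Q|\le C_A|Q|$. We also need $\dist(S_{Q_n},0)\ge\dist(Q_n,0)-C\ell(Q_n)\to\infty$ in case (iii). Given these, the suprema over $S_{Q_n}$ are dominated by tails at shifted thresholds, whose limits are the same $\delta_{\rm s},\delta_{\rm l},\delta_{\rm f}$. Compactness forcing vanishing tails then follows by taking $K=C_{B,T}$ (so that $\mathcal N_K=0$), which also shows $B\in\VMO^p_{V,U}$ given the $\BMO$ bound from Proposition~\ref{prop:two-matrix-lower}.
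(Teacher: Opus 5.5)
Your proposal is correct and follows the paper's proof: the localized reciprocal tests of Lemma~\ref{lem:reciprocal-tests}, weak nullity from Lemma~\ref{lem:weaknull} to remove the compact part, the regime bookkeeping $\limsup_n\omega_{S_{Q_n}}(B)\le\mathcal T_{V,U,p}(B)$, absorption with a fixed large $M$, and the adjoint identity $C_{B,T}^*=-C_{B^*,T^*}$ for the dual tails. The factor $M^{d/p}$ was never an obstacle: it is the paper's constant $C_{\mathrm{test},M}$ and is harmless once $M$ is fixed, so you can drop the Rademacher detour. In the dual absorption step, say explicitly that the adjoint pair's dual oscillation is $\alpha_Q(B)$; hence its $\omega_Q$ equals the original one, and the error term is again bounded by $\mathcal T_{V,U,p}(B)$.
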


\begin{proof}
Write $\tau=\mathcal T_{V,U,p}(B)$.  If $\tau=0$, there is nothing to
prove.  Choose cubes $Q_n$ in one of the three tail regimes such that
$\omega_{Q_n}(B)\to\tau$.  After passing to a subsequence, one of
$\alpha_{Q_n}(B)$ and $\beta_{Q_n}(B)$ is at least
$\frac12\omega_{Q_n}(B)$ for every $n$.  We first treat the primal
case.

Let $S_n=S_{Q_n}$ be the enlarged cube in
Lemma~\ref{lem:reciprocal-tests}.  The geometry is uniform.  Hence
\begin{equation}\label{eq:enlarged-tail-limsup}
 \limsup_{n\to\infty}\omega_{S_n}(B)\leq\tau.
\end{equation}
For the small and large regimes, this follows from
$\ell(S_n)\simeq \ell(Q_n)$.  In the far regime, pass to a further
subsequence.  If the side lengths tend to zero or infinity, use the
small or large defect.  If they remain between two positive constants,
then $S_n$ also escapes to infinity, so use the far defect.  This is
the only point where a far companion cube may be absorbed into a
large-cube tail rather than another far tail.

Let $K_0:L^p(U)\to L^p(V)$ be compact.  Lemma~\ref{lem:weaknull}
and the finiteness of the index sets for $\rho$ and $s$ give
\begin{equation}\label{eq:compact-reciprocal-vanishing}
 \max_{\rho,s}
 \|K_0g_{Q_n,R_\rho(Q_n),s}\|_{L^p(V)}\longrightarrow0.
\end{equation}
Use $C_{B,T}=(C_{B,T}-K_0)+K_0$ in
\eqref{eq:localized-reciprocal-alpha}.  The uniform test bound and
\eqref{eq:compact-reciprocal-vanishing} yield
\begin{equation}\label{eq:primal-essential-preabsorb}
 \limsup_{n\to\infty}\alpha_{Q_n}(B)
 \leq C_{\mathrm{test},M}\|C_{B,T}-K_0\|
      +C_A M^{-\delta}\tau.
\end{equation}
Choose the fixed integer $M$ so large that the last coefficient is at
most $1/4$.  Since the left side is at least $\tau/2$, the last term
can be absorbed.  Taking the infimum over $K_0$ gives
$$
 \tau\lesssim \|C_{B,T}\|_{\ess}
$$
in the primal case.

For the dual case, put
$$
 V^\#=V^{-p'/p},\qquad U^\#=U^{-p'/p}.
$$
The adjoint identity is
\begin{equation}\label{eq:general-commutator-adjoint}
 C_{B,T}^*=-C_{B^*,T^*}:
 L^{p'}(V^\#)\longrightarrow L^{p'}(U^\#).
\end{equation}
For the pair $(V^\#,U^\#)$ at exponent $p'$, the primal oscillation of
$B^*$ is
$$
 \bigg(\avg_Q
 \|U^{-1/p}(B^*-B_Q^*)(\mathcal V'_Q)^{-1}\|_{\op}^{p'}
 \,dx\bigg)^{1/p'}
 =\beta_Q(B).
$$
Its dual oscillation is $\alpha_Q(B)$.  Thus the new $\omega_Q$ and
its three tail defects agree with those of the original pair when the
reducing matrices $\mathcal V'_Q$ and $\mathcal U_Q$ are used.
The kernel of $T^*$ is $K^*(x,y)=\overline{K(y,x)}$.  Hence the
condition
$$
 |K(y_-,x)|\gtrsim r^{-d},
$$
which is the second inequality in
\eqref{eq:two-sided-nondegeneracy}, gives
$$
 |K^*(x,y_-)|\gtrsim r^{-d}.
$$
This is the condition required to apply
Lemma~\ref{lem:reciprocal-tests} to $T^*$.
If $K_0:L^p(U)\to L^p(V)$ is compact, then $K_0^*$ is compact and
$$
 \|C_{B,T}^*-K_0^*\|=\|C_{B,T}-K_0\|.
$$
Apply the primal argument at exponent $p'$ to $C_{B^*,T^*}$ and use
\eqref{eq:general-commutator-adjoint}.  This proves the same lower
bound in the dual case.
\end{proof}

If $C_{B,T}$ is compact, then it is bounded.  Theorem
\ref{thm:two-matrix-boundedness} first gives $B\in\BMO^p_{V,U}$.
Proposition~\ref{prop:lower} then gives the three $\omega$-conditions,
or equivalently all six primal and dual vanishing conditions.
Conversely, Proposition~\ref{prop:upper} gives compactness when these
conditions hold.  The two propositions also give
\eqref{eq:essential}.  This proves Theorem~\ref{thm:main}.

\section{Schatten classes for Riesz-transform commutators}

\subsection{The $L^2$ specialization}

Let $d\geq2$, let $U,V$ be $m\times m$ matrix $\mathcal A_2$ weights
on $\R^d$ and  let $B:\R^d\to\C^{m\times m}$ be locally integrable.
Thus a matrix weight $W$ is positive definite almost everywhere,
$W,W^{-1}$ are locally integrable.  In this part we use the
equivalent characteristic
\begin{equation}
 [W]_{\mathcal A_2}
 =\sup_Q
 \|\langle W\rangle_Q^{1/2}
   \langle W^{-1}\rangle_Q^{1/2}\|_{\op}^2<\infty.
 \label{sch:eq:matrix-A2-definition}
\end{equation}
If $[W]_{\mathcal A_2}^{\rm int}$ denotes the quantity in
\eqref{eq:matrix-Ap} with $p=2$, then
$$
 [W]_{\mathcal A_2}\leq [W]_{\mathcal A_2}^{\rm int}
 \leq m[W]_{\mathcal A_2}.
$$
The first inequality follows from convexity of the operator norm and 
the second follows by using the Hilbert--Schmidt norm and taking the
trace.  All constants below are unchanged up to a factor depending on
$m$.
Fix $1\leq j\leq d$ and  let $R_j$ be the $j$th Riesz transform,
with kernel
$$
 K_j(x,y)=c_d\frac{x_j-y_j}{|x-y|^{d+1}}.
$$
Put $C_{B,R_j}=[M_B,R_j\otimes I_m]$.
Schatten classes are defined only between Hilbert spaces.  Thus the
Lebesgue exponent in this part is fixed at $2$.  The unitary maps
$$
 J_Uf=U^{1/2}f,
 \qquad
 J_Vg=V^{1/2}g
$$
identify $L^2(U;\C^m)$ and $L^2(V;\C^m)$ with
$L^2(\R^d;\C^m)$.  Hence
\begin{equation}
 \|C_{B,R_j}\|_{\Sch^{r,q}(L^2(U),L^2(V))}
 =\|\mathcal C_{B,R_j}^{V,U}\|_{\Sch^{r,q}(L^2,L^2)},
 \qquad
 \mathcal C_{B,R_j}^{V,U}
 =M_{V^{1/2}}C_{B,R_j}M_{U^{-1/2}}.
 \label{sch:eq:unitary-conjugation}
\end{equation}
Off the diagonal, the kernel of $\mathcal C_{B,R_j}^{V,U}$ is
\begin{equation}
 V^{1/2}(x)(B(x)-B(y))K_j(x,y)U^{-1/2}(y).
 \label{sch:eq:weighted-commutator-kernel}
\end{equation}

For a bounded operator $A:H_1\to H_2$, let
$$
 s_n(A)=\inf\{\|A-F\|:\operatorname{rank}F<n\},
 \qquad n\geq1,
$$
be its approximation numbers.  Thus $A$ is compact if and only if
$s_n(A)\to0$ and  for a compact operator these are its singular
values, listed in nonincreasing order.  For $r>0$ and
$0<q\leq\infty$, $\Sch^{r,q}(H_1,H_2)$ denotes the
Schatten--Lorentz class defined by
$$
 \|A\|_{\Sch^{r,q}}=\|\{s_n(A)\}_{n\geq1}\|_{\ell^{r,q}}.
$$
For any countable family, its $\ell^{r,q}$ quasi-norm is defined by
the decreasing rearrangement of its absolute values.  We use the
conventions $\Sch^r=\Sch^{r,r}$ and $\ell^r=\ell^{r,r}$.  When
$0<q<1$, these are quasi-norms.
Membership in any of these classes implies compactness.

\subsection{The matrix two-weight oscillation sequences}

At exponent $2$ we take
$
 \mathcal U_Q=\langle U\rangle_Q^{1/2},
 \ \ 
 \mathcal U'_Q=\langle U^{-1}\rangle_Q^{1/2},
 \ \ 
 \mathcal V_Q=\langle V\rangle_Q^{1/2},
 \ \ 
 \mathcal V'_Q=\langle V^{-1}\rangle_Q^{1/2}.
$
Here $B_Q=\avg_QB$.
Define the full-cube oscillations
\begin{align}
 \alpha_Q^{\HS}(B)
 &=\bigg(\avg_Q
 \|V^{1/2}(x)(B(x)-B_Q)\mathcal U_Q^{-1}\|_{\HS}^{2}\,dx
 \bigg)^{1/2},                                      \label{sch:eq:primal-osc}\\
 \beta_Q^{\HS}(B)
 &=\bigg(\avg_Q
 \|U^{-1/2}(x)(B^*(x)-B_Q^*)(\mathcal V'_Q)^{-1}\|_{\HS}^{2}\,dx
 \bigg)^{1/2},                                      \label{sch:eq:dual-osc}\\
 \omega_Q^{\HS}(B)&=\max\{\alpha_Q^{\HS}(B),\beta_Q^{\HS}(B)\}.                 \label{sch:eq:Omega}
\end{align}
The Hilbert--Schmidt norm in \eqref{sch:eq:primal-osc}--\eqref{sch:eq:dual-osc}
is convenient for Schatten estimates.  Since the fibre dimension $m$
is fixed, it is equivalent to the operator norm.  Thus $\omega_Q^{\HS}(B)$
is equivalent to the operator-norm quantity $\omega_Q(B)$ used in Parts~I and~II.

Let $\D^1,\ldots,\D^{3^d}$ be fixed adjacent dyadic systems.  For
$0<r<\infty$ and $0<q\leq\infty$, put
\begin{equation}
 \|B\|_{\mathfrak B^{r,q}_{V,U}}
 =\sum_{t=1}^{3^d}
 \big\|\{\omega_Q^{\HS}(B):Q\in\D^t\}\big\|_{\ell^{r,q}}.
 \label{sch:eq:matrix-Besov-osc}
\end{equation}
We write $\mathfrak B^r_{V,U}=\mathfrak B^{r,r}_{V,U}$ and
$$
 \|B\|_{\mathfrak W^d_{V,U}}
 =\|B\|_{\mathfrak B^{d,\infty}_{V,U}}.
$$
These are seminorms modulo constant matrices.

\begin{lemma}
\label{sch:lem:sequence-implies-VMO}
Let $0<r<\infty$ and $0<q\leq\infty$.  If
$B\in\mathfrak B^{r,q}_{V,U}$, then $B\in\VMO^2_{V,U}$.
\end{lemma}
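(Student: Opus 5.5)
The plan is to show that an $\ell^{r,q}$ condition on the dyadic families forces the full-cube oscillations $\omega_Q(B)$ to be bounded and to have vanishing small, large and far tails. First I reduce from arbitrary cubes to dyadic ones. By the adjacent dyadic systems (three-lattice) lemma, every cube $Q$ is contained in some $P\in\D^t$ with $\ell(P)\le 6\ell(Q)$, hence $|P|\lesssim_d|Q|$. I then need a per-cube comparison $\omega_Q(B)\lesssim\omega_P^{\HS}(B)$ for $Q\subset P$ with $|P|\le C|Q|$. For $\alpha$, this follows by three moves. Lemma~\ref{lem:change-center-bdd} replaces $B_Q$ by $B_P$. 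Enlarging the $x$-average from $Q$ to $P$ costs $(|P|/|Q|)^{1/2}$. Finally, $\|\mathcal U_P\mathcal U_Q^{-1}\|_{\op}\lesssim1$ by \eqref{eq:comparable-reducing-weak-null}. For $\beta$, I apply the same argument to the dual weight $V^{-1}$ and its reducing matrices $\mathcal V'$. Since the operator and Hilbert--Schmidt norms are equivalent with constants depending on $m$, this gives $\omega_Q(B)\lesssim\max_t\sup\{\omega_P^{\HS}(B):P\in\D^t,\ P\supset Q,\ \ell(P)\le6\ell(Q)\}$.

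Next, a sequence in $\ell^{r,q}$ with $q\le\infty$ lies in $\ell^\infty$ and is a null sequence, in the sense that for every $\epsilon>0$ only finitely many terms exceed $\epsilon$. Indeed, membership in $\ell^{r,\infty}$ already gives $\#\{Q:\omega_Q^{\HS}>\epsilon\}\le(\|\cdot\|/\epsilon)^r<\infty$, and $\ell^{r,q}\subset\ell^{r,\infty}$. Boundedness of $\sup_Q\omega_Q(B)$ then follows immediately, so $B\in\BMO^2_{V,U}$.

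For the three vanishing conditions, fix $\epsilon>0$ and let $\mathcal F_\epsilon$ be the finite set of dyadic cubes, across the $3^d$ systems, with $\omega^{\HS}>\epsilon$. These cubes have side lengths in some interval $[a,b]$ with $a>0$, and they lie in a bounded set $[-L,L]^d$. Take any cube $Q$ with $\ell(Q)<a/6$, or $\ell(Q)>b$, or $\dist(Q,0)>L+6b$, say. Its dyadic enlargement $P$ then has $\ell(P)<a$, or $\ell(P)\ge\ell(Q)>b$, or (in the last case) $P$ lies outside $[-L,L]^d$, since $P\supset Q$. In every case $P\notin\mathcal F_\epsilon$, so $\omega_Q(B)\lesssim\epsilon$. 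This verifies \eqref{eq:vmosmall}, \eqref{eq:vmolarge} and \eqref{eq:vmofar}.

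The main obstacle is the per-cube comparison in the first step, especially for the $\beta$ half. There I must check that the dual analogue of \eqref{eq:comparable-reducing-weak-null} holds, namely $\|\mathcal V'_P(\mathcal V'_Q)^{-1}\|_{\op}\lesssim1$, using \eqref{eq:weighted-averaging-operator-bdd} for $V^{-1}\in\mathcal A_2$. I must also check that the $p=2$ reducing matrices $\langle U\rangle_Q^{1/2}$ satisfy the reducing normalization with constants depending only on $m$. Both are routine, but they are where care is needed.
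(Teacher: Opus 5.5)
Your proof is correct. Its skeleton matches the paper's: enclose each cube in a comparable cube of one of the $3^d$ adjacent systems, then use that an $\ell^{r,q}$ sequence has only finitely many terms above any $\epsilon>0$. The difference lies in the per-cube comparison, which is the only real content of the lemma.

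The paper routes this through the pairwise quantity $\widehat\omega_Q(B)$ of \eqref{eq:pairwise}. That quantity is a double average of a positive function with no cube-dependent normalization, so enlarging $R$ to $Q$ trivially gives $\widehat\omega_R(B)\le(|Q|/|R|)\,\widehat\omega_Q(B)$. Lemma~\ref{lem:localcomparison} then converts between $\widehat\omega$ and $\alpha+\beta$ on a single cube, which gives \eqref{sch:eq:comparable-cube-oscillation} in one line.

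You instead compare $\alpha$ and $\beta$ directly, using Lemma~\ref{lem:change-center-bdd}, enlargement of the average, and the doubling bound $\|\mathcal U_P\mathcal U_Q^{-1}\|_{\op}\lesssim1$ that follows from \eqref{eq:comparable-reducing-weak-null}, together with its analogue for $V^{-1}$. This needs a little more weight input. In the $\beta$ half, the change-of-center step uses Lemma~\ref{lem:local-matrix-weight-bdd}(i) for the weight $U^{-1}$; only the reducing-matrix comparison $\|\mathcal V'_P(\mathcal V'_Q)^{-1}\|_{\op}\lesssim1$ uses $V^{-1}$. Your phrase ``apply the same argument to $V^{-1}$'' should be read that way. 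In return you get the unmixed estimates $\alpha_Q\lesssim\alpha_P$ and $\beta_Q\lesssim\beta_P$ separately, whereas the paper's route only controls $\alpha_R+\beta_R$ by $\alpha_Q+\beta_Q$. Both suffice here.

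One small slip in the far case: with Euclidean distance, $\dist(Q,0)>L+6b$ does not force $Q\not\subset[-L,L]^d$ when $\sqrt d\,L>L+6b$. A small cube near a corner of $[-L,L]^d$ is a counterexample. Use the threshold $\sqrt d\,L$ instead, or enclose all cubes of $\mathcal F_\epsilon$ in a ball. Then $P\supset Q$ cannot lie in that bounded region, and the far condition follows for cubes of every size.
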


\begin{proof}
Suppose that $R\subset Q$ and $|Q|\leq A|R|$.  By
Lemma~\ref{lem:localcomparison}, the equivalence of the operator and
Hilbert--Schmidt norms and  enlargement of the two positive integrals
in \eqref{eq:pairwise},
\begin{equation}
 \omega_R^{\HS}(B)\lesssim_m\widehat\omega_R(B)
 \leq A\widehat\omega_Q(B)
 \lesssim_m\omega_Q^{\HS}(B).                       \label{sch:eq:comparable-cube-oscillation}
\end{equation}
For every cube $R$, choose a cube $Q$ in one of the fixed adjacent
systems such that $R\subset Q$ and $|Q|\leq C_d|R|$.  For each
$\epsilon>0$, an $\ell^{r,q}$ sequence has only finitely many terms
larger than $\epsilon$.  It follows from
\eqref{sch:eq:comparable-cube-oscillation} that every cube $R$ with
$\omega_R^{\HS}(B)>\epsilon$ is contained in and  has side length
comparable to, one of finitely many cubes.  Such cubes $R$ cannot have
side length tending to zero or infinity, or tend to infinity while
their side lengths remain bounded.  The three vanishing conditions
follow.  Since $\ell^{r,q}\subset\ell^\infty$, the same comparison
also gives $B\in\BMO^2_{V,U}$.  The operator and Hilbert--Schmidt norms
are equivalent in the fixed matrix dimension, so
$B\in\VMO^2_{V,U}$.
\end{proof}

\subsection{The Schatten theorems}

For a locally integrable $B$, we first interpret $C_{B,R_j}$ by the
separated-support form
$$
 \langle C_{B,R_j}f,g\rangle
 =\iint g(x)^*(B(x)-B(y))K_j(x,y)f(y)\,dy\,dx
$$
for bounded compactly supported vector functions whose supports have
positive distance.  A Schatten assertion below means that this form
has an extension from $L^2(U)$ to $L^2(V)$ in the stated class.  Such
a compact extension is unique; this follows from
Lemma~\ref{sch:lem:compact-local} below.

\begin{theorem}
\label{sch:thm:target}
Let $d\geq2$.  Let $U,V\in\mathcal A_2$ be matrix weights and let
$B:\R^d\to\C^{m\times m}$ be locally integrable.  Fix
$1\leq j\leq d$.  Then the following statements hold.

\begin{enumerate}
\item If $d<r<\infty$, then
\begin{equation}
 C_{B,R_j}\in\Sch^r(L^2(U;\C^m),L^2(V;\C^m))
 \quad\Longleftrightarrow\quad
 B\in\mathfrak B^r_{V,U},
 \label{sch:eq:strong-target}
\end{equation}
and
\begin{equation}
 \|C_{B,R_j}\|_{\Sch^r(L^2(U),L^2(V))}
 \simeq \|B\|_{\mathfrak B^r_{V,U}}.
 \label{sch:eq:strong-norm-target}
\end{equation}

\item If $0<r\leq d$, then
\begin{equation}
 C_{B,R_j}\in\Sch^r(L^2(U;\C^m),L^2(V;\C^m))
 \quad\Longleftrightarrow\quad
 \begin{gathered}\text{there is a fixed }B_0\in\C^{m\times m}\text{ such that}\\B(x)=B_0\text{ for almost every }x\in\R^d\end{gathered}.
 \label{sch:eq:subcritical-target}
\end{equation}

\item At the critical index,
\begin{equation}
 C_{B,R_j}\in\Sch^{d,\infty}(L^2(U;\C^m),L^2(V;\C^m))
 \quad\Longleftrightarrow\quad
 B\in\mathfrak W^d_{V,U},
 \label{sch:eq:weak-target}
\end{equation}
and
\begin{equation}
 \|C_{B,R_j}\|_{\Sch^{d,\infty}(L^2(U),L^2(V))}
 \simeq \|B\|_{\mathfrak W^d_{V,U}}.
 \label{sch:eq:weak-norm-target}
\end{equation}
\end{enumerate}

The constants in the norm equivalences depend only on
$d,m,r,[U]_{\mathcal A_2}$ and  $[V]_{\mathcal A_2}$ and  on the
normalization of $R_j$.
\end{theorem}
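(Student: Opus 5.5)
Via \eqref{sch:eq:unitary-conjugation} we work with the unweighted operator $\mathcal C=\mathcal C^{V,U}_{B,R_j}$ on $L^2(\R^d;\C^m)$, whose off-diagonal kernel is \eqref{sch:eq:weighted-commutator-kernel}, and prove two estimates:
\begin{equation*}
 \|B\|_{\mathfrak B^{r,q}_{V,U}}\lesssim\|\mathcal C\|_{\Sch^{r,q}}
 \quad(0<r<\infty),
 \qquad
 \|\mathcal C\|_{\Sch^{r,q}}\lesssim\|B\|_{\mathfrak B^{r,q}_{V,U}}
 \quad(r>d\text{ or }(r,q)=(d,\infty)).
\end{equation*}
Parts (1) and (3) follow at once. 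For (2), a constant $B$ gives $\mathcal C=0$. Conversely, $\Sch^r\subset\Sch^{d,\infty}$ shows $B\in\mathfrak W^d$, and the lower bound gives $\{\omega_Q^{\HS}(B)\}\in\ell^r$ with $r\le d$. We then run a rigidity argument. Lemma \ref{sch:lem:sequence-implies-VMO} and the comparability \eqref{sch:eq:comparable-cube-oscillation} pass to scalar oscillations of the entries $z=e_t^*\mathcal V_QB\mathcal U_Q^{-1}e_s$. On cubes of comparable size these normalizations differ only by bounded matrices, using Lemma \ref{lem:local-matrix-weight-bdd}(ii). This reduces matters to the Janson--Wolff mechanism: $\sum_Q\omega_Q^d<\infty$ forces the Haar/Besov $B^{d/d}_{d}$-type seminorm of each normalized entry to vanish on every cube. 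Hence $B$ is locally constant, and therefore constant by the connectivity argument from the proof of Proposition \ref{prop:two-matrix-upper}.

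\emph{Lower bound.} Fix $Q\in\D^t$ and $\varepsilon\in\Sig$, and set $A_{Q,\varepsilon}=\mathcal V_Q\widehat B(Q,\varepsilon)|Q|^{-1/2}\mathcal U_Q^{-1}$, with singular value decomposition $\sum_\rho\sigma_\rho u_\rho v_\rho^*$. For each $(Q,\varepsilon,\rho)$ we apply Lemma \ref{lem:complex-median-WZ} to the scalar function $u_\rho^*\mathcal V_QB\mathcal U_Q^{-1}v_\rho$ on the companion cube $Q^\sharp$ of Lemma \ref{lem:median-companion-sector}; for $R_j$ this lemma is immediate, since $R_j$ is nondegenerate in direction $e_j$. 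As in Proposition \ref{lem:general-czo-global-lower}, this produces sets $E\subset Q$, $F\subset Q^\sharp$ and tests
\begin{equation*}
 f=|Q|^{-1/2}\one_EU^{1/2}\mathcal U_Q^{-1}v_\rho,\qquad
 g=|Q|^{-1/2}\one_FV^{-1/2}\mathcal V_Qu_\rho,
\end{equation*}
with $|\langle\mathcal Cf,g\rangle|\gtrsim$ the mean oscillation of the scalar function. Haar cancellation bounds $\sigma_\rho$ by this oscillation. The families $\{f\}$ and $\{g\}$ indexed by $(Q,\varepsilon,\rho)$ are, after taking finitely many sparse subcollections, vector-valued NWO sequences. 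Here the $\mathcal A_2$ reducing bounds and reverse H\"older \eqref{eq:reverseholder} supply the needed $L^{2+}$ control, and the constants are independent of $u_\rho,v_\rho$. The Rochberg--Semmes inequality $\|\{\langle\mathcal Cf_k,g_k\rangle\}\|_{\ell^{r,q}}\lesssim\|\mathcal C\|_{\Sch^{r,q}}$ then gives
\begin{equation*}
 \big\|\{\|A_{Q,\varepsilon}\|_{\HS}\}\big\|_{\ell^{r,q}}\lesssim\|\mathcal C\|_{\Sch^{r,q}}.
\end{equation*}
Finally, $\alpha^{\HS}_Q$ and $\beta^{\HS}_Q$ are controlled by weighted square sums of the descendant Haar coefficients, via matrix-weighted Haar square function bounds. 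The $\ell^{r,q}$ norms of these sums reduce to the coefficient sequence itself using the geometric decay from scalar $A_\infty$ estimates in each direction.

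\emph{Upper bound.} Decompose $\R^d\times\R^d$ minus the diagonal into Whitney pairs $Q\times Q'$ in the adjacent systems. On each pair, expand $K_j$ in a Fourier series, whose coefficients decay rapidly because $K_j$ is smooth. Write $B(x)-B(y)=(B(x)-B_{\widetilde Q})-(B(y)-B_{\widetilde Q})$ with $\widetilde Q\supset Q\cup Q'$ comparable. Each piece becomes a sum of rank-one-in-space operators whose matrix coefficients are $V^{1/2}(B-B_{\widetilde Q})\mathcal U_{\widetilde Q}^{-1}$ times $\mathcal U_{\widetilde Q}U^{-1/2}$ and similar expressions. Expanding these through the singular value decomposition and Haar descendants grouped by generation yields NWO-type sums. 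The vector-valued Rochberg--Semmes upper bound then gives $\|\mathcal C\|_{\Sch^{r,q}}\lesssim\|\{\omega^{\HS}_Q\}\|_{\ell^{r,q}}$, provided the generation sums converge in $\ell^{r,q}$.

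\emph{Main obstacle.} The decisive step is controlling the generation sum, and this is exactly where $r>d$, or the weak endpoint $(d,\infty)$, is required. I would first try to get a factor $2^{-k\gamma}$ per generation from directional $A_\infty$ decay, combined with the $2^{kd}$ count of descendants in $\ell^{r}$. For $r>d$, one needs the Riesz kernel's smoothness to provide an extra $2^{-k}$, obtained by Taylor expanding $B$ at scale $2^{-k}\ell(Q)$ through the Haar coefficients. At $r=d$, only the weak-type summation survives. Matching this to the matrix weights uniformly in the singular vectors is where I expect the most work.
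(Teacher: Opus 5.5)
Your lower bound follows the paper's route in Theorems~\ref{sch:thm:Riesz-lower} and~\ref{sch:prop:matrix-besov-JN}: the complex median applied to $u^*\mathcal V_QB\mathcal U_Q^{-1}v$ built from the singular vectors of the normalized Haar coefficient, Haar cancellation, companion NWO tests, and the $A_\infty$-decay passage from $\mathfrak h_Q^\pm$ to $\alpha_Q^{\HS},\beta_Q^{\HS}$. Claim it only for $1<r<\infty$. The NWO analysis estimate is not available for $r<1$, and it fails there even for orthonormal families. In part (2), apply it at $r=d$ after using $\Sch^r\subset\Sch^d$.

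\textbf{Rigidity in part (2).} Here there is a genuine gap. The Janson--Wolff argument needs one fixed function whose oscillations are $\ell^d$-summable over \emph{all} scales. Your normalized entries $e_t^*\mathcal V_QB\mathcal U_Q^{-1}e_s$ change with $Q$. Lemma~\ref{lem:local-matrix-weight-bdd}(ii) and \eqref{sch:eq:comparable-cube-oscillation} compare normalizations only on cubes of comparable size. Take $P\subset Q_0$ with $\ell(P)\ll\ell(Q_0)$. The factors that convert the oscillation of the $Q_0$-normalized entry on $P$ into $\omega_P^{\HS}(B)$, such as $\|\mathcal U_P\mathcal U_{Q_0}^{-1}\|_{\op}$, are not uniformly bounded. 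This already happens for the scalar pair $u=|x|^a$ with $a<0$ and $v\equiv1$. These factors are controlled only on average, as in \eqref{sch:eq:reducing-matrix-comparison}. So $\sum_Q\omega_Q^{\HS}(B)^d<\infty$ gives no Besov bound for any single entry, and your step ``the seminorm of each normalized entry vanishes'' is unsupported.

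The paper avoids any weighted rigidity argument by compressing to scalar entries:
\begin{itemize}
\item $I_bf=fe_b$ is an isometry from $L^2(\mu_b)$ into $L^2(U)$, where $\mu_b=e_b^*Ue_b$.
\item $P_aF=e_a^*F$ is a contraction from $L^2(V)$ onto $L^2(\lambda_a)$, where $\lambda_a=(e_a^*V^{-1}e_a)^{-1}$; this follows from matrix Cauchy--Schwarz.
\item Both weights are scalar $A_2$, and $P_aC_{B,R_j}I_b=[M_{B_{ab}},R_j]$.
\end{itemize}
Hence each scalar two-weight commutator lies in $\Sch^d$. The scalar theory then forces every $B_{ab}$ to be constant: \cite{HolmesLaceyWick,LaceyLi} give BMO and VMO, and \cite[Theorem~1.3]{LLWW} gives constancy.

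\textbf{The upper bound.} You leave its decisive step open, and the mechanism you suggest would not work. $B$ is only locally integrable, so there is nothing to Taylor expand. The Fourier expansion of $K_j$ on a Whitney block gives decay only in the Fourier index $\ell$, not in the Haar generation $k$. Moreover, $r>d$ plays no role in the generation sum, which converges for every $r\ge2$. The restriction $r>d$ enters only because $\mathfrak B^r_{V,U}$ reduces to constants when $r\le d$.

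The missing idea is to stop treating the $2^{kd}$ generation-$k$ descendants of $S_Q$ as separate NWO indices. Instead, sum them with their matrix coefficients into one function $E_{Q,k}$ per Whitney block, normalized by $c_{Q,k}$ as in \eqref{sch:eq:grouped-E-family}.
\begin{itemize}
\item Disjointness and directional reverse H\"older for $V$ give $\|E_{Q,k}u\|_{L^{s_0}}\lesssim2^{kd(1/2-1/s_0)}|S_Q|^{1/s_0-1/2}|u|$. This is exactly the input to the NWO criterion of Lemmas~\ref{sch:lem:NWO-Lp-criterion} and~\ref{sch:lem:matrix-column-NWO}.
\item The bounds $\vartheta_U(P,S_Q)\lesssim2^{-kd\eta_U}$ and $\sum_{P\in\operatorname{ch}^k(S_Q)}\vartheta_U(P,S_Q)=m$ give $\|\{c_{Q,k}\}_Q\|_{\ell^{r,q}}\lesssim2^{-k\gamma_U(r)}\|\{\mathfrak h_P^+(B)\}\|_{\ell^{r,q}}$.
\item Choosing $s_0$ close to $2$ makes the growth smaller than the decay.
\end{itemize}
You also need to identify the weak limit of the finite truncations with the commutator. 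The paper does this using the compactness from Theorem~\ref{thm:main} together with Lemma~\ref{sch:lem:compact-local}.
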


\begin{corollary}
\label{sch:cor:Schatten-implies-VMO}
Under the assumptions of Theorem~\ref{sch:thm:target}, either of the
conditions
$$
 B\in\mathfrak B^r_{V,U}\quad (r>d),
 \qquad\text{or}\qquad
 B\in\mathfrak W^d_{V,U}
$$
implies $B\in\VMO^2_{V,U}$.  No converse is asserted.
\end{corollary}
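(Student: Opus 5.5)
The corollary follows directly from Lemma~\ref{sch:lem:sequence-implies-VMO}. Both hypotheses are instances of the condition $B\in\mathfrak B^{r,q}_{V,U}$ for some $0<r<\infty$ and $0<q\leq\infty$. In the first case take $q=r$ with $r>d$, since $\mathfrak B^r_{V,U}=\mathfrak B^{r,r}_{V,U}$ by definition. In the second case take $r=d$ and $q=\infty$, since $\mathfrak W^d_{V,U}=\mathfrak B^{d,\infty}_{V,U}$. The lemma was stated for all $0<r<\infty$ and $0<q\leq\infty$, so it applies in both cases and gives $B\in\VMO^2_{V,U}$. The restriction $r>d$ plays no role here; it is only needed for the equivalence with Schatten membership in Theorem~\ref{sch:thm:target}.

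For completeness, here is the mechanism inside the lemma. The comparison
\[
 \omega_R^{\HS}(B)\lesssim\widehat\omega_R(B)\leq A\,\widehat\omega_Q(B)\lesssim\omega_Q^{\HS}(B)
\]
holds for $R\subset Q$ with $|Q|\leq A|R|$, by Lemma~\ref{lem:localcomparison}. For a given $\epsilon>0$, an $\ell^{r,q}$ sequence has only finitely many terms exceeding a fixed multiple of $\epsilon$. So every cube $R$ with $\omega^{\HS}_R(B)>\epsilon$ is contained in one of finitely many adjacent-dyadic cubes, with comparable side length. Such cubes cannot become arbitrarily small, arbitrarily large, or escape to infinity. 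This gives the three vanishing conditions \eqref{eq:vmosmall}--\eqref{eq:vmofar}. Boundedness of the sequence gives $B\in\BMO^2_{V,U}$. The Hilbert--Schmidt and operator norms are equivalent in the fixed dimension $m$, so these statements transfer from $\omega^{\HS}_Q$ to $\omega_Q$.

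There is no real obstacle. The only point to check is that the parameter ranges of the two spaces fall inside those allowed in Lemma~\ref{sch:lem:sequence-implies-VMO}, in particular the endpoint $q=\infty$ for $\mathfrak W^d_{V,U}$. The lemma's proof covers $q=\infty$: a weak-$\ell^d$ sequence still has only finitely many terms above any positive threshold. The remark that no converse is asserted needs no proof.
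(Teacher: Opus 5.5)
Your proposal is correct and is exactly the paper's argument: the corollary is Lemma~\ref{sch:lem:sequence-implies-VMO} applied with $(r,q)=(r,r)$ for $r>d$, and with $(r,q)=(d,\infty)$. Your summary of the lemma's mechanism also matches the paper's proof of that lemma. That mechanism consists of the comparison of $\omega^{\HS}$ on nested cubes of comparable size, the finiteness of the terms above any threshold in an $\ell^{r,q}$ sequence (including $q=\infty$), and the equivalence of the Hilbert--Schmidt and operator norms.
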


\begin{proof}
This follows from Lemma~\ref{sch:lem:sequence-implies-VMO}, with
$(r,q)=(r,r)$ or $(d,\infty)$.
\end{proof}

\subsection{The Haar characterization of the oscillation spaces}

Let $h_Q^\varepsilon$, $\varepsilon\in\{0,1\}^d$, be the
$L^2$-normalized tensor Haar functions.  The signature
$\mathbf1=(1,\ldots,1)$ denotes the noncancellative function
$$
 h_Q^{\mathbf1}=|Q|^{-1/2}\one_Q.
$$
For a cancellative signature, define
$$
 \widehat B(Q,\varepsilon)
 =\int_QB(x)h_Q^\varepsilon(x)\,dx.
$$
For the cancellative signatures, put
$$
 \Delta_QB
 =\sum_{\varepsilon\neq\mathbf1}
   \widehat B(Q,\varepsilon)h_Q^\varepsilon.
$$
We write $\operatorname{ch}^k(Q)$ for the $k$th-generation
dyadic descendants of $Q$, with
$\operatorname{ch}^0(Q)=\{Q\}$.

There are two Haar forms.  For $Q\in\D^t$, put
\begin{equation}
 \mathfrak h_Q^+(B)=
 \bigg(
 \sum_{\varepsilon\neq\mathbf1}
 \bigg\|
 \mathcal V_Q
 \frac{\widehat B(Q,\varepsilon)}{|Q|^{1/2}}
 \mathcal U_Q^{-1}
 \bigg\|_{\HS}^{2}
 \bigg)^{1/2}.
 \label{sch:eq:Haar-primal}
\end{equation}
The adjoint-side coefficient is
\begin{equation}
 \mathfrak h_Q^-(B)=
 \bigg(
 \sum_{\varepsilon\neq\mathbf1}
 \bigg\|
 \mathcal U'_Q
 \frac{\widehat B(Q,\varepsilon)^*}{|Q|^{1/2}}
 (\mathcal V'_Q)^{-1}
 \bigg\|_{\HS}^{2}
 \bigg)^{1/2}.
 \label{sch:eq:Haar-dual}
\end{equation}
Set
\begin{equation}
 \mathfrak h_Q(B)=\max\{\mathfrak h_Q^+(B),
                              \mathfrak h_Q^-(B)\}.
 \label{sch:eq:Haar-pair}
\end{equation}

The two terms in \eqref{sch:eq:Haar-pair} keep the primal and adjoint
estimates separate.  At exponent $2$, however, they are equivalent.

\begin{lemma}
\label{sch:lem:two-Haar-comparable}
For every $Q$,
$$
 \mathfrak h_Q^+(B)\simeq\mathfrak h_Q^-(B).
$$
The constants depend only on
$[U]_{\mathcal A_2}$ and $[V]_{\mathcal A_2}$.
\end{lemma}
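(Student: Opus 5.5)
The plan is to compare the two Haar forms coefficient by coefficient, for each cancellative signature $\varepsilon$ separately, using only the ideal property of the Hilbert--Schmidt norm and the fact that at exponent $2$ the reducing matrices of a weight and of its inverse are mutually inverse up to bounded factors. Fix $Q$ and $\varepsilon\neq\mathbf1$, and put $A=\widehat B(Q,\varepsilon)|Q|^{-1/2}$. The Hilbert--Schmidt norm is invariant under adjoints, and the reducing matrices $\mathcal U_Q=\langle U\rangle_Q^{1/2}$, $\mathcal U'_Q=\langle U^{-1}\rangle_Q^{1/2}$, $\mathcal V_Q$, $\mathcal V'_Q$ are self-adjoint. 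Hence
$$
 \|\mathcal U'_QA^*(\mathcal V'_Q)^{-1}\|_{\HS}
 =\|(\mathcal V'_Q)^{-1}A\,\mathcal U'_Q\|_{\HS}.
$$
It therefore suffices to compare $\|\mathcal V_QA\,\mathcal U_Q^{-1}\|_{\HS}$ with $\|(\mathcal V'_Q)^{-1}A\,\mathcal U'_Q\|_{\HS}$, uniformly in $A$.

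The key matrix facts, stated for a weight $W\in\mathcal A_2$, are:
$$
 \|\langle W\rangle_Q^{1/2}\langle W^{-1}\rangle_Q^{1/2}\|_{\op}\leq[W]_{\mathcal A_2}^{1/2},
 \qquad
 \|\langle W^{-1}\rangle_Q^{-1/2}\langle W\rangle_Q^{-1/2}\|_{\op}\leq1.
$$
The first is the definition \eqref{sch:eq:matrix-A2-definition}. The second follows from the Löwner inequality $\langle W^{-1}\rangle_Q^{-1}\leq\langle W\rangle_Q$, since $0<X\leq Y$ implies $\|X^{1/2}Y^{-1/2}\|_{\op}\leq1$. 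I would prove this Löwner inequality, which is the only step needing any argument, by a Cauchy--Schwarz computation. For $e\in\C^m$ and $f=\langle W^{-1}\rangle_Q^{-1}e$,
$$
 \langle \langle W^{-1}\rangle_Q^{-1}e,e\rangle
 =\avg_Q\langle W^{-1/2}f,W^{1/2}e\rangle
 \leq\big(\langle\langle W^{-1}\rangle_Qf,f\rangle\big)^{1/2}
 \big(\langle\langle W\rangle_Qe,e\rangle\big)^{1/2},
$$
and $\langle\langle W^{-1}\rangle_Qf,f\rangle=\langle\langle W^{-1}\rangle_Q^{-1}e,e\rangle$. Dividing through gives the inequality. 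Operator convexity of inversion is an alternative route.

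With these facts, factor
$$
 (\mathcal V'_Q)^{-1}A\,\mathcal U'_Q
 =\big[(\mathcal V'_Q)^{-1}\mathcal V_Q^{-1}\big]
 \big[\mathcal V_QA\,\mathcal U_Q^{-1}\big]
 \big[\mathcal U_Q\mathcal U'_Q\big].
$$
Apply $\|XYZ\|_{\HS}\leq\|X\|_{\op}\|Y\|_{\HS}\|Z\|_{\op}$, using the bound $1$ for the first factor (the second fact with $W=V$) and $[U]_{\mathcal A_2}^{1/2}$ for the last factor. Conversely,
$$
 \mathcal V_QA\,\mathcal U_Q^{-1}
 =\big[\mathcal V_Q\mathcal V'_Q\big]
 \big[(\mathcal V'_Q)^{-1}A\,\mathcal U'_Q\big]
 \big[(\mathcal U'_Q)^{-1}\mathcal U_Q^{-1}\big],
$$
and here the outer factors are bounded by $[V]_{\mathcal A_2}^{1/2}$ and by $1$ (the second fact with $W=U$). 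Squaring and summing over $\varepsilon\neq\mathbf1$ then gives
$$
 \mathfrak h_Q^-(B)\leq[U]_{\mathcal A_2}^{1/2}\mathfrak h_Q^+(B),
 \qquad
 \mathfrak h_Q^+(B)\leq[V]_{\mathcal A_2}^{1/2}\mathfrak h_Q^-(B).
$$
There is no real obstacle. The only points to keep straight are the orientation of the Löwner inequality, which is the step I would check first, and the fact that the singular-value/ideal estimate needs only operator-norm bounds on the outer factors.
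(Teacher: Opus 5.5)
Your proof is correct and follows the paper's argument: the same adjoint identity, the same three-factor decompositions, and the same four operator-norm bounds, two from the $\mathcal A_2$ condition and two from $\langle W^{-1}\rangle_Q^{-1}\leq\langle W\rangle_Q$. The only difference is that you prove this Löwner inequality directly by Cauchy--Schwarz, where the paper simply cites Jensen's matrix inequality.
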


\begin{proof}
For one Haar coefficient $A=\widehat B(Q,\varepsilon)/|Q|^{1/2}$,
$
 \|\mathcal U'_QA^*(\mathcal V'_Q)^{-1}\|_{\HS}
 =
 \|(\mathcal V'_Q)^{-1}A\mathcal U'_Q\|_{\HS}.
$
Put $X=\mathcal V_QA\mathcal U_Q^{-1}$.  Then
$$
 (\mathcal V'_Q)^{-1}A\mathcal U'_Q
 =
 (\mathcal V'_Q)^{-1}\mathcal V_Q^{-1}
 X\mathcal U_Q\mathcal U'_Q.
$$
Jensen's matrix inequality gives
$$
 \|(\mathcal V'_Q)^{-1}\mathcal V_Q^{-1}\|_{\op}\leq1,
 \qquad
 \|(\mathcal U_Q\mathcal U'_Q)^{-1}\|_{\op}\leq1.
$$
The matrix $\mathcal A_2$ condition gives
$$
 \|\mathcal V_Q\mathcal V'_Q\|_{\op}
 \leq[V]_{\mathcal A_2}^{1/2},
 \qquad
 \|\mathcal U_Q\mathcal U'_Q\|_{\op}
 \leq[U]_{\mathcal A_2}^{1/2}.
$$
Multiplication on the left or right is bounded on the
Hilbert--Schmidt class by the corresponding operator norm.  These four
estimates prove the result for each $\varepsilon$.  Sum their squares.
\end{proof}

For one martingale generation, put
\begin{align}
 \rho_Q^+(B)&=\bigg(\avg_Q
 \|V^{1/2}(x)\Delta_QB(x)\mathcal U_Q^{-1}\|_{\HS}^2\,dx
 \bigg)^{1/2},\label{sch:eq:rho-plus}\\
 \rho_Q^-(B)&=\bigg(\avg_Q
 \|U^{-1/2}(x)(\Delta_QB(x))^*(\mathcal V'_Q)^{-1}\|_{\HS}^2\,dx
 \bigg)^{1/2}.\label{sch:eq:rho-minus}
\end{align}

\begin{lemma}
\label{sch:lem:scalar-projections}
Let $W\in\mathcal A_2$.  For every nonzero $e\in\C^m$, the function
$$
 w_e(x)=\langle W(x)e,e\rangle
$$
belongs to the scalar class $A_2$ and 
$
 [w_e]_{A_2}\leq m[W]_{\mathcal A_2}.
$

Consequently, there are $C_W<\infty$ and $\eta_W>0$, independent
of $e$, such that
\begin{equation}
 \frac{\int_Ew_e}{\int_Qw_e}
 \leq C_W\bigg(\frac{|E|}{|Q|}\bigg)^{\eta_W},
 \qquad E\subset Q.
 \label{sch:eq:scalarized-Ainfty}
\end{equation}
\end{lemma}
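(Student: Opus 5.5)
We need $w_e\in A_2$ with $[w_e]_{A_2}\le m[W]_{\mathcal A_2}$, where $[W]_{\mathcal A_2}$ is the reducing-matrix characteristic \eqref{sch:eq:matrix-A2-definition}. The $A_\infty$-type estimate \eqref{sch:eq:scalarized-Ainfty} then follows from standard scalar theory.

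For the first part, fix a cube $Q$ and a nonzero $e$. We have $\langle w_e\rangle_Q=\langle\langle W\rangle_Qe,e\rangle=|\langle W\rangle_Q^{1/2}e|^2$, so the task is to bound $\langle w_e^{-1}\rangle_Q$. Pointwise, Cauchy--Schwarz gives
$$|e|^2=\langle W^{-1/2}(x)e,W^{1/2}(x)e\rangle\le |W^{-1/2}(x)e|\,|W^{1/2}(x)e|.$$
Hence
$$w_e(x)^{-1}=|W^{1/2}(x)e|^{-2}\le |W^{-1/2}(x)e|^2/|e|^4=\langle W^{-1}(x)e,e\rangle/|e|^4.$$
Averaging gives $\langle w_e^{-1}\rangle_Q\le |\langle W^{-1}\rangle_Q^{1/2}e|^2/|e|^4$.

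It then suffices to show
$$|\langle W\rangle_Q^{1/2}e|\,|\langle W^{-1}\rangle_Q^{1/2}e|\le \|\langle W\rangle_Q^{1/2}\langle W^{-1}\rangle_Q^{1/2}\|_{\op}|e|^2 .$$
Write $P=\langle W\rangle_Q^{1/2}$ and $R=\langle W^{-1}\rangle_Q^{1/2}$. Then $|Pe|\,|Re|=\sup_{|u|=1}|\langle Pe,u\rangle|\,|Re|$, and I would handle this by a rank-one/duality argument. The cleanest route is $|Pe|^2|Re|^2\le \|PR\|_{\op}^2|e|^4$. To get it, note that $|Pe|\cdot|Re|$ is invariant under $e\mapsto$ scaling. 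Apply $|Pe|=|PR\,R^{-1}e|\le\|PR\|_{\op}|R^{-1}e|$, which reduces matters to showing $|R^{-1}e|\,|Re|\le\,$? That quantity is not bounded by $|e|^2$ in general. So this first try fails, and instead I would use Jensen: $R^{-2}=\langle W^{-1}\rangle_Q^{-1}\le\langle W\rangle_Q=P^2$, which is the same inequality cited in Lemma~\ref{sch:lem:two-Haar-comparable}.

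With that in hand, use the trace. We have
$$|Pe|^2|Re|^2\le \operatorname{tr}(P^2)\,|Re|^2\le\ldots$$
which is lossy. A better route avoids this: bound the $A_2$ product by the Hilbert--Schmidt norm, as the paper does when comparing characteristics,
$$\langle w_e\rangle_Q\langle w_e^{-1}\rangle_Q\le \frac{\langle Pe,Pe\rangle\langle Re,Re\rangle}{|e|^4}.$$
Choose $e$ normalized and write $|Pe|\,|Re|=|\langle Pe\otimes Re\rangle|$. Expanding $Pe=PR(R^{-1}e)$ and using $|R^{-1}e|^2=\langle\langle W^{-1}\rangle_Q^{-1}e,e\rangle$ would lead to a product of two different vectors, so I would abandon the direct algebra. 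Instead I would invoke the integral characteristic: $[W]^{\rm int}_{\mathcal A_2}\le m[W]_{\mathcal A_2}$ was already stated, and
$$\avg_Q\avg_Q\|W^{1/2}(x)W^{-1/2}(y)\|_{\op}^2\,dy\,dx\ \ge\ \avg_Q\avg_Q\frac{|W^{1/2}(x)e|^2\,|W^{-1/2}(y)e|^2}{|e|^4}\,dy\,dx,$$
because $|\langle W^{1/2}(x)W^{-1/2}(y)f,g\rangle|$ with $f=W^{1/2}(y)e/|W^{1/2}(y)e|$, $g=W^{1/2}(x)e/|W^{1/2}(x)e|$ gives $\|W^{1/2}(x)W^{-1/2}(y)\|\ge|W^{1/2}(x)e|/|W^{1/2}(y)e|$. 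So the right pointwise bound is
$$\|W^{1/2}(x)W^{-1/2}(y)\|_{\op}^2\ge w_e(x)/w_e(y).$$
Averaging in $x$ and $y$ then yields $\langle w_e\rangle_Q\langle w_e^{-1}\rangle_Q\le[W]^{\rm int}_{\mathcal A_2}\le m[W]_{\mathcal A_2}$, which proves the first claim and makes the earlier Cauchy--Schwarz detour unnecessary.

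For the second part, scalar $A_2\subset A_\infty$ with quantitative constants: by the scalar reverse H\"older inequality with exponent $1+\eta$ depending only on $[w]_{A_2}$, followed by H\"older, we get $\int_E w\le C(|E|/|Q|)^{\eta/(1+\eta)}\int_Q w$. Since $[w_e]_{A_2}\le m[W]_{\mathcal A_2}$ uniformly in $e$, the constants are independent of $e$. The main point, and the step I expected to be hardest, is the pointwise inequality $\|W^{1/2}(x)W^{-1/2}(y)\|_{\op}^2\ge w_e(x)/w_e(y)$; once it is established, everything else is routine.
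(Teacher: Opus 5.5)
Your final argument is correct and is essentially the paper's proof. The pointwise bound $w_e(x)/w_e(y)\le\|W^{1/2}(x)W^{-1/2}(y)\|_{\op}^2$, averaged over $Q\times Q$ and combined with $[W]^{\rm int}_{\mathcal A_2}\le m[W]_{\mathcal A_2}$ (then reverse H\"older for the $A_\infty$ part), is what the paper does, except that it puts $\langle W\rangle_Q^{1/2}$ in place of $W^{1/2}(x)$ and takes the trace directly. You should delete the abandoned route through $w_e^{-1}\le\langle W^{-1}e,e\rangle/|e|^4$, since that bound cannot be controlled by $[W]_{\mathcal A_2}$; it already fails for constant weights, by the Kantorovich inequality.
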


\begin{proof}
Fix a cube $Q$ and put $M=\langle W\rangle_Q$ and
$a=M^{1/2}e$.  Since $|a|^2=\avg_Qw_e$,
\begin{align*}
 \frac1{w_e(x)}
 &=\frac1{|W^{1/2}(x)M^{-1/2}a|^2}
 \leq\frac{\|M^{1/2}W^{-1/2}(x)\|_{\op}^2}{|a|^2}.
\end{align*}
It follows that
\begin{align*}
 \bigg(\avg_Qw_e\bigg)\bigg(\avg_Qw_e^{-1}\bigg)
 &\leq\avg_Q\|M^{1/2}W^{-1/2}(x)\|_{\op}^2
 \leq\operatorname{tr}\!\bigg(M^{1/2}
           \langle W^{-1}\rangle_QM^{1/2}\bigg)
 \leq m[W]_{\mathcal A_2}.
\end{align*}
The scalar $A_\infty$ estimate for $w_e$ gives
\eqref{sch:eq:scalarized-Ainfty}.
\end{proof}

\begin{lemma}
\label{sch:lem:local-haar}
For every dyadic cube $Q$,
\begin{equation}
 \rho_Q^+(B)\simeq \mathfrak h_Q^+(B),
 \qquad
 \rho_Q^-(B)\simeq \mathfrak h_Q^-(B).
 \label{sch:eq:local-haar-equivalence}
\end{equation}
The constants depend only on $d,m$, the matrix $\mathcal A_2$ constants
of $U,V$ and  the chosen normalization of the Haar functions.
\end{lemma}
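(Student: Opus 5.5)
The plan is to expand the Hilbert--Schmidt norm as a sum over Haar signatures and use orthogonality of the Haar functions in a reduced scalar form. The central step is to show that multiplication by $V^{1/2}(x)$ on $Q$ acts, up to constants, like the constant matrix $\mathcal V_Q$ on the finite-dimensional space $\operatorname{span}\{h_Q^\varepsilon:\varepsilon\ne\mathbf1\}$. For the primal statement, write $A_\varepsilon=\mathcal V_Q\widehat B(Q,\varepsilon)|Q|^{-1/2}\mathcal U_Q^{-1}$. Then
$$
 V^{1/2}(x)\Delta_QB(x)\mathcal U_Q^{-1}
 =V^{1/2}(x)\mathcal V_Q^{-1}\sum_{\varepsilon\ne\mathbf1}
 A_\varepsilon\,|Q|^{1/2}h_Q^\varepsilon(x).
$$
The function $G(x)=\sum_\varepsilon A_\varepsilon|Q|^{1/2}h_Q^\varepsilon(x)$ is constant on each child $Q'\in\operatorname{ch}(Q)$. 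We take the Hilbert--Schmidt norm column by column: $\|X\|_{\HS}^2=\sum_s|Xe_s|^2$.

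For the upper bound $\rho_Q^+\lesssim\mathfrak h_Q^+$, fix a child $Q'$ and a vector $v=Ge_s|_{Q'}$. We need
$$
 \avg_{Q'}|V^{1/2}(x)\mathcal V_Q^{-1}v|^2\,dx\lesssim|v|^2 .
$$
This follows because $\avg_{Q'}|V^{1/2}\mathcal V_Q^{-1}v|^2\le2^d\avg_Q|V^{1/2}\mathcal V_Q^{-1}v|^2=2^d|v|^2$, by the exact identity $\mathcal V_Q^2=\langle V\rangle_Q$. Summing over children gives $\rho_Q^+(B)^2\lesssim\avg_Q\|G\|_{\HS}^2$. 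Orthonormality of the $h_Q^\varepsilon$ then gives $\avg_Q\|G\|_{\HS}^2=\sum_\varepsilon\|A_\varepsilon\|_{\HS}^2=\mathfrak h_Q^+(B)^2$.

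The lower bound $\mathfrak h_Q^+\lesssim\rho_Q^+$ is the main obstacle. It needs the reverse inequality on each child,
$$
 |v|^2\lesssim\avg_{Q'}|V^{1/2}(x)\mathcal V_Q^{-1}v|^2\,dx ,
$$
that is, $\langle V\rangle_{Q'}\gtrsim\langle V\rangle_Q$ in the sense of quadratic forms. I would prove this with Lemma~\ref{sch:lem:scalar-projections}, applied with $e=\mathcal V_Q^{-1}v$ and $E=Q\setminus Q'$. For $\eta$ small this only gives $\int_{Q\setminus Q'}w_e\le C(1-2^{-d})^{\eta}\int_Qw_e$, which need not be less than $\int_Qw_e$, so it is cleaner to use the doubling property of scalar $A_2$ weights: $w_e(Q)\le C_{[w_e]_{A_2}}w_e(Q')$ with the constant $C\simeq 2^{2d}[w_e]_{A_2}$. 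This is uniform in $e$ because $[w_e]_{A_2}\le m[V]_{\mathcal A_2}$. Hence $\avg_{Q'}w_e\gtrsim\avg_Qw_e=|v|^2$, and summing over children and columns gives $\rho_Q^+\gtrsim\mathfrak h_Q^+$.

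For the dual statement, run the same argument with $U^{-1}$ in place of $V$, using the reducing matrix $\mathcal U'_Q=\langle U^{-1}\rangle_Q^{1/2}$ and the right factor $(\mathcal V'_Q)^{-1}$. Writing $U^{-1/2}(\Delta_QB)^*(\mathcal V'_Q)^{-1}=U^{-1/2}(\mathcal U'_Q)^{-1}\sum_\varepsilon\mathcal U'_Q\widehat B(Q,\varepsilon)^*|Q|^{-1/2}(\mathcal V'_Q)^{-1}|Q|^{1/2}h_Q^\varepsilon$ puts us in the same setting. Doubling of $\langle U^{-1}e,e\rangle$ comes from $U^{-1}\in\mathcal A_2$, with $[U^{-1}]_{\mathcal A_2}=[U]_{\mathcal A_2}$ for the form \eqref{sch:eq:matrix-A2-definition}. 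The constants depend only on $d$, $m$ and the characteristics of $U$ and $V$, as claimed.
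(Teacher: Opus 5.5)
Your proposal is correct and follows essentially the paper's argument. Both use that $\Delta_QB$ is constant on the children of $Q$, and obtain the two-sided Loewner comparison $c\,\langle V\rangle_Q\le\langle V\rangle_{Q'}\le 2^d\langle V\rangle_Q$ from the trivial inclusion bound and from scalar $A_2$ doubling of the directional weights, made uniform by Lemma~\ref{sch:lem:scalar-projections}; orthonormality of the Haar functions (Parseval) then identifies the resulting sum with $\mathfrak h_Q^+(B)^2$, and the dual estimate comes from the same argument with $U^{-1}$ in place of $V$.
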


\begin{proof}
We prove the first estimate.  Let $\operatorname{ch}(Q)$ be the
$2^d$ dyadic children of $Q$.  The function $\Delta_QB$ is constant on
each child.  Write this value as $D_E$ on $E\in\operatorname{ch}(Q)$.
Then
\begin{align}
 \rho_Q^+(B)^2
 =\frac1{|Q|}\sum_{E\in\operatorname{ch}(Q)}
   \int_E\|V^{1/2}(x)D_E\mathcal U_Q^{-1}\|_{\HS}^2\,dx=2^{-d}\sum_{E\in\operatorname{ch}(Q)}
   \|\langle V\rangle_E^{1/2}D_E\mathcal U_Q^{-1}\|_{\HS}^2.
 \label{sch:eq:child-trace}
\end{align}
By Lemma~\ref{sch:lem:scalar-projections}, the scalar function
$x\mapsto\langle V(x)e,e\rangle$ is an $A_2$ weight with a constant
uniform in $e$.  Scalar doubling, applied to a child
$E$ and its parent $Q$, gives
$$
 c\,\langle V\rangle_Q\leq \langle V\rangle_E
 \leq 2^d\langle V\rangle_Q
$$
in the Loewner order.  Hence \eqref{sch:eq:child-trace} is comparable to
$$
 2^{-d}\sum_{E\in\operatorname{ch}(Q)}
 \|\mathcal V_QD_E\mathcal U_Q^{-1}\|_{\HS}^2.
$$
The map from the mean-zero child values $\{D_E\}_E$ to the Haar
coefficients
$\{|Q|^{-1/2}\widehat B(Q,\varepsilon)\}_{\varepsilon\ne\mathbf1}$
is a fixed unitary map, up to the harmless normalization of the Haar
basis.  Apply it separately to every scalar entry of
$\mathcal V_QD_E\mathcal U_Q^{-1}$.  Parseval's identity gives the
first estimate in \eqref{sch:eq:local-haar-equivalence}.

For the second estimate, repeat the argument with the matrix weight
$U^{-1}$ and the child values $D_E^*$.  The inverse of a matrix
$\mathcal A_2$ weight is again a matrix $\mathcal A_2$ weight.  This
gives the second estimate.
\end{proof}

We now take the dyadic Schatten--Besov seminorm to be
\begin{equation}
 \|B\|_{\mathfrak b^{r,q}_{V,U}(\D)}
 =\|\{\mathfrak h_Q(B):Q\in\D\}\|_{\ell^{r,q}},
 \qquad
 \|B\|_{\mathfrak b^{r}_{V,U}(\D)}
 =\|B\|_{\mathfrak b^{r,r}_{V,U}(\D)}.
 \label{sch:eq:dyadic-Haar-Besov}
\end{equation}
The continuous seminorm is the sum over a fixed finite family of
adjacent grids:
\begin{equation}
 \|B\|_{\mathfrak{SB}^{r,q}_{V,U}}
 =\sum_{t=1}^{3^d}\|B\|_{\mathfrak b^{r,q}_{V,U}(\D^t)}.
 \label{sch:eq:continuous-Haar-Besov}
\end{equation}
We write $\mathfrak{SB}^r_{V,U}=\mathfrak{SB}^{r,r}_{V,U}$ and
$\mathfrak{SW}^d_{V,U}=\mathfrak{SB}^{d,\infty}_{V,U}$.
Lemma~\ref{sch:lem:local-haar} shows that this definition
is  the $\ell^{r,q}$ norm of the two one-generation weighted
martingale oscillations.  No matrix John--Nirenberg theorem is needed
for this identity.

\begin{theorem}
\label{sch:prop:matrix-besov-JN}
For $1<r<\infty$ and $0<q\leq\infty$,
\begin{equation}
 \sum_{t=1}^{3^d}
 \|\{\mathfrak h_Q(B):Q\in\D^t\}\|_{\ell^{r,q}}
 \simeq
 \|B\|_{\mathfrak B^{r,q}_{V,U}}.
 \label{sch:eq:matrix-besov-JN}
\end{equation}
The constants may depend on $d,m,r,q,[U]_{\mathcal A_2}$ and
$[V]_{\mathcal A_2}$, but not on $B$.
\end{theorem}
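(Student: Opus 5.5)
We prove the two inequalities separately, on each fixed grid $\D^t$ first and then summing over $t$.

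\emph{Haar side controlled by oscillations.} Fix $Q\in\D^t$. Since $\Delta_QB=\Delta_Q(B-B_Q)$ is a martingale difference of $\one_Q(B-B_Q)$, the one-generation bound $\rho_Q^+(B)\lesssim\alpha_Q^{\HS}(B)$ should follow from a single-generation version of Lemma~\ref{lem:local-matrix-weight-bdd}(i). On each child $E$, $\Delta_QB=\langle B\rangle_E-B_Q$ is the average over $E$ of $B-B_Q$. The weighted averaging bound \eqref{eq:weighted-averaging-operator-bdd} on $E$, applied to columns, together with $|E|=2^{-d}|Q|$, gives $\rho_Q^+\lesssim\alpha_Q^{\HS}$. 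In the same way $\rho_Q^-\lesssim\beta_Q^{\HS}$, using $U^{-1}\in\mathcal A_2$. Lemma~\ref{sch:lem:local-haar} then gives $\mathfrak h_Q(B)\lesssim\omega_Q^{\HS}(B)$ termwise. Termwise domination gives the $\ell^{r,q}$ bound directly, so the ``$\lesssim$'' direction is routine.

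\emph{Oscillations controlled by Haar side.} On $Q$ we expand $B-B_Q=\sum_{k\ge0}\sum_{R\in\operatorname{ch}^k(Q)}\Delta_RB$. By orthogonality of the martingale differences and a matrix-weighted square function (Volberg/Treil square function estimates with the reducing matrices), we expect
$$
\alpha_Q^{\HS}(B)^2\lesssim\frac1{|Q|}\sum_{k\ge0}\sum_{R\in\operatorname{ch}^k(Q)}|R|\,\|\mathcal V_R\,\Delta\text{-coef}_R\,\mathcal U_Q^{-1}\|_{\HS}^2 .
$$
We then replace $\mathcal U_Q^{-1}$ by $\mathcal U_R^{-1}\,\mathcal U_R\mathcal U_Q^{-1}$. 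This gives
$$
\alpha_Q^{\HS}(B)^2\lesssim\sum_k\sum_{R\in\operatorname{ch}^k(Q)}\frac{|R|}{|Q|}\,\mathfrak h_R^+(B)^2\,\|\mathcal U_R\mathcal U_Q^{-1}\|_{\op}^2 .
$$
The key decay input is Lemma~\ref{sch:lem:scalar-projections}, i.e.\ the uniform scalar $A_\infty$ bound \eqref{sch:eq:scalarized-Ainfty} applied in directions $\mathcal U_Q^{-1}e_s$. It should yield
$$
\sum_{R\in\operatorname{ch}^k(Q)}\frac{|R|}{|Q|}\|\mathcal U_R\mathcal U_Q^{-1}\|_{\op}^{2(1+\gamma)}\lesssim1\qquad\text{for some }\gamma>0,
$$
that is, a reverse-H\"older gain. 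From it we extract geometric decay $2^{-\theta k}$ when the weights $\|\mathcal U_R\mathcal U_Q^{-1}\|^2|R|/|Q|$ are paired against an $\ell^{r}$ ($r>1$) norm of $\mathfrak h_R$ via H\"older. Concretely, we aim for
$$
\alpha_Q^{\HS}(B)\lesssim\sum_k2^{-\theta k}\Big(\sum_{R\in\operatorname{ch}^k(Q)}\frac{|R|}{|Q|}\mathfrak h_R(B)^{s}\Big)^{1/s}
$$
for some $s$ with $1\le s<r$ (possibly $s=2$ when $r>2$). The same bound should hold for $\beta_Q^{\HS}$ with $V'$, $U^{-1}$ in place of the primal weights.

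\emph{Summation.} Given this local estimate, the $\ell^{r,q}$ norm over $Q\in\D^t$ of the right side is controlled as follows. For fixed $k$, the map sending $\{\mathfrak h_R\}$ to $\{(\sum_{R\in\operatorname{ch}^k(Q)}2^{-dk}\mathfrak h_R^s)^{1/s}\}_Q$ is bounded on $\ell^{r}$ with norm $\le 2^{-dk(1/s-1/r)}\cdot 2^{dk/r}\cdots$. More precisely, each $R$ appears for exactly one $Q$ per $k$, and averaging costs nothing on $\ell^\infty$ while costing $2^{dk/r}\cdot2^{-dk/s}$ on $\ell^{r}$, hence is bounded when $s\le r$. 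We then use real interpolation for $\ell^{r,q}$, which is where $r>1$ enters, together with the quasi-triangle inequality and the geometric factor $2^{-\theta k}$ to sum in $k$. Since the oscillation seminorm uses the same adjacent grids, no cross-grid comparison is needed.

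\emph{Main obstacle.} The main obstacle is the local square-function step with an off-cube normalization $\mathcal U_Q^{-1}$ on deep descendants $R$. It must be carried out without a matrix John--Nirenberg theorem, and with enough decay in $k$ to survive the $\ell^{r,q}$ summation for all $r>1$. If the square-function route fails, I would first try expanding $\|V^{1/2}(\cdot)\sum_R\Delta_RB\,\mathcal U_Q^{-1}e_s\|$ directionally and applying the scalar weighted Haar square function for $w_e$ with uniform constants.
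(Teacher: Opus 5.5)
\textbf{There is a gap in the converse direction: the decay in $k$ is placed in the wrong spot.}

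Your first direction is correct and is the paper's argument. That is, $\mathfrak h_Q(B)\lesssim\omega_Q^{\HS}(B)$ holds cube by cube, via the averaging bound on each child and Lemma~\ref{sch:lem:local-haar}.

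The local estimate you aim for,
$\alpha_Q^{\HS}(B)\lesssim\sum_k2^{-\theta k}\bigl(\sum_{R\in\operatorname{ch}^k(Q)}\frac{|R|}{|Q|}\mathfrak h_R(B)^s\bigr)^{1/s}$ with $\theta>0$, is false for every $s$.
\begin{itemize}
\item Counterexample with constant coefficients. Take $U=V=I_m$ and a fixed $\varepsilon_0\in\Sigma_0$. Let $B=\sum_R|R|^{1/2}h_R^{\varepsilon_0}I_m$, summed over the dyadic $R\subseteq Q$ with $\ell(R)\geq2^{-K}\ell(Q)$. Then $\mathfrak h_R(B)=\sqrt m$ for these $R$ and $\alpha_Q^{\HS}(B)=\sqrt{m(K+1)}$. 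Your right side is at most $\sqrt m\sum_k2^{-\theta k}$ for every $K$. No per-cube decay exists: over one generation the weights $\frac{|R|}{|Q|}\|\mathcal U_R\mathcal U_Q^{-1}\|_{\op}^2$ only sum to at most $m$.
\item Counterexample for $s<2$, even without the factor $2^{-\theta k}$. Take the single coefficient $B=|R_0|^{1/2}h_{R_0}^{\varepsilon_0}I_m$ with $R_0\in\operatorname{ch}^k(Q)$. The left side is $\sqrt m\,2^{-dk/2}$ and the right side is $\sqrt m\,2^{-dk/s}$.
\item Consequence for your range condition. Your requirement $s<r$ therefore cannot be met for $1<r\leq2$. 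The theorem covers this range, and Theorem~\ref{sch:thm:Riesz-lower} uses it, e.g.\ $(r,q)=(2,\infty)$ when $d=2$. In fact, your H\"older step against the reverse-H\"older gain yields $s=2(1+\gamma)'>2$.
\end{itemize}
Your summation paragraph relies on this nonexistent local factor. Its norm computation for the averaging map is also wrong, as explained below.

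\textbf{How the paper gets the decay.} The decay must come from the summation over $Q$ in the sequence norm, not from inside $\alpha_Q$.
\begin{itemize}
\item Within one generation, the $\Delta_PB$ with $P\in\operatorname{ch}^k(Q)$ have disjoint supports. So Lemma~\ref{sch:lem:local-haar} gives exactly $\frac1{|Q|}\int_Q\|V^{1/2}S_{Q,k}\mathcal U_Q^{-1}\|_{\HS}^2\lesssim\sum_{P\in\operatorname{ch}^k(Q)}\vartheta_U(P,Q)\mathfrak h_P^+(B)^2$.
\item Two facts then bound this generation map on every $\ell^s$ with a factor $2^{-k\gamma}$: the identity $\sum_P\vartheta_U(P,Q)=m$, and the pointwise bound $\vartheta_U(P,Q)\leq C2^{-kd\eta_U}$ from \eqref{sch:eq:scalarized-Ainfty}. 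Use Jensen for $s\geq2$ and subadditivity of $t\mapsto t^{s/2}$ for $s<2$.
\item One then interpolates to $\ell^{r,q}$ and sums in $k$.
\end{itemize}
Because the decay is global, the plain triangle inequality over $k$ suffices. The matrix-weighted square function you call the main obstacle is not needed.

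\textbf{Your reverse-H\"older route can be repaired.} Drop the local factor and take $s=2(1+\gamma)'$. Then the map $c\mapsto\{(\sum_{R\in\operatorname{ch}^k(Q)}2^{-dk}c_R^s)^{1/s}\}_Q$ has norm at most $2^{-dk/\max\{\tau,s\}}$ on every $\ell^\tau$. This follows from $\ell^\tau\subset\ell^s$ when $\tau\leq s$, and from Jensen when $\tau\geq s$. So the restriction $s\leq r$ is unnecessary, and the decay comes from the number of descendants rather than from the weight.
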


\begin{proof}
It is enough to work with one dyadic system $\D$.  Let $P\subset Q$
be dyadic cubes.  Define
\begin{equation}
 \vartheta_U(P,Q)
 =\frac1{|Q|}\operatorname{tr}\!\bigg(
 \mathcal U_Q^{-1}U(P)\mathcal U_Q^{-1}\bigg),
 \qquad U(P)=\int_PU(x)\,dx.
 \label{sch:eq:theta-U}
\end{equation}
For every $k\geq0$,
\begin{equation}
 \sum_{P\in\operatorname{ch}^k(Q)}\vartheta_U(P,Q)=m.
 \label{sch:eq:theta-sum}
\end{equation}
To obtain decay, choose the standard basis $e_1,\ldots,e_m$ and apply
\eqref{sch:eq:scalarized-Ainfty} to
$$
 x\longmapsto
 \langle U(x)\mathcal U_Q^{-1}e_j,
                 \mathcal U_Q^{-1}e_j\rangle.
$$
Its integral over $Q$ is $|Q|$.  Hence, if
$P\in\operatorname{ch}^k(Q)$,
\begin{equation}
 0\leq\vartheta_U(P,Q)\leq C\,2^{-kd\eta_U}.
 \label{sch:eq:theta-decay}
\end{equation}
Also,
\begin{align}
 \|\mathcal U_P\mathcal U_Q^{-1}\|_{\op}^2
 &\leq
 \operatorname{tr}\!\bigg(
 \mathcal U_Q^{-1}\langle U\rangle_P
 \mathcal U_Q^{-1}\bigg)
 =\frac{|Q|}{|P|}\vartheta_U(P,Q).
 \label{sch:eq:reducing-matrix-comparison}
\end{align}

We prove the upper estimate for the primal oscillation.  For
$P\in\D$, write $D_P=\Delta_PB$.  For $Q\in\D$ and $k\geq0$, put
$$
 S_{Q,k}=\sum_{P\in\operatorname{ch}^k(Q)}D_P.
$$
The cubes in this sum are disjoint.  Lemma~\ref{sch:lem:local-haar} and
\eqref{sch:eq:reducing-matrix-comparison} give
\begin{align}
 \frac1{|Q|}\int_Q
 \|V^{1/2}(x)S_{Q,k}(x)\mathcal U_Q^{-1}\|_{\HS}^2\,dx&=\sum_{P\in\operatorname{ch}^k(Q)}\frac{|P|}{|Q|}
 \avg_P\|V^{1/2}D_P\mathcal U_P^{-1}
              \mathcal U_P\mathcal U_Q^{-1}\|_{\HS}^2\notag\\
 &\leq
 C\sum_{P\in\operatorname{ch}^k(Q)}
 \vartheta_U(P,Q)\,\mathfrak h_P^+(B)^2.
 \label{sch:eq:generation-bound}
\end{align}
For a nonnegative sequence $c=\{c_P\}_{P\in\D}$, define
$$
 \mathcal H_kc(Q)=
 \bigg(\sum_{P\in\operatorname{ch}^k(Q)}
 \vartheta_U(P,Q)c_P^2\bigg)^{1/2}.
$$
We next estimate this operator on sequence spaces.

Let $s\geq2$.  By \eqref{sch:eq:theta-sum},
\begin{align*}
 \|\mathcal H_kc\|_{\ell^s}^s
 &\leq m^{s/2-1}
 \sum_Q\sum_{P\in\operatorname{ch}^k(Q)}
 \vartheta_U(P,Q)c_P^s
 \leq C2^{-kd\eta_U}\sum_Pc_P^s.
\end{align*}
Each $P$ has only one ancestor of generation $k$ in the last sum.  Thus
\begin{equation}
 \|\mathcal H_kc\|_{\ell^s}
 \leq C_s2^{-kd\eta_U/s}\|c\|_{\ell^s},
 \qquad s\geq2.
 \label{sch:eq:Hk-large-s}
\end{equation}
If $0<s<2$, then $(\sum_jx_j)^{s/2}\leq\sum_jx_j^{s/2}$.
By \eqref{sch:eq:theta-decay},
\begin{align*}
 \|\mathcal H_kc\|_{\ell^s}^s
 &\leq\sum_Q\sum_{P\in\operatorname{ch}^k(Q)}
 \vartheta_U(P,Q)^{s/2}c_P^s
 \leq C_s2^{-kd\eta_Us/2}\sum_Pc_P^s.
\end{align*}
Consequently,
\begin{equation}
 \|\mathcal H_kc\|_{\ell^s}
 \leq C_s2^{-kd\eta_U/2}\|c\|_{\ell^s},
 \qquad 0<s<2.
 \label{sch:eq:Hk-small-s}
\end{equation}
Choose $1<s_0<r<s_1$.  The real $K$-method and Holmstedt's formula
\cite{Holmstedt} apply for $0<q\leq\infty$.  Interpolating
\eqref{sch:eq:Hk-large-s} and \eqref{sch:eq:Hk-small-s} gives numbers
$C<\infty$ and $\gamma>0$ such that
\begin{equation}
 \|\mathcal H_kc\|_{\ell^{r,q}}
 \leq C2^{-k\gamma}\|c\|_{\ell^{r,q}}.
 \label{sch:eq:Hk-Lorentz}
\end{equation}
This also covers $q=\infty$.

For a finite martingale expansion,
$$
 (B-B_Q)\one_Q=\sum_{k\geq0}S_{Q,k},
$$
where only finitely many terms occur.  Minkowski's inequality and
\eqref{sch:eq:generation-bound} give
\begin{equation}
 \alpha_Q^{\HS}(B)\leq C\sum_{k\geq0}
 \mathcal H_k(\mathfrak h^+(B))(Q).
 \label{sch:eq:aQ-Hk}
\end{equation}
Let $0<\tau\leq\min\{1,q\}$ be an exponent for the Lorentz
quasi-triangle inequality.  Raise the quasi-norm to the power
$\tau$, apply that inequality to the sum over $k$ and  use the
geometric factor in \eqref{sch:eq:Hk-Lorentz}.  More precisely,
$$
 \|\{\alpha_Q^{\HS}(B)\}\|_{\ell^{r,q}}^\tau
 \leq C\sum_{k\geq0}
 \|\mathcal H_k(\mathfrak h^+(B))\|_{\ell^{r,q}}^\tau
 \leq C\|\{\mathfrak h_Q^+(B)\}\|_{\ell^{r,q}}^\tau
 \sum_{k\geq0}2^{-k\gamma\tau}.
$$
This gives
\begin{equation}
 \|\{\alpha_Q^{\HS}(B)\}_{Q\in\D}\|_{\ell^{r,q}}
 \leq C\|\{\mathfrak h_Q^+(B)\}_{Q\in\D}\|_{\ell^{r,q}}.
 \label{sch:eq:primal-global-upper}
\end{equation}
For a general locally integrable $B$, define
$$
 \alpha_{Q,N}^{\HS}
 =\bigg(\frac1{|Q|}\int_Q
 \bigg\|V^{1/2}(x)
 \sum_{k=0}^NS_{Q,k}(x)\mathcal U_Q^{-1}
 \bigg\|_{\HS}^2\,dx\bigg)^{1/2}.
$$
On a finite set of cubes, the preceding argument gives the
sequence estimate for $\{\alpha_{Q,N}^{\HS}\}$, uniformly in $N$.  Exhaust
the countable grid to obtain the same estimate on all of $\D$.
Ordinary martingale convergence gives
$$
 \sum_{k=0}^NS_{Q,k}(x)\longrightarrow B(x)-B_Q
 \quad\text{for almost every }x\in Q.
$$
Hence $\alpha_Q^{\HS}(B)\leq\liminf_N\alpha_{Q,N}^{\HS}$ by Fatou's lemma.  The Fatou
property of $\ell^{r,q}$, followed by the uniform estimate, proves
\eqref{sch:eq:primal-global-upper}.

Apply the same proof to $B^*$, with $U^{-1}$ in place of $V$ and
$V^{-1}$ in place of $U$.  Since the inverse of a matrix
$\mathcal A_2$ weight belongs to $\mathcal A_2$, we obtain
\begin{equation}
 \|\{\beta_Q^{\HS}(B)\}_{Q\in\D}\|_{\ell^{r,q}}
 \leq C\|\{\mathfrak h_Q^-(B)\}_{Q\in\D}\|_{\ell^{r,q}}.
 \label{sch:eq:dual-global-upper}
\end{equation}

It remains to prove the reverse estimates.  Put
$F=(B-B_Q)\mathcal U_Q^{-1}$.  Then $F_Q=0$ and, on every child
$E$ of $Q$,
$$
 \Delta_QF=F_E=\avg_EF.
$$
The matrix Cauchy--Schwarz inequality gives
\begin{align*}
 \|\langle V\rangle_E^{1/2}F_E\|_{\HS}^2
 &\leq
 \|\langle V\rangle_E^{1/2}
       \langle V^{-1}\rangle_E^{1/2}\|_{\op}^2
 \avg_E\|V^{1/2}(x)F(x)\|_{\HS}^2
 \leq [V]_{\mathcal A_2}
 \avg_E\|V^{1/2}(x)F(x)\|_{\HS}^2.
\end{align*}
Sum over the children.  By Lemma~\ref{sch:lem:local-haar},
\begin{equation}
 \mathfrak h_Q^+(B)\leq C\rho_Q^+(B)\leq C\alpha_Q^{\HS}(B).
 \label{sch:eq:primal-reverse}
\end{equation}
The same calculation with $B^*$, $U^{-1}$ and  $V^{-1}$ gives
\begin{equation}
 \mathfrak h_Q^-(B)\leq C\beta_Q^{\HS}(B).
 \label{sch:eq:dual-reverse}
\end{equation}
Combining \eqref{sch:eq:primal-global-upper}--\eqref{sch:eq:dual-reverse} proves the result on $\D$.  Sum over the adjacent dyadic systems.
\end{proof}

\begin{corollary}
For $1<r<\infty$ and $0<q\leq\infty$,
$
 \mathfrak B^{r,q}_{V,U}=\mathfrak{SB}^{r,q}_{V,U}
$
with equivalent seminorms.  In particular,
$
 \mathfrak W^d_{V,U}=\mathfrak{SW}^d_{V,U}.
$
\end{corollary}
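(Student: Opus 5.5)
The corollary should be immediate from Theorem~\ref{sch:prop:matrix-besov-JN} together with the definitions \eqref{sch:eq:matrix-Besov-osc} and \eqref{sch:eq:continuous-Haar-Besov}. I will first unfold the definitions. The seminorm $\|B\|_{\mathfrak{SB}^{r,q}_{V,U}}$ is, by definition, $\sum_{t=1}^{3^d}\|\{\mathfrak h_Q(B):Q\in\D^t\}\|_{\ell^{r,q}}$, which is exactly the left side of \eqref{sch:eq:matrix-besov-JN}. The right side of \eqref{sch:eq:matrix-besov-JN} is $\|B\|_{\mathfrak B^{r,q}_{V,U}}$ by \eqref{sch:eq:matrix-Besov-osc}. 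The theorem therefore gives $\|B\|_{\mathfrak{SB}^{r,q}_{V,U}}\simeq\|B\|_{\mathfrak B^{r,q}_{V,U}}$ for every locally integrable $B$. The constants depend only on $d,m,r,q,[U]_{\mathcal A_2},[V]_{\mathcal A_2}$.

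Both sides are $[0,\infty]$-valued. So one is finite exactly when the other is, and the two spaces coincide as sets of locally integrable symbols modulo constant matrices. Both seminorms vanish on constants: $\omega_Q^{\HS}$ and all Haar coefficients of a constant matrix are zero. Hence the identification is compatible with the quotient by constants.

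For the last assertion I specialize to $r=d$ and $q=\infty$. This needs $1<r$, which holds because $d\geq2$ is the standing assumption of this part; for $d=1$ the theorem would not apply, but that case is excluded. Then $\mathfrak W^d_{V,U}=\mathfrak B^{d,\infty}_{V,U}$ and $\mathfrak{SW}^d_{V,U}=\mathfrak{SB}^{d,\infty}_{V,U}$ by definition, so they coincide with equivalent seminorms.

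There is no substantive obstacle. The only point to check is that the range $1<r<\infty$, $0<q\leq\infty$ of Theorem~\ref{sch:prop:matrix-besov-JN} includes the critical pair $(d,\infty)$. It does, because the interpolation step \eqref{sch:eq:Hk-Lorentz} there explicitly covers $q=\infty$.
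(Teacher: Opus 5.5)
Your proposal is correct and matches the paper, which states the corollary with no separate proof as an immediate consequence of Theorem~\ref{sch:prop:matrix-besov-JN}. The left side of \eqref{sch:eq:matrix-besov-JN} is $\|B\|_{\mathfrak{SB}^{r,q}_{V,U}}$ by \eqref{sch:eq:continuous-Haar-Besov}, its right side is $\|B\|_{\mathfrak B^{r,q}_{V,U}}$ by \eqref{sch:eq:matrix-Besov-osc}, and the weak statement is the case $(r,q)=(d,\infty)$, which is admissible because $d\geq2$.
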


\subsection{NWO estimates}

We use one scalar result of Rochberg and Semmes.  All extensions to
$\C^m$, all matrix coefficients and  all Lorentz estimates are proved
in this section.  The matrix-weight maximal estimates are proved below
before each application.

Let $\{e_\xi\}_{\xi\in\Xi}$ be a family in
$L^2(\R^d;\C^n)$.  Assign a cube $S_\xi$ to every index and suppose
that $\operatorname{supp} e_\xi\subset S_\xi$.  Define
\begin{equation}
 \mathcal M_eh(x)
 =\sup_{\xi\in\Xi}
 \frac{|\langle h,e_\xi\rangle|}{|S_\xi|^{1/2}}
 \one_{S_\xi}(x).
 \label{sch:eq:NWO-maximal-operator}
\end{equation}
We call $\{e_\xi\}$ an NWO family if
\begin{equation}
 \|\mathcal M_eh\|_2\leq C_e\|h\|_2.
 \label{sch:eq:NWO-definition}
\end{equation}
The controlling cubes may be drawn from a fixed finite collection of
dyadic systems.  Repeated controlling cubes are also allowed
with fixed finite multiplicity.  These changes affect only the
constant in \eqref{sch:eq:NWO-definition}.  When an operator-ideal
estimate is applied, we partition the indices by dyadic system and by
a finite repetition color.  We then apply the scalar estimate on each
subfamily and use the finite quasi-triangle inequality.

For vectors $x$ and $y$ in Hilbert spaces, we use the convention
$$
 (x\otimes y)h=\langle h,y\rangle x.
$$

\subsubsection{The scalar theorem and its Lorentz consequence}

We first state the only NWO operator-ideal estimate taken from
Rochberg--Semmes \cite[(1.10) and Section~3]{RS}.

\begin{theorem}
\label{sch:thm:scalar-NWO}
Let $\{e_\xi\}$ and $\{f_\xi\}$ be scalar NWO families in two
copies of $L^2(\R^d)$.  Assume that, for every $\xi$, both
functions are supported in the same controlling cube $S_\xi$ and 
that both NWO bounds are defined with this cube.  If
$1<s<\infty$ and $A$ is a compact operator between these spaces,
then
\begin{equation}
 \bigg(\sum_{\xi\in\Xi}
 |\langle Ae_\xi,f_\xi\rangle|^s\bigg)^{1/s}
 \leq C_s\|A\|_{\Sch^s}.
 \label{sch:eq:scalar-NWO-analysis}
\end{equation}
The constant depends on $s$ and on the two NWO constants.
\end{theorem}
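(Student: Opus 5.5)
The plan is to prove \eqref{sch:eq:scalar-NWO-analysis} by duality from a \emph{synthesis} estimate, and to prove the synthesis estimate by interpolation between two endpoints that use only the maximal operators $\mathcal M_e$ and $\mathcal M_f$.

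First, a preliminary observation. Taking $h=e_\xi$ in \eqref{sch:eq:NWO-definition} gives $\mathcal M_eh\geq\|e_\xi\|_2^2|S_\xi|^{-1/2}\one_{S_\xi}$. Hence $\|e_\xi\|_2^2\leq C_e\|e_\xi\|_2$, so $\sup_\xi\|e_\xi\|_2\leq C_e$, and likewise $\sup_\xi\|f_\xi\|_2\leq C_f$.

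Next, the duality reduction. For a finitely supported scalar sequence $\lambda=\{\lambda_\xi\}$, put
$$
 T_\lambda=\sum_\xi\lambda_\xi\, e_\xi\otimes f_\xi,
$$
so that $\sum_\xi\lambda_\xi\langle Ae_\xi,f_\xi\rangle=\operatorname{tr}(AT_\lambda)$. By Schatten duality $(\Sch^s)^*=\Sch^{s'}$ for $1<s<\infty$,
$$
 \Big(\sum_\xi|\langle Ae_\xi,f_\xi\rangle|^s\Big)^{1/s}
 =\sup_{\|\lambda\|_{\ell^{s'}}\leq1}|\operatorname{tr}(AT_\lambda)|
 \leq\|A\|_{\Sch^s}\sup_{\|\lambda\|_{\ell^{s'}}\leq1}\|T_\lambda\|_{\Sch^{s'}}.
$$
Since $s'$ ranges over $(1,\infty)$, it suffices to prove the synthesis bound $\|T_\lambda\|_{\Sch^t}\leq C_t\|\lambda\|_{\ell^t}$ for every $1<t<\infty$. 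The statement for an infinite family then follows by exhausting $\Xi$ with finite subfamilies, since the constants do not depend on the subfamily.

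The synthesis bound will come from complex interpolation of $\lambda\mapsto T_\lambda$ (the Schatten classes and $\ell^t$ interpolate compatibly) between two estimates:
\[
 \|T_\lambda\|_{\op}\leq C\|\lambda\|_{\ell^\infty},
 \qquad
 \|T_\lambda\|_{\Sch^{1,\infty}}\leq C\|\lambda\|_{\ell^1},
\]
or, alternatively, $\|T_\lambda\|_{\Sch^1}\leq C\|\lambda\|_{\ell^{1,\infty}}$ combined with Marcinkiewicz-type real interpolation.

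The $\ell^1$ endpoint in its naive form is immediate: $\|T_\lambda\|_{\Sch^1}\leq\sum_\xi|\lambda_\xi|\|e_\xi\|\|f_\xi\|\leq C\|\lambda\|_{\ell^1}$ by the preliminary observation. This naive form is enough for real interpolation with the $\ell^\infty$ endpoint, provided the interpolation parameter stays away from $0$, which is the case for $t>1$.

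The $\ell^\infty$ endpoint is the bilinear estimate
\[
 \sum_\xi|\langle h,f_\xi\rangle|\,|\langle e_\xi,g\rangle|
 \leq C\|h\|_2\|g\|_2 .
\]
I expect this to be the main obstacle. Bounding each term by $|S_\xi|\,\inf_{S_\xi}(\mathcal M_fh)(\mathcal M_eg)$ gives
$$
 \sum_\xi\int_{S_\xi}\mathcal M_fh\,\mathcal M_eg .
$$
This is summable only if the cubes $S_\xi$ satisfy a Carleson packing condition, and that condition is not part of the hypotheses.

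To get around this, I would first try the Rochberg--Semmes stopping-time device. Fix $h,g$ and, for each $k$, let $\Xi_k$ be the set of indices $\xi$ with $2^k<|\langle h,f_\xi\rangle|/|S_\xi|^{1/2}\leq2^{k+1}$. Inside $\Xi_k$, select maximal cubes. The NWO bound then controls the total measure $|\bigcup_{\xi\in\Xi_k}S_\xi|\lesssim2^{-2k}\|h\|_2^2$, by a weak-type $L^2$ estimate for $\mathcal M_f$. On each level set, pair against $g$ using $\mathcal M_e$ and Cauchy--Schwarz, and then sum the levels by a layer-cake argument. If the nested repetitions within one level set obstruct this, I would instead use the operator form: estimate $\|T_\lambda^*T_\lambda\|$ by a Schur test on the Gram entries $\langle e_\xi,e_\eta\rangle$, controlled through $\mathcal M_e$ along chains of nested cubes. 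This reproduces the argument in \cite[Section~3]{RS}, which I would follow closely.
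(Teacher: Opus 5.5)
\textbf{There is a genuine gap: the operator-norm endpoint is false.}

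Your duality reduction is correct. It is the trace-duality argument the paper gives right after the statement, run in the opposite direction. Note that the paper does not prove this theorem; it imports the analysis estimate from \cite[(1.10) and Section~3]{RS} and then deduces synthesis from it.

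The gap is in the synthesis step. The endpoint $\|T_\lambda\|_{\op}\leq C\|\lambda\|_{\ell^\infty}$ is false for NWO families. Equivalently, the bilinear bound $\sum_\xi|\langle h,f_\xi\rangle|\,|\langle e_\xi,g\rangle|\leq C\|h\|_2\|g\|_2$ is false. So no stopping-time device or Schur test can supply it.

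For a counterexample, take $e_Q=f_Q=|Q|^{-1/2}\one_Q$ with $S_Q=Q$, for dyadic $Q$. Since $|\langle h,e_Q\rangle|\,|Q|^{-1/2}=|\avg_Qh|\leq Mh$ on $Q$, both families are NWO with a constant depending only on $d$. Let $h=\one_{[0,1]^d}$, and let $\lambda_Q=1$ for the dyadic $Q\subset[0,1]^d$ with $\ell(Q)\geq2^{-K}$, and $\lambda_Q=0$ otherwise. Then
\[
 \langle T_\lambda h,h\rangle
 =\sum_{k=0}^{K}\ \sum_{Q\subset[0,1]^d,\ \ell(Q)=2^{-k}}|Q|
 =K+1 .
\]
In fact $T_\lambda=\sum_{k\leq K}\mathbb E_k$ on $L^2([0,1]^d)$. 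Hence $\|T_\lambda\|_{\op}\geq K+1$ while $\|\lambda\|_{\ell^\infty}=1$. Dually, this is the failure of the analysis estimate at $s=1$, which is why the theorem is stated only for $1<s<\infty$. The Carleson packing you observed to be missing is genuinely needed for an operator-norm bound; it cannot be circumvented.

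Your alternative endpoint also fails. The bound $\|T_\lambda\|_{\Sch^1}\leq C\|\lambda\|_{\ell^{1,\infty}}$ is false already for orthonormal families, by taking $\lambda_n=1/n$.

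What is actually needed is a nontrivial estimate at some large but finite exponent, or restricted-type estimates at every finite exponent, to interpolate against the trivial $\ell^1\to\Sch^1$ bound. The tree example shows what such an estimate must capture. There the eigenvalues of $T_\lambda$ are $K+1-j$ on the level-$j$ martingale differences, so $\|T_\lambda\|_{\Sch^t}=O_t\big((\#\{Q:\lambda_Q\neq0\})^{1/t}\big)$ for every finite $t$, even though the operator norm grows like the number of generations.

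Such an estimate must also use the index structure: one function, or boundedly many, per distinct dyadic cube, so that the supports at each scale are disjoint. This is the setting of \cite{RS} and of the paper's remarks before the statement. The inequality fails without it: repeating one function $e$ on one cube infinitely often gives an NWO family with $\sum_\xi|\langle Ae,e\rangle|^s=\infty$ for $A=e\otimes e$. Exploiting this structure is the content of \cite[Section~3]{RS}, and your proposal contains no substitute for it.
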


The synthesis estimate needed below follows from
Theorem~\ref{sch:thm:scalar-NWO}.  We include the argument.  Let
$\lambda=\{\lambda_\xi\}$ have finite support and put
$$
 T_\lambda=\sum_\xi\lambda_\xi f_\xi\otimes e_\xi.
$$
Fix $1<s<\infty$ and write $s'=s/(s-1)$.  For a finite-rank
operator $A$ between these spaces,
\begin{align*}
 |\operatorname{Tr}(A^*T_\lambda)|
 &=\left|\sum_\xi\lambda_\xi
       \overline{\langle Ae_\xi,f_\xi\rangle}\right|
 \leq \|\lambda\|_{\ell^s}
 \bigg(\sum_\xi|\langle Ae_\xi,f_\xi\rangle|^{s'}\bigg)^{1/s'}
 \leq C_{s'}\|\lambda\|_{\ell^s}\|A\|_{\Sch^{s'}}.
\end{align*}
Trace duality gives
\begin{equation}
 \|T_\lambda\|_{\Sch^s}
 \leq C_s\|\lambda\|_{\ell^s}.
 \label{sch:eq:scalar-NWO-synthesis}
\end{equation}
Indeed, finite-rank operators are dense in $\Sch^{s'}$, so it is
enough to take the supremum over such $A$ with
$\|A\|_{\Sch^{s'}}\leq1$.

We next pass to Lorentz ideals.  This step will be used at the weak
critical index.  Fix
$$
 1<s_0<r<s_1<\infty,
 \qquad
 \frac1r=\frac{1-\theta}{s_0}+\frac\theta{s_1}.
$$
Apply the same real $K$-method to the linear maps
$
 \lambda\longmapsto T_\lambda,
 \ 
 A\longmapsto\{\langle Ae_\xi,f_\xi\rangle\}_\xi.
$
The identities
\begin{align*}
 (\ell^{s_0},\ell^{s_1})_{\theta,q}
 &=\ell^{r,q}\qquad{\rm and}
\qquad (\Sch^{s_0},\Sch^{s_1})_{\theta,q}
 =\Sch^{r,q}
\end{align*}
follow from the corresponding $K$-functional formulas for decreasing
sequences and singular values.  When $0<q<1$, the resulting spaces are
quasi-Banach spaces.  These identities give, for $1<r<\infty$ and
$0<q\leq\infty$,
\begin{align}
 \left\|\sum_\xi\lambda_\xi f_\xi\otimes e_\xi
 \right\|_{\Sch^{r,q}}
 &\leq C_{r,q}\|\lambda\|_{\ell^{r,q}},
 \label{sch:eq:scalar-NWO-Lorentz-synthesis}\\
 \big\|\{\langle Ae_\xi,f_\xi\rangle\}_\xi
 \big\|_{\ell^{r,q}}
 &\leq C_{r,q}\|A\|_{\Sch^{r,q}}.
 \label{sch:eq:scalar-NWO-Lorentz-analysis}
\end{align}
For each endpoint $s_i$, the synthesis estimate on finitely
supported sequences extends by completion to a bounded map
$\ell^{s_i}\to\Sch^{s_i}$.  We interpolate these extensions.  When
$q=\infty$, only the finite-sequence synthesis estimate is used
below.
The first estimate is initially stated for finitely supported
$\lambda$.  The second follows for an infinite family by applying the
finite estimate to increasing finite sets and using the Fatou property
of $\ell^{r,q}$.

We now prove the vector-valued form used in the paper.  

\begin{lemma}
\label{sch:lem:finite-fibre-NWO}
Let
$
 H_n=L^2(\R^d;\C^n),
 \ H_m=L^2(\R^d;\C^m).
$
Let $\{e_\xi\}\subset H_n$ and
$\{f_\xi\}\subset H_m$ be NWO families with a common controlling
cube $S_\xi$ at every index.  If
$1<r<\infty$ and $0<q\leq\infty$, then
for every finitely supported scalar sequence $\lambda$ and every
$A\in\Sch^{r,q}(H_n,H_m)$,
\begin{align}
 \left\|\sum_\xi\lambda_\xi f_\xi\otimes e_\xi
 \right\|_{\Sch^{r,q}(H_n,H_m)}
 &\leq C\|\lambda\|_{\ell^{r,q}},
 \label{sch:eq:finite-fibre-synthesis}\\
 \big\|\{\langle Ae_\xi,f_\xi\rangle\}_\xi
 \big\|_{\ell^{r,q}}
 &\leq C\|A\|_{\Sch^{r,q}(H_n,H_m)}.
 \label{sch:eq:finite-fibre-analysis}
\end{align}
The constant depends only on
$m,n,r,q$ and the two NWO constants.
\end{lemma}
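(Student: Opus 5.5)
We reduce everything to the scalar statements \eqref{sch:eq:scalar-NWO-Lorentz-synthesis} and \eqref{sch:eq:scalar-NWO-Lorentz-analysis} by splitting vector-valued functions into scalar coordinates. Let $\pi_a:H_n\to L^2(\R^d)$, $a=1,\ldots,n$, be the coordinate projections, and let $\iota_b:L^2(\R^d)\to H_m$, $b=1,\ldots,m$, be the coordinate injections. Write $e_\xi=\sum_a e_\xi^a\epsilon_a$ and $f_\xi=\sum_b f_\xi^b\epsilon_b$, where $e_\xi^a=\pi_a e_\xi$, $f_\xi^b=\pi_b f_\xi$, and $\epsilon_a$ is the standard basis.

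The first step is to check that each scalar family $\{e_\xi^a\}_\xi$ is NWO with the same controlling cubes $S_\xi$. Since $\operatorname{supp}e^a_\xi\subset S_\xi$ and $\langle h,e_\xi^a\rangle=\langle h\epsilon_a,e_\xi\rangle$ for scalar $h$, we have $\mathcal M_{e^a}h\le\mathcal M_e(h\epsilon_a)$ pointwise, so the NWO constant does not increase. The same holds for $\{f_\xi^b\}$.

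Next we decompose the rank-one operators. Directly from the convention $(x\otimes y)h=\langle h,y\rangle x$,
$$
 f_\xi\otimes e_\xi=\sum_{a=1}^n\sum_{b=1}^m \iota_b\,(f_\xi^b\otimes e_\xi^a)\,\pi_a .
$$
Hence $\sum_\xi\lambda_\xi f_\xi\otimes e_\xi=\sum_{a,b}\iota_b T^{a,b}_\lambda\pi_a$, where $T^{a,b}_\lambda=\sum_\xi\lambda_\xi f^b_\xi\otimes e^a_\xi$ is a scalar NWO synthesis operator. The maps $\iota_b$ and $\pi_a$ are contractions, so $\|\iota_b T\pi_a\|_{\Sch^{r,q}}\le\|T\|_{\Sch^{r,q}}$ by the ideal property of approximation numbers. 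Applying \eqref{sch:eq:scalar-NWO-Lorentz-synthesis} to each of the $mn$ pairs and using the quasi-triangle inequality in $\Sch^{r,q}$ with $mn$ summands (constant depending on $m,n,r,q$) gives \eqref{sch:eq:finite-fibre-synthesis}.

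For the analysis estimate we use
$$
 \langle Ae_\xi,f_\xi\rangle=\sum_{a,b}\langle A^{a,b}e^a_\xi,f^b_\xi\rangle,
 \qquad A^{a,b}=\pi_b A\iota_a ,
$$
where $\pi_b$ denotes the projection on $H_m$ and $\iota_a$ the injection into $H_n$. Each $A^{a,b}$ satisfies $\|A^{a,b}\|_{\Sch^{r,q}}\le\|A\|_{\Sch^{r,q}}$, and \eqref{sch:eq:scalar-NWO-Lorentz-analysis} applies to it; for $q=\infty$ we apply it first on finite index sets and then use Fatou. Summing the $mn$ sequences with the quasi-triangle inequality in $\ell^{r,q}$ gives \eqref{sch:eq:finite-fibre-analysis}.

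The only point needing care is that the scalar theorem requires both families to share the controlling cube at each index. This is inherited from the vector hypothesis. If some coordinate $e^a_\xi$ vanishes, it contributes nothing and remains admissible. If the families are indexed by several dyadic systems or carry repetitions, we partition them as described before Theorem~\ref{sch:thm:scalar-NWO}, which costs only finitely many further quasi-triangle applications.
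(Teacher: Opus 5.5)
Your proposal is correct and follows the paper's proof essentially step for step. The coordinate families inherit the NWO bound through $\mathcal M_{e^a}h\le\mathcal M_e(h\epsilon_a)$, and both the synthesis operator and the pairing $\langle Ae_\xi,f_\xi\rangle$ split into $mn$ scalar blocks. The scalar Lorentz synthesis and analysis estimates, the ideal property and the finite quasi-triangle inequality then give both inequalities, exactly as in the paper.
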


\begin{proof}
Let $\{\mathbf e_a^{(m)}\}_{a=1}^m$ and
$\{\mathbf e_b^{(n)}\}_{b=1}^n$ be the standard bases.  Write
$$
 f_\xi=\sum_{a=1}^mf_\xi^a\mathbf e_a^{(m)},
 \qquad
 e_\xi=\sum_{b=1}^ne_\xi^b\mathbf e_b^{(n)}.
$$
Let $I_a^{(m)}:L^2\to H_m$ and
$P_a^{(m)}:H_m\to L^2$ be the coordinate injection and projection
in the range.  Define $I_b^{(n)}:L^2\to H_n$ and
$P_b^{(n)}:H_n\to L^2$ in the domain by
$$
 I_a^{(m)}h=h\mathbf e_a^{(m)},
 \quad P_a^{(m)}H=H_a,
 \qquad
 I_b^{(n)}h=h\mathbf e_b^{(n)},
 \quad P_b^{(n)}H=H_b.
$$
Every coordinate family is scalar NWO.  Indeed, if
$h\in L^2(\R^d)$, then
$$
 |\langle h,e_\xi^b\rangle|
 =|\langle I_b^{(n)}h,e_\xi\rangle|.
$$
Thus
$$
 \mathcal M_{e^b}h\leq\mathcal M_e(I_b^{(n)}h),
 \qquad
 \|\mathcal M_{e^b}h\|_2\leq C_e\|h\|_2.
$$
Likewise,
$$
 |\langle h,f_\xi^a\rangle|
 =|\langle I_a^{(m)}h,f_\xi\rangle|,
$$
so every family $\{f_\xi^a\}$ is scalar NWO with constant at most
$C_f$.

The $(a,b)$-block of
$$
 T_\lambda=\sum_\xi\lambda_\xi f_\xi\otimes e_\xi
$$
is
$$
 P_a^{(m)}T_\lambda I_b^{(n)}
 =\sum_\xi\lambda_\xi f_\xi^a\otimes e_\xi^b.
$$
Equation \eqref{sch:eq:scalar-NWO-Lorentz-synthesis} gives
$$
 \|P_a^{(m)}T_\lambda I_b^{(n)}\|_{\Sch^{r,q}}
 \leq C\|\lambda\|_{\ell^{r,q}}.
$$
Since
$$
 T_\lambda=\sum_{a=1}^m\sum_{b=1}^n
 I_a^{(m)}(P_a^{(m)}T_\lambda I_b^{(n)})P_b^{(n)},
$$
the ideal property and the quasi-triangle inequality in
$\Sch^{r,q}$ prove \eqref{sch:eq:finite-fibre-synthesis}.

For the second estimate, put
$A_{ab}=P_a^{(m)}AI_b^{(n)}$.  Then
\begin{equation}
 \langle Ae_\xi,f_\xi\rangle
 =\sum_{a=1}^m\sum_{b=1}^n
 \langle A_{ab}e_\xi^b,f_\xi^a\rangle.
 \label{sch:eq:finite-fibre-coordinate-pairing}
\end{equation}
By \eqref{sch:eq:scalar-NWO-Lorentz-analysis} and the ideal property,
$$
 \big\|\{\langle A_{ab}e_\xi^b,f_\xi^a\rangle\}_\xi
 \big\|_{\ell^{r,q}}
 \leq C\|A_{ab}\|_{\Sch^{r,q}}
 \leq C\|A\|_{\Sch^{r,q}}.
$$
Apply the quasi-triangle inequality to the finite sum in
\eqref{sch:eq:finite-fibre-coordinate-pairing}.  This proves
\eqref{sch:eq:finite-fibre-analysis}.
\end{proof}

\subsubsection{Matrix-valued families and matrix coefficients}

Let $F_\xi$ be an $m\times n$ matrix-valued function and  let
$S_\xi^F$ contain its support.  Define
\begin{equation}
 \mathcal M_Fh(x)
 =\sup_\xi
 \frac{\left|\int_{\R^d}F_\xi(y)^*h(y)\,dy\right|_{\C^n}}
 {|S_\xi^F|^{1/2}}\one_{S_\xi^F}(x).
 \label{sch:eq:matrix-NWO-maximal}
\end{equation}
Suppose that
\begin{equation}
 \|\mathcal M_Fh\|_2\leq C_F\|h\|_2.
 \label{sch:eq:matrix-NWO-maximal-bound}
\end{equation}
For any unit vectors $u_\xi\in\C^n$, which may depend on $\xi$,
\begin{align*}
 |\langle h,F_\xi u_\xi\rangle|
 &=\left|u_\xi^*\int_{\R^d}F_\xi(y)^*h(y)\,dy\right|
 \leq\left|\int_{\R^d}F_\xi(y)^*h(y)\,dy\right|_{\C^n}.
\end{align*}
Hence $\{F_\xi u_\xi\}$ is an NWO family with constant at most
$C_F$.  This observation permits the singular vectors below to vary
with every index.

We now prove the form used in the Riesz upper estimate.  The two
families use one common controlling cube at every index.

\begin{proposition}
\label{sch:prop:matrix-NWO}
Let $\mathcal K$ be finite.  Let $F_{Q,\kappa}$ and
$G_{Q,\kappa}$ be $m\times m$ matrix-valued functions, with a
common controlling cube $S_{Q,\kappa}$, such that
$$
 \operatorname{supp}F_{Q,\kappa}\subset S_{Q,\kappa},
 \qquad
 \operatorname{supp}G_{Q,\kappa}\subset S_{Q,\kappa}.
$$
Suppose that
\begin{align}
 \left\|\sup_{Q,\kappa}
 \frac{\left|\int F_{Q,\kappa}(y)^*f(y)\,dy\right|}
 {|S_{Q,\kappa}|^{1/2}}
 \one_{S_{Q,\kappa}}\right\|_2
 &\leq C_F\|f\|_2,
 \label{sch:eq:matrix-NWO-F-bound}\\
 \left\|\sup_{Q,\kappa}
 \frac{\left|\int G_{Q,\kappa}(y)^*g(y)\,dy\right|}
 {|S_{Q,\kappa}|^{1/2}}
 \one_{S_{Q,\kappa}}\right\|_2
 &\leq C_G\|g\|_2.
 \label{sch:eq:matrix-NWO-G-bound}
\end{align}
Let $A_{Q,\kappa}\in\C^{m\times m}$.  For a finite set
$\mathcal F$ of pairs $(Q,\kappa)$, put
$$
 \Theta_{F,A,G}^{\mathcal F}h
 =\sum_{(Q,\kappa)\in\mathcal F}
 F_{Q,\kappa}A_{Q,\kappa}
 \int_{\R^d}G_{Q,\kappa}(y)^*h(y)\,dy.
$$
Then, for $1<r<\infty$ and $0<q\leq\infty$,
\begin{equation}
 \|\Theta_{F,A,G}^{\mathcal F}\|_{\Sch^{r,q}}
 \leq C
 \left\|\left\{
 \bigg(\sum_{\kappa\in\mathcal K}
 \|A_{Q,\kappa}\|_{\HS}^2\bigg)^{1/2}
 \right\}_Q\right\|_{\ell^{r,q}}.
 \label{sch:eq:matrix-NWO-upper}
\end{equation}
The constant is independent of $\mathcal F$ and depends only on
$r,q,m,|\mathcal K|,C_F$ and  $C_G$.
\end{proposition}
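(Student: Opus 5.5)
The plan is to reduce the matrix coefficients to scalar ones by singular-value decomposition and then apply Lemma~\ref{sch:lem:finite-fibre-NWO}. For each pair $(Q,\kappa)$ we write
$$
 A_{Q,\kappa}=\sum_{\sigma=1}^{m}s_{Q,\kappa,\sigma}\,u_{Q,\kappa,\sigma}v_{Q,\kappa,\sigma}^*,
$$
with unit vectors $u,v\in\C^m$ (which may depend on every index) and singular values $s_{Q,\kappa,\sigma}\geq0$, so that $\sum_\sigma s_{Q,\kappa,\sigma}^2=\|A_{Q,\kappa}\|_{\HS}^2$. The operator then becomes
$$
 \Theta^{\mathcal F}_{F,A,G}h=\sum_{(Q,\kappa)\in\mathcal F}\sum_{\sigma}s_{Q,\kappa,\sigma}\,(F_{Q,\kappa}u_{Q,\kappa,\sigma})\big\langle h,G_{Q,\kappa}v_{Q,\kappa,\sigma}\big\rangle,
$$
which is a rank-one sum $\sum_\xi\lambda_\xi f_\xi\otimes e_\xi$ indexed by $\xi=(Q,\kappa,\sigma)$. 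Here $f_\xi=F_{Q,\kappa}u_{Q,\kappa,\sigma}$, $e_\xi=G_{Q,\kappa}v_{Q,\kappa,\sigma}$, $\lambda_\xi=s_{Q,\kappa,\sigma}$, and the common controlling cube is $S_\xi=S_{Q,\kappa}$.

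Next we verify the NWO hypotheses. By the observation following \eqref{sch:eq:matrix-NWO-maximal-bound}, with arbitrary unit vectors $u_\xi$, the family $\{F_{Q,\kappa}u\}$ is NWO with constant $C_F$ by \eqref{sch:eq:matrix-NWO-F-bound}, and $\{G_{Q,\kappa}v\}$ is NWO with constant $C_G$ by \eqref{sch:eq:matrix-NWO-G-bound}. This holds because the supremum in \eqref{sch:eq:NWO-maximal-operator} over the larger index set $\xi$ is still dominated pointwise by the matrix maximal function. The controlling cube $S_{Q,\kappa}$ is shared by both families. Repetition of the same cube over $\kappa\in\mathcal K$ and $\sigma\leq m$ has multiplicity at most $m|\mathcal K|$. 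If the NWO framework needs distinct cubes, we split the index set into $m|\mathcal K|$ subfamilies indexed by $(\kappa,\sigma)$, apply the estimate to each, and sum with the quasi-triangle inequality in $\Sch^{r,q}$. The constant then depends only on $m$, $|\mathcal K|$, $r$ and $q$.

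Lemma~\ref{sch:lem:finite-fibre-NWO}, estimate \eqref{sch:eq:finite-fibre-synthesis} with $n=m$ and a finitely supported $\lambda$, gives
$$
 \|\Theta^{\mathcal F}_{F,A,G}\|_{\Sch^{r,q}}\leq C\|\{s_{Q,\kappa,\sigma}\}_{(Q,\kappa,\sigma)}\|_{\ell^{r,q}}.
$$
It remains to compare this $\ell^{r,q}$ quasi-norm with that of $a_Q=(\sum_\kappa\|A_{Q,\kappa}\|_{\HS}^2)^{1/2}$. Each $s_{Q,\kappa,\sigma}\leq a_Q$, and each $Q$ contributes at most $N=m|\mathcal K|$ entries. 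The decreasing rearrangement of the $s$-family is therefore dominated by that of the sequence in which each $a_Q$ is repeated $N$ times. For that sequence, $(a^*)_{\lceil n/N\rceil}$ dominates, and the Lorentz quasi-norm of an $N$-fold repetition is at most $C_{N,r,q}$ times $\|a\|_{\ell^{r,q}}$, a standard dilation estimate for the distribution function. This proves \eqref{sch:eq:matrix-NWO-upper}.

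The main point needing care is the bookkeeping of repeated controlling cubes and singular vectors that vary with the index. The matrix maximal bounds \eqref{sch:eq:matrix-NWO-F-bound}--\eqref{sch:eq:matrix-NWO-G-bound} dominate every choice of unit vectors, so the NWO constants stay uniform. All remaining constants come from finite splittings, so they are independent of $\mathcal F$.
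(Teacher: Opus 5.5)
Your proposal is correct and follows the paper's proof essentially step for step. Both arguments use the singular-value decomposition of each $A_{Q,\kappa}$, the uniform NWO bounds for $\{F_{Q,\kappa}u\}$ and $\{G_{Q,\kappa}v\}$ with index-dependent unit vectors from \eqref{sch:eq:matrix-NWO-F-bound}--\eqref{sch:eq:matrix-NWO-G-bound}, Lemma~\ref{sch:lem:finite-fibre-NWO}, and a distribution-function comparison of $\{s_{Q,\kappa,\rho}\}$ with $\{a_Q\}$; as you observe, only the direction $n_s(t)\leq m|\mathcal K|\,n_a(t)$ is needed.

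One caveat: your multiplicity count $m|\mathcal K|$ tacitly uses the standing convention of the NWO subsection that a controlling cube is shared by only boundedly many $Q$. This is genuinely needed. Repeating one term $N$ times, with the same cube, functions and coefficient, leaves the maximal hypotheses unchanged but multiplies the left side of \eqref{sch:eq:matrix-NWO-upper} by $N$ and the right side only by a factor comparable to $N^{1/r}$. The paper's proof relies on the same convention without comment.
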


\begin{proof}
Replace $A_{Q,\kappa}$ by zero when
$(Q,\kappa)\notin\mathcal F$.  This does not change
$\Theta_{F,A,G}^{\mathcal F}$ and  it can only decrease the
right-hand side of \eqref{sch:eq:matrix-NWO-upper}.  We may therefore
assume that the coefficient family is finitely supported.

For every $(Q,\kappa)$, choose a singular-value decomposition
$$
 A_{Q,\kappa}
 =\sum_{\rho=1}^m s_{Q,\kappa,\rho}
 u_{Q,\kappa,\rho}v_{Q,\kappa,\rho}^*.
$$
Then
\begin{equation}
 \Theta_{F,A,G}^{\mathcal F}
 =\sum_{(Q,\kappa)\in\mathcal F}\sum_{\rho=1}^m
 s_{Q,\kappa,\rho}
 (F_{Q,\kappa}u_{Q,\kappa,\rho})
 \otimes(G_{Q,\kappa}v_{Q,\kappa,\rho}).
 \label{sch:eq:matrix-NWO-SVD-expansion}
\end{equation}
The singular vectors depend on $(Q,\kappa,\rho)$.  Equations
\eqref{sch:eq:matrix-NWO-F-bound} and
\eqref{sch:eq:matrix-NWO-G-bound}, together with
\eqref{sch:eq:matrix-NWO-maximal}--
\eqref{sch:eq:matrix-NWO-maximal-bound}, show that the two vector
families in \eqref{sch:eq:matrix-NWO-SVD-expansion} are NWO with
uniform constants.  Lemma~\ref{sch:lem:finite-fibre-NWO} gives
\begin{equation}
 \|\Theta_{F,A,G}^{\mathcal F}\|_{\Sch^{r,q}}
 \leq C
 \|\{s_{Q,\kappa,\rho}\}_{Q,\kappa,\rho}\|_{\ell^{r,q}}.
 \label{sch:eq:matrix-NWO-flattened}
\end{equation}

It remains to group the finite set of singular values over each
$Q$.  Put $N=m|\mathcal K|$ and
$$
 a_Q=\bigg(\sum_{\kappa,\rho}s_{Q,\kappa,\rho}^2\bigg)^{1/2}
 =\bigg(\sum_\kappa\|A_{Q,\kappa}\|_{\HS}^2\bigg)^{1/2}.
$$
For $t>0$, let
$$
 n_a(t)=\#\{Q:a_Q>t\},
 \qquad
 n_s(t)=\#\{(Q,\kappa,\rho):s_{Q,\kappa,\rho}>t\}.
$$
Since $s_{Q,\kappa,\rho}\leq a_Q$,
$$
 n_s(t)\leq N n_a(t).
$$
If $a_Q>t$, at least one singular value satisfies
$s_{Q,\kappa,\rho}>t/\sqrt N$.  Hence
$$
 n_a(t)\leq n_s(t/\sqrt N).
$$
The distribution-function description of Lorentz sequence norms now
gives
\begin{equation}
 \|\{s_{Q,\kappa,\rho}\}\|_{\ell^{r,q}}
 \simeq_{m,|\mathcal K|,r,q}
 \|\{a_Q\}\|_{\ell^{r,q}}.
 \label{sch:eq:finite-block-Lorentz-equivalence}
\end{equation}
Combining \eqref{sch:eq:matrix-NWO-flattened} and
\eqref{sch:eq:finite-block-Lorentz-equivalence} proves
\eqref{sch:eq:matrix-NWO-upper}.
\end{proof}

We also record the analysis form.  It will be used in the lower
estimate.

\begin{lemma}
\label{sch:lem:NWO-diagonal}
Let $\{e_\xi\}$ and $\{f_\xi\}$ be NWO families in
$L^2(\R^d;\C^n)$ and $L^2(\R^d;\C^m)$, respectively, with a common
controlling cube at every index.  If
$1<r<\infty$, $0<q\leq\infty$ and 
$$
 A\in\Sch^{r,q}\bigl(
 L^2(\R^d;\C^n),L^2(\R^d;\C^m)\bigr),
$$
then
\begin{equation}
 \big\|\{\langle Ae_\xi,f_\xi\rangle\}_\xi
 \big\|_{\ell^{r,q}}
 \leq C\|A\|_{\Sch^{r,q}}.
 \label{sch:eq:NWO-diagonal}
\end{equation}
The same estimate holds for a fixed finite collection of pairs of NWO
families.
\end{lemma}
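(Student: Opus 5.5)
This estimate is essentially the second half of Lemma~\ref{sch:lem:finite-fibre-NWO}; the only differences are that the fibre dimensions are now arbitrary and that $A$ need not be given as a finite-rank operator. I would therefore reduce to the scalar analysis estimate \eqref{sch:eq:scalar-NWO-Lorentz-analysis} coordinatewise, then pass to the infinite index set by a Fatou argument.

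First, write $e_\xi=\sum_{b=1}^n e_\xi^b\mathbf e_b^{(n)}$ and $f_\xi=\sum_{a=1}^m f_\xi^a\mathbf e_a^{(m)}$. As in the proof of Lemma~\ref{sch:lem:finite-fibre-NWO}, the identity $|\langle h,e_\xi^b\rangle|=|\langle I_b^{(n)}h,e_\xi\rangle|$ gives $\mathcal M_{e^b}h\le\mathcal M_e(I_b^{(n)}h)$. Hence each coordinate family $\{e_\xi^b\}_\xi$ is a scalar NWO family with the same controlling cube $S_\xi$ and constant at most $C_e$. The same holds for each $\{f_\xi^a\}_\xi$ with constant at most $C_f$. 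The pairs $(\{e^b_\xi\},\{f^a_\xi\})$ therefore satisfy the common-cube hypothesis of Theorem~\ref{sch:thm:scalar-NWO}.

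Next, put $A_{ab}=P_a^{(m)}AI_b^{(n)}$, which is a scalar operator on $L^2(\R^d)$. By the ideal property, $\|A_{ab}\|_{\Sch^{r,q}}\le\|A\|_{\Sch^{r,q}}$. The expansion \eqref{sch:eq:finite-fibre-coordinate-pairing} writes $\langle Ae_\xi,f_\xi\rangle$ as a sum of the $mn$ scalar pairings $\langle A_{ab}e^b_\xi,f^a_\xi\rangle$. To each pair $(a,b)$ I apply \eqref{sch:eq:scalar-NWO-Lorentz-analysis}, which was obtained by real interpolation between the Rochberg--Semmes estimates \eqref{sch:eq:scalar-NWO-analysis} at two exponents $1<s_0<r<s_1$. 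Each of the $mn$ sequences then has $\ell^{r,q}$ quasi-norm at most $C\|A\|_{\Sch^{r,q}}$. The quasi-triangle inequality in $\ell^{r,q}$, applied to a sum of $mn$ terms, finishes the estimate with a constant depending on $m,n,r,q$ and the NWO constants.

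The main technical point is that the index set may be infinite. I would first run the argument on an arbitrary finite subset of indices; the bound is uniform in that subset. I would then exhaust the index set by increasing finite subsets and use the Fatou property of $\ell^{r,q}$, exactly as in the passage following \eqref{sch:eq:scalar-NWO-Lorentz-analysis}. If $A$ is only known to lie in $\Sch^{r,q}$, it is in particular compact, so Theorem~\ref{sch:thm:scalar-NWO} applies at the endpoints and the interpolation is legitimate. For the final sentence, a fixed finite collection of pairs of NWO families is handled by applying the estimate to each pair separately and using the quasi-triangle inequality once more over that finite collection.
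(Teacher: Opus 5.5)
Your proposal is correct and takes essentially the paper's approach. The paper's proof simply observes that \eqref{sch:eq:NWO-diagonal} is exactly \eqref{sch:eq:finite-fibre-analysis}, which already covers arbitrary fibre dimensions $m,n$ and all $A\in\Sch^{r,q}$ and is proved by the same coordinatewise reduction you reproduce, so you could cite it directly; for a finite collection of pairs, the paper places the resulting sequences on disjoint copies of the index set and applies the finite quasi-triangle inequality, as you do.
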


\begin{proof}
Equation \eqref{sch:eq:NWO-diagonal} is
\eqref{sch:eq:finite-fibre-analysis}.  For a finite collection, apply
that estimate to each pair, place the resulting sequences on disjoint
copies of the index set and  use the finite quasi-triangle inequality
in $\ell^{r,q}$.  The number of pairs is fixed, so the resulting
constant is independent of $A$.
\end{proof}

\subsection{Matrix-weight-normalized NWO families}

We now prove the maximal estimates used in the Riesz upper and lower
bounds.  We begin with an $L^{2+\sigma}$ criterion.

\begin{lemma}
\label{sch:lem:NWO-Lp-criterion}
Let $0<\sigma<\infty$ and  let $H$ be a finite-dimensional
Hilbert space.  Suppose that $e_Q\in L^{2+\sigma}(\R^d;H)$ is
supported on a cube $S_Q$ and
\begin{equation}
 \|e_Q\|_{2+\sigma}
 \leq C_0|S_Q|^{1/(2+\sigma)-1/2}.
 \label{sch:eq:NWO-Lp-size}
\end{equation}
Then, for every $f\in L^2(\R^d;H)$,
\begin{equation}
 \left\|\sup_Q
 \frac{|\langle f,e_Q\rangle|}{|S_Q|^{1/2}}
 \one_{S_Q}\right\|_2
 \leq C_{\sigma,d}C_0\|f\|_2.
 \label{sch:eq:NWO-maximal-from-Lp}
\end{equation}
The cubes $S_Q$ need not be distinct.
\end{lemma}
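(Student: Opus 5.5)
The approach is to dominate the maximal function pointwise by a power maximal function $M_s$ with $s<2$, and then use the $L^2$ boundedness of $M_s$.

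Fix $f\in L^2(\R^d;H)$ and put $p=2+\sigma$ and $s=p'=(2+\sigma)/(1+\sigma)$, so that $1<s<2$.

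First, Hölder's inequality on $S_Q$ together with \eqref{sch:eq:NWO-Lp-size} gives
\begin{align*}
 |\langle f,e_Q\rangle|
 &\leq \|e_Q\|_{p}\bigg(\int_{S_Q}|f|^{s}\bigg)^{1/s}
 \leq C_0|S_Q|^{1/p-1/2}|S_Q|^{1/s}
 \bigg(\avg_{S_Q}|f|^{s}\bigg)^{1/s}.
\end{align*}
Since $1/p+1/s=1$, the power of $|S_Q|$ is $1-1/2=1/2$. Dividing by $|S_Q|^{1/2}$ therefore leaves exactly the normalized $L^s$ average, and for $x\in S_Q$ this is at most $M_s f(x)=\big(M(|f|^s)(x)\big)^{1/s}$, where $M$ is the uncentered Hardy--Littlewood maximal function over cubes applied to the scalar function $|f|_H$. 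This bound is uniform in $Q$. Hence the supremum in \eqref{sch:eq:NWO-maximal-from-Lp} is dominated pointwise by $C_0M_sf$. Repetitions among the cubes $S_Q$ play no role, because the estimate is a pointwise supremum.

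Second, $M$ is bounded on $L^{2/s}$ because $2/s>1$, with a norm depending only on $d$ and $\sigma$. This gives
\[
 \|M_sf\|_2=\|M(|f|^s)\|_{2/s}^{1/s}\leq C_{d,\sigma}\||f|^s\|_{2/s}^{1/s}=C_{d,\sigma}\|f\|_2 .
\]
The vector-valued nature of $f$ causes no trouble: everything has been reduced to the scalar function $|f|_H$, and the finite-dimensionality of $H$ is not even needed.

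Measurability of the supremum over possibly uncountably many $Q$ is handled by the pointwise bound. One works with the upper function, or restricts to countable subfamilies and uses monotone convergence; in the applications the families are countable anyway. The only point requiring care is the exponent bookkeeping in the first step, namely checking that $1/p-1/2+1/s=1/2$ exactly, and this holds precisely because $s=p'$. I expect no genuine obstacle.
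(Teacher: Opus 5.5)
Your proof is correct and follows the paper's own argument: H\"older's inequality with the dual exponent $(2+\sigma)'<2$ dominates the supremum pointwise by $C_0\bigl(M(|f|_H^{(2+\sigma)'})\bigr)^{1/(2+\sigma)'}$. The Hardy--Littlewood maximal theorem on $L^{2/(2+\sigma)'}$ then gives the bound, and your exponent bookkeeping matches the paper's.
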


\begin{proof}
Put $s=2+\sigma$ and $s'=s/(s-1)<2$.  For $x\in S_Q$,
H\"older's inequality and \eqref{sch:eq:NWO-Lp-size} give
$$
 \frac{|\langle f,e_Q\rangle|}{|S_Q|^{1/2}}
 \leq C_0\bigg(\avg_{S_Q}|f|_H^{s'}\bigg)^{1/s'}
 \leq C_0\bigl(M(|f|_H^{s'})(x)\bigr)^{1/s'}.
$$
Since $2/s'>1$, the Hardy--Littlewood maximal theorem proves
\eqref{sch:eq:NWO-maximal-from-Lp}.
\end{proof}

The next lemma gives the matrix maximal estimate required in
Proposition~\ref{sch:prop:matrix-NWO}.

\begin{lemma}
\label{sch:lem:matrix-column-NWO}
Let $F_Q$ be $m\times m$ matrix-valued functions supported on
$S_Q$.  Suppose that, for some $s>2$,
\begin{equation}
 \|F_Qu\|_{L^s}
 \leq C_0|S_Q|^{1/s-1/2}|u|,
 \qquad u\in\C^m.
 \label{sch:eq:matrix-column-Ls}
\end{equation}
Then
\begin{equation}
 \left\|\sup_Q
 \frac{\left|\int_{\R^d}F_Q(y)^*f(y)\,dy\right|_{\C^m}}
 {|S_Q|^{1/2}}\one_{S_Q}\right\|_2
 \leq C_{d,m,s}C_0\|f\|_2.
 \label{sch:eq:matrix-column-maximal}
\end{equation}
Consequently, for every choice of unit vectors $u_Q$, the family
$\{F_Qu_Q\}$ is NWO with a constant independent of the choices.
\end{lemma}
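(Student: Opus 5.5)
The plan is to reduce everything to Lemma~\ref{sch:lem:NWO-Lp-criterion} applied to one column at a time. Write the vector under the norm as
$$
 \int_{\R^d}F_Q(y)^*f(y)\,dy
 =\sum_{k=1}^m\Big(\int_{\R^d}(F_Q\mathbf e_k)(y)^*f(y)\,dy\Big)\mathbf e_k ,
$$
where $\mathbf e_1,\ldots,\mathbf e_m$ is the standard basis of $\C^m$. The $k$th coordinate is $\langle f,F_Q\mathbf e_k\rangle$ in $L^2(\R^d;\C^m)$, and $|v|_{\C^m}\leq\sum_k|v_k|$. This gives the pointwise bound
$$
 \sup_Q\frac{|\int F_Q^*f|_{\C^m}}{|S_Q|^{1/2}}\one_{S_Q}
 \leq\sum_{k=1}^m\sup_Q\frac{|\langle f,F_Q\mathbf e_k\rangle|}{|S_Q|^{1/2}}\one_{S_Q}.
$$

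For each fixed $k$, put $e_Q=F_Q\mathbf e_k\in L^s(\R^d;\C^m)$. It is supported on $S_Q$, and by \eqref{sch:eq:matrix-column-Ls} with $u=\mathbf e_k$ it satisfies
$$
 \|e_Q\|_s\leq C_0|S_Q|^{1/s-1/2}.
$$
This is \eqref{sch:eq:NWO-Lp-size} with $\sigma=s-2>0$ and $H=\C^m$. Lemma~\ref{sch:lem:NWO-Lp-criterion} then bounds the $k$th maximal function in $L^2$ by $C_{s,d}C_0\|f\|_2$. Summing over the $m$ columns with the triangle inequality in $L^2$ gives \eqref{sch:eq:matrix-column-maximal} with constant $mC_{s,d}C_0$.

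The consequence for unit vectors $u_Q$ is the observation recorded before Proposition~\ref{sch:prop:matrix-NWO}. Since $|\langle f,F_Qu_Q\rangle|=|u_Q^*\int F_Q^*f|\leq|\int F_Q^*f|_{\C^m}$, the NWO maximal function of $\{F_Qu_Q\}$ is pointwise dominated by the left side of \eqref{sch:eq:matrix-column-maximal}. The constant therefore does not depend on the choice of $u_Q$.

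There is no real obstacle here. The one point to check is that Lemma~\ref{sch:lem:NWO-Lp-criterion} allows repeated cubes and a finite-dimensional target $H$, which its statement does.
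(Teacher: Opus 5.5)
Your proof is correct and follows the paper's argument: both reduce to finitely many fixed directions, apply Lemma~\ref{sch:lem:NWO-Lp-criterion} to each family $\{F_Qu\}$, sum the resulting maximal functions, and then dominate the maximal function of $\{F_Qu_Q\}$ pointwise. The only difference is that you use the standard basis and $|v|\leq\sum_k|v_k|$, whereas the paper uses a $1/2$-net $\mathcal N_m$ with $|z|\leq2\max_{u\in\mathcal N_m}|u^*z|$; your version gives the smaller constant $m\,C_{s,d}C_0$.
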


\begin{proof}
Let $\mathcal N_m$ be a fixed $1/2$-net of the unit sphere of
$\C^m$, with $\#\mathcal N_m\leq5^{2m}$.  For every
$z\in\C^m$,
\begin{equation}
 |z|\leq2\max_{u\in\mathcal N_m}|u^*z|.
 \label{sch:eq:finite-net-vector-norm}
\end{equation}
For each $u\in\mathcal N_m$, the family $\{F_Qu\}$ satisfies
Lemma~\ref{sch:lem:NWO-Lp-criterion}.  Hence
\begin{align*}
 &\left\|\sup_Q
 \frac{\left|\int F_Q^*f\right|}{|S_Q|^{1/2}}
 \one_{S_Q}\right\|_2\leq
 2\sum_{u\in\mathcal N_m}
 \left\|\sup_Q
 \frac{|\langle f,F_Qu\rangle|}{|S_Q|^{1/2}}
 \one_{S_Q}\right\|_2
 \leq C_{d,m,s}C_0\|f\|_2.
\end{align*}
This proves \eqref{sch:eq:matrix-column-maximal}.  Finally,
$$
 |\langle f,F_Qu_Q\rangle|
 \leq\left|\int F_Q^*f\right|,
$$
which proves the last assertion.
\end{proof}

We next prove the estimates for the normalized columns of a matrix
weight.  They are uniform in the direction and also hold on a nearby
cube.

\begin{proposition}
\label{sch:prop:concrete-matrix-NWO}
Let $W\in\mathcal A_2$ and  put
$$
 \mathcal W_Q=\langle W\rangle_Q^{1/2},
 \qquad
 \mathcal W'_Q=\langle W^{-1}\rangle_Q^{1/2}.
$$
There is $\sigma_W>0$ with the following property.  Let $Q^*$ be
a cube with $|Q^*|=|Q|$.  Suppose that a
cube $S$ contains $Q\cup Q^*$ and $|S|\leq A|Q|$.  Then, for
every $u\in\C^m$,
\begin{align}
 \|W^{1/2}\mathcal W_Q^{-1}u\|_{L^{2+\sigma_W}(Q^*)}
 &\leq C_{W,A}|Q|^{1/(2+\sigma_W)}|u|,
 \label{sch:eq:directional-primal-RH}\\
 \|W^{-1/2}\mathcal W_Qu\|_{L^{2+\sigma_W}(Q^*)}
 &\leq C_{W,A}|Q|^{1/(2+\sigma_W)}|u|.
 \label{sch:eq:directional-dual-RH}
\end{align}
The same conclusions hold with $Q^*=Q$.  The constants depend only
on $d,m,[W]_{\mathcal A_2}$ and  $A$.
\end{proposition}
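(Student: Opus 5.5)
The plan is to reduce both estimates to scalar reverse H\"older and doubling inequalities for the directional weights in Lemma~\ref{sch:lem:scalar-projections}, applied once to $W$ and once to $W^{-1}$. For $e\in\C^m$ put $w_e(x)=\langle W(x)e,e\rangle=|W^{1/2}(x)e|^2$ and $\tilde w_e(x)=\langle W^{-1}(x)e,e\rangle=|W^{-1/2}(x)e|^2$. By Lemma~\ref{sch:lem:scalar-projections}, $[w_e]_{A_2}\leq m[W]_{\mathcal A_2}$. Since $W^{-1}$ is a matrix $\mathcal A_2$ weight with $[W^{-1}]_{\mathcal A_2}=[W]_{\mathcal A_2}$ (the quantity in \eqref{sch:eq:matrix-A2-definition} is symmetric under $W\leftrightarrow W^{-1}$ after taking adjoints), the same lemma gives $[\tilde w_e]_{A_2}\leq m[W]_{\mathcal A_2}$. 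Both bounds are uniform in $e\neq0$.

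\emph{Step 1: uniform constants.} The scalar reverse H\"older inequality for $A_2$ weights gives $\sigma>0$ and $C$, depending only on $d$ and an upper bound for the $A_2$ characteristic, such that for every cube $S$
$$
 \avg_S w^{1+\sigma/2}\leq C\Big(\avg_S w\Big)^{1+\sigma/2}
$$
for every scalar weight $w$ with $[w]_{A_2}\leq m[W]_{\mathcal A_2}$. Let $\sigma_W$ be this exponent, and apply it to $w_e$ and to $\tilde w_e$. Scalar $A_2$ weights are also doubling with uniform constants. Hence, if $Q\subset S$ and $|S|\leq A|Q|$, then
$$
 w_e(S)\leq C_A w_e(Q),
$$
and likewise for $\tilde w_e$. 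Here $C_A$ depends only on $A$, $d$ and $m[W]_{\mathcal A_2}$.

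\emph{Step 2: the primal estimate.} Take $e=\mathcal W_Q^{-1}u$. Then
$$
 \avg_Q w_e=\langle\langle W\rangle_Q e,e\rangle=|u|^2.
$$
Since $Q^*\subset S$,
$$
 \int_{Q^*}|W^{1/2}e|^{2+\sigma_W}
 \leq\int_S w_e^{1+\sigma_W/2}
 \leq C|S|\Big(\avg_S w_e\Big)^{1+\sigma_W/2}.
$$
By doubling, $\avg_S w_e\leq (C_A|Q|/|S|)\avg_Q w_e\leq C_A|u|^2$. Together with $|S|\leq A|Q|$ this gives \eqref{sch:eq:directional-primal-RH} after taking the $(2+\sigma_W)$th root.

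\emph{Step 3: the dual estimate.} Take $e=\mathcal W_Qu$. Then
$$
 \avg_Q\tilde w_e=|\mathcal W'_Q\mathcal W_Qu|^2\leq[W]_{\mathcal A_2}|u|^2,
$$
by \eqref{sch:eq:matrix-A2-definition}. The same reverse H\"older and doubling argument on $S$ for $\tilde w_e$ then yields \eqref{sch:eq:directional-dual-RH}.

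The case $Q^*=Q$ is included: take $S=Q$ and $A=1$. The only point that needs care is that $\sigma_W$ and the doubling constants must not depend on $e$, and hence not on $u$ or $Q$. This holds because the scalar reverse H\"older exponent and the doubling constant are controlled solely by $d$ and an upper bound for the $A_2$ characteristic, and that characteristic is bounded uniformly by $m[W]_{\mathcal A_2}$ for both $w_e$ and $\tilde w_e$.
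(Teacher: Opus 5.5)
Your proof is correct and follows essentially the paper's route: the same directional weights $w_e$ and $\tilde w_e$ with $A_2$ bounds uniform in $e$, a comparison of $\avg_S$ with $\avg_Q$ (including $\avg_Q\tilde w_e=|\mathcal W'_Q\mathcal W_Qu|^2\leq[W]_{\mathcal A_2}|u|^2$), and uniform scalar reverse H\"older. The only differences are cosmetic. The paper derives the doubling step directly from the scalar $A_2$ inequality and applies reverse H\"older on $Q^*$ after bounding $\avg_{Q^*}$, whereas you cite doubling as a standard fact and apply reverse H\"older on $S$.
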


\begin{proof}
By homogeneity, assume $|u|=1$.  Set
$$
 w_{Q,u}(x)=|W^{1/2}(x)\mathcal W_Q^{-1}u|^2.
$$
Its average on $Q$ is one.  Lemma~\ref{sch:lem:scalar-projections}
shows that $w_{Q,u}\in A_2$, with a constant independent of $Q$
and $u$.  The scalar $A_2$ inequality gives
\begin{align*}
 \avg_Sw_{Q,u}
 &\leq[w_{Q,u}]_{A_2}
       \bigg(\avg_Sw_{Q,u}^{-1}\bigg)^{-1}
 \leq A[w_{Q,u}]_{A_2}
       \bigg(\avg_Qw_{Q,u}^{-1}\bigg)^{-1}
 \leq A[w_{Q,u}]_{A_2}\avg_Qw_{Q,u}
 \leq C_{W,A}.
\end{align*}
Since $Q^*\subset S$ and $|S|\leq A|Q^*|$,
\begin{equation}
 \avg_{Q^*}w_{Q,u}\leq C_{W,A}.
 \label{sch:eq:nearby-primal-average}
\end{equation}
Scalar reverse H\"older, applied on $Q^*$, gives a number
$\varepsilon_W>0$, independent of $Q,Q^*$ and  $u$, such that
$$
 \bigg(\avg_{Q^*}w_{Q,u}^{1+\varepsilon_W}\bigg)^{1/(1+\varepsilon_W)}
 \leq C_{W,A}.
$$
This proves \eqref{sch:eq:directional-primal-RH} with
$\sigma_W=2\varepsilon_W$.

For the second estimate, put
$$
 \widetilde w_{Q,u}(x)
 =|W^{-1/2}(x)\mathcal W_Qu|^2.
$$
The dual matrix weight $W^{-1}$ also belongs to $\mathcal A_2$.  Thus
this is a scalar projection of $W^{-1}$, again with a uniform scalar
$A_2$ constant.  Moreover,
\begin{align}
 \avg_Q\widetilde w_{Q,u}
 &=|\mathcal W'_Q\mathcal W_Qu|^2
 \leq\|\mathcal W'_Q\mathcal W_Q\|_{\op}^2
 =\|\mathcal W_Q\mathcal W'_Q\|_{\op}^2
 \leq[W]_{\mathcal A_2}.
 \label{sch:eq:dual-directional-average}
\end{align}
The equality of the two operator norms holds because the two products
are adjoints.
The scalar $A_2$ inequality, the inclusion $Q\subset S$ and 
\eqref{sch:eq:dual-directional-average} give
\begin{align*}
 \avg_S\widetilde w_{Q,u}
 &\leq A[\widetilde w_{Q,u}]_{A_2}
       \bigg(\avg_Q\widetilde w_{Q,u}^{-1}\bigg)^{-1}
 \leq A[\widetilde w_{Q,u}]_{A_2}
       \avg_Q\widetilde w_{Q,u}
 \leq C_{W,A},
\end{align*}
and hence
$$
 \avg_{Q^*}\widetilde w_{Q,u}\leq C_{W,A}.
$$
The reverse H\"older inequality for the scalar projections of
$W^{-1}$ proves \eqref{sch:eq:directional-dual-RH}, after decreasing
$\sigma_W$ if necessary.

When $Q^*=Q$, the proof is the same with $S=Q$.  The scalar
projection constants and the reverse H\"older exponents depend only on
the stated quantities.
\end{proof}

Under the hypotheses of
Proposition~\ref{sch:prop:concrete-matrix-NWO}, multiplication by a
bounded scalar function preserves the required column estimates.
Together with Lemma~\ref{sch:lem:matrix-column-NWO}, this gives the
following consequence.  If
$|\varphi_Q|\leq C|Q|^{-1/2}$, then the matrix functions
$$
 \one_{Q^*}\varphi_QW^{1/2}\mathcal W_Q^{-1},
 \qquad
 \one_{Q^*}\varphi_QW^{-1/2}\mathcal W_Q
$$
satisfy the matrix maximal estimate, with $Q^*$ as the controlling
cube.  This is the form used below.

Put $\Sigma=\{0,1\}^d$ and $\Sigma_0=\Sigma\setminus\{\mathbf1\}$.

\subsection{The direct Riesz upper estimate}

\subsubsection{Whitney blocks and Fourier coefficients}

Let
$
 \mathcal O=(\R^d\times\R^d)\setminus
 \{(x,x):x\in\R^d\}.
$
Choose a dyadic Whitney decomposition of $\mathcal O$ by product
cubes $Q\times R$.  The cubes $Q$ and $R$ have the same side
length and
\begin{equation}
 c_d\ell(Q)\leq\operatorname{dist}(Q,R)
 \leq C_d\ell(Q).
 \label{sch:eq:Whitney-separation}
\end{equation}
For each $Q$, there are at most $M_d$ possible cubes $R$.
Denote them by $R_{Q,s}$, $1\leq s\leq M_d$.  The maps
$Q\mapsto R_{Q,s}$ have uniformly bounded fibres.
Indeed, for fixed $Q$, \eqref{sch:eq:Whitney-separation} confines $R$
to an annulus containing only $O_d(1)$ dyadic cubes of side length
$\ell(Q)$.  The same argument with $Q$ and $R$ interchanged proves
the bounded-fibre assertion.

\begin{lemma}
\label{sch:lem:Whitney-Fourier}
For every integer $N>0$, there are scalar functions
$\Phi_{\ell,Q}$ and $\Psi_{\ell,R_{Q,s}}$, supported on $Q$
and $R_{Q,s}$, respectively and  of modulus at most one, such that
\begin{equation}
 K_j(x,y)\one_Q(x)\one_{R_{Q,s}}(y)
 =\frac1{|Q|}\sum_{\ell\in\mathbb Z^{2d}}
 \Upsilon^j_{\ell,Q,s}
 \Phi_{\ell,Q}(x)\Psi_{\ell,R_{Q,s}}(y),
 \label{sch:eq:Whitney-Fourier}
\end{equation}
where
\begin{equation}
 |\Upsilon^j_{\ell,Q,s}|
 \leq C_{N,d}(1+|\ell|)^{-N}.
 \label{sch:eq:Fourier-rapid-decay}
\end{equation}
\end{lemma}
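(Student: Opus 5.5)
The plan is the standard Whitney--Fourier expansion of a smooth kernel on a product of separated cubes.

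\emph{Rescaling.} Fix $Q$ and $R=R_{Q,s}$, with common side $\ell=\ell(Q)$ and lower left corners $c_Q,c_R$. Write
\[
x=c_Q+\ell u,\qquad y=c_R+\ell v,\qquad u,v\in[0,1]^d .
\]
Homogeneity of $K_j$ of degree $-d$ gives
\[
K_j(x,y)=\ell^{-d}\,k_{Q,s}(u,v),\qquad
k_{Q,s}(u,v)=c_d\frac{(a+u-v)_j}{|a+u-v|^{d+1}},\qquad a=(c_Q-c_R)/\ell .
\]
By \eqref{sch:eq:Whitney-separation} we have $|a+u-v|\geq c_d'>0$ for all $u,v\in[0,1]^d$, and $|a|\le C_d$. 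Moreover $a$ ranges over a finite set of integer vectors, since the cubes are dyadic of equal size. Hence $k_{Q,s}$ is a real-analytic function on a neighbourhood of $[0,1]^{2d}$, with all derivatives bounded uniformly in $Q$ and $s$.

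\emph{Periodization.} Fourier series on $[0,1]^{2d}$ do not converge rapidly unless the function extends smoothly and periodically. So choose a fixed cutoff $\chi\in C_c^\infty((-1,2)^{2d})$ with $\chi=1$ on $[0,1]^{2d}$, and apply it on a slightly enlarged box $[-1,2]^{2d}$, of side $3$. The separation must survive the enlargement. If it does not, first subdivide $Q$ and $R$ into a fixed number of subcubes; this is harmless but changes the coefficient bookkeeping. A cleaner alternative is to extend $k_{Q,s}$ smoothly using a smooth extension of $|\cdot|^{-d-1}(\cdot)_j$ off a neighbourhood of $0$, defined once for the finitely many $a$. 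Then $\chi k_{Q,s}$ is smooth and compactly supported in the box. Expanding it in Fourier series with period $3$ gives
\[
\chi k_{Q,s}(u,v)=\sum_{\ell\in\mathbb Z^{2d}}\Upsilon_{\ell,Q,s}\,e^{2\pi i\ell\cdot(u,v)/3},
\]
and repeated integration by parts bounds $|\Upsilon_{\ell,Q,s}|\le C_N(1+|\ell|)^{-N}$, uniformly over the finitely many $a$.

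\emph{Splitting and normalization.} Write $\ell=(\ell_1,\ell_2)$ and put
\[
\Phi_{\ell,Q}(x)=\one_Q(x)\,e^{2\pi i\ell_1\cdot(x-c_Q)/(3\ell(Q))},\qquad
\Psi_{\ell,R}(y)=\one_R(y)\,e^{2\pi i\ell_2\cdot(y-c_R)/(3\ell(Q))}.
\]
These have modulus at most one and the required supports. Since $\ell^{-d}=1/|Q|$, restricting to $u,v\in[0,1]^d$, where $\chi=1$, yields \eqref{sch:eq:Whitney-Fourier} with $\Upsilon^j_{\ell,Q,s}=\Upsilon_{\ell,Q,s}$.

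\emph{Main obstacle.} The only delicate step is uniformity of the decay constants in $Q$ and $s$. This is handled by the reduction to finitely many translation parameters $a$, which comes from scale invariance plus dyadic equal-size Whitney cubes. If the Whitney decomposition permits a continuum of relative positions, compactness of the admissible set of $a$ together with joint smoothness in $(a,u,v)$ gives the same uniform bound. Convergence of the series is absolute and pointwise by \eqref{sch:eq:Fourier-rapid-decay} with $N>2d$.
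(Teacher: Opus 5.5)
Your proposal is correct and follows essentially the paper's route: localize with smooth cutoffs, rescale to a fixed box, expand in a Fourier series in $2d$ variables, and integrate by parts. In both arguments the uniformity in $Q$ and $s$ comes from scale invariance; the paper expresses this through the bounds $|\partial_x^\alpha\partial_y^\beta K_j(x,y)|\lesssim\ell(Q)^{-d-|\alpha|-|\beta|}$ on the enlarged product, and you express it through the finitely many relative positions $a$.

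The separation issue you flag is avoided in the paper by placing the cutoffs on $(1+\epsilon)Q$ and $(1+\epsilon)R_{Q,s}$ with $\epsilon$ small. In your setup this amounts to taking $\chi$ supported in a small neighbourhood of $[0,1]^{2d}$ inside the period-$3$ box, so neither subdivision nor extension is needed, although your smooth-extension variant also works.
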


\begin{proof}
Choose $\epsilon>0$ so that
$(1+\epsilon)Q$ and $(1+\epsilon)R_{Q,s}$ remain separated.
Take smooth cutoffs supported on these enlarged cubes and equal to
one on $Q$ and $R_{Q,s}$.  After rescaling both cubes to fixed
unit cubes, expand the localized kernel in a Fourier series in
$2d$ variables.  On the enlarged product,
$|x-y|\simeq\ell(Q)$ and  for all multi-indices
$\alpha,\beta$,
$$
 |\partial_x^\alpha\partial_y^\beta K_j(x,y)|
 \leq C_{\alpha,\beta,d}
 \ell(Q)^{-d-|\alpha|-|\beta|}.
$$
Each derivative falling on a cutoff contributes a factor
$\ell(Q)^{-1}$.  Hence the localized kernel satisfies the same
derivative bound.  Integration by parts $N$ times gives a
coefficient bounded by
$$
 C_{N,d}|Q|^{-1}(1+|\ell|)^{-N}.
$$
Put $|Q|^{-1}$ outside the series and restrict the exponentials to
$Q$ and $R_{Q,s}$.  This proves
\eqref{sch:eq:Whitney-Fourier}--\eqref{sch:eq:Fourier-rapid-decay}.
\end{proof}

\begin{lemma}
\label{sch:lem:Marcinkiewicz-ideal}
For $d>1$, define
\begin{equation}
 |||A|||_{d,\infty}
 =\sup_{N\geq1}N^{1/d-1}\sum_{n=1}^Ns_n(A).
 \label{sch:eq:Marcinkiewicz-norm}
\end{equation}
Then
$
 |||A|||_{d,\infty}\simeq\|A\|_{\Sch^{d,\infty}}.
$
Moreover, $|||\cdot|||_{d,\infty}$ is a convex ideal norm and is
lower semicontinuous for weak operator convergence.
\end{lemma}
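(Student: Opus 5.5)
The plan is to use the singular-value description $s_n(A)=\inf\{\|A-F\|:\operatorname{rank}F<n\}$ together with the Ky Fan variational formula
$$
 \sigma_N(A):=\sum_{n=1}^Ns_n(A)=\sup\Big\{\Big|\sum_{k=1}^N\langle Ax_k,y_k\rangle\Big|:\{x_k\},\{y_k\}\text{ orthonormal}\Big\},
$$
which holds for every bounded operator between Hilbert spaces, with approximation numbers in place of singular values. The argument has three parts.

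\emph{Equivalence of quasinorms.} Since $s_n$ is nonincreasing, $Ns_N\le\sigma_N$. Hence $N^{1/d}s_N\le N^{1/d-1}\sigma_N\le|||A|||_{d,\infty}$, and so $\|A\|_{\Sch^{d,\infty}}\le|||A|||_{d,\infty}$. Conversely, if $s_n\le Kn^{-1/d}$ then
$$
 \sigma_N\le K\sum_{n\le N}n^{-1/d}\le K\frac{d}{d-1}N^{1-1/d},
$$
using $d>1$, so that $1/d<1$. This gives $|||A|||_{d,\infty}\le \frac{d}{d-1}\|A\|_{\Sch^{d,\infty}}$.

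\emph{Norm and ideal properties.} From the Ky Fan formula, each $\sigma_N$ is a supremum of absolute values of linear functionals $A\mapsto\sum_k\langle Ax_k,y_k\rangle$. Each $\sigma_N$ is therefore a seminorm, and so is $|||\cdot|||_{d,\infty}$ as a supremum of the seminorms $N^{1/d-1}\sigma_N$. It is a norm because $\sigma_1(A)=\|A\|$. The ideal inequality $|||XAY|||\le\|X\|\,\|A\|\,|||Y|||$ follows from $s_n(XAY)\le\|X\|\,\|Y\|\,s_n(A)$, which is immediate from the approximation-number definition.

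\emph{Weak lower semicontinuity.} Suppose $A_\alpha\to A$ in the weak operator topology. For fixed finite orthonormal systems we have $\sum_k\langle A_\alpha x_k,y_k\rangle\to\sum_k\langle Ax_k,y_k\rangle$. Hence
$$
 \Big|\sum_k\langle Ax_k,y_k\rangle\Big|\le\liminf_\alpha\sigma_N(A_\alpha)\le N^{1-1/d}\liminf_\alpha|||A_\alpha|||_{d,\infty}.
$$
Taking the supremum over the orthonormal systems and then over $N$ gives $|||A|||_{d,\infty}\le\liminf_\alpha|||A_\alpha|||_{d,\infty}$.

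The main obstacle is justifying the Ky Fan formula for a possibly noncompact bounded $A$ using approximation numbers. I would handle it as follows. For compact $A$, the formula is the standard one obtained from the Schmidt decomposition and von Neumann's trace inequality. If $A$ is not compact, let $s_\infty=\lim_n s_n(A)>0$, which equals the essential norm. Then $\sigma_N(A)\ge Ns_\infty$, so $|||A|||_{d,\infty}=\infty$, consistent with $\|A\|_{\Sch^{d,\infty}}=\infty$. For the semicontinuity, a noncompact limit would force $\liminf_\alpha|||A_\alpha|||=\infty$. To see this, I would use the fact that $\liminf_\alpha|||A_\alpha|||<\infty$ keeps the family in a bounded subset of $\Sch^{d,\infty}$, and that this subset is contained in the closed compact operators; then the inequality above, applied with compressions $PAP'$ to finite-dimensional subspaces, controls $A$. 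In practice it is simplest to prove $\sigma_N(A)=\sup_{P,P'}\sigma_N(P'AP)$ over finite-rank projections, and then use finite-dimensional Ky Fan.
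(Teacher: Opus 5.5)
Your proposal is correct and is essentially the paper's proof. It uses the same two-sided comparison with constant $\frac{d}{d-1}$ for the equivalence, Ky Fan together with $s_n(XAY)\le\|X\|\,s_n(A)\,\|Y\|$ for the norm and ideal properties, and the variational formula for weak-operator lower semicontinuity. Note that your displayed ideal inequality should read $|||XAY|||\le\|X\|\,|||A|||\,\|Y\|$.

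The paper uses the variational formula for all bounded $A$ without comment, and your detour through compressions does not actually supply the lower bound in it. You can get that bound directly for noncompact $A$. Write $A=W|A|$ and take orthonormal eigenvectors of $|A|$ for its largest eigenvalues above $\|A\|_{\mathrm{ess}}$. If there are fewer than $N$ of them, complete them by orthonormal vectors $x_k$ from the infinite-rank spectral subspace of $|A|$ for $(\|A\|_{\mathrm{ess}}-\epsilon,\infty)$. Then $y_k=Wx_k$ are orthonormal and $\sum_{k\le N}\langle Ax_k,y_k\rangle\ge\sum_{n\le N}s_n(A)-N\epsilon$.
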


\begin{proof}
For every nonnegative decreasing sequence $\{a_n\}$,
$$
 \sup_NN^{1/d}a_N
 \leq\sup_NN^{1/d-1}\sum_{n=1}^Na_n
 \leq \frac d{d-1}\sup_NN^{1/d}a_N.
$$
This proves the norm equivalence.  Ky Fan's inequality proves the
triangle and ideal properties.  Finally,
$$
 \sum_{n=1}^Ns_n(A)
 =\sup_{\{e_n\},\{f_n\}}
 \left|\sum_{n=1}^N\langle Ae_n,f_n\rangle\right|,
$$
where both families are orthonormal.  This variational formula proves
weak-operator lower semicontinuity.
\end{proof}

\begin{lemma}
\label{sch:lem:compact-local}
Let $A$ be bounded on $L^2(\R^d;\C^m)$.  Suppose that
$
 \langle Af,g\rangle=0
$
whenever $f,g$ are bounded and compactly supported and
$$
 \dist(\operatorname{supp}f,\operatorname{supp}g)>0.
$$
Then $A=M_a$ for some essentially bounded matrix function $a$.  If
$A$ is compact, then $A=0$.
\end{lemma}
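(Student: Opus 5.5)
The plan is to prove first that $A$ commutes with multiplication by bounded scalar functions, which identifies it as a multiplication operator, and then to rule out compactness by a weakly null family. Fix bounded compactly supported scalar functions $\phi$ and consider $[M_\phi,A]$, where $M_\phi$ acts by scalar multiplication on $L^2(\R^d;\C^m)$.

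\emph{Step 1: indicators of disjoint cubes annihilate $A$.} Let $P,P'$ be dyadic cubes, or finite unions of them, whose closures are disjoint. Then $\one_{P'}A\one_P=0$, because bounded compactly supported functions supported in $P$ and $P'$ have supports at positive distance. Now take two disjoint half-open dyadic cubes $P,P'$ of the same generation. Their closures may touch along a face, so positive distance fails. I would handle this by approximation. Let $P_\epsilon\subset P$ be the concentric cube of side $(1-\epsilon)\ell(P)$, and define $P'_\epsilon$ likewise. Then $\one_{P'_\epsilon}A\one_{P_\epsilon}=0$. Since $\one_{P_\epsilon}\to\one_P$ strongly as multiplication operators on $L^2$, and $A$ is bounded, we get $\one_{P'}A\one_P=0$ in the strong limit. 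The conclusion is that, for every dyadic generation $k$, the operator $A$ is block diagonal with respect to the partition $\mathcal D_k$: we have $A=\sum_{P\in\mathcal D_k}\one_PA\one_P$ strongly.

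\emph{Step 2: $A$ commutes with scalar multipliers.} By Step~1, $A$ commutes with every $\one_P$, $P\in\mathcal D_k$, and therefore with every finite linear combination of such indicators. Dyadic step functions are dense in $L^\infty$ in the weak-star sense. More concretely, conditional expectations $\mathbb E_k\phi\to\phi$ boundedly almost everywhere, so $M_{\mathbb E_k\phi}\to M_\phi$ strongly. Passing to the limit gives $AM_\phi=M_\phi A$ for every scalar $\phi\in L^\infty$. Thus $A$ lies in the commutant of $L^\infty(\R^d)\otimes I_m$ acting on $L^2(\R^d)\otimes\C^m$. That commutant is $L^\infty(\R^d;\C^{m\times m})$ acting by multiplication; this is the standard fact that a maximal abelian algebra tensored with $I_m$ has commutant consisting of its $m\times m$ matrices. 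To keep the argument self-contained, I would verify it directly. Write $A$ in $m\times m$ block form $A_{ab}=P_aAI_b$. Each block is a bounded operator on scalar $L^2$ commuting with all $M_\phi$, hence equals $M_{a_{ab}}$ with $a_{ab}=A_{ab}(\one)$ computed locally. The usual proof applies $A_{ab}$ to $\one_E$, obtaining $A_{ab}\one_E=\one_E A_{ab}\one_{E'}$ for $E\subset E'$, and bounds $\|a_{ab}\|_\infty\le\|A\|$ by testing on small sets. This gives $A=M_a$ with $a\in L^\infty$.

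\emph{Step 3: compactness forces $a=0$.} Suppose $a\neq0$ on a set of positive measure. Then there are unit vectors $u,v$ and $\epsilon>0$ such that $E=\{x:|v^*a(x)u|>\epsilon\}$ has positive measure, and by Lebesgue density $E$ contains points of density. Choose cubes $Q_n$ shrinking to such a point with $|E\cap Q_n|\ge\frac12|Q_n|$. Using Rademacher-type oscillating functions, build an orthonormal sequence $h_n=|E\cap Q_n|^{-1/2}\one_{E\cap Q_n}r_nu$, or more simply choose disjoint subsets of $E$. Take $h_n=|E_n|^{-1/2}\one_{E_n}u$ with the $E_n\subset E$ pairwise disjoint and of positive measure. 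Then $h_n\to0$ weakly, while $\langle Ah_n,|E_n|^{-1/2}\one_{E_n}v\rangle=\avg_{E_n}v^*au$. By further restricting each $E_n$ to a set where $v^*au$ lies in a fixed sector, this average has modulus $\gtrsim\epsilon$. Consequently $\|Ah_n\|\gtrsim\epsilon$, which contradicts compactness, since a compact operator maps weakly null sequences to norm null ones. Hence $a=0$ and $A=0$.

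The main obstacle is Step~1's boundary issue, namely passing from positive-distance supports to adjacent cubes. Strong convergence of the shrunken indicators handles it; an alternative would be to note that boundaries of cubes are null sets.
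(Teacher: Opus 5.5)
Your proof is correct and follows the paper's argument. You shrink adjacent dyadic cubes to obtain positive distance, pass to the strong limit to show that $A$ commutes with dyadic indicators, extend to all scalar $L^\infty$ multipliers, identify the commutant as matrix multiplications, and then test on normalized indicators of disjoint subsets of a set where $a$ is bounded below. The differences are cosmetic: you approximate by conditional expectations $\mathbb E_k\phi\to\phi$ instead of using a monotone-class argument, and you use a weakly null orthonormal sequence instead of the paper's disjointly supported images with no convergent subsequence; the sector restriction is unnecessary there, since $\|Ah_n\|\geq(\avg_{E_n}|v^*au|^2)^{1/2}\geq\epsilon$ directly.
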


\begin{proof}
Fix a dyadic grid.  Let $Q$ and $R$ be distinct cubes in the same
generation.  For $\epsilon>0$, remove an $\epsilon$-neighborhood of
the boundary from each cube and denote the resulting inner sets by
$Q_\epsilon$ and $R_\epsilon$.  The two inner sets have positive
distance.  The hypothesis and density of bounded functions give
$$
 \one_{R_\epsilon}A\one_{Q_\epsilon}=0.
$$
The boundaries have measure zero.  Letting $\epsilon\downarrow0$ and
using strong convergence of the multiplication projections gives
$$
 \one_RA\one_Q=0.
$$
Sum strongly over all cubes $R$ in the same generation with $R\ne Q$.
The same argument with $Q$ and $R$ interchanged gives
$$
 \one_{Q^c}A\one_Q=0,
 \qquad
 \one_QA\one_{Q^c}=0.
$$
Thus $A$ commutes with the indicator of every dyadic cube.  These cubes
generate the Borel sigma-algebra.  A monotone-class argument, followed
by approximation by simple functions, shows that $A$ commutes with
every scalar $L^\infty$ multiplier.  The commutant of these scalar multipliers on
$L^2(\R^d;\C^m)$ consists of multiplication by essentially bounded
$m\times m$ matrix functions.  Thus $A=M_a$.

Suppose that $M_a\neq0$.  A finite net of the unit sphere in
$\C^m$ gives a unit vector $v$, a number $\epsilon>0$ and  a
measurable set $E$ of positive finite measure such that
$|a(x)v|\geq\epsilon$ on $E$.  Since Lebesgue measure is
non-atomic, choose pairwise disjoint subsets $E_n\subset E$ of
positive measure.  The functions
$$
 f_n=|E_n|^{-1/2}\one_{E_n}v
$$
are orthonormal, while the functions $M_af_n$ have disjoint
supports and norms at least $\epsilon$.  They have no convergent
subsequence.  Therefore $M_a$ is not compact.  This proves the last
assertion.
\end{proof}

\subsubsection{Grouped Haar descendants}

The localized factors obtained from the Fourier expansion need not
satisfy the NWO maximal estimate.  Uniform $L^2$ bounds do not
imply that estimate.  We therefore sum all Haar functions in one
martingale generation and prove an $L^{2+\sigma}$ bound for the
resulting family.

We use the following elementary geometric refinement.  For every
Whitney pair $(Q,R_{Q,s})$, one can choose a cube $S_Q$ from one of
the fixed adjacent systems such that
\begin{equation}
 Q\cup R_{Q,s}\subset S_Q,
 \qquad \ell(S_Q)\leq C_d\ell(Q).
 \label{sch:eq:adjacent-common-cube}
\end{equation}
There are only finitely many choices for the adjacent system.  After
a further finite coloring, the map $Q\mapsto S_Q$ is one-to-one.
Indeed, for a fixed cube $S$, every $Q$ with $S_Q=S$ is contained in
$S$ and satisfies $\ell(Q)\simeq_d\ell(S)$.  Hence there are only
$O_d(1)$ such cubes and  a fixed finite coloring makes the map
one-to-one.  We
work with one value of $s$, one adjacent system and  one color.  All
constants below are uniform in these finite choices.  We write $S_Q$ for the common cube on this fixed piece.  The
functions
$E_{Q,k}$ also depend on $\ell,s$, the adjacent system and  the
color; these finite or summable indices are suppressed.

Fix $k\geq0$.  For $P\in\operatorname{ch}^k(S_Q)$ and
$\varepsilon\in\Sigma_0$, set
\begin{align}
 H_{P,\varepsilon}
 &=\mathcal V_P
   \frac{\widehat B(P,\varepsilon)}{|P|^{1/2}}
   \mathcal U_P^{-1},                                      
 \label{sch:eq:direct-H-coefficient}\\
 F_{P,Q,\ell,\varepsilon}(x)
 &=V^{1/2}(x)\Phi_{\ell,Q}(x)\one_Q(x)
   h_P^\varepsilon(x)\mathcal V_P^{-1},
 \label{sch:eq:direct-F-family}\\
 G_{Q,\ell,s}(y)
 &=|Q|^{-1/2}U^{-1/2}(y)
   \overline{\Psi_{\ell,R_{Q,s}}(y)}\one_{R_{Q,s}}(y)
   \mathcal U_{S_Q},
 \label{sch:eq:direct-G-family}\\
 A_{P,Q,\varepsilon}
 &=\bigg(\frac{|P|}{|Q|}\bigg)^{1/2}
   H_{P,\varepsilon}\mathcal U_P\mathcal U_{S_Q}^{-1}.
 \label{sch:eq:direct-A-coefficient}
\end{align}
Here and below a function in \eqref{sch:eq:direct-F-family} is understood
to be zero when $P\cap Q=\varnothing$.  Direct multiplication gives
the identity
\begin{align}
 &F_{P,Q,\ell,\varepsilon}(x)A_{P,Q,\varepsilon}
   G_{Q,\ell,s}(y)^*                                      \notag\\
 &\qquad=\frac1{|Q|}V^{1/2}(x)\Phi_{\ell,Q}(x)\one_Q(x)
   \widehat B(P,\varepsilon)h_P^\varepsilon(x)
   \Psi_{\ell,R_{Q,s}}(y)\one_{R_{Q,s}}(y)U^{-1/2}(y).
 \label{sch:eq:direct-exact-factorization}
\end{align}

Define
\begin{equation}
 c_{Q,k}^2
 =\sum_{P\in\operatorname{ch}^k(S_Q)}
   \sum_{\varepsilon\in\Sigma_0}
   \|A_{P,Q,\varepsilon}\|_{\HS}^2.
 \label{sch:eq:grouped-coefficient}
\end{equation}
The next lemma records the coefficient decay and the two column estimates.

\begin{lemma}
\label{sch:lem:grouped-descendant}
Let $2\leq r<\infty$ and $1\leq q\leq\infty$.  There is
$\gamma_U(r)>0$ such that
\begin{equation}
 \|\{c_{Q,k}\}_Q\|_{\ell^{r,q}}
 \leq C2^{-k\gamma_U(r)}
 \|\{\mathfrak h_P^+(B)\}_P\|_{\ell^{r,q}}.
 \label{sch:eq:grouped-coefficient-decay}
\end{equation}
Moreover, for every $2<s_0<2+\sigma_V$, where $\sigma_V$ is the
exponent in Proposition~\ref{sch:prop:concrete-matrix-NWO}, the matrix
functions
\begin{equation}
 E_{Q,k}(x)
 =\frac1{c_{Q,k}}
 \sum_{P\in\operatorname{ch}^k(S_Q)}
 \sum_{\varepsilon\in\Sigma_0}
 F_{P,Q,\ell,\varepsilon}(x)A_{P,Q,\varepsilon}
 \label{sch:eq:grouped-E-family}
\end{equation}
satisfy, with the convention $E_{Q,k}=0$ when $c_{Q,k}=0$,
\begin{equation}
 \|E_{Q,k}u\|_{L^{s_0}}
 \leq C2^{k\delta_V(s_0)}
 |S_Q|^{1/s_0-1/2}|u|,
 \qquad
 \delta_V(s_0)=d\bigg(\frac12-\frac1{s_0}\bigg).
 \label{sch:eq:grouped-E-growth}
\end{equation}
The family $\{G_{Q,\ell,s}\}_Q$ satisfies
\begin{equation}
 \|G_{Q,\ell,s}v\|_{L^{s_1}}
 \leq C|S_Q|^{1/s_1-1/2}|v|
 \label{sch:eq:direct-G-NWO-size}
\end{equation}
for some $s_1>2$.  All constants are independent of
$k,Q,\ell,s$ and of the finite geometric choices.
\end{lemma}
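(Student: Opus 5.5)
Three estimates have to be proved; I take them in the order coefficients, $E$-columns, $G$-columns.

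\emph{Coefficient decay.} From \eqref{sch:eq:direct-A-coefficient},
$$
 \|A_{P,Q,\varepsilon}\|_{\HS}^2\le\frac{|P|}{|Q|}\|H_{P,\varepsilon}\|_{\HS}^2\|\mathcal U_P\mathcal U_{S_Q}^{-1}\|_{\op}^2 .
$$
Apply \eqref{sch:eq:reducing-matrix-comparison} with $S_Q$ in place of $Q$:
$$
 \frac{|P|}{|Q|}\|\mathcal U_P\mathcal U_{S_Q}^{-1}\|_{\op}^2\le\frac{|S_Q|}{|Q|}\vartheta_U(P,S_Q)\lesssim_d\vartheta_U(P,S_Q),
$$
because $\ell(S_Q)\simeq\ell(Q)$. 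Summing over $\varepsilon$ gives
$$
 c_{Q,k}^2\lesssim\sum_{P\in\operatorname{ch}^k(S_Q)}\vartheta_U(P,S_Q)\,\mathfrak h_P^+(B)^2=\big(\mathcal H_k\mathfrak h^+(B)(S_Q)\big)^2 .
$$
On the fixed piece the map $Q\mapsto S_Q$ is injective, so $\|\{c_{Q,k}\}_Q\|_{\ell^{r,q}}\lesssim\|\mathcal H_k\mathfrak h^+(B)\|_{\ell^{r,q}(\D^t)}$. Now use the bounds \eqref{sch:eq:Hk-large-s}--\eqref{sch:eq:Hk-small-s} on $\D^t$ and interpolate them as in \eqref{sch:eq:Hk-Lorentz}. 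This yields \eqref{sch:eq:grouped-coefficient-decay} with some $\gamma_U(r)>0$. The range $r\ge2$ even allows the $\ell^s$ bound for $s\ge2$ to be used directly at $s=r$, giving $2^{-kd\eta_U/r}$, with Lorentz $q$ handled by real interpolation between two such exponents.

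\emph{The $E$-columns.} Fix a unit vector $u$. For $x\in Q\cap P$, $P\in\operatorname{ch}^k(S_Q)$,
$$
 c_{Q,k}E_{Q,k}(x)u=\Phi_{\ell,Q}(x)\,V^{1/2}(x)\sum_{\varepsilon}h_P^\varepsilon(x)\mathcal V_P^{-1}A_{P,Q,\varepsilon}u .
$$
The cubes $P$ in one generation are disjoint, so
$$
 \|c_{Q,k}E_{Q,k}u\|_{s_0}^{s_0}\le\sum_P\int_P\Big|V^{1/2}\mathcal V_P^{-1}\sum_\varepsilon h_P^\varepsilon A_{P,Q,\varepsilon}u\Big|^{s_0}.
$$
On $P$ we have $|h_P^\varepsilon|\le|P|^{-1/2}$. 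The bound \eqref{sch:eq:directional-primal-RH} with $Q^*=Q=P$ gives, for each fixed vector $w$,
$$
 \|V^{1/2}\mathcal V_P^{-1}w\|_{L^{s_0}(P)}\lesssim|P|^{1/s_0}|w| ;
$$
this uses $s_0<2+\sigma_V$ and H\"older. The contribution of $P$ is therefore $\lesssim|P|^{s_0(1/s_0-1/2)}a_P^{s_0}$, where $a_P=(\sum_\varepsilon\|A_{P,Q,\varepsilon}\|_{\HS}^2)^{1/2}$. Since $s_0>2$, $\sum_Pa_P^{s_0}\le(\sum_Pa_P^2)^{s_0/2}=c_{Q,k}^{s_0}$. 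Dividing by $c_{Q,k}$ leaves the factor
$$
 |P|^{1/s_0-1/2}=\big(2^{-kd}|S_Q|\big)^{1/s_0-1/2}=2^{kd(1/2-1/s_0)}|S_Q|^{1/s_0-1/2},
$$
which is \eqref{sch:eq:grouped-E-growth}.

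\emph{The $G$-columns.} The factor $|Q|^{-1/2}\overline{\Psi_{\ell,R_{Q,s}}}$ is bounded by $|Q|^{-1/2}$. Apply \eqref{sch:eq:directional-dual-RH} with $W=U$, the base cube $S_Q$, the nearby cube $Q^*=R_{Q,s}$ (or a cube of size comparable to $S_Q$ containing it), and $A$ a dimensional constant. This gives $\|U^{-1/2}\mathcal U_{S_Q}v\|_{L^{s_1}(R_{Q,s})}\lesssim|S_Q|^{1/s_1}|v|$ for $s_1<2+\sigma_U$. Multiplying by $|Q|^{-1/2}\simeq|S_Q|^{-1/2}$ gives \eqref{sch:eq:direct-G-NWO-size}.

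The main obstacle is keeping the comparison between $|Q|$ and $|S_Q|$ and the uniform applicability of Proposition~\ref{sch:prop:concrete-matrix-NWO} straight across the finite geometric choices. The loss $2^{k\delta_V}$ in the $E$-estimate also has to be recorded precisely, because it must later be beaten by $\gamma_U(r)$ after choosing $s_0$ close to $2$.
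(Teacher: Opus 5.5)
Your proposal is correct and is essentially the paper's proof: the coefficient bound uses the same reducing-matrix comparison \eqref{sch:eq:reducing-matrix-comparison} on $S_Q$, injectivity of $Q\mapsto S_Q$, and \eqref{sch:eq:Hk-Lorentz}. The $E$-bound uses the same per-$P$ reverse H\"older estimate \eqref{sch:eq:directional-primal-RH}, together with disjointness and $\|a\|_{\ell^{s_0}}\le\|a\|_{\ell^2}$. The $G$-bound is the same reverse H\"older estimate for the scalar projections of $U^{-1}$. For that last step, the choice matching Proposition~\ref{sch:prop:concrete-matrix-NWO} literally is $Q^*=S_Q$, followed by restriction to $R_{Q,s}\subset S_Q$.

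Only your aside is off. Using \eqref{sch:eq:Hk-large-s} alone cannot reach $\ell^{2,q}$ with $q\neq2$, and this is exactly the critical case $(r,q)=(2,\infty)$ when $d=2$. So the interpolation with the $s<2$ bound \eqref{sch:eq:Hk-small-s}, which your main line does use, is genuinely needed.
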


\begin{proof}
The reducing-matrix comparison \eqref{sch:eq:reducing-matrix-comparison},
with $S_Q$ in place of $Q$, gives
$$
 \|\mathcal U_P\mathcal U_{S_Q}^{-1}\|_{\op}^2
 \leq \frac{|S_Q|}{|P|}\vartheta_U(P,S_Q).
$$
Since $|S_Q|\simeq|Q|$, \eqref{sch:eq:direct-A-coefficient} implies
\begin{equation}
 c_{Q,k}^2
 \leq C\sum_{P\in\operatorname{ch}^k(S_Q)}
 \vartheta_U(P,S_Q)\mathfrak h_P^+(B)^2.
 \label{sch:eq:cQ-by-Hk}
\end{equation}
The map $Q\mapsto S_Q$ is one-to-one on the present color.
Therefore \eqref{sch:eq:Hk-Lorentz}, applied to
$\{\mathfrak h_P^+(B)\}$, proves
\eqref{sch:eq:grouped-coefficient-decay}.

We next prove \eqref{sch:eq:grouped-E-growth}.  Apply
\eqref{sch:eq:directional-primal-RH} with $W=V$ and with both cubes
equal to $P$.  Since $|\Phi_{\ell,Q}|\leq1$,
$|h_P^\varepsilon|=|P|^{-1/2}\one_P$ and  restriction to
$P\cap Q$ decreases the norm,
\begin{equation}
 \|F_{P,Q,\ell,\varepsilon}u\|_{L^{s_0}}
 \leq C|P|^{1/s_0-1/2}|u|.
 \label{sch:eq:direct-F-Ls}
\end{equation}
The cubes $P\in\operatorname{ch}^k(S_Q)$ are disjoint.  There are
only $2^d-1$ Haar signatures.  If
$$
 a_P^2=\sum_{\varepsilon\in\Sigma_0}
       \|A_{P,Q,\varepsilon}\|_{\HS}^2,
$$
then \eqref{sch:eq:direct-F-Ls}, disjointness and 
$\|a\|_{\ell^{s_0}}\leq\|a\|_{\ell^2}$ give
\begin{align*}
 \left\|\sum_{P,\varepsilon}
 F_{P,Q,\ell,\varepsilon}A_{P,Q,\varepsilon}u
 \right\|_{L^{s_0}}^{s_0}
 &\leq C\sum_{P\in\operatorname{ch}^k(S_Q)}
 |P|^{1-s_0/2}a_P^{s_0}|u|^{s_0}
 \leq C(2^{-kd}|S_Q|)^{1-s_0/2}
 c_{Q,k}^{s_0}|u|^{s_0}.
\end{align*}
Divide by $c_{Q,k}^{s_0}$ and take the $s_0$th root.  This is
\eqref{sch:eq:grouped-E-growth}.

For \eqref{sch:eq:direct-G-NWO-size}, fix a unit vector $v$ and put
$$
 w_{Q,v}(y)=|U^{-1/2}(y)\mathcal U_{S_Q}v|^2.
$$
This is a scalar projection of $U^{-1}$, with a uniform scalar
$A_2$ constant.  Since $R_{Q,s}\subset S_Q$ and
$|S_Q|\simeq|R_{Q,s}|$,
\begin{align*}
 \avg_{R_{Q,s}}w_{Q,v}
 &\leq C\avg_{S_Q}w_{Q,v}
 =C|\mathcal U'_{S_Q}\mathcal U_{S_Q}v|^2
 \leq C[U]_{\mathcal A_2}.
\end{align*}
Scalar reverse H\"older on $R_{Q,s}$, followed by
$|R_{Q,s}|\simeq|S_Q|$, proves
\eqref{sch:eq:direct-G-NWO-size}.
\end{proof}

For a fixed $r\geq2$, choose $s_0>2$ so close to $2$ that
\begin{equation}
 \delta_V(s_0)<\gamma_U(r).
 \label{sch:eq:growth-below-decay}
\end{equation}
This is possible because a reverse-H\"older estimate remains true
when its exponent is decreased and 
$\delta_V(s_0)\downarrow0$ as $s_0\downarrow2$.
For each fixed descendant generation $k$,
\eqref{sch:eq:grouped-E-growth} and
\eqref{sch:eq:direct-G-NWO-size} hold uniformly in the unit vector.
Lemma~\ref{sch:lem:matrix-column-NWO} therefore shows that the two
matrix families
$$
 \{2^{-k\delta_V(s_0)}E_{Q,k}\}_Q,
 \qquad \{G_{Q,\ell,s}\}_Q
$$
satisfy \eqref{sch:eq:matrix-NWO-F-bound} and
\eqref{sch:eq:matrix-NWO-G-bound}, with controlling cubes $S_Q$.
Apply Proposition~\ref{sch:prop:matrix-NWO} with
$\mathcal K$ consisting of one point, with the first of these
families as $F_Q$, the second as $G_Q$ and  with
$$
 A_Q=2^{k\delta_V(s_0)}c_{Q,k}I_m.
$$
Together with \eqref{sch:eq:grouped-coefficient-decay}, this gives the
following estimate.
Here and in the next display, $Q$ ranges over an arbitrary finite
subcollection and  the constant is independent of that subcollection:
\begin{equation}
 \left\|\sum_QE_{Q,k}c_{Q,k}
       \int G_{Q,\ell,s}(y)^*\,\cdot\,dy\right\|_{\Sch^{r,q}}
 \leq C2^{-k(\gamma_U(r)-\delta_V(s_0))}
 \|\{\mathfrak h_P^+(B)\}_P\|_{\ell^{r,q}}.
 \label{sch:eq:one-generation-Riesz-bound}
\end{equation}
Since $\Upsilon^j_{\ell,Q,s}$ depends on $Q$, we record explicitly
how it is used.  Absorb it into the scalar coefficient in
Proposition~\ref{sch:prop:matrix-NWO}.  Equations
\eqref{sch:eq:Fourier-rapid-decay} and
\eqref{sch:eq:one-generation-Riesz-bound} give
\begin{align}
 &\left\|\sum_Q\Upsilon^j_{\ell,Q,s}E_{Q,k}c_{Q,k}
       \int G_{Q,\ell,s}(y)^*\,\cdot\,dy\right\|_{\Sch^{r,q}}
 \notag\\
 &\qquad\leq
 C_N(1+|\ell|)^{-N}
 2^{-k(\gamma_U(r)-\delta_V(s_0))}
 \|\{\mathfrak h_P^+(B)\}_P\|_{\ell^{r,q}}.
 \label{sch:eq:one-generation-with-Fourier}
\end{align}
The factor $m^{1/2}$ caused by $I_m$ is included in the constant.

\begin{lemma}
\label{sch:lem:weighted-martingale-convergence}
Let $W\in\mathcal A_2$, let $S$ be a cube in a dyadic system and 
let
$$
 \mathbb E_n^SF
 =\sum_{P\in\operatorname{ch}^n(S)}
   \langle F\rangle_P\one_P.
$$
For every matrix-valued $F\in L^2(W;S)$,
\begin{equation}
 \|\mathbb E_n^SF\|_{L^2(W;S)}
 \leq[W]_{\mathcal A_2}^{1/2}\|F\|_{L^2(W;S)}
 \label{sch:eq:weighted-conditional-bound}
\end{equation}
and
\begin{equation}
 \mathbb E_n^SF\longrightarrow F
 \quad\text{in }L^2(W;S).
 \label{sch:eq:weighted-martingale-convergence}
\end{equation}
\end{lemma}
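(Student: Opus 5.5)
The plan is to prove \eqref{sch:eq:weighted-conditional-bound} first, working one cube at a time, and then to deduce \eqref{sch:eq:weighted-martingale-convergence} by density. Throughout, $L^2(W;S)$ denotes matrix functions $F$ with $\int_S\|W^{1/2}F\|_{\HS}^2<\infty$, or, column by column, vector functions. Since the Hilbert--Schmidt norm splits into columns, it suffices to treat vector-valued $f$.

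\textbf{The uniform bound.} Fix $P\in\operatorname{ch}^n(S)$. For $x\in P$, insert $W^{-1/2}(y)W^{1/2}(y)$ inside the average:
$$
 |W^{1/2}(x)\langle f\rangle_P|
 \leq\avg_P\|W^{1/2}(x)W^{-1/2}(y)\|_{\op}|W^{1/2}(y)f(y)|\,dy .
$$
The sharp constant $[W]_{\mathcal A_2}^{1/2}$ should not come from Cauchy--Schwarz with the operator-norm integral characteristic, since that yields $[W]^{\rm int}_{\mathcal A_2}\le m[W]_{\mathcal A_2}$. Instead, compute exactly:
$$
 \int_P|W^{1/2}(x)\langle f\rangle_P|^2dx=|P|\,\big|\langle W\rangle_P^{1/2}\langle f\rangle_P\big|^2 .
$$
Then write $\langle f\rangle_P=\avg_P W^{-1/2}(y)\,(W^{1/2}f)(y)\,dy$. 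Applying Cauchy--Schwarz in the form $\big|\avg_P W^{-1/2}g\big|^2\leq \|\langle W^{-1}\rangle_P\|$-type bounds, more precisely
$$
 \big|M\avg_PW^{-1/2}g\big|\leq \sup_{|v|=1}\avg_P|W^{-1/2}Mv||g|\leq \Big(\sup_{|v|=1}\avg_P|W^{-1/2}Mv|^2\Big)^{1/2}\Big(\avg_P|g|^2\Big)^{1/2},
$$
with $M=\langle W\rangle_P^{1/2}$, the first factor equals $\|\langle W^{-1}\rangle_P^{1/2}\langle W\rangle_P^{1/2}\|_{\op}\le[W]_{\mathcal A_2}^{1/2}$ by \eqref{sch:eq:matrix-A2-definition}. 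Summing over the disjoint cubes $P$ gives \eqref{sch:eq:weighted-conditional-bound}. For matrix $F$, apply this to each column.

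\textbf{Convergence.} The operators $\mathbb E_n^S$ are uniformly bounded on $L^2(W;S)$, so by a standard $\varepsilon/3$ argument it suffices to prove convergence on a dense class. Take $F$ bounded and continuous on $\overline S$; these are dense in $L^2(W;S)$, since $W$ is locally integrable and the bounded functions are dense, and a Lusin-type approximation handles continuity. An alternative dense class is finite linear combinations of indicators of dyadic subcubes of $S$ times constant matrices, and for this class $\mathbb E_n^SF=F$ once $n$ is large, so convergence is immediate. Density of this class in $L^2(W;S)$ follows because $W\in L^1(S)$: approximate a bounded $F$ by dyadic step functions uniformly bounded with a.e. convergence (via unweighted martingale convergence), and use dominated convergence with the integrable majorant $C\|W\|_{\op}$. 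The main subtlety is this density step for general elements of $L^2(W;S)$. First truncate to bounded $F$: for $F\in L^2(W;S)$, the truncations $F\one_{\{\|F\|\le k\}}$ converge in $L^2(W;S)$ by dominated convergence. Then apply the step-function approximation above. Combining this with the uniform bound yields \eqref{sch:eq:weighted-martingale-convergence}.
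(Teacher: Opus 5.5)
Your proof is correct and follows essentially the same route as the paper. Your duality and Cauchy--Schwarz computation is the paper's matrix Cauchy--Schwarz step. It gives $|P|\,|\langle W\rangle_P^{1/2}\langle f\rangle_P|^2\le[W]_{\mathcal A_2}\int_P|W^{1/2}f|^2$, which is then summed over $\operatorname{ch}^n(S)$. Convergence then follows from density of finite dyadic step functions together with the uniform bound. Your truncation and dominated-convergence argument, with majorant $C\|W\|_{\op}$, justifies the density that the paper only asserts, and the detour through continuous functions is unnecessary.
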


\begin{proof}
Apply matrix Cauchy--Schwarz to each column of $F$.  For every
$P\subset S$,
\begin{align*}
 |P|\,\|\langle W\rangle_P^{1/2}\langle F\rangle_P\|_{\HS}^2
 &\leq
 \|\langle W\rangle_P^{1/2}
   \langle W^{-1}\rangle_P^{1/2}\|_{\op}^2
 \int_P\|W^{1/2}(x)F(x)\|_{\HS}^2\,dx\\
 &\leq[W]_{\mathcal A_2}
 \int_P\|W^{1/2}(x)F(x)\|_{\HS}^2\,dx.
\end{align*}
Sum this estimate over $P\in\operatorname{ch}^n(S)$.  This proves
\eqref{sch:eq:weighted-conditional-bound}.

Finite dyadic step functions are dense in $L^2(W;S)$.  For such a
function, $\mathbb E_n^SF=F$ for all sufficiently large $n$.
Approximate a general $F$ by a finite step function and use the
uniform bound \eqref{sch:eq:weighted-conditional-bound}.  This proves
\eqref{sch:eq:weighted-martingale-convergence}.
\end{proof}

\subsubsection{The all-dimensional upper estimate}

\begin{theorem}
\label{sch:thm:Riesz-upper}
Let $d\geq2$, let $U,V\in\mathcal A_2$ and  fix
$1\leq j\leq d$.  Then
\begin{align}
 \|C_{B,R_j}\|_{\Sch^r(L^2(U),L^2(V))}
 &\leq C\|B\|_{\mathfrak B^r_{V,U}},
 &&d<r<\infty,                                      
 \label{sch:eq:direct-strong-upper}\\
 \|C_{B,R_j}\|_{\Sch^{d,\infty}(L^2(U),L^2(V))}
 &\leq C\|B\|_{\mathfrak W^d_{V,U}}.
 \label{sch:eq:direct-critical-upper}
\end{align}
The proof is deterministic.  It uses neither a dyadic representation
of $R_j$ nor an average over random grids.
\end{theorem}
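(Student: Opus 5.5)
The proof works on the conjugated operator $\mathcal C^{V,U}_{B,R_j}$ from \eqref{sch:eq:unitary-conjugation}. We decompose its kernel \eqref{sch:eq:weighted-commutator-kernel} over the Whitney blocks $Q\times R_{Q,s}$ of $\mathcal O$. On each block we expand $K_j$ by Lemma~\ref{sch:lem:Whitney-Fourier}. The diagonal carries no mass for $L^2$ kernels, and the separated-support form determines the operator by Lemma~\ref{sch:lem:compact-local}. So it suffices to bound, uniformly over finite collections of Whitney cubes, the sum over $s\le M_d$, $\ell\in\mathbb Z^{2d}$ and the finitely many adjacent-system and color pieces. Lower semicontinuity then passes to the full operator. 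For $r>d$ we use the Fatou property of $\Sch^r$. At $r=d$ we use the convex norm $|||\cdot|||_{d,\infty}$ of Lemma~\ref{sch:lem:Marcinkiewicz-ideal}, which is weak-operator lower semicontinuous and so handles the non-separable weak space.

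On a fixed block, split $B(x)-B(y)=(B(x)-B_{S_Q})-(B(y)-B_{S_Q})$. For the $x$-term, expand $\one_Q(B-B_{S_Q})$ by the Haar martingale on $S_Q$. The terms with $P\not\ni x$ vanish, and the averages of ancestors of $Q$ inside $S_Q$ are only finitely many generations, which are absorbed in the same way. Group the expansion by generation $k$. The identity \eqref{sch:eq:direct-exact-factorization} writes the generation-$k$ piece as
$$\frac1{|Q|}\,V^{1/2}\Phi_{\ell,Q}\one_Q\,\Delta^{(k)}B\,\Psi_{\ell,R_{Q,s}}\one_{R_{Q,s}}U^{-1/2}
=\Upsilon^j_{\ell,Q,s}\,c_{Q,k}\,E_{Q,k}\int G_{Q,\ell,s}^*\,\cdot\,.$$
The bound \eqref{sch:eq:one-generation-with-Fourier} then gives the $\Sch^{r,q}$ norm $C_N(1+|\ell|)^{-N}2^{-k(\gamma_U(r)-\delta_V(s_0))}\|\mathfrak h^+(B)\|_{\ell^{r,q}}$. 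We take $q=r$ for $r>d$ and $q=\infty$ at $r=d\ge2$; the latter requires $r\ge2$, which Lemma~\ref{sch:lem:grouped-descendant} provides. We sum over $k$ and $\ell$ in the $\tau$-power quasi-triangle inequality, or in $|||\cdot|||_{d,\infty}$, which is a norm. For the passage from finite generations to the full expansion we use Lemma~\ref{sch:lem:weighted-martingale-convergence}: the truncations converge in $L^2(V;S_Q)$ after right multiplication by $\mathcal U_{S_Q}^{-1}$, so the block operators converge in Hilbert--Schmidt norm. This makes the grouped sum legitimate.

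The $y$-term is symmetric. We pass to adjoints, which replace $(V,U,B)$ by $(U^{-1},V^{-1},B^*)$ up to a sign and swap the roles of $Q$ and $R_{Q,s}$. The same argument then gives a bound by $\|\mathfrak h^-(B)\|_{\ell^{r,q}}$. Using the bounded fibres of $Q\mapsto R_{Q,s}$ and the inverse weights being $\mathcal A_2$, everything is controlled by $\sum_t\|\{\mathfrak h_Q(B)\}_{Q\in\D^t}\|_{\ell^{r,q}}$. Theorem~\ref{sch:prop:matrix-besov-JN} converts this to $\|B\|_{\mathfrak B^{r,q}_{V,U}}$.

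The main obstacle is the choice in \eqref{sch:eq:growth-below-decay}: the NWO growth $2^{k\delta_V(s_0)}$ of the grouped families must stay below the $A_\infty$ decay $2^{-k\gamma_U(r)}$ of the grouped coefficients. This is where the restriction to $r\ge2$ enters in \eqref{sch:eq:Hk-Lorentz}, and it is the step I would verify most carefully, together with the uniformity of the constants over $\ell$, since $\Phi_{\ell,Q}$ and $\Psi_{\ell,R}$ are only bounded.
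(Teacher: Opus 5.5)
Your proposal is correct and follows the paper's proof essentially step for step. The shared steps are the Whitney--Fourier expansion, the split at $B_{S_Q}$ with the martingale expansion on $S_Q$, and the generation-grouped NWO families with $s_0$ chosen as in \eqref{sch:eq:growth-below-decay}. They continue with adjoints for the $y$-half, Theorem~\ref{sch:prop:matrix-besov-JN}, and removal of the truncations by weak-operator lower semicontinuity together with Lemma~\ref{sch:lem:compact-local}.

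Two small points. First, the truncated forms converge only on separated pairs, so the limit step has to pass through a weak-operator cluster point of the uniformly bounded net. That cluster point is identified with the commutator because both operators are compact; the paper gets compactness of the commutator from Theorem~\ref{thm:main} via Lemma~\ref{sch:lem:sequence-implies-VMO}. Second, \eqref{sch:eq:Hk-Lorentz} holds for every $1<r<\infty$, so the hypothesis $r\geq2$ in Lemma~\ref{sch:lem:grouped-descendant} does not enter through that estimate, contrary to your closing remark.
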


\begin{proof}
If the right side of either estimate is infinite, there is nothing to
prove.  Otherwise Lemma~\ref{sch:lem:sequence-implies-VMO} gives
$B\in\VMO^2_{V,U}$.  Theorem~\ref{thm:main} therefore gives the
standard compact commutator $C_{B,R_j}:L^2(U)\to L^2(V)$.  We recover
its conjugate as a weak-operator limit of finite-rank operators.

We first take finite truncations in the Whitney cubes, Fourier modes,
and martingale generations.  The resulting operators have finite
rank.  On a fixed Whitney block, split
\begin{equation}
 B(x)-B(y)=(B(x)-B_{S_Q})+(B_{S_Q}-B(y)).
 \label{sch:eq:common-mean-split}
\end{equation}
For $x\in Q\subset S_Q$, martingale expansion in the adjacent
system containing $S_Q$ gives
\begin{equation}
 B(x)-B_{S_Q}
 =\sum_{k\geq0}\sum_{P\in\operatorname{ch}^k(S_Q)}
   \sum_{\varepsilon\in\Sigma_0}
   \widehat B(P,\varepsilon)h_P^\varepsilon(x).
 \label{sch:eq:local-full-martingale-expansion}
\end{equation}
%
Thus \eqref{sch:eq:Whitney-Fourier} and
\eqref{sch:eq:direct-exact-factorization} express the first term in
\eqref{sch:eq:common-mean-split} as follows.  Applied to
$f\in L^2(\mathbb R^d;\mathbb C^m)$, this part of the operator is
\begin{equation}
 \sum_{\ell\in\mathbb Z^{2d}}\sum_{k\geq0}\sum_Q
 \Upsilon^j_{\ell,Q,s}E_{Q,k}(x)c_{Q,k}
 \int_{\mathbb R^d}G_{Q,\ell,s}(y)^*f(y)\,dy.
 \label{sch:eq:x-half-grouped-series}
\end{equation}
Here the Whitney-neighbour index $s$, the adjacent dyadic system, and
the finite color are fixed.  The full $x$-part is obtained by summing
over these finite choices.  Every Haar cube in
\eqref{sch:eq:local-full-martingale-expansion} is a descendant of
$S_Q$.  There is no infinite ancestor term.

For the strong estimate take $q=r$ in
\eqref{sch:eq:one-generation-with-Fourier}.  Sum first in $k$, using
\eqref{sch:eq:growth-below-decay} and  then in $\ell$, with $N>2d$.
The indices $s$, the
adjacent systems and  the colors range over fixed finite sets.  We
obtain
\begin{equation}
 \|T_x\|_{\Sch^r}
 \leq C\sum_t
 \|\{\mathfrak h_P^+(B):P\in\D^t\}\|_{\ell^r}.
 \label{sch:eq:x-half-strong}
\end{equation}
Here $T_x$ denotes the part of the localized commutator containing
$B(x)-B_{S_Q}$.

The second term in \eqref{sch:eq:common-mean-split} is treated by taking
adjoints.  More precisely,
$$
 (\mathcal C_{B,R_j}^{V,U})^*
 =M_{U^{-1/2}}[M_{B^*},R_j]M_{V^{1/2}}.
$$
After interchanging $Q$ and $R_{Q,s}$, its $x$-half is the
construction leading to \eqref{sch:eq:x-half-strong}, with symbol
$B^*$, source weight $V^{-1}$ and  target weight $U^{-1}$.  It is controlled by
$\{\mathfrak h_P^-(B)\}$.  Hence
\begin{equation}
 \|T_y\|_{\Sch^r}
 \leq C\sum_t
 \|\{\mathfrak h_P^-(B):P\in\D^t\}\|_{\ell^r}.
 \label{sch:eq:y-half-strong}
\end{equation}
Theorem~\ref{sch:prop:matrix-besov-JN} now proves
\eqref{sch:eq:direct-strong-upper} for every finite truncation.

At the critical index use
\eqref{sch:eq:one-generation-with-Fourier} with
$(r,q)=(d,\infty)$.  Sum first over $k$, using
\eqref{sch:eq:growth-below-decay} and  then over $\ell$, using
$N>2d$.  Lemma~\ref{sch:lem:Marcinkiewicz-ideal} and the finite sums
over $s$, the adjacent systems and  the colors give, uniformly for
all finite truncations,
\begin{equation}
 |||T_x+T_y|||_{d,\infty}
 \leq C\|B\|_{\mathfrak W^d_{V,U}}.
 \label{sch:eq:finite-critical-bound}
\end{equation}

It remains to remove the truncations.  On $S_Q$, put
$$
 F_{S_Q}=(B-B_{S_Q})\mathcal U_{S_Q}^{-1}.
$$
The definition of $\alpha_{S_Q}^{\HS}(B)$ gives
$$
 \|F_{S_Q}\|_{L^2(V;S_Q)}^2
 =|S_Q|\alpha_{S_Q}^{\HS}(B)^2.
$$
The partial sum through generations $k=0,\ldots,n$ in
\eqref{sch:eq:local-full-martingale-expansion}, after right multiplication
by $\mathcal U_{S_Q}^{-1}$, is
$\mathbb E_{n+1}^{S_Q}F_{S_Q}$, because
$\langle F_{S_Q}\rangle_{S_Q}=0$.  Lemma
\ref{sch:lem:weighted-martingale-convergence} therefore gives convergence
to $F_{S_Q}$ in $L^2(V;S_Q)$.  For the adjoint half, apply the
same lemma with weight $U^{-1}$ to
$$
 (B^*-B_{S_Q}^*)(\mathcal V'_{S_Q})^{-1};
$$
its squared norm is $|S_Q|\beta_{S_Q}^{\HS}(B)^2$.

Introduce the dense subspaces
$$
 \mathcal X_U
 =\{U^{1/2}\varphi:\varphi\in L^\infty_c(\R^d;\C^m)\},
 \qquad
 \mathcal Y_V
 =\{V^{-1/2}\psi:\psi\in L^\infty_c(\R^d;\C^m)\}
$$
of unweighted $L^2(\R^d;\C^m)$.  Their density follows from the
density of bounded compactly supported functions in $L^2(U)$ and
$L^2(V^{-1})$, respectively.

The Fourier series converges absolutely and uniformly on every
Whitney block.  Only finitely many Whitney blocks meet the product of
the two supports: their side lengths are bounded below by the positive
distance between the supports and bounded above by their diameters.
Cauchy--Schwarz and the two weighted martingale
convergences now show that, if
$\varphi,\psi\in L^\infty_c(\R^d;\C^m)$ have compact supports at
positive distance,
then the truncated bilinear forms converge to
\begin{align}
 &\left\langle
 \mathcal C_{B,R_j}^{V,U}U^{1/2}\varphi,
 V^{-1/2}\psi\right\rangle =
 \iint_{\R^d\times\R^d}
 \psi(x)^*(B(x)-B(y))K_j(x,y)\varphi(y)\,dy\,dx.
 \label{sch:eq:separated-core-form}
\end{align}
The last integral is absolutely convergent.  Let $I$ be the
directed set of triples
$\iota=(\mathcal Q_0,\mathcal L_0,n_0)$, where $\mathcal Q_0$ is a
finite set of Whitney blocks, $\mathcal L_0\subset\mathbb Z^{2d}$ is
finite and  $n_0\geq0$.  Order the first two coordinates by inclusion
and the last by the usual order.  Let $T_\iota$ be the corresponding
truncation in which the martingale sum contains the initial
generations $0,\ldots,n_0$.  The uniform bounds place
$\{T_\iota\}_{\iota\in I}$ in a weak-operator compact ball.  Hence
the net has a weak-operator cluster point and  every cluster point has
the off-diagonal form \eqref{sch:eq:separated-core-form}.

For $r>d$, use the extended-valued formula
$$
 \|T\|_{\Sch^r}
 =\sup_{\substack{A\text{ finite rank}\\
                   \|A\|_{\Sch^{r'}}\leq1}}
   |\operatorname{Tr}(A^*T)|.
$$
Each expression inside the supremum is weak-operator continuous.
Hence the Schatten norm is weak-operator lower semicontinuous.  Apply
this fact along a subnet converging weakly to any cluster point.  This gives
the bound in \eqref{sch:eq:direct-strong-upper} for every cluster
point; in particular, every cluster point belongs to $\Sch^r$ and is
compact.
At the critical index, let $T$ be a weak-operator cluster point.
Choose a subnet $\{T_{\iota_\eta}\}_{\eta\in J}$ that converges
weakly to $T$.  Equation \eqref{sch:eq:finite-critical-bound} gives
$s_n(T_{\iota_\eta})\leq Cn^{-1/d}$.  Therefore, for every $N\geq1$,
Ky Fan lower semicontinuity gives
\begin{align*}
 \sum_{n=1}^Ns_n(T)
 &\leq\liminf_\eta\sum_{n=1}^Ns_n(T_{\iota_\eta})
 \leq C\sum_{n=1}^Nn^{-1/d}
 \leq C_dN^{1-1/d}.
\end{align*}
It follows that
$$
 s_N(T)\leq \frac1N\sum_{n=1}^Ns_n(T)
 \leq C_dN^{-1/d}.
$$
Thus $s_N(T)\to0$.  By the approximation-number characterization,
$T$ is compact and  the same estimate gives
$T\in\Sch^{d,\infty}$.  Hence every cluster point satisfies the
bound in \eqref{sch:eq:direct-critical-upper}.  We now identify each
cluster point with the standard commutator obtained from
Theorem~\ref{thm:main}.  Let $D$ be the difference between a cluster
point and the conjugate of that commutator.  Both operators have the
off-diagonal form \eqref{sch:eq:separated-core-form}.  Hence $D$
vanishes on the positive-distance pairs from
$\mathcal X_U\times\mathcal Y_V$.  This density is local.  To see this,
if $h$ is bounded and supported in a bounded set $E$, truncate
$U^{-1/2}h$ in norm while keeping its support in $E$.  Multiplication
by $U^{1/2}$ then gives approximants to $h$ from $\mathcal X_U$, with
the same support.  Truncating $V^{1/2}h$ gives the corresponding
statement for $\mathcal Y_V$.  Thus $D$ vanishes on every pair of
bounded compactly supported functions whose supports have positive
distance.  Lemma~\ref{sch:lem:compact-local} shows that $D$ is a
compact multiplication operator, because both terms in its definition
are compact.  The lemma gives $D=0$.  Thus every cluster point is the
conjugate of the standard commutator and agrees with the algebraic
commutator on its initial form domain.
Every weak-operator cluster point is therefore equal to
$\mathcal C_{B,R_j}^{V,U}$.  Hence the net of finite truncations
converges weakly to this operator.  Finally,
\eqref{sch:eq:unitary-conjugation} transfers the two estimates to
$C_{B,R_j}:L^2(U)\to L^2(V)$.
\end{proof}

\subsection{The median lower estimate}

\subsubsection{Companion cubes and a complex median}

Fix a sufficiently large integer $N_d$.  For an axis-parallel cube
$Q$, put
\begin{equation}
 \widetilde Q=Q+N_d\ell(Q)e_j.
 \label{sch:eq:companion-cube}
\end{equation}
For $x\in Q$ and $y\in\widetilde Q$, the kernel has a fixed sign and
\begin{equation}
 c_d|Q|^{-1}\leq |K_j(x,y)|\leq C_d|Q|^{-1}.
 \label{sch:eq:Riesz-companion-kernel}
\end{equation}
Indeed, $|x_j-y_j|\simeq N_d\ell(Q)$ and
$|x-y|\simeq N_d\ell(Q)$ on this product.

\begin{lemma}
\label{sch:lem:complex-median}
Let $\beta$ be a complex-valued integrable function on
$Q\cup\widetilde Q$.  There are $c\in\C$, a measurable
partition $Q=E_1\cup\cdots\cup E_4$, sets
$F_s\subset\widetilde Q$ with $|F_s|\geq|Q|/3$ and  unimodular
numbers $\zeta_s$ such that
\begin{equation}
 \operatorname{Re}\bigl(
 \zeta_s(\beta(x)-\beta(y))\bigr)
 \geq2^{-1/2}|\beta(x)-c|
 \label{sch:eq:complex-sector-inequality}
\end{equation}
for $x\in E_s$ and $y\in F_s$.
\end{lemma}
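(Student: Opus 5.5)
The plan is to avoid the quadrant version of the complex median in Lemma~\ref{lem:complex-median-WZ}. That lemma only gives the constant $1/16$, while here we need $|Q|/3$. Instead we take $c$ to be a coordinatewise real median of $\beta$ on $\widetilde Q$ and split $Q$ according to which coordinate of $\beta(x)-c$ dominates. The mechanism is the same as in the proof of Proposition~\ref{lem:general-czo-global-lower}, but the two coordinates are handled separately, and each resulting target set has half the measure of $\widetilde Q$.

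\emph{Step 1 (medians).} Since $\beta$ is integrable on $\widetilde Q$, the real functions $\operatorname{Re}\beta$ and $\operatorname{Im}\beta$ are finite almost everywhere there. Each therefore has a median: there are real numbers $a,b$ with
\[
 |\{y\in\widetilde Q:\operatorname{Re}\beta(y)\le a\}|\ge\tfrac12|\widetilde Q|,
 \qquad
 |\{y\in\widetilde Q:\operatorname{Re}\beta(y)\ge a\}|\ge\tfrac12|\widetilde Q|,
\]
and the same two inequalities hold for $\operatorname{Im}\beta$ and $b$. One may take $a=\inf\{t:|\{\operatorname{Re}\beta\le t\}|\ge\frac12|\widetilde Q|\}$ and use right-continuity of the distribution function. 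Put $c=a+ib$, and set
\[
 F_1=\{\operatorname{Re}\beta\le a\},\quad
 F_2=\{\operatorname{Re}\beta\ge a\},\quad
 F_3=\{\operatorname{Im}\beta\le b\},\quad
 F_4=\{\operatorname{Im}\beta\ge b\},
\]
all taken inside $\widetilde Q$. Since $|\widetilde Q|=|Q|$, each satisfies $|F_s|\ge|Q|/2\ge|Q|/3$.

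\emph{Step 2 (partition of $Q$).} Write $\beta(x)-c=u+iv$. Let $E_1$ be the set where $|u|\ge|v|$ and $u\ge0$; let $E_2$ be the set where $|u|\ge|v|$, $u<0$; let $E_3$ be the set where $|v|>|u|$ and $v\ge0$; and let $E_4$ be the set where $|v|>|u|$ and $v<0$. These sets are measurable and partition $Q$ (up to the null set where $\beta$ is infinite, which is assigned arbitrarily). Choose $\zeta_1=1$, $\zeta_2=-1$, $\zeta_3=-i$, $\zeta_4=i$.

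\emph{Step 3 (verification).} Take $x\in E_1$ and $y\in F_1$. Then
\[
 \operatorname{Re}(\beta(x)-\beta(y))=u+(a-\operatorname{Re}\beta(y))\ge u=|u|.
\]
On $E_1$ we have $|u|\ge|v|$, so $|u|\ge 2^{-1/2}(u^2+v^2)^{1/2}=2^{-1/2}|\beta(x)-c|$. This is \eqref{sch:eq:complex-sector-inequality} for $s=1$.

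For $s=2$, note $\operatorname{Re}(-(\beta(x)-\beta(y)))=-u+(\operatorname{Re}\beta(y)-a)\ge|u|$. For $s=3$, use $\operatorname{Re}(-i z)=\operatorname{Im}z$, which gives $\operatorname{Im}(\beta(x)-\beta(y))=v+(b-\operatorname{Im}\beta(y))\ge v=|v|$. The case $s=4$ is the symmetric one. In the last two cases $|v|>|u|$ gives the factor $2^{-1/2}$ again.

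No step is a real obstacle. The only points needing care are the existence of medians when the distribution has atoms, which is why the sets $F_s$ use closed inequalities and are allowed to overlap, and the measurability of the disjoint partition, which is handled by the strict and non-strict inequalities in Step 2.
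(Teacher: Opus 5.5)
Your proof is correct. It differs from the paper's only in how the center $c$ is produced.

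The paper applies the centerpoint theorem in $\R^2$ to the push-forward of normalized Lebesgue measure on $\widetilde Q$ under $\beta$. This gives a point $c$ such that every closed half-plane through $c$ carries at least one third of the mass. The paper then takes four sectors of half-angle $\pi/4$ and lets $F_s$ be the closed half-plane opposite the $s$th sector. With the directions $\theta_s\in\{0,\pi/2,\pi,3\pi/2\}$, these half-planes and phases $e^{-i\theta_s}$ are exactly your sets $\{\operatorname{Re}\beta\le a\}$, $\{\operatorname{Im}\beta\le b\}$, $\{\operatorname{Re}\beta\ge a\}$, $\{\operatorname{Im}\beta\ge b\}$ and your phases $\zeta_s$, up to relabeling. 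So your verification step is the paper's.

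Since only these four half-planes are ever used, coordinatewise real medians suffice. Your route is more elementary, with no Helly-type argument, and gives the better bound $|F_s|\geq|Q|/2$. What the centerpoint buys is flexibility in the directions: one point $c$ serves every half-plane simultaneously. For example, one could use $n$ sectors of half-angle $\pi/n$ and improve $2^{-1/2}$ to $\cos(\pi/n)$ while keeping the mass bound $1/3$. Coordinatewise medians do not obviously give this, because median lines in three or more directions need not pass through a common point. For the lemma as stated, and for its use in Theorem~\ref{sch:thm:Riesz-lower}, your version is fully adequate.

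A minor point: integrability is not needed. Since $\beta$ takes values in $\C$, measurability alone gives the existence of the medians.
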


\begin{proof}
Apply the centerpoint theorem in $\R^2\simeq\C$ to the push-forward of
normalized Lebesgue measure on $\widetilde Q$ under $\beta$.  It gives
$c\in\C$ such that every closed half-plane containing $c$ has
push-forward measure at least $1/3$.

Choose four directions $\theta_s$ whose closed sectors of half-angle
$\pi/4$ cover the plane.  Put
\begin{align*}
 E_s&=\{x\in Q:\arg(\beta(x)-c)
              \text{ belongs to the $s$th sector}\},\\
 F_s&=\{y\in\widetilde Q:
 \operatorname{Re}(e^{-i\theta_s}(\beta(y)-c))\leq0\}.
\end{align*}
Assign points with $\beta(x)=c$ and sector boundaries in any
fixed way.  The centerpoint property gives $|F_s|\geq|Q|/3$.  For
$x\in E_s$,
$$
 \operatorname{Re}(e^{-i\theta_s}(\beta(x)-c))
 \geq2^{-1/2}|\beta(x)-c|.
$$
The defining inequality for $F_s$ now gives
\eqref{sch:eq:complex-sector-inequality} with
$\zeta_s=e^{-i\theta_s}$.
\end{proof}

\subsubsection{The companion NWO families}

\begin{lemma}
\label{sch:lem:companion-NWO}
Let $E_Q\subset Q$, $F_Q\subset\widetilde Q$ and  let $u_Q,v_Q$ be
unit vectors.  Define
\begin{align}
 e_Q(y)
 &=|Q|^{-1/2}U^{1/2}(y)\one_{F_Q}(y)
   \mathcal U_Q^{-1}v_Q,
 \label{sch:eq:lower-test-e}\\
 f_Q(x)
 &=|Q|^{-1/2}V^{-1/2}(x)\one_{E_Q}(x)
   \mathcal V_Qu_Q.
 \label{sch:eq:lower-test-f}
\end{align}
There is $\sigma>0$, independent of all choices, such that
\begin{equation}
 \|e_Q\|_{2+\sigma}+\|f_Q\|_{2+\sigma}
 \leq C|Q|^{1/(2+\sigma)-1/2}.
 \label{sch:eq:companion-Lp-size}
\end{equation}
For each $Q$, one may choose a cube $S_Q$ containing
$Q\cup\widetilde Q$ with $|S_Q|\leq C_d|Q|$.  With this same
controlling cube for both functions, the two families are NWO.  The
conclusion remains valid for a fixed finite number of choices for every
$Q$.
\end{lemma}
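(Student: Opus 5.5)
The plan is to reduce everything to the $L^{2+\sigma}$ criterion of Lemma~\ref{sch:lem:NWO-Lp-criterion}, with the size bounds supplied by the directional reverse H\"older estimates of Proposition~\ref{sch:prop:concrete-matrix-NWO}. First choose the controlling cube. Since $\widetilde Q=Q+N_d\ell(Q)e_j$, the cube with side length $(N_d+1)\ell(Q)$ that contains both $Q$ and $\widetilde Q$ serves as $S_Q$, and $|S_Q|\leq (N_d+1)^d|Q|=C_d|Q|$. Because $|S_Q|\simeq|Q|$, the normalizations $|Q|^{1/(2+\sigma)-1/2}$ and $|S_Q|^{1/(2+\sigma)-1/2}$ differ only by a constant. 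Hence \eqref{sch:eq:companion-Lp-size} gives hypothesis \eqref{sch:eq:NWO-Lp-size} of Lemma~\ref{sch:lem:NWO-Lp-criterion} with cube $S_Q$, and the lemma produces the NWO maximal bound for each family with this common cube. The lemma allows repeated cubes, so a fixed finite number of choices per $Q$ changes only the constant, or one can take the finite union of families.

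For $e_Q$, apply \eqref{sch:eq:directional-primal-RH} with $W=U$, $Q^*=\widetilde Q$, $S=S_Q$ and $A=C_d$. Since $\one_{F_Q}\leq\one_{\widetilde Q}$ and $|v_Q|=1$,
$$
 \|e_Q\|_{2+\sigma_U}\leq|Q|^{-1/2}\|U^{1/2}\mathcal U_Q^{-1}v_Q\|_{L^{2+\sigma_U}(\widetilde Q)}\leq C|Q|^{1/(2+\sigma_U)-1/2}.
$$
For $f_Q$, note that $V^{-1}$ is a matrix $\mathcal A_2$ weight and use \eqref{sch:eq:directional-dual-RH} with $W=V$ and $Q^*=Q$:
$$
 \|V^{-1/2}\mathcal V_Qu_Q\|_{L^{2+\sigma_V}(Q)}\leq C|Q|^{1/(2+\sigma_V)}.
$$
Restricting to $E_Q\subset Q$ only decreases the norm, so $\|f_Q\|_{2+\sigma_V}\leq C|Q|^{1/(2+\sigma_V)-1/2}$.

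Next set $\sigma=\min\{\sigma_U,\sigma_V\}$. The only point needing care is lowering an exponent without changing the normalization. For a function $g$ supported on a set of measure $\simeq|Q|$, H\"older's inequality gives
$$
 \|g\|_{2+\sigma}\leq |\operatorname{supp}g|^{1/(2+\sigma)-1/(2+\sigma')}\,\|g\|_{2+\sigma'},\qquad \sigma\leq\sigma'.
$$
Here the support has measure at most $|Q|$, so this converts $|Q|^{1/(2+\sigma')-1/2}$ into $|Q|^{1/(2+\sigma)-1/2}$. Alternatively, the reverse H\"older exponent can simply be decreased, as the paper remarks after \eqref{sch:eq:growth-below-decay}.

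None of these steps is a real obstacle. The one thing to check is uniformity: the constants in Proposition~\ref{sch:prop:concrete-matrix-NWO} depend only on $d,m$, the $\mathcal A_2$ characteristics and $A$. They are therefore independent of $Q$, of the sets $E_Q,F_Q$ produced by the median lemma, and of the unit vectors $u_Q,v_Q$, which will later be singular vectors.
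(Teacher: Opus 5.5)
Your proposal is correct and follows essentially the paper's route. The paper inlines the scalar-projection, $A_2$ and reverse H\"older argument for $|U^{1/2}\mathcal U_Q^{-1}v_Q|^2$ on $\widetilde Q\subset S_Q$ and for $|V^{-1/2}\mathcal V_Qu_Q|^2$ on $Q$, which is exactly what you get by citing Proposition~\ref{sch:prop:concrete-matrix-NWO}; it then takes $\sigma=\min\{\sigma_U,\sigma_V\}$ and applies Lemma~\ref{sch:lem:NWO-Lp-criterion} with the common cube $S_Q$, and your explicit H\"older step for lowering the exponent and your handling of finitely many choices correctly fill in points the paper treats briefly.
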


\begin{proof}
Put
$$
 w_Q(y)=|U^{1/2}(y)\mathcal U_Q^{-1}v_Q|^2.
$$
Its average on $Q$ is one.  Lemma~\ref{sch:lem:scalar-projections} gives a
uniform scalar $A_2$ constant.  Let $S_Q$ be an axis-parallel cube
containing $Q\cup\widetilde Q$ with $|S_Q|\leq C_d|Q|$.  Then
\begin{align*}
 \avg_{S_Q}w_Q
 &\leq[w_Q]_{A_2}\bigg(\avg_{S_Q}w_Q^{-1}\bigg)^{-1}
 \leq[w_Q]_{A_2}\frac{|S_Q|}{|Q|}
       \bigg(\avg_Qw_Q^{-1}\bigg)^{-1}\leq C.
\end{align*}
Here the last inequality follows from
$(\avg_Qw_Q)(\avg_Qw_Q^{-1})\geq1$.  Hence
$\avg_{\widetilde Q}w_Q\leq C$.  Scalar reverse H\"older gives
$\sigma_U>0$ such that
$$
 \int_{\widetilde Q}w_Q^{1+\sigma_U/2}\leq C|Q|.
$$
This proves the required estimate for $e_Q$.

For $f_Q$, use
$$
 z_Q(x)=|V^{-1/2}(x)\mathcal V_Qu_Q|^2.
$$
This is a scalar projection of $V^{-1}$ with a uniform $A_2$
constant and 
$$
 \avg_Qz_Q
 =|\mathcal V'_Q\mathcal V_Qu_Q|^2
 \leq[V]_{\mathcal A_2}.
$$
Reverse H\"older gives the required estimate with an exponent
$\sigma_V>0$.  Take $\sigma=\min\{\sigma_U,\sigma_V\}$.  Indicators can
only decrease the norms.  Since $|S_Q|\simeq_d|Q|$ and both supports
are contained in $S_Q$, Lemma~\ref{sch:lem:NWO-Lp-criterion} applies to
both families with the same controlling cube $S_Q$.  If there are at
most $L$ choices for each
$Q$, add the choice index to $Q$.  The associated maximal function
is bounded by the sum of the $L$ NWO maximal functions.  Its
$L^2$ norm is at most $L$ times the uniform NWO bound.
\end{proof}

For the adjoint estimate, replace $(U,V)$ by
$(V^{-1},U^{-1})$.  The corresponding test functions are
\begin{align*}
 e_Q^{\#}(y)
 &=|Q|^{-1/2}V^{-1/2}(y)\one_{F_Q}(y)
   (\mathcal V'_Q)^{-1}v_Q,\\
 f_Q^{\#}(x)
 &=|Q|^{-1/2}U^{1/2}(x)\one_{E_Q}(x)
   \mathcal U'_Qu_Q.
\end{align*}
Proposition~\ref{sch:prop:concrete-matrix-NWO}, applied to
$V^{-1}$ and $U^{-1}$, proves their
$L^{2+\sigma}$ estimates.  Lemma~\ref{sch:lem:NWO-Lp-criterion}
then proves the two NWO maximal estimates with the same cube $S_Q$
containing $Q\cup\widetilde Q$ and satisfying $|S_Q|\leq C_d|Q|$.

\subsubsection{Extraction of the matrix Haar coefficients}

\begin{theorem}
\label{sch:thm:Riesz-lower}
Let $d\geq2$, let $U,V\in\mathcal A_2$ be matrix weights, let $B$ be
locally integrable and  fix $1\leq j\leq d$.  If $1<r<\infty$ and
$0<q\leq\infty$, then
\begin{equation}
 \|B\|_{\mathfrak B^{r,q}_{V,U}}
 \leq C
 \|C_{B,R_j}\|_{\Sch^{r,q}(L^2(U),L^2(V))}.
 \label{sch:eq:Riesz-lower}
\end{equation}
In particular, this includes $(r,q)=(d,\infty)$.
\end{theorem}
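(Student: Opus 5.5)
By Theorem~\ref{sch:prop:matrix-besov-JN} and Lemma~\ref{sch:lem:two-Haar-comparable}, it suffices to bound $\|\{\mathfrak h_Q^+(B):Q\in\D^t\}\|_{\ell^{r,q}}$ for each fixed adjacent system, since $\mathfrak h_Q\simeq\mathfrak h_Q^+$. We may assume $C_{B,R_j}\in\Sch^{r,q}$; in particular it is compact, hence bounded, and Theorem~\ref{thm:two-matrix-boundedness} makes all local quantities finite. We work with the conjugated operator $\mathcal C^{V,U}_{B,R_j}$ from \eqref{sch:eq:unitary-conjugation}. This operator has the same Schatten--Lorentz quasi-norm.

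Fix $Q\in\D^t$ and $\varepsilon\in\Sigma_0$. Take a singular-value decomposition
\[
 A_{Q,\varepsilon}=\mathcal V_Q\widehat B(Q,\varepsilon)|Q|^{-1/2}\mathcal U_Q^{-1}
 =\sum_{\rho}s_{Q,\varepsilon,\rho}\,u_{Q,\varepsilon,\rho}v_{Q,\varepsilon,\rho}^*,
\]
so that $\mathfrak h_Q^+(B)^2=\sum_{\varepsilon,\rho}s_{Q,\varepsilon,\rho}^2$. Put $\beta(x)=u^*\mathcal V_QB(x)\mathcal U_Q^{-1}v$. Since $h_Q^\varepsilon$ has mean zero and $|h_Q^\varepsilon|=|Q|^{-1/2}$ on $Q$, Haar cancellation gives
\[
 s_{Q,\varepsilon,\rho}=u^*A_{Q,\varepsilon}v=|Q|^{-1/2}\int_Q(\beta-c)h_Q^\varepsilon\le\avg_Q|\beta-c|
\]
for every $c\in\C$. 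Apply Lemma~\ref{sch:lem:complex-median} to $\beta$ on $Q\cup\widetilde Q$, with the companion cube \eqref{sch:eq:companion-cube}. This yields $c$, a partition $E_1,\dots,E_4$ of $Q$, sets $F_s\subset\widetilde Q$ with $|F_s|\ge|Q|/3$, and phases $\zeta_s$. By \eqref{sch:eq:Riesz-companion-kernel}, $K_j$ has a fixed sign $\pm$ and size $\simeq|Q|^{-1}$ on $Q\times\widetilde Q$. Integrating \eqref{sch:eq:complex-sector-inequality} first over $y\in F_s$ then gives
\[
 \int_{E_s}|\beta-c|\lesssim\Bigl|\int_{E_s}\int_{F_s}K_j(x,y)(\beta(x)-\beta(y))\,dy\,dx\Bigr|.
\]
The right side, divided by $|Q|$, equals $|\langle\mathcal C^{V,U}_{B,R_j}e_Q,f_Q\rangle|$, where $e_Q,f_Q$ are the test functions \eqref{sch:eq:lower-test-e}--\eqref{sch:eq:lower-test-f} with $F_Q=F_s$, $E_Q=E_s$, $u_Q=u$ and $v_Q=v$. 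The supports are separated, so the off-diagonal kernel \eqref{sch:eq:weighted-commutator-kernel} applies; the $V^{\pm1/2}$ and $U^{\pm1/2}$ factors cancel and leave exactly $u^*\mathcal V_Q(B(x)-B(y))\mathcal U_Q^{-1}v$. Hence
\[
 s_{Q,\varepsilon,\rho}\lesssim\sum_{s=1}^4|\langle\mathcal C^{V,U}_{B,R_j}e_{Q,\varepsilon,\rho,s},f_{Q,\varepsilon,\rho,s}\rangle|.
\]

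For each $Q$ the index set $(\varepsilon,\rho,s)$ has at most $(2^d-1)\cdot m\cdot4$ choices. By Lemma~\ref{sch:lem:companion-NWO}, the families $\{e_{Q,\cdot}\}$ and $\{f_{Q,\cdot}\}$ are NWO with the common controlling cube $S_Q$. Their constants are uniform in the sets and in the singular vectors, and they allow the fixed finite number of choices per $Q$. Lemma~\ref{sch:lem:NWO-diagonal} then bounds the $\ell^{r,q}$ quasi-norm of the sequence of pairings over $(Q,\varepsilon,\rho,s)$ by $C\|C_{B,R_j}\|_{\Sch^{r,q}}$. Regrouping the finitely many entries attached to each $Q$, with the distribution-function comparison \eqref{sch:eq:finite-block-Lorentz-equivalence}, gives
\[
 \|\{\mathfrak h_Q^+(B)\}_{Q\in\D^t}\|_{\ell^{r,q}}\lesssim\|C_{B,R_j}\|_{\Sch^{r,q}}.
\]
Summing over $t$ proves \eqref{sch:eq:Riesz-lower}.

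The main obstacle I expect is making the pairing identity rigorous. One has to check that the separated-support form agrees with the compact extension, including when $B$ is only locally integrable and the tests are not bounded. I would handle this by the truncation argument used for \eqref{sch:eq:commutator-separated-form-bdd} together with Lemma~\ref{sch:lem:compact-local}. A second point needing care is measurability and uniformity of the median data in $Q$; this is harmless because each step is done cube by cube.
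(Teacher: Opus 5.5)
Your proposal is correct and is essentially the paper's proof: the singular-value decomposition of $A_{Q,\varepsilon}$, Haar cancellation, the complex median lemma on $Q\cup\widetilde Q$ with the fixed-sign Riesz kernel, the companion NWO tests of Lemma~\ref{sch:lem:companion-NWO}, Lemma~\ref{sch:lem:NWO-diagonal}, and the finite-block Lorentz regrouping. The only deviation is that you control $\mathfrak h_Q^-(B)$ by Lemma~\ref{sch:lem:two-Haar-comparable}, whereas the paper reruns the argument on the adjoint $M_{U^{-1/2}}[M_{B^*},R_j]M_{V^{1/2}}$ with weights $(V^{-1},U^{-1})$; this is a valid shortcut. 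Two smaller points: the opening appeal to boundedness is unnecessary, and the pairing identity you flag as the main obstacle is already built into the paper's separated-support interpretation of $C_{B,R_j}$, because $U^{-1/2}e_Q$ and $V^{1/2}f_Q$ are bounded with separated compact supports.
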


\begin{proof}
Work first on one fixed adjacent dyadic system $\D$.  For
$Q\in\D$ and $\varepsilon\in\Sigma_0$, put
\begin{equation}
 A_{Q,\varepsilon}
 =\mathcal V_Q
 \frac{\widehat B(Q,\varepsilon)}{|Q|^{1/2}}
 \mathcal U_Q^{-1}.
 \label{sch:eq:normalized-coefficient-lower}
\end{equation}
Choose singular vectors $u_{Q,\varepsilon,\rho}$ and
$v_{Q,\varepsilon,\rho}$ so that
$$
 A_{Q,\varepsilon}v_{Q,\varepsilon,\rho}
 =s_{Q,\varepsilon,\rho}u_{Q,\varepsilon,\rho},
 \qquad 1\leq \rho\leq m.
$$
Define the scalar function
\begin{equation}
 \beta_{Q,\varepsilon,\rho}(x)
 =u_{Q,\varepsilon,\rho}^*\mathcal V_QB(x)
  \mathcal U_Q^{-1}v_{Q,\varepsilon,\rho}.
 \label{sch:eq:scalarized-symbol}
\end{equation}
Apply Lemma~\ref{sch:lem:complex-median} to this function.  Denote the resulting
quantities by $\mathfrak m_{Q,\varepsilon,\rho}$,
$E_{Q,\varepsilon,\rho,s}$, $F_{Q,\varepsilon,\rho,s}$ and  $\zeta_s$.
Use these sets and singular vectors in
\eqref{sch:eq:lower-test-e}--\eqref{sch:eq:lower-test-f}; call the resulting
functions $e_{Q,\varepsilon,\rho,s}$ and
$f_{Q,\varepsilon,\rho,s}$.

The separated-support kernel formula
\eqref{sch:eq:weighted-commutator-kernel} gives
\begin{align}
 &\langle\mathcal C_{B,R_j}^{V,U}e_{Q,\varepsilon,\rho,s},
 f_{Q,\varepsilon,\rho,s}\rangle=\frac1{|Q|}
 \int_{E_{Q,\varepsilon,\rho,s}}
 \int_{F_{Q,\varepsilon,\rho,s}}
 \bigl(\beta_{Q,\varepsilon,\rho}(x)
       -\beta_{Q,\varepsilon,\rho}(y)\bigr)
 K_j(x,y)\,dy\,dx.
 \label{sch:eq:lower-pairing-identity}
\end{align}
The identity is first obtained for bounded truncations of $B$.
Since the two supports are separated, $K_j$ is bounded on their
product and  local integrability of $B$ permits passage to the
limit.

Multiply the integral in \eqref{sch:eq:lower-pairing-identity} by the sign
of $K_j$ and by $\zeta_s$ and  take its real part.  Lemma
\ref{sch:lem:complex-median} and
\eqref{sch:eq:Riesz-companion-kernel} give
\begin{equation}
 \avg_Q|\beta_{Q,\varepsilon,\rho}-
       \mathfrak m_{Q,\varepsilon,\rho}|
 \leq C\sum_{s=1}^4
 \left|\langle\mathcal C_{B,R_j}^{V,U}
 e_{Q,\varepsilon,\rho,s},f_{Q,\varepsilon,\rho,s}\rangle\right|.
 \label{sch:eq:median-to-pairings}
\end{equation}
Indeed, $|F_{Q,\varepsilon,\rho,s}|\geq|Q|/3$ cancels one factor
$|Q|^{-1}$ from the two integrations.

The cancellation of $h_Q^\varepsilon$ gives
\begin{align}
 s_{Q,\varepsilon,\rho}
 &=|Q|^{-1/2}\left|
 \int_Qh_Q^\varepsilon(x)
 \beta_{Q,\varepsilon,\rho}(x)\,dx\right|\leq\avg_Q|\beta_{Q,\varepsilon,\rho}(x)
             -\mathfrak m_{Q,\varepsilon,\rho}|\,dx.
 \label{sch:eq:singular-value-to-median}
\end{align}
By Lemma~\ref{sch:lem:companion-NWO}, all test functions in
\eqref{sch:eq:median-to-pairings} form a fixed finite collection of NWO
families.  Lemma~\ref{sch:lem:NWO-diagonal} and the distribution-function
argument leading to \eqref{sch:eq:finite-block-Lorentz-equivalence}
give
\begin{align}
 \|\{\mathfrak h_Q^+(B):Q\in\D\}\|_{\ell^{r,q}}
 &\simeq
 \|\{s_{Q,\varepsilon,\rho}\}_{Q,\varepsilon,\rho}\|_{\ell^{r,q}}
 \leq C\|\mathcal C_{B,R_j}^{V,U}\|_{\Sch^{r,q}}.
 \label{sch:eq:primal-lower-final}
\end{align}

Since $R_j^*=-R_j$,
\begin{equation}
 (\mathcal C_{B,R_j}^{V,U})^*
 =M_{U^{-1/2}}[M_{B^*},R_j]M_{V^{1/2}}.
 \label{sch:eq:adjoint-conjugated-commutator}
\end{equation}
Apply \eqref{sch:eq:primal-lower-final} to the symbol $B^*$, source weight
$V^{-1}$ and  target weight $U^{-1}$.  This gives
\begin{equation}
 \|\{\mathfrak h_Q^-(B):Q\in\D\}\|_{\ell^{r,q}}
 \leq C\|\mathcal C_{B,R_j}^{V,U}\|_{\Sch^{r,q}}.
 \label{sch:eq:dual-lower-final}
\end{equation}
Sum \eqref{sch:eq:primal-lower-final} and
\eqref{sch:eq:dual-lower-final} over the fixed adjacent systems.  Theorem
\ref{sch:prop:matrix-besov-JN} and
\eqref{sch:eq:unitary-conjugation} prove \eqref{sch:eq:Riesz-lower}.
\end{proof}

\subsection{Endpoint estimates}

The next argument does not use a dyadic representation.

\begin{theorem}
\label{sch:thm:matrix-rigidity}
Let $d\geq2$, let $U,V\in\mathcal A_2$ be matrix weights, let $B$ be
locally integrable and  fix $1\leq j\leq d$.  If $0<r\leq d$ and
$$
 C_{B,R_j}\in\Sch^r(L^2(U;\C^m),L^2(V;\C^m)),
$$
then there is a fixed matrix $B_0\in\C^{m\times m}$ such that
$B(x)=B_0$ for almost every $x$.  Conversely, every such symbol
satisfies $C_{B,R_j}=0$.
\end{theorem}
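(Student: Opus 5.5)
The converse is immediate: if $B=B_0$ almost everywhere, the separated-support form of $C_{B,R_j}$ vanishes identically, so the zero operator represents it. For the main direction, first reduce to the critical weak class. Since $\Sch^r\subset\Sch^{d,\infty}$ for $r\le d$, Theorem~\ref{sch:thm:Riesz-lower} with $(r,q)=(d,d)$ gives $\|\{\mathfrak h_Q(B)\}_{Q\in\D^t}\|_{\ell^{d}}<\infty$ for every adjacent system. (For $r<d$ one uses $\ell^r\subset\ell^d$, or applies the lemma with $r\in(1,d)$ when $r>1$.) By Theorem~\ref{sch:prop:matrix-besov-JN} and Lemma~\ref{sch:lem:two-Haar-comparable}, this yields
\[
\sum_{Q\in\D^t}\big\|\mathcal V_Q\widehat B(Q,\varepsilon)|Q|^{-1/2}\mathcal U_Q^{-1}\big\|_{\HS}^d<\infty .
\]
It therefore suffices to show that $\ell^d$-summability of these normalized Haar oscillations forces $B$ to be constant.

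The plan is to scalarize and reduce to the classical unweighted rigidity. Fix a cube $Q_0$ and unit vectors $u,v$. Consider the scalar function $b(x)=u^*\mathcal V_{Q_0}B(x)\mathcal U_{Q_0}^{-1}v$ restricted to dyadic descendants $P$ of $Q_0$. Its Haar coefficients satisfy
\[
|\widehat b(P,\varepsilon)||P|^{-1/2}\le \|\mathcal V_{Q_0}\mathcal V_P^{-1}\|_{\op}\,\|A_{P,\varepsilon}\|_{\HS}\,\|\mathcal U_P\mathcal U_{Q_0}^{-1}\|_{\op}.
\]
The last factor is controlled by \eqref{sch:eq:reducing-matrix-comparison} together with the decay \eqref{sch:eq:theta-decay}. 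This gives $\|\mathcal U_P\mathcal U_{Q_0}^{-1}\|^2\lesssim 2^{kd(1-\eta_U)}$ at generation $k$. There is a similar, possibly growing, bound for $\mathcal V_{Q_0}\mathcal V_P^{-1}$ obtained via the dual weight $V^{-1}$ and Jensen. These factors can grow polynomially in $|Q_0|/|P|$, which spoils a direct reduction.

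\textbf{Main obstacle.} The obstacle is handling this growth. Instead of fixing $Q_0$, I would work locally and use the weighted oscillations directly. The aim is a weighted Poincar\'e/derivative estimate: for a Lebesgue point argument, show that the averages $\mathcal V_P(B_{P'}-B_P)\mathcal U_P^{-1}$ over parent–child pairs vanish. Then compare matrix averages across scales through the telescoping
\[
B_{P_k}-B_{P_0}=\sum_i (B_{P_{i+1}}-B_{P_i})
\]
along chains, with each step weighted by the local reducing matrices, which are comparable between parent and child by doubling. The cleanest route I would try first mirrors the classical proof for $r\le d$. Take a $C^1$ mollification or a smooth test direction. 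Then $\ell^d$ summability of $\omega^{\HS}_Q$, combined with H\"older over the $\sim 2^{kd}$ cubes of generation $k$ inside $Q_0$, gives
\[
\sum_{\ell(P)=2^{-k}\ell(Q_0)}\omega_P^{\HS}(B)\,|P|^{(d-1)/d}\to 0
\]
as $k\to\infty$. This is the tail of a convergent $\ell^d$ sum; the normalization $|P|^{(d-1)/d}$ matches a BV-type total variation. I would then show that this quantity dominates, up to $\mathcal A_2$ constants and the scalar-projection $A_\infty$ estimates of Lemma~\ref{sch:lem:scalar-projections}, the weighted variation of $B$ across hyperplanes at scale $2^{-k}$. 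Consequently the distributional gradient of the scalarized, locally renormalized symbol vanishes.

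Concretely, the final step proceeds as follows. For each fixed pair of cubes $Q_0,Q_0'$ in a common adjacent system, the quantity $\mathcal V_{Q_0}(B_{Q_0}-B_{Q_0'})\mathcal U_{Q_0}^{-1}$ is bounded by the vanishing sums above, after chaining through comparable reducing matrices. Hence all averages $B_Q$ coincide. Lebesgue differentiation then gives $B=B_0$ almost everywhere, and finally the converse is the trivial observation made at the start.
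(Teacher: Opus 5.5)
Your first step is fine: $\Sch^r\subset\Sch^d$ for $r\le d$ (this, not $\Sch^{d,\infty}$, is the inclusion you use), and Theorem~\ref{sch:thm:Riesz-lower} with $(r,q)=(d,d)$ gives $\{\omega_Q^{\HS}(B)\}_{Q\in\D^t}\in\ell^d$. The gap is the next implication, that $\ell^d$-summability of these matrix-weighted oscillations forces $B$ to be constant. This carries the entire content of the theorem, and you give a plan for it rather than a proof.

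In the unweighted scalar case your quantity works because of the translation estimate
\[
 \|\tau_hb-b\|_{L^1(Q_0)}\lesssim |h|\sum_{\ell(P)\simeq|h|}|P|^{(d-1)/d}\operatorname{osc}(b;\widetilde P)=o(|h|),
\]
which is an $L^1$ bound for one fixed function. Here $\omega_P^{\HS}(B)$ controls quantities such as $\int_P\|V^{1/2}(B-B_P)\mathcal U_P^{-1}\|_{\HS}$, whose normalizations change with $P$ and with $x$. So there is no single ``locally renormalized symbol'' whose gradient you are computing.

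Removing the normalization brings back the factors you flagged, $\|\mathcal V_{Q_0}\mathcal V_P^{-1}\|_{\op}\|\mathcal U_P\mathcal U_{Q_0}^{-1}\|_{\op}$, which can grow like a power of $2^k$. You could instead pull out $\|V^{-1/2}(x)\|_{\op}$ and $\|\mathcal U_P\|_{\op}$ pointwise and apply H\"older against the $\ell^d$ sequence. That route requires $\sup_k\sum_{P}|P|\langle\|U\|_{\op}\rangle_P^{d'/2}\langle\|V^{-1}\|_{\op}\rangle_P^{d'/2}<\infty$, with $d'=d/(d-1)$, and this fails for general $\mathcal A_2$ weights. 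For $d=2$ and $U=V^{-1}=|x|^{-\gamma}I_m$ with $1\le\gamma<2$, the sum diverges as $k\to\infty$.

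``Chaining through comparable reducing matrices'' does not help. Parent--child or neighbour comparability costs a fixed constant per step, and a chain across $k$ generations (or $\sim2^k$ neighbours) multiplies these constants. The tail of an $\ell^d$ series is $o(1)$ with no rate, so it cannot absorb them. Your final claim, that $\mathcal V_{Q_0}(B_{Q_0}-B_{Q_0'})\mathcal U_{Q_0}^{-1}$ is bounded by the vanishing sums, is exactly the missing estimate.

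The paper avoids all cross-scale comparison by scalarizing with \emph{fixed} scalar weights rather than cube-dependent reducing matrices. Put $\mu_b=e_b^*Ue_b$ and $\lambda_a=(e_a^*V^{-1}e_a)^{-1}$; both are scalar $A_2$ weights by Lemma~\ref{sch:lem:scalar-projections}. Then:
\begin{itemize}
\item $I_bf=fe_b$ is an isometry $L^2(\mu_b)\to L^2(U;\C^m)$;
\item $P_aF=e_a^*F$ is a contraction $L^2(V;\C^m)\to L^2(\lambda_a)$, by matrix Cauchy--Schwarz;
\item $P_aC_{B,R_j}I_b=[M_{B_{ab}},R_j]$, since $R_j$ acts componentwise.
\end{itemize}
By the ideal property each entry commutator lies in $\Sch^d(L^2(\mu_b),L^2(\lambda_a))$. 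The scalar two-weight theory then makes every $B_{ab}$ constant: Holmes--Lacey--Wick gives BMO, Lacey--Li gives VMO, and the rigidity theorem \cite[Theorem~1.3]{LLWW} gives constancy. The rigidity you are trying to establish by hand is exactly what that scalar theorem supplies.

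The same compression shows $\mu_b(Q)^{-1}\int_Q|B_{ab}-(B_{ab})_Q|^2\lambda_a\le\alpha_Q^{\HS}(B)^2$. So your sequence-space reduction can be pushed down to scalar two-weight oscillations of the entries, but you would still need a scalar two-weight rigidity result to finish.
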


\begin{proof}
Fix coordinate vectors $e_a,e_b$ and put
\begin{equation}
 \mu_b(x)=e_b^*U(x)e_b,
 \qquad
 \lambda_a(x)=\bigl(e_a^*V^{-1}(x)e_a\bigr)^{-1}.
 \label{sch:eq:scalar-compression-weights}
\end{equation}
Lemma~\ref{sch:lem:scalar-projections} gives $\mu_b\in A_2$ and
$e_a^*V^{-1}e_a\in A_2$.  The reciprocal of a scalar $A_2$ weight is
again in $A_2$, so $\lambda_a\in A_2$.

Define
$$
 I_b:L^2(\mu_b)\longrightarrow L^2(U;\C^m),
 \quad I_bf=fe_b,
$$
and
$$
 P_a:L^2(V;\C^m)\longrightarrow L^2(\lambda_a),
 \quad P_aF=e_a^*F.
$$
The first map is an isometry.  The matrix Cauchy--Schwarz inequality
gives
\begin{equation}
 \lambda_a(x)|e_a^*F(x)|^2
 \leq F(x)^*V(x)F(x),
 \label{sch:eq:coordinate-projection-contraction}
\end{equation}
so $P_a$ is a contraction.  Since $R_j$ acts componentwise,
\begin{equation}
 P_aC_{B,R_j}I_b=[M_{B_{ab}},R_j]
 :L^2(\mu_b)\longrightarrow L^2(\lambda_a).
 \label{sch:eq:coordinate-compression}
\end{equation}

If $r<d$, use $\Sch^r\subset\Sch^d$; if $r=d$, no inclusion is
needed.  The ideal property and \eqref{sch:eq:coordinate-compression} show
that every scalar commutator $[M_{B_{ab}},R_j]$ belongs to
$\Sch^d(L^2(\mu_b),L^2(\lambda_a))$.  Put
$$
 \nu_{ab}=(\mu_b/\lambda_a)^{1/2}.
$$
The compressed operator is bounded, so the scalar two-weight
boundedness theorem in \cite{HolmesLaceyWick} gives
$B_{ab}\in\BMO_{\nu_{ab}}$.  It is also compact.  Hence
\cite[Theorem~1.1]{LaceyLi} gives
$$
 B_{ab}\in\VMO_{\nu_{ab}}.
$$
Apply \cite[Theorem~1.3]{LLWW} with Schatten exponent $d$, source
weight $\mu_b$, target weight $\lambda_a$ and  Bloom weight
$\nu_{ab}=(\mu_b/\lambda_a)^{1/2}$.  Since
$B_{ab}\in\VMO_{\nu_{ab}}$, that theorem shows that $B_{ab}$ is
constant almost everywhere.  This holds for every pair $a,b$, so $B$ is
a constant matrix.  A constant matrix commutes with $R_j$, which proves
the converse.
\end{proof}

\subsection{Proof of the main theorem}

\begin{proof}[Proof of Theorem~\ref{sch:thm:target}]
Let $d\geq2$.  If $d<r<\infty$, Theorem
\ref{sch:thm:Riesz-upper} gives
$$
 \|C_{B,R_j}\|_{\Sch^r(L^2(U),L^2(V))}
 \leq C\|B\|_{\mathfrak B^r_{V,U}},
$$
and Theorem~\ref{sch:thm:Riesz-lower}, with $q=r$, gives the reverse
inequality.  This proves \eqref{sch:eq:strong-target} and
\eqref{sch:eq:strong-norm-target}.

Theorem~\ref{sch:thm:matrix-rigidity} proves
\eqref{sch:eq:subcritical-target}.  No upper estimate is needed in this
range because the commutator of a constant matrix is zero.

At the weak critical index, Theorem~\ref{sch:thm:Riesz-upper} gives
$$
 \|C_{B,R_j}\|_{\Sch^{d,\infty}(L^2(U),L^2(V))}
 \leq C\|B\|_{\mathfrak W^d_{V,U}}.
$$
Theorem~\ref{sch:thm:Riesz-lower}, with $(r,q)=(d,\infty)$, gives the
reverse inequality.  This proves \eqref{sch:eq:weak-target} and
\eqref{sch:eq:weak-norm-target}.
\end{proof}

\section{Sobolev estimates at the critical index}

Throughout this section, let $d\geq2$, let $U,V\in\mathcal A_2$
and fix $1\leq j\leq d$.  We prove a first-order Sobolev estimate
for $C^1$ symbols in the weak Schatten class
$\Sch^{d,\infty}(L^2(U),L^2(V))$.  For constant or uniformly
elliptic matrix weights, we obtain an equivalent Sobolev
characterization with distributional derivatives.

For $B\in W^{1,1}_{\mathrm{loc}}(\R^d;\C^{m\times m})$, define
\begin{equation}
 G_{V,U}B(x)
 =
 \bigg(
 \sum_{\kappa=1}^d
 \|V^{1/2}(x)(\partial_\kappa B)(x)U^{-1/2}(x)\|_{\HS}^2
 \bigg)^{1/2}.
 \label{sch:eq:matrix-Sobolev-density}
\end{equation}
The homogeneous Sobolev space $\dot W^{1,d}_{V,U}$ consists of
these symbols for which $G_{V,U}B\in L^d(\R^d)$, with seminorm
$$
 \|B\|_{\dot W^{1,d}_{V,U}}
 =\|G_{V,U}B\|_{L^d(\R^d)}.
$$
The derivatives act on $B$, and the weights occur as coefficients
in the seminorm.  We regard this space modulo constant matrices.

\begin{proposition}
\label{sch:prop:Sobolev-necessity}
If $B\in C^1(\R^d;\C^{m\times m})$, then
\begin{equation}
 \|G_{V,U}B\|_{L^d}
 \leq C_d\|B\|_{\mathfrak W^d_{V,U}}.
 \label{sch:eq:Sobolev-from-oscillation}
\end{equation}
Consequently,
\begin{equation}
 \|G_{V,U}B\|_{L^d}
 \leq C
 \|C_{B,R_j}\|_{\Sch^{d,\infty}(L^2(U),L^2(V))}.
 \label{sch:eq:Sobolev-from-commutator}
\end{equation}
\end{proposition}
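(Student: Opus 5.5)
The second inequality \eqref{sch:eq:Sobolev-from-commutator} follows at once from \eqref{sch:eq:Sobolev-from-oscillation} and Theorem~\ref{sch:thm:Riesz-lower} with $(r,q)=(d,\infty)$. So the work is in \eqref{sch:eq:Sobolev-from-oscillation}. The plan is to compare $G_{V,U}B$ pointwise with the oscillations of small dyadic cubes near Lebesgue points. We then convert the weak $\ell^d$ bound into an $L^d$ bound by a counting argument at a fixed small scale.

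First, the local linearization. Fix $\D=\D^1$ and a scale $2^{-n}$. For $Q\in\D$ with $\ell(Q)=2^{-n}$ and $x_Q\in Q$, the $C^1$ hypothesis gives
$$B(x)-B_Q=\nabla B(x_Q)\cdot(x-c_Q)+o(2^{-n})$$
uniformly on compact sets. Put $\mathcal V_Q=\langle V\rangle_Q^{1/2}$ and $\mathcal U_Q=\langle U\rangle_Q^{1/2}$. Since
$$\avg_Q\|V^{1/2}X\mathcal U_Q^{-1}\|_{\HS}^2=\|\mathcal V_QX\mathcal U_Q^{-1}\|_{\HS}^2$$
for constant $X$, and $\avg_Q|x-c_Q|_\kappa^2\simeq\ell(Q)^2$ with the coordinate contributions orthogonal, we get
$$\alpha_Q^{\HS}(B)\gtrsim 2^{-n}\Big(\sum_\kappa\|\mathcal V_Q\partial_\kappa B(x_Q)\mathcal U_Q^{-1}\|_{\HS}^2\Big)^{1/2}-\text{error}.$$
The error term is $o(2^{-n})\,\|\mathcal V_Q\|\,\|\mathcal U_Q^{-1}\|$. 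To avoid needing pointwise control of the weights, I would instead use the Haar coefficients $\mathfrak h_Q^+$ and Lemma~\ref{sch:lem:local-haar}. The first-order Haar coefficients of the linear part are exactly $2^{-n}$ times directional derivatives, up to fixed constants.

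Second, the passage from $\mathcal V_Q,\mathcal U_Q$ to $V^{1/2}(x),U^{-1/2}(x)$. At a point $x$ that is a Lebesgue point of $V$ and $U^{-1}$, we have $\langle V\rangle_{Q_n(x)}\to V(x)$ and $\langle U^{-1}\rangle_{Q_n(x)}\to U^{-1}(x)$. Jensen's matrix inequality gives $\langle U\rangle_Q^{-1}\le\langle U^{-1}\rangle_Q$. For the lower bound one needs the reverse direction. For a fixed matrix $X$, Fatou's lemma gives
$$\liminf_n\|\mathcal V_{Q_n(x)}X\mathcal U_{Q_n(x)}^{-1}\|_{\HS}\ge\|V^{1/2}(x)XU^{-1/2}(x)\|_{\HS}\quad\text{a.e.}$$
This holds because $\langle U\rangle_{Q_n(x)}\to U(x)$ at Lebesgue points of $U$. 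Combining this with continuity of $\nabla B$, we get a.e.
$$G_{V,U}B(x)\le C\liminf_{n\to\infty}2^{n}\,\mathfrak h_{Q_n(x)}(B).$$

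Third, the counting step. Let $g_n(x)=2^n\mathfrak h_{Q_n(x)}(B)$ and fix $\lambda>0$. Then
$$|\{g_n>\lambda\}|=2^{-nd}\#\{Q:\ell(Q)=2^{-n},\ \mathfrak h_Q>2^{-n}\lambda\}\le\lambda^{-d}\|B\|_{\mathfrak W^d_{V,U}}^d.$$
Fatou's lemma then yields only a weak-$L^d$ bound for $G_{V,U}B$, and this is the main obstacle: we need strong $L^d$. To gain it, I would sum over scales using that the weak-$\ell^d$ counts are shared among all generations. Let $N(t)=\#\{Q:\mathfrak h_Q>t\}\le t^{-d}\|B\|^d$. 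On the set $\{G_{V,U}B>\lambda\}$, every generation $n\ge n_0(x)$ contributes a cube with $\mathfrak h_Q>c\lambda2^{-n}$. Partition the range of $G$ dyadically into $\{2^k<G\le2^{k+1}\}\cap\{n_0\le n_1\}$ on a compact set. The cubes counted at generation $n$ and level $k$ are distinct from those at other $(n,k)$ as long as the value ranges $c2^{k-n}$ are separated. Choosing $n=k+L$ with $L$ fixed large, the family $\{\mathfrak h_Q\in(c2^{-L},c2^{1-L}]\}$ then gets contributions from every $k$, each with measure $|\{G\sim2^k\}|2^{(k+L)d}$. Summing gives
$$\sum_k2^{kd}|\{G\sim2^k\}|\lesssim 2^{-Ld}N(c2^{-L})\lesssim\|B\|_{\mathfrak W^d}^d.$$
This is $\|G\|_{L^d}^d$. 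The details require the $o(2^{-n})$ error and the Lebesgue-point convergence to be uniform on a large compact set, which Egorov's theorem provides. A final monotone limit over compact sets and over $L$ completes the argument.
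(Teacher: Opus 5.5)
Your argument is correct and has the same two-step structure as the paper's proof. The first step is an a.e.\ identification of $G_{V,U}B(x)$ as the limit of $2^n$ times a cube quantity on $Q_n(x)$ at Lebesgue points, as in Lemma~\ref{sch:lem:infinitesimal-oscillation}. The second is exactly the counting of Lemma~\ref{sch:lem:weak-cube-counting}: you test $\{2^k<G\le2^{k+1}\}$ at generation $k+L$ with $L$ fixed.

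Three small points in the execution. First, the cubes for different $k$ are distinct because they have different side lengths. They are not distinct because of separated value ranges; with $n=k+L$ all $k$ use the same threshold $c2^{-L}$. Second, you must restrict $k$ to a finite window before fixing $L$, as the paper does with $-N\le n\le N$, because the generation at which the pointwise limit takes effect must be reached for every $k$. Third, $\mathcal V_{Q_n(x)}\to V^{1/2}(x)$ and $\mathcal U_{Q_n(x)}^{-1}\to U^{-1/2}(x)$ hold by continuity, not by Fatou. In fact $2^n\mathfrak h^+_{Q_n(x)}(B)\to G_{V,U}B(x)/4$.

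The genuine difference is that you count $\mathfrak h_Q$ rather than $\omega_Q^{\HS}$. To bound $\#\{Q:\mathfrak h_Q>t\}$ by $\|B\|_{\mathfrak W^d_{V,U}}$ you need the per-cube estimates \eqref{sch:eq:primal-reverse}--\eqref{sch:eq:dual-reverse}. You do not cite them, and their constants involve $[U]_{\mathcal A_2}$ and $[V]_{\mathcal A_2}$. So you obtain \eqref{sch:eq:Sobolev-from-oscillation} with a weight-dependent constant instead of the stated $C_d$.

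The detour is unnecessary. The error you feared in $\alpha_Q^{\HS}$ is at most $o(\ell)\,\|\mathcal U_Q^{-1}\|_{\op}\big(\avg_Q\operatorname{tr}V(y)\,dy\big)^{1/2}$, which is $o(\ell)$ at Lebesgue points. Replacing $V(y)$ by $V(x)$ in the linear term costs only $o(\ell^2)$, since $\avg_Q\|V(y)-V(x)\|_{\op}\,dy\to0$. This is how the paper obtains $\alpha^{\HS}_{Q_n}(B)/\ell(Q_n)\to G_{V,U}B(x)/\sqrt{12}$ directly and keeps a purely dimensional constant. Your deduction of \eqref{sch:eq:Sobolev-from-commutator} from Theorem~\ref{sch:thm:Riesz-lower} is the paper's.
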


\begin{theorem}
\label{sch:thm:elliptic-Sobolev}
Let $B\in L^1_{\mathrm{loc}}(\R^d;\C^{m\times m})$.
For fixed positive definite matrices $U_0,V_0$, one has
\begin{equation}
 B\in\mathfrak W^d_{V_0,U_0}
 \quad\Longleftrightarrow\quad
 B\in\dot W^{1,d}_{V_0,U_0},
 \qquad
 \|B\|_{\mathfrak W^d_{V_0,U_0}}
 \simeq
 \|G_{V_0,U_0}B\|_{L^d}.
 \label{sch:eq:constant-weight-Sobolev}
\end{equation}
If $U,V$ satisfy
$$
 cI_m\leq U(x),V(x)\leq CI_m
 \quad\text{for almost every }x,
$$
where $0<c\leq C<\infty$, then
$$
 \mathfrak W^d_{V,U}=\dot W^{1,d}_{V,U},
 \qquad
 \|B\|_{\mathfrak W^d_{V,U}}
 \simeq \|G_{V,U}B\|_{L^d}.
$$
In either case,
\begin{equation}
 C_{B,R_j}\in\Sch^{d,\infty}(L^2(U),L^2(V))
 \quad\Longleftrightarrow\quad
 B\in\dot W^{1,d}_{V,U},
 \label{sch:eq:elliptic-Sobolev-commutator}
\end{equation}
and
$$
 \|C_{B,R_j}\|_{\Sch^{d,\infty}(L^2(U),L^2(V))}
 \simeq \|B\|_{\mathfrak W^d_{V,U}}
 \simeq \|G_{V,U}B\|_{L^d}.
$$
The constants in the uniformly elliptic case depend only on
$d,m,c$ and $C$.
\end{theorem}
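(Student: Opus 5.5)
The statement to prove is Theorem~\ref{sch:thm:elliptic-Sobolev}. The plan is to reduce everything to the unweighted entrywise case. The commutator equivalences \eqref{sch:eq:elliptic-Sobolev-commutator} and the final chain of norm equivalences then follow from Theorem~\ref{sch:thm:target}(iii), so the work is the identity $\mathfrak W^d_{V,U}=\dot W^{1,d}_{V,U}$ in the two weight regimes.

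\textbf{Constant weights.} For constant $U_0,V_0$, the reducing matrices are $\mathcal U_Q=U_0^{1/2}$ and $\mathcal V'_Q=V_0^{-1/2}$ for every cube. Hence
$$
\alpha_Q^{\HS}(B)=\bigg(\avg_Q\|V_0^{1/2}(B-B_Q)U_0^{-1/2}\|_{\HS}^2\bigg)^{1/2},
$$
which is the unweighted $L^2$ oscillation of the matrix function $\widetilde B=V_0^{1/2}BU_0^{-1/2}$. Similarly, $\beta_Q^{\HS}(B)$ is the oscillation of $\widetilde B^*$, which is equal to that of $\widetilde B$. Also $G_{V_0,U_0}B=|\nabla\widetilde B|_{\HS}$. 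Both sides are then sums over the $m^2$ entries of $\widetilde B$, up to constants depending on $m$.

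It therefore suffices to prove, for a scalar locally integrable $b$,
$$
\|\{\mathrm{osc}_Q b\}_{Q\in\D^t,\,t}\|_{\ell^{d,\infty}}\simeq\|\nabla b\|_{L^d}.
$$
This is the known scalar unweighted critical-index characterization. I would obtain it from Theorem~\ref{sch:thm:target}(iii) with $m=1$ and $U=V=1$, together with the known unweighted weak-Schatten Sobolev characterization of $[b,R_j]$ (Frank, and Frank--Sukochev--Zanin \cite{FrankSukochevZanin}). Via Theorem~\ref{sch:thm:target}(iii), both sides are then comparable to $\|[b,R_j]\|_{\Sch^{d,\infty}}$.

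Alternatively, the direction $\|\nabla b\|_{L^d}\lesssim\|b\|_{\mathfrak W^d}$ follows the argument of Proposition~\ref{sch:prop:Sobolev-necessity}, after a mollification step to remove the $C^1$ hypothesis. The mollification is harmless because the oscillation sequence of $b*\varphi_\epsilon$ is controlled by that of $b$ in the constant-weight case. The converse direction is a Poincaré estimate, $\mathrm{osc}_Q b\lesssim \ell(Q)\avg_Q|\nabla b|$, combined with a weak-type counting argument. I expect this counting step to be the main obstacle if it is done by hand. For each $\lambda$, one counts the dyadic cubes with $\ell(Q)^{1-d}\int_Q|\nabla b|>\lambda$; the Poincaré bound alone only gives the weak-$\ell^d$ bound up to a harmless endpoint. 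I would therefore rely on the citation instead.

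\textbf{Uniformly elliptic weights.} For $cI\le U,V\le CI$, the pointwise factors satisfy $\|V^{1/2}(x)\|_{\op},\|U^{-1/2}(x)\|_{\op}\lesssim1$, and the same bounds hold for the reducing matrices and their inverses. Factoring
$$
V^{1/2}(x)(B-B_Q)\mathcal U_Q^{-1}=[V^{1/2}(x)][B-B_Q][\mathcal U_Q^{-1}]
$$
gives $\omega_Q^{\HS}(B)\simeq\big(\avg_Q\|B-B_Q\|_{\HS}^2\big)^{1/2}$, with constants depending on $c,C,m$. Likewise $G_{V,U}B\simeq|\nabla B|_{\HS}$ pointwise. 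Thus $\mathfrak W^d_{V,U}=\mathfrak W^d_{I,I}$ and $\dot W^{1,d}_{V,U}=\dot W^{1,d}_{I,I}$ with equivalent seminorms, and the identity weights form the constant case just treated. Finally, $B\in\mathfrak W^d$ forces $B\in W^{1,1}_{\rm loc}$, since an $L^d$ gradient exists by the scalar theorem, so the derivatives are understood in the weak sense as stated.
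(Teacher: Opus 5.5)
Your proposal is correct, and its reductions are the paper's own. In the constant case you pass to $V_0^{1/2}BU_0^{-1/2}$ and note that $\alpha_Q^{\HS}=\beta_Q^{\HS}$ equals its $L^2$ oscillation. In the elliptic case you compare pointwise with the identity weights. The commutator statements then come from Theorem~\ref{sch:thm:target}(iii).

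Where you differ is the core unweighted identity $\|\{\operatorname{osc}_2(F;Q)\}\|_{\ell^{d,\infty}}\simeq\|\nabla F\|_{L^d}$.
\begin{itemize}
\item You sandwich it through $\|[b,R_j]\|_{\Sch^{d,\infty}}$. One side is Theorem~\ref{sch:thm:target}(iii) with $m=1$, $U=V=1$. The other is the unweighted endpoint commutator theorem. There is no circularity, since Theorem~\ref{sch:thm:target}(iii) is proved independently.
\item The paper proves the identity directly as Lemma~\ref{sch:lem:unweighted-critical-cube}. The bound by $\|\nabla F\|_{L^d}$ uses the Sobolev--Poincar\'e inequality $\operatorname{osc}_2(F;Q)\lesssim\ell(Q)(\avg_Q|\nabla F|^s)^{1/s}$ with $s=2d/(d+2)$ ($s=1$ if $d=2$), so that $d/s>1$. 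It then counts dyadic cubes inside level sets of $M(|\nabla F|^s)$. The reverse bound transfers the dyadic weak-$\ell^d$ bound, via adjacent grids, to Frank's continuous weak mean-oscillation functional and applies \cite[Theorem~1]{Frank}.
\end{itemize}
So the counting step you expected to be the main obstacle takes a few lines and has no endpoint loss.

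The paper's route keeps the function-space identity purely real-variable and independent of Schatten theory. Yours is shorter on the page but imports the much deeper endpoint commutator theorem. It is valid only if you use that theorem in a form covering every locally integrable symbol whose commutator lies in $\Sch^{d,\infty}$, with no a priori boundedness or smoothness on $b$. That commutator must be identified with the paper's compact extension of the separated form, via Lemma~\ref{sch:lem:compact-local}.

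Your by-hand alternative for $\|\nabla b\|_{L^d}\lesssim\|b\|_{\mathfrak W^d}$ should not be relied on as stated. The claim that the oscillation sequence of $b*\varphi_\epsilon$ is controlled by that of $b$ is not immediate for cubes with $\ell(Q)\ll\epsilon$. The natural bound is $\operatorname{osc}_2(b*\varphi_\epsilon;Q)\lesssim(\ell(Q)/\epsilon)\operatorname{osc}(b;P)$ with $\ell(P)\simeq\epsilon$. Counting with it gives roughly $\lambda^{-d}\sum_{\operatorname{osc}(b;P)>\lambda}\operatorname{osc}(b;P)^d$ over a single scale. A weak-$\ell^d$ bound does not control this sum; it can grow logarithmically in $\lambda$. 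This is why the paper uses Frank's theorem rather than mollification combined with Proposition~\ref{sch:prop:Sobolev-necessity}.
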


\subsection{Proof of the Sobolev estimate}

We use the reducing matrices
$\mathcal U_Q=\langle U\rangle_Q^{1/2}$ and
$\mathcal V'_Q=\langle V^{-1}\rangle_Q^{1/2}$ fixed in Part~III.
The following limit identifies the derivative measured by the two
cube oscillations.

\begin{lemma}
\label{sch:lem:infinitesimal-oscillation}
Suppose that $B$ is differentiable at $x$ and that $x$ is a
common Lebesgue point of $U,V,U^{-1},V^{-1}$.
Let $Q_n$ be cubes containing $x$, with sides parallel to the
coordinate axes and $\ell(Q_n)\to0$.  Then
\begin{equation}
 \lim_{n\to\infty}\frac{\alpha_{Q_n}^{\HS}(B)}{\ell(Q_n)}
 =
 \lim_{n\to\infty}\frac{\beta_{Q_n}^{\HS}(B)}{\ell(Q_n)}
 =\frac1{\sqrt{12}}G_{V,U}B(x).
 \label{sch:eq:infinitesimal-oscillation}
\end{equation}
\end{lemma}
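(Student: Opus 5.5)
The plan is to linearize $B$ at $x$, replace the weights and the reducing matrices by their values at $x$, and compute an explicit Gaussian-type average of a linear function over a cube. Write $D_\kappa=(\partial_\kappa B)(x)$, $h_n=\ell(Q_n)$ and $c_n$ for the centre of $Q_n$. Differentiability gives
\[
 B(y)=B(x)+\sum_\kappa (y_\kappa-x_\kappa)D_\kappa+o(h_n),\qquad y\in Q_n,
\]
uniformly on $Q_n$ (because $Q_n\ni x$ has diameter $\sqrt d\,h_n$). Averaging, $B_{Q_n}=B(x)+\sum_\kappa (c_{n,\kappa}-x_\kappa)D_\kappa+o(h_n)$. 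Hence
\[
 B(y)-B_{Q_n}=\sum_\kappa (y_\kappa-c_{n,\kappa})D_\kappa+o(h_n),
\]
uniformly on $Q_n$. After division by $h_n$ the linear part is $L_n(y)=\sum_\kappa t_\kappa D_\kappa$ with $t=(y-c_n)/h_n$ uniform on $[-1/2,1/2]^d$ under normalized measure.

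Next I would treat the reducing matrices. Since $x$ is a Lebesgue point of $U$ and $V^{-1}$ and the cubes shrink regularly to $x$ (they contain $x$ and $|Q_n|\simeq h_n^d$), we get $\langle U\rangle_{Q_n}\to U(x)$ and $\langle V^{-1}\rangle_{Q_n}\to V^{-1}(x)$. Hence $\mathcal U_{Q_n}^{-1}\to U^{-1/2}(x)$ and $(\mathcal V'_{Q_n})^{-1}\to V^{1/2}(x)$, by continuity of the square root and inverse on positive definite matrices.

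The main obstacle is the variable weight factor $V^{1/2}(y)$ inside the average, which is only $L^2$-controlled. For $\alpha$ I would write
\[
 \frac{\alpha_{Q_n}^{\HS}(B)^2}{h_n^2}=\avg_{Q_n}\operatorname{tr}\bigl(M_n(y)^*V(y)M_n(y)\bigr)\,dy,\qquad M_n=h_n^{-1}(B-B_{Q_n})\mathcal U_{Q_n}^{-1}.
\]
Then $M_n=L_n\mathcal U_{Q_n}^{-1}+o(1)$ uniformly, and the uniform $o(1)$ error is harmless because $\avg_{Q_n}\|V\|$ stays bounded. For the main term I would replace $V(y)$ by $V(x)$ using $\avg_{Q_n}|V(y)-V(x)|\to0$ together with the boundedness of $L_n$ (its coefficients $t$ are bounded and $D_\kappa$ is fixed). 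Thus the limit is
\[
 \int_{[-1/2,1/2]^d}\Bigl\|V^{1/2}(x)\sum_\kappa t_\kappa D_\kappa U^{-1/2}(x)\Bigr\|_{\HS}^2\,dt .
\]
Expanding the square, the cross terms vanish since $\int t_\kappa t_\lambda\,dt=0$ for $\kappa\ne\lambda$. The diagonal terms carry $\int t_\kappa^2\,dt=1/12$. So the limit equals $\tfrac1{12}G_{V,U}B(x)^2$, and taking square roots gives the claim for $\alpha$.

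For $\beta$ I would use the same computation with $U^{-1}$ in place of $V$ (a Lebesgue point by hypothesis), with $B^*$ in place of $B$, and with $(\mathcal V'_{Q_n})^{-1}\to V^{1/2}(x)$. This yields the limit
\[
 \tfrac1{12}\sum_\kappa\|U^{-1/2}(x)D_\kappa^*V^{1/2}(x)\|_{\HS}^2 .
\]
This coincides with $\tfrac1{12}G_{V,U}B(x)^2$, because the Hilbert--Schmidt norm is invariant under adjoints.

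The only delicate point is justifying that the error terms go to zero after multiplication by the unbounded $V(y)$ or $U^{-1}(y)$. Every error is either uniform in $y$ or is the difference $V(y)-V(x)$ integrated against a bounded factor, so the Lebesgue-point hypothesis on $V$ and $U^{-1}$ suffices. The hypotheses on $U$ and $V^{-1}$ are needed only for the convergence of the reducing matrices.
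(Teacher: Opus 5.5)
Your proposal is correct and follows the paper's proof. The paper likewise linearizes $B-B_{Q_n}$ about the cube centre with a uniform $o(\ell(Q_n))$ error, and uses the Lebesgue-point hypotheses to replace $V(y)$ by $V(x)$ and $\langle U\rangle_{Q_n}^{-1/2}$ by $U(x)^{-1/2}$; it then evaluates $\int_{[-1/2,1/2]^d}t_\kappa t_\lambda\,dt=\delta_{\kappa\lambda}/12$ and obtains the $\beta$ limit by running the same computation for $B^*$ with the weights $U^{-1},V^{-1}$, using adjoint invariance of the Hilbert--Schmidt norm.
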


\begin{proof}
Write $\ell_n=\ell(Q_n)$ and let $c_n$ be the center of $Q_n$.
Differentiability at $x$ gives
$$
 B(y)-B_{Q_n}
 =L_n(y)+E_n(y),
 \qquad
 L_n(y)=\sum_{\kappa=1}^d
 (y_\kappa-c_{n,\kappa})\partial_\kappa B(x),
$$
where
$$
 \sup_{y\in Q_n}\|E_n(y)\|_{\HS}
 \leq\varepsilon_n\ell_n,
 \qquad \varepsilon_n\to0.
$$
The Lebesgue point property gives
$$
 \langle U\rangle_{Q_n}^{-1/2}\longrightarrow U(x)^{-1/2},
 \qquad
 \avg_{Q_n}\|V(y)-V(x)\|_{\op}\,dy\longrightarrow0.
$$
In particular,
\begin{align*}
 &\avg_{Q_n}
 \|V^{1/2}(y)E_n(y)\langle U\rangle_{Q_n}^{-1/2}\|_{\HS}^2\,dy\leq
 \varepsilon_n^2\ell_n^2
 \|\langle U\rangle_{Q_n}^{-1/2}\|_{\op}^2
 \avg_{Q_n}\operatorname{tr}V(y)\,dy
 =o(\ell_n^2).
\end{align*}
The weighted square average of $L_n$ is $O(\ell_n^2)$, so
Cauchy--Schwarz bounds the cross term by $o(\ell_n^2)$.

Since $\|L_n(y)\|_{\HS}\leq C_x\ell_n$ on $Q_n$,
replacing $V(y)$ by $V(x)$ in the square average of $L_n$
changes that average by at most
$$
 C_x\ell_n^2
 \|\langle U\rangle_{Q_n}^{-1/2}\|_{\op}^2
 \avg_{Q_n}\|V(y)-V(x)\|_{\op}\,dy
 =o(\ell_n^2).
$$
We may also replace $\langle U\rangle_{Q_n}^{-1/2}$ by
$U(x)^{-1/2}$ in the limit.  Changing variables
$y=c_n+\ell_n z$ and using
$$
 \int_{[-1/2,1/2]^d}z_\kappa z_\tau\,dz
 =\frac{\delta_{\kappa\tau}}{12},
$$
we obtain
$$
 \lim_{n\to\infty}
 \frac{\alpha_{Q_n}^{\HS}(B)^2}{\ell_n^2}
 =\frac1{12}\sum_{\kappa=1}^d
 \|V^{1/2}(x)(\partial_\kappa B)(x)U^{-1/2}(x)\|_{\HS}^2.
$$

For $\beta_{Q_n}^{\HS}(B)$, apply the preceding calculation
to $B^*$ with the pair of weights $(V^{-1},U^{-1})$ in place
of $(U,V)$.  These weights have the required Lebesgue point
properties at $x$, and hence
$$
 \lim_{n\to\infty}
 \frac{\beta_{Q_n}^{\HS}(B)^2}{\ell_n^2}
 =\frac1{12}\sum_{\kappa=1}^d
 \|U^{-1/2}(x)(\partial_\kappa B(x))^*
 V^{1/2}(x)\|_{\HS}^2.
$$
The Hilbert--Schmidt norm is invariant under taking adjoints.
Thus both limits equal $G_{V,U}B(x)^2/12$.
\end{proof}

\begin{lemma}
\label{sch:lem:weak-cube-counting}
Let $\D$ be a dyadic system, let $c_Q\geq0$ for $Q\in\D$,
and let $g:\R^d\to[0,\infty)$ be measurable.
For $x$ outside the dyadic boundaries, let $Q_k(x)$ be the
cube of side length $2^{-k}$ containing $x$.  If
$$
 g(x)\leq\liminf_{k\to\infty}2^kc_{Q_k(x)}
 \quad\text{for almost every }x,
$$
then
\begin{equation}
 \|g\|_{L^d}
 \leq C_d\|\{c_Q\}_{Q\in\D}\|_{\ell^{d,\infty}}.
 \label{sch:eq:weak-cube-counting}
\end{equation}
\end{lemma}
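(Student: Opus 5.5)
The plan is a layer-cake argument combined with a counting of cubes in the weak-$\ell^d$ sequence. Put $\Lambda=\|\{c_Q\}\|_{\ell^{d,\infty}}$; we may assume $0<\Lambda<\infty$. For $t>0$ let $N(t)=\#\{Q\in\D:c_Q>t\}$. The definition of the weak norm gives $N(t)\leq\Lambda^dt^{-d}$. We estimate $|\{g>\lambda\}|$ for each $\lambda>0$ and then integrate.

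Fix $\lambda>0$ and take $x$ with $g(x)>\lambda$ that satisfies the hypothesis. Then $2^kc_{Q_k(x)}>\lambda$ for all $k\geq k_0(x)$. Hence $x$ lies, for every large $k$, in some cube $Q$ of side $2^{-k}$ with $c_Q>\lambda 2^{-k}$. Put
$$
 \mathcal G_k(\lambda)=\{Q\in\D:\ell(Q)=2^{-k},\ c_Q>\lambda2^{-k}\}
$$
and $E_k=\bigcup\mathcal G_k(\lambda)$. Then $\{g>\lambda\}\subset\liminf_kE_k$ up to a null set, so $|\{g>\lambda\}|\leq\liminf_k|E_k|$ by Fatou for sets. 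On the other hand,
$$
 |E_k|\leq 2^{-kd}\,\#\mathcal G_k(\lambda)\leq 2^{-kd}N(\lambda2^{-k})\leq 2^{-kd}\Lambda^d\lambda^{-d}2^{kd}=\Lambda^d\lambda^{-d}.
$$
This only gives the weak bound $\|g\|_{L^{d,\infty}}\leq\Lambda$, and that is the main obstacle. We need a strong $L^d$ bound, which requires a gain beyond a single scale.

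The gain comes from the fact that each cube counted at scale $k$ for level $\lambda$ is counted only once across all scales and levels. Instead of one level, we will use all dyadic levels $\lambda=2^i$ at once. Write
$$
 \|g\|_d^d\simeq\sum_i2^{id}|\{g>2^i\}|.
$$
For each $i$ choose a large scale $k_i$. This is allowed because $|\{g>2^i\}|\leq\liminf_k|E_k^{(i)}|$, and we may take $k_i$ with $|\{g>2^i\}|\leq2|E^{(i)}_{k_i}|+\epsilon_i$. For each $i$ we also choose $k_i$ so large, and with $k_i-i$ increasing strictly in $i$, that the thresholds $2^{i-k_i}$ are distinct dyadic numbers. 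Then
$$
 \sum_i2^{id}|E^{(i)}_{k_i}|\leq\sum_i2^{(i-k_i)d}\#\{Q:c_Q>2^{i-k_i}\}.
$$
To control this we refine the count by level bands. Let $n_\mu=\#\{Q:2^{\mu}<c_Q\leq2^{\mu+1}\}$. Cubes counted in $E^{(i)}_{k_i}$ have $\ell(Q)=2^{-k_i}$, and distinct $i$ use distinct scales, so each cube is counted once. A cube with $c_Q\in(2^\mu,2^{\mu+1}]$ counted at step $i$ satisfies $\mu+1>i-k_i$. Its contribution $2^{(i-k_i)d}$ is therefore at most $2^{(\mu+1)d}\simeq c_Q^d$. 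This gives the bound $\sum_Qc_Q^d$, which is the strong $\ell^d$ norm, not the weak one. So this attempt fails.

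The honest fix is to exploit that the hypothesis holds for \emph{all} large $k$, not just one. Averaging over $K$ consecutive scales, we get
$$
 |\{g>\lambda\}|\leq\frac1K\sum_{k=k_0}^{k_0+K-1}|E_k|+o(1).
$$
Summing the level sets across $\lambda=2^i$ with the scales $k\in[k_0,k_0+K)$, each cube of level band $\mu$ is hit by at most the $i$'s with $i\leq\mu+1+k$ at its own fixed scale $k$. Weighting by $2^{id}$ then gives a geometric sum $\lesssim c_Q^d$ per cube. The factor $1/K$ together with $N(t)\leq\Lambda^dt^{-d}$ should then yield
$$
 \sum_i2^{id}|\{g>2^i\}|\lesssim\Lambda^d\cdot\frac{\#\{\text{relevant bands}\}}{K}.
$$
The relevant bands are essentially those $\mu$ with $\mu\approx i-k$ for $k$ in the window, about $K$ of them, so the right side stays bounded. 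Making this double-counting precise, so that the number of contributing bands per window is $O(K)$ uniformly, is the crux. I expect to spend the effort there.
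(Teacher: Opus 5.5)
\paragraph{The gap.} Your first three paragraphs are sound, including your diagnosis of why the third one fails. The proof, however, has to happen in the last paragraph, and there the decisive count is only asserted: you say there are about $K$ relevant bands and call making this precise the crux. As stated, the count is also wrong. A cube of scale $k$ is counted for \emph{every} level $i$ with $2^{i-k}<c_Q$, not only when $\mu\approx i-k$. So the relevant bands are all bands between the lowest threshold $2^{-L-k_0-K+1}$ (levels $|i|\le L$, scales $k_0\le k<k_0+K$) and $\sup_Qc_Q$. Their number is $O(K)$ only if two things hold: $\sup_Qc_Q$ is controlled, and $K\to\infty$ with $L$ and $k_0$ fixed. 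Your sketch addresses neither.

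\paragraph{How to close it on your route.} By homogeneity take $\Lambda=1$. Then $N(t)=0$ for $t>1$, so $c_Q\le1$ and only the bands $-(L+k_0+K)<\mu\le-1$ occur. Each band contributes
$\sum_{c_Q\in(2^\mu,2^{\mu+1}]}c_Q^d\le2^{(\mu+1)d}N(2^\mu)\le2^d$, which uses only the weak-$\ell^d$ bound. Your per-cube geometric sum is at most $2c_Q^d$, so
\[
 \sum_{|i|\le L}2^{id}\bigl(|\{g>2^i\}|-\epsilon\bigr)
 \le\frac1K\sum_{|i|\le L}\sum_{k_0\le k<k_0+K}2^{id}|E^{(i)}_k|
 \le\frac2K\sum_{Q:\,c_Q>2^{-L-k_0-K+1}}c_Q^d
 \le\frac{2^{d+1}(L+k_0+K)}{K}.
\]
Here $k_0=k_0(L,\epsilon)$ is chosen so that $|\{g>2^i\}|\le|E^{(i)}_k|+\epsilon$ for all $k\ge k_0$ and all $|i|\le L$. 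Such a $k_0$ exists by continuity of measure, since $\{g>2^i\}\subset\liminf_kE^{(i)}_k$ up to a null set and $|\{g>2^i\}|<\infty$ by your second paragraph. Because this holds for \emph{all} $k\ge k_0$, you may let $K\to\infty$ first, then $\epsilon\to0$, then $L\to\infty$. With these additions your route is a complete proof.

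\paragraph{The paper's route.} The paper needs neither averaging nor band counting; it reverses the choice in your third paragraph. There you took $k_i-i$ strictly increasing, so the thresholds $2^{i-k_i}$ are distinct and each level costs its own weak-type bound $\Lambda^d$. Take instead $k_i=i+a$ for one integer $a\ge k_0(L,\epsilon)+L$. Every level then has the same threshold $2^{-a}$. The cubes of $\mathcal G_{i+a}(2^i)$ have side length $2^{-i-a}$, so they are distinct for distinct $i$, and all lie in $\{Q:c_Q>2^{-a}\}$. A single application of the weak bound gives
\[
 \sum_{|i|\le L}2^{id}\,|E^{(i)}_{i+a}|
 \le2^{-ad}\sum_{|i|\le L}\#\mathcal G_{i+a}(2^i)
 \le2^{-ad}N(2^{-a})\le\Lambda^d ,
\]
hence $\sum_{|i|\le L}2^{id}|\{g>2^i\}|\le\Lambda^d+\epsilon\sum_{|i|\le L}2^{id}$ directly. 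The paper writes this with the bands $\{2^n<g\le2^{n+1}\}$ inside a bounded cube, the common threshold $2^{-K-1}$, and a factor $\tfrac12$ from dominated convergence.

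If you group the pairs $(i,k)$ in your window by $i-k$, your completed argument becomes an average of this aligned estimate over $2L+K$ offsets, divided by $K$. It is correct, but one offset already suffices and gives the constant without a limit in $K$.
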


\begin{proof}
It suffices to consider
$A=\|\{c_Q\}\|_{\ell^{d,\infty}}<\infty$.  Then
\begin{equation}
 \#\{Q:c_Q>t\}\leq(A/t)^d,\qquad t>0.
 \label{sch:eq:weak-sequence-distribution}
\end{equation}
Fix a bounded cube $H$ and an integer $N\geq1$.  For
$-N\leq n\leq N$, put
$$
 E_n=H\cap\{x:2^n<g(x)\leq2^{n+1}\}.
$$
For almost every $x\in E_n$, the assumed lower limit gives
$$
 c_{Q_{K+n}(x)}>2^{-K-1}
$$
for all sufficiently large $K$.  By dominated convergence,
we may choose one $K$ such that
$$
 E'_{n,K}
 =\{x\in E_n:c_{Q_{K+n}(x)}>2^{-K-1}\},
 \qquad |E'_{n,K}|\geq\tfrac12|E_n|,
$$
for every retained $n$.

At level $K+n$, the cubes meeting $E'_{n,K}$ have total
volume at least $|E'_{n,K}|$.  Their number is therefore at least
$$
 |E'_{n,K}|\,2^{d(K+n)}.
$$
All of these cubes satisfy $c_Q>2^{-K-1}$.
Cubes from different values of $n$ have different side lengths,
so \eqref{sch:eq:weak-sequence-distribution} gives
$$
 \frac12\sum_{n=-N}^N|E_n|2^{d(K+n)}
 \leq \#\{Q:c_Q>2^{-K-1}\}
 \leq A^d2^{d(K+1)}.
$$
After cancellation,
$$
 \sum_{n=-N}^N2^{dn}|E_n|\leq C_dA^d.
$$
Let $N\to\infty$ and then let $H$ increase to $\R^d$.
Since $g^d\leq2^{d(n+1)}$ on $\{2^n<g\leq2^{n+1}\}$,
monotone convergence proves \eqref{sch:eq:weak-cube-counting}.
\end{proof}

\begin{proof}[Proof of Proposition~\ref{sch:prop:Sobolev-necessity}]
Fix one of the adjacent dyadic systems and put
$c_Q=\omega_Q^{\HS}(B)$.
Lemma~\ref{sch:lem:infinitesimal-oscillation} gives
$$
 \lim_{k\to\infty}2^kc_{Q_k(x)}
 =\frac1{\sqrt{12}}G_{V,U}B(x)
 \quad\text{for almost every }x.
$$
Apply Lemma~\ref{sch:lem:weak-cube-counting} to
$g=G_{V,U}B/\sqrt{12}$.  We obtain
$$
 \|G_{V,U}B\|_{L^d}
 \leq C_d\|\{\omega_Q^{\HS}(B):Q\in\D\}\|_{\ell^{d,\infty}}
 \leq C_d\|B\|_{\mathfrak W^d_{V,U}}.
$$
This proves \eqref{sch:eq:Sobolev-from-oscillation}.
The lower estimate \eqref{sch:eq:Riesz-lower} gives
\eqref{sch:eq:Sobolev-from-commutator}.
\end{proof}

\subsection{Proof of the Sobolev characterization}

We first record the unweighted cube characterization.
Its converse follows from Frank's mean-oscillation theorem
\cite[Theorem~1]{Frank}.

\begin{lemma}
\label{sch:lem:unweighted-critical-cube}
Let $N<\infty$ and
$F\in L^1_{\mathrm{loc}}(\R^d;\C^N)$.  For each cube $Q$, put
$$
 \operatorname{osc}_2(F;Q)
 =\bigg(\avg_Q|F-F_Q|^2\bigg)^{1/2},
$$
where the integral takes values in $[0,\infty]$.
Then $F\in\dot W^{1,d}(\R^d;\C^N)$ if and only if the
oscillation sequences have finite weak $\ell^d$ quasi-norms.
In this case,
\begin{equation}
 \sum_{t=1}^{3^d}
 \big\|\{\operatorname{osc}_2(F;Q):Q\in\D^t\}\big\|_{\ell^{d,\infty}}
 \simeq_{d,N}
 \|\nabla F\|_{L^d(\R^d)}.
 \label{sch:eq:unweighted-critical-cube}
\end{equation}
The derivatives are distributional, and the seminorms are
understood modulo constant vectors.
\end{lemma}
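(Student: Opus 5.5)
Both directions of the lemma can be proved by reduction to known unweighted statements. The direction ``Sobolev $\Rightarrow$ weak oscillation'' comes from the Poincar\'e inequality and a counting argument. The direction ``weak oscillation $\Rightarrow$ Sobolev'' comes from Frank's theorem \cite[Theorem~1]{Frank}, applied coordinatewise. Since $N$ is finite and $|F-F_Q|^2=\sum_{a=1}^N|F^a-F^a_Q|^2$, we have
\[
 \max_a\operatorname{osc}_2(F^a;Q)\le\operatorname{osc}_2(F;Q)\le\sum_a\operatorname{osc}_2(F^a;Q).
\]
The finite quasi-triangle inequality in $\ell^{d,\infty}$ then reduces \eqref{sch:eq:unweighted-critical-cube} to scalar (even real-valued) $F$, up to constants depending on $d,N$. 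The same applies to $\|\nabla F\|_{L^d}$.

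For the upper bound $\|\{\operatorname{osc}_2(F;Q)\}\|_{\ell^{d,\infty}}\lesssim\|\nabla F\|_{L^d}$, I would first take $F$ smooth. The Sobolev--Poincar\'e inequality on $Q$, with exponent $d$ on the gradient side (valid since $d\ge2$ and the Sobolev conjugate of $d$ is infinite, so any $L^2$ average is dominated), gives
\[
 \operatorname{osc}_2(F;Q)\lesssim\ell(Q)\Big(\avg_Q|\nabla F|^d\Big)^{1/d}=\Big(\int_Q|\nabla F|^d\Big)^{1/d}.
\]
So $c_Q\le a_Q:=(\int_Q|\nabla F|^d)^{1/d}$, and the weak-type bound $\#\{Q\in\D:a_Q>t\}\lesssim t^{-d}\|\nabla F\|_d^d$ is needed. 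The sum $\sum_Q a_Q^d$ diverges over scales, so the bound cannot come from a plain $\ell^d$ estimate. This is the main obstacle.

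I would follow the standard argument from the Connes/Rochberg--Semmes/Frank circle. For the counting, use the sharper bound
\[
 \operatorname{osc}_2(F;Q)\lesssim\ell(Q)\Big(\avg_Q|\nabla F|^{s}\Big)^{1/s}\quad\text{with }s<d.
\]
Then $\{Q:c_Q>t\}\subset\{Q:\ell(Q)\,(M|\nabla F|^s)^{1/s}(x)>ct\ \text{for }x\in Q\}$. For each point, count the dyadic cubes containing it with $\ell(Q)\gtrsim t/g(x)$, where $g=(M|\nabla F|^s)^{1/s}$. Summing $|Q|^{-1}\int_Q dx$ over these cubes gives
\[
 \#\{Q:c_Q>t\}\le\int\sum_{Q\ni x,\ \ell(Q)\ge ct/g(x)}|Q|^{-1}dx\lesssim t^{-d}\int g^d\lesssim t^{-d}\|\nabla F\|_d^d,
\]
by the geometric sum over scales and the maximal theorem in $L^{d/s}$. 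For general $F\in\dot W^{1,d}$, approximate by mollifications $F_\epsilon$. Then $\operatorname{osc}_2(F_\epsilon;Q)\to\operatorname{osc}_2(F;Q)$ for each fixed $Q$ (local $L^2$ convergence via Sobolev embedding), and the Fatou property of $\ell^{d,\infty}$ carries the bound through.

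For the converse, assume the weak-$\ell^d$ quasi-norms over the $3^d$ adjacent systems are finite. Then Frank's theorem \cite[Theorem~1]{Frank} states precisely that $F^a\in\dot W^{1,d}$ with $\|\nabla F^a\|_{L^d}\lesssim$ the weak oscillation norm. The $L^2$ oscillation dominates the $L^1$ oscillation used there, so the hypothesis needed by that theorem holds. The only care points are that Frank's result is formulated with $L^1$ mean oscillation over all cubes or one dyadic system and for real functions. I would split $F^a$ into real and imaginary parts, and invoke the comparability of arbitrary cubes with cubes in one of the adjacent systems (as in Lemma~\ref{sch:lem:sequence-implies-VMO}) to pass between the formulations. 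Combining both directions and summing over $a$ gives \eqref{sch:eq:unweighted-critical-cube}, modulo constants.
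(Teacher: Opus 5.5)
Your proposal is correct and follows essentially the paper's route. The forward direction is the Sobolev--Poincar\'e bound $\operatorname{osc}_2(F;Q)\lesssim\ell(Q)(\avg_Q|\nabla F|^s)^{1/s}$ with $s<d$, followed by counting cubes against $M(|\nabla F|^s)$ over scales and the maximal theorem in $L^{d/s}$; the converse is Frank's theorem applied to the real and imaginary parts of the components.

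Two small points. The paper takes $s=2d/(d+2)$, which equals $1$ when $d=2$; you likewise need $s\geq 2d/(d+2)$ for the $L^2$ oscillation to be controlled, so not every $s<d$ works. Your mollification step is harmless but unnecessary, since Poincar\'e applies directly in $W^{1,s}_{\rm loc}$.

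The one step you compress is the passage to Frank's hypothesis. That hypothesis is the continuous quantity $\sup_{\lambda>0}\lambda^d\iint_{\{m_F(a,r)>\lambda\}}da\,dr/r^{d+1}$ over balls, not a count over a single dyadic system. The paper assigns each ball $D(a,r)$ to an adjacent dyadic cube $Q\supset D(a,r)$ with $r\leq\ell(Q)\leq C_dr$. It then notes that the set of parameters $(a,r)$ assigned to a fixed $Q$ has $da\,dr/r^{d+1}$-measure at most $C_d$, which turns your discrete weak-$\ell^d$ bound into Frank's condition.
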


\begin{proof}
Suppose first that $F\in\dot W^{1,d}(\R^d;\C^N)$.
Put $g=|\nabla F|$ and choose
$$
 s=\frac{2d}{d+2}\quad\text{if }d>2,
 \qquad s=1\quad\text{if }d=2.
$$
The Sobolev--Poincar\'e inequality gives
\begin{equation}
 \operatorname{osc}_2(F;Q)
 \leq C_d\ell(Q)
 \bigg(\avg_Qg^s\bigg)^{1/s}.
 \label{sch:eq:critical-Sobolev-Poincare}
\end{equation}
Here $|\nabla F|$ is the Euclidean norm of all first
derivatives of the components of $F$.

Fix a dyadic system $\D$, and let $\D_k$ consist of its
cubes of side length $2^{-k}$.  If $Q\in\D_k$ and
$\operatorname{osc}_2(F;Q)>\lambda$, then
$$
 Q\subset\{x:M(g^s)(x)>c_d\lambda^s2^{ks}\},
$$
where $M$ is the Hardy--Littlewood maximal operator.
Since the cubes in $\D_k$ are disjoint,
\begin{align*}
 \#\{Q\in\D:\operatorname{osc}_2(F;Q)>\lambda\}
 &\leq\sum_{k\in\mathbb Z}2^{kd}
 \big|\{M(g^s)>c_d\lambda^s2^{ks}\}\big|
 \leq C_d\lambda^{-d}
 \int_{\R^d}M(g^s)^{d/s}.
\end{align*}
The last inequality follows by integrating
$$
 \sum_{k\in\mathbb Z}2^{kd}
 \one_{\{h>c_d\lambda^s2^{ks}\}}
 \leq C_d\lambda^{-d}h^{d/s}.
$$
Since $d/s>1$, the maximal theorem yields
$$
 \#\{Q\in\D:\operatorname{osc}_2(F;Q)>\lambda\}
 \leq C_d\lambda^{-d}\|\nabla F\|_{L^d}^d.
$$
Taking the weak $\ell^d$ quasi-norm and summing over the
adjacent systems proves one inequality in
\eqref{sch:eq:unweighted-critical-cube}.

For the converse, denote the left side of
\eqref{sch:eq:unweighted-critical-cube} by $A$ and suppose that
$A<\infty$.  For a ball $D(a,r)$, set
$$
 m_F(a,r)=\avg_{D(a,r)}|F-F_{D(a,r)}|.
$$
The adjacent-grid lemma gives $t=t(a,r)$ and
$Q=Q(a,r)\in\D^t$ such that
$$
 D(a,r)\subset Q,
 \qquad r\leq\ell(Q)\leq C_dr.
$$
Changing the average and applying Cauchy--Schwarz, we obtain
\begin{equation}
 m_F(a,r)
 \leq C_d\avg_Q|F-F_Q|
 \leq C_d\operatorname{osc}_2(F;Q).
 \label{sch:eq:ball-by-dyadic-oscillation}
\end{equation}
Choose $Q(a,r)$ by a fixed ordering of the adjacent systems
and their admissible cubes.  All pairs $(a,r)$ assigned to a
fixed cube $Q$ satisfy
$$
 a\in Q,\qquad c_d\ell(Q)\leq r\leq C_d\ell(Q).
$$
Consequently,
\begin{equation}
 \iint_{\{(a,r):Q(a,r)=Q\}}
 \frac{da\,dr}{r^{d+1}}
 \leq |Q|\int_{c_d\ell(Q)}^{C_d\ell(Q)}
 \frac{dr}{r^{d+1}}
 \leq C_d.
 \label{sch:eq:parameter-cell-bound}
\end{equation}
Let $A_t$ be the weak $\ell^d$ quasi-norm of the
oscillation sequence in $\D^t$.  Equations
\eqref{sch:eq:ball-by-dyadic-oscillation} and
\eqref{sch:eq:parameter-cell-bound} give
\begin{align*}
 \iint_{\{m_F(a,r)>\lambda\}}\frac{da\,dr}{r^{d+1}}
 &\leq C_d\sum_{t=1}^{3^d}
 \#\{Q\in\D^t:\operatorname{osc}_2(F;Q)>c_d\lambda\}
 \leq C_d\lambda^{-d}\sum_{t=1}^{3^d}A_t^d
 \leq C_d(A/\lambda)^d.
\end{align*}
Thus
\begin{equation}
 \sup_{\lambda>0}\lambda^d
 \iint_{\{m_F(a,r)>\lambda\}}\frac{da\,dr}{r^{d+1}}
 \leq C_dA^d.
 \label{sch:eq:continuous-weak-oscillation}
\end{equation}

By \cite[Theorem~1]{Frank}, a scalar locally integrable
function $f$ with finite weak mean oscillation belongs to
$\dot W^{1,d}(\R^d)$ and satisfies
$$
 \|\nabla f\|_{L^d}^d
 \leq C_d\sup_{\lambda>0}\lambda^d
 \iint_{\{m_f(a,r)>\lambda\}}\frac{da\,dr}{r^{d+1}}.
$$
For each component $F_\ell$,
$$
 m_{\operatorname{Re}F_\ell}(a,r)
 \leq m_F(a,r),
 \qquad
 m_{\operatorname{Im}F_\ell}(a,r)
 \leq m_F(a,r).
$$
Apply Frank's estimate to these real-valued functions and use
the equivalence of norms in $\C^N$.  It follows that
$$
 F\in\dot W^{1,d}(\R^d;\C^N),
 \qquad
 \|\nabla F\|_{L^d}\leq C_{d,N}A.
$$
This proves the converse.
\end{proof}

\begin{proof}[Proof of Theorem~\ref{sch:thm:elliptic-Sobolev}]
For constant weights, put
$$
 D=V_0^{1/2}BU_0^{-1/2}.
$$
The definitions give
$$
 \alpha_Q^{\HS}(B)=\beta_Q^{\HS}(B)
 =\bigg(\avg_Q\|D-D_Q\|_{\HS}^2\bigg)^{1/2}.
$$
Moreover, the distributional derivatives satisfy
$$
 \partial_\kappa D
 =V_0^{1/2}(\partial_\kappa B)U_0^{-1/2},
 \qquad
 G_{V_0,U_0}B=|\nabla D|_{\HS}.
$$
Apply Lemma~\ref{sch:lem:unweighted-critical-cube} to $D$,
regarded as a vector in $\C^{m^2}$.  This proves
\eqref{sch:eq:constant-weight-Sobolev}.

Now suppose that $cI_m\leq U,V\leq CI_m$ almost everywhere.
The same bounds hold for their averages, and the inverse
matrices satisfy
$$
 C^{-1}I_m\leq U^{-1},V^{-1}\leq c^{-1}I_m.
$$
For every cube $Q$, it follows that
$$
 \sqrt{\frac cC}\operatorname{osc}_2(B;Q)
 \leq\alpha_Q^{\HS}(B),\,\beta_Q^{\HS}(B)
 \leq\sqrt{\frac Cc}\operatorname{osc}_2(B;Q),
$$
where the oscillation uses the Hilbert--Schmidt norm.
For symbols with weak first derivatives, we also have
$$
 \sqrt{\frac cC}|\nabla B|_{\HS}
 \leq G_{V,U}B
 \leq\sqrt{\frac Cc}|\nabla B|_{\HS}.
$$
Lemma~\ref{sch:lem:unweighted-critical-cube} therefore gives
$$
 \mathfrak W^d_{V,U}
 =\dot W^{1,d}(\R^d;\C^{m\times m})
 =\dot W^{1,d}_{V,U},
$$
with equivalent seminorms.  Finally, Theorem~\ref{sch:thm:target}
identifies $\mathfrak W^d_{V,U}$ with weak Schatten
membership and proves
\eqref{sch:eq:elliptic-Sobolev-commutator}.
\end{proof}

\noindent{\bf AI Disclosure:}
ChatGPT were used for language editing and proofreading. All mathematical results, arguments, proofs, and conclusions were developed, verified, and approved by the authors.
\medskip

\noindent {\bf Acknowledgement:}J. Li and B. D. Wick are supported by ARC DP 220100285. C.-W. Liang and C.-Y.
Shen were supported  by NSTC through grant 111-2115-M-002-010-MY5. B. D. Wick is partially supported by NSF-DMS \# 2000510, \# 2054863, and \# 1800057.

(J. Li) Department of Mathematics, Macquarie University, NSW, 2109, Australia.\\ 
{\it E-mail}: \texttt{ji.li@mq.edu.au}\\

(C.-W. Liang) Department of Mathematics, National Taiwan University, Taiwan.
\\ 
{\it E-mail}: \texttt{d10221001@ntu.edu.tw}\\

(C.-Y. Shen) Department of Mathematics, National Taiwan University, Taiwan. \\ {\it E-mail}: \texttt{cyshen@math.ntu.edu.tw}\\

(B.D. Wick) Department of Mathematics
Washington University - St. Louis
. \\ {\it E-mail}: \texttt{wick@math.wustl.edu}
\end{document}